\crefname{lem}{Lemma}{Lemmas}
\crefname{prop}{Proposition}{Propositions}
\def\Tr{{\mathop{\mathrm{Tr}}}}
\def\d{{\mathrm{d}}}
\def\bsl{\backslash}
\def\inf{\infty}
\def\fO{\mathfrak o} 
\def\cH{\mathcal H}
\def\un{\mathrm{un}}
\def\scS{\mathscr S}
\newcommand{\R}{\mathbb R}
\newcommand{\Q}{\mathbb Q}
\newcommand{\C}{\mathbb C}
\newcommand{\Z}{\mathbb Z}
\newcommand{\F}{\mathbb F}
\newcommand{\eps}{\varepsilon}
\newcommand{\calO}{\mathcal O}
\newcommand{\A}{\mathbb A}
\newcommand{\calA}{\mathcal A}
\newcommand{\frako}{\mathfrak o}
\newcommand{\frakp}{\mathfrak p}
\newcommand{\Cl}{\mathrm{Cl}}
\newcommand{\JL}{\mathrm{JL}}
\newcommand{\one}{\mathbbm{1}}
\newcommand{\bs}{\backslash}
\newcommand{\<}{\langle}
\renewcommand{\>}{\rangle}
\newcommand{\bmx}{\left( \begin{matrix}}
\newcommand{\emx}{\end{matrix} \right)}
\DeclareMathOperator{\GL}{GL}  
\DeclareMathOperator{\SL}{SL} 
\DeclareMathOperator{\GSp}{GSp} 
\DeclareMathOperator{\UU}{U}
\DeclareMathOperator{\SU}{SU}
\DeclareMathOperator{\Ram}{Ram}
  \DeclareMathOperator{\Sp}{Sp}
\DeclareMathOperator{\St}{St} \DeclareMathOperator{\diag}{diag} \DeclareMathOperator{\vol}{vol}
\DeclareMathOperator{\PGSp}{PGSp} \DeclareMathOperator{\PGL}{PGL} \DeclareMathOperator{\SO}{SO}
\newtheorem{lem}{Lemma}
\numberwithin{lem}{section}
\newtheorem{prop}[lem]{Proposition}
\newtheorem{thm}[lem]{Theorem}
\newtheorem{cor}[lem]{Corollary}
\newtheorem{conj}[lem]{Conjecture}
\newtheorem{ex}[lem]{Example}
\newtheorem*{thma}{Theorem A*}
\newtheorem*{thmb}{Theorem B}
\theoremstyle{definition}
\newtheorem{rem}[lem]{Remark}
\numberwithin{equation}{section}
\title{Mass formulas and Eisenstein congruences in higher rank}
\author{Kimball Martin}
\address{Department of Mathematics, University of Oklahoma, Norman, OK 73019 USA}
\email{kimball.martin@ou.edu}
\author{Satoshi Wakatsuki}
\address{Faculty of Mathematics and Physics, Institute of Science and Engineering, Kanazawa University, Kakumamachi, Kanazawa, Ishikawa,
920-1192, Japan}
\email{wakatsuk@staff.kanazawa-u.ac.jp}
\date{\today}
\begin{document}

\maketitle


\begin{abstract}
We use mass formulas to
construct minimal parabolic Eisenstein congruences for algebraic modular forms on 
reductive groups compact at infinity, and study when these
yield congruences between cusp forms and Eisenstein series on the quasi-split
inner form.  This extends recent work of the first author on weight 2 Eisenstein congruences for GL(2) to higher rank.

Two issues in higher rank are that the transfer
to the quasi-split form is not always cuspidal and
sometimes the congruences come
from lower rank (e.g., are ``endoscopic'').  We show our
construction yields Eisenstein congruences with non-endoscopic
cuspidal automorphic forms on quasi-split unitary groups by using
certain unitary groups over division algebras.
On the other hand, when using unitary groups over fields, or
other groups of Lie type, these Eisenstein congruences typically appear to
be endoscopic.  This suggests a new way to see higher weight Eisenstein congruences for GL(2), and leads to various conjectures about GL(2)
Eisenstein congruences.

In supplementary sections, we also generalize previous weight 2
Eisenstein congruences
for Hilbert modular forms, and prove some special
congruence mod $p$ results between cusp forms on U($p$).
\end{abstract}


\setcounter{tocdepth}{1}
\tableofcontents

\section{Introduction}


In \cite{me:cong}, we gave a construction for mod $p$ congruences of weight 2 cusp forms
with Eisenstein series on PGL(2) using the Eichler mass formula for a definite
quaternion algebra and the Jacquet--Langlands correspondence.  This approach
has certain advantages over previous approaches to Eisenstein congruences for
elliptic modular forms (e.g., \cite{mazur}, \cite{yoo}): 
one can treat more general levels and 
primes $p$, as well as Hilbert modular forms, without much difficulty.

In this paper, we extend this approach to groups of higher rank.  

Suppose $\pi, \pi'$ are irreducible automorphic representations of a reductive group $G$ over a number field $F$, 
and that outside of a finite set of places $S$ there is a hyperspecial
maximal compact subgroup $K_v \subset G(F_v)$ such that $\pi_v$ and $\pi'_v$
are both $K_v$-spherical.  Then we say $\pi$ and $\pi'$ are Hecke congruent mod
$p$ (away from $S$) if there exists a prime $\frakp$ above $p$ in a sufficiently large
number field such that, for $v \not \in S$, the spherical Hecke eigenvalues for 
$\pi_v^{K_v}$ and those for $(\pi'_v)^{K_v}$ are congruent mod $\frakp$.

Here is a sample result for certain unitary groups over $\Q$.  

\begin{thma} (\cref{ex:Qi}) \hypertarget{thma}
Let $n=2m+1$ be an odd prime, $\chi$ the idele class character for $\Q$ associated to
the quadratic extension $E=\Q(i)$, and $G=\UU(n)$ the quasi-split
unitary group associated to $E/\Q$.  Let $\one_G$ denote the trivial 
representation of $G$.  Fix a prime $\ell \equiv 1 \bmod 4$. 
Suppose $p > n$ is a prime such that either $p | (\ell^r-1)$ for some
$1 \le r \le n-1$ or that $p$ divides the numerator of the product $\prod_{r=1}^m B_{2r} \cdot
\prod_{r=1}^m B_{2r+1, \chi}$ of generalized Bernoulli numbers.

Then there exists a holomorphic weight $n$ cuspidal representation
$\pi$ of $G(\A)$ such that (i) $\pi_v$ is unramified at each finite odd $v \ne \ell$,
(ii) $\pi_2$ is spherical, (iii) $\pi_\ell$ is an unramified twist of the Steinberg representation,
(iv) the base change $\pi_{E}$ of $\pi$ to $\GL_n(\A_E)$ is cuspidal, and
(v) $\pi$ is Hecke congruent to $\one_G \bmod p$.
\end{thma}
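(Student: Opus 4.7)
The plan is to apply the paper's general framework: produce the congruence on a compact-at-infinity inner form $G'$ of $G = \UU(n)$ via a mass-formula computation, then transfer it to $G$ using Jacquet--Langlands. To force \emph{cuspidal, non-endoscopic} base change to $\GL_n(\A_E)$ one must use an inner form coming from a division algebra rather than from a Hermitian form over $E$ itself.

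Since $n$ is an odd prime and $\ell \equiv 1 \bmod 4$ splits in $E = \Q(i)$ as $\lambda \bar\lambda$, I would take $D/E$ to be a central division algebra of degree $n$ with an involution of the second kind extending complex conjugation and with $D_\lambda$ non-split (balancing the Hasse invariants at an auxiliary prime if necessary). Set $G' = \UU(h)$ for a positive-definite rank-one Hermitian $D$-form $h$, so that $G'(\R)$ is compact and $G'(\Q_\ell) \cong D_\lambda^\times$. Take $K = \prod_v K_v \subset G'(\A_\fin)$ with $K_v$ hyperspecial for $v \ne 2, \ell$, the natural maximal compact at $v = 2$, and $K_\ell = \fO_{D_\lambda}^\times$. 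Applying Prasad's mass formula expresses $\mathrm{mass}(G', K) = \vol(G'(\Q) \bs G'(\A)/K)$ as a product of local densities and the values $\zeta(1-2r)$ and $L(-2r, \chi)$ for $1 \le r \le m$; the functional equation rewrites these in terms of $B_{2r}$ and $B_{2r+1,\chi}$, while the ramified factor at $\ell$ contributes the extra Euler product $\prod_{r=1}^{n-1}(\ell^r - 1)$ coming from the non-split nature of $D_\lambda$. Under either hypothesis on $p$, this mass has positive $\frakp$-adic valuation for some $\frakp \mid p$. A Deligne--Serre pigeonhole in the finite-dimensional space of $K$-fixed algebraic modular forms on $G'$ then produces an automorphic $\pi' \ne \one_{G'}$ on $G'(\A)$ whose spherical Hecke eigenvalues match those of $\one_{G'}$ mod $\frakp$; anisotropy of $G'$ makes $\pi'$ automatically cuspidal.

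I would then transfer $\pi'$ to $G = \UU(n)$ via Jacquet--Langlands for unitary groups: the trivial of $G'(\R)$ lifts to the holomorphic weight-$n$ discrete series, sphericity is preserved at each $v \ne \ell$ (giving (i), (ii), and (v)), and the trivial representation of $D_\lambda^\times$ lifts to an unramified twist of Steinberg on $G(\Q_\ell) \cong \GL_n(\Q_\ell)$ (giving (iii)). The main obstacle is (iv): showing $\pi_E$ is cuspidal on $\GL_n(\A_E)$. Here the division-algebra choice is essential, because the base change of an unramified twist of Steinberg at $\ell$ to $\GL_n(E_\lambda)$ remains an unramified twist of Steinberg, which is square-integrable and so cannot occur as a local component of any isobaric $\sigma_1 \boxplus \sigma_2$ arising from proper Levi subgroups of $\GL_n$. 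Combined with Mok's endoscopic classification (refined by Kaletha--Minguez--Shin--White), this forces $\pi_E$ to be cuspidal. Making the argument precise --- matching normalizations between Prasad's mass formula and the classification, and verifying Steinberg-preservation under local base change at the split prime $\ell$ --- will be the most delicate step.
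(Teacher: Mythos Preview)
Your approach is essentially the paper's: build a definite unitary group over a degree-$n$ division algebra $D/E$ that is non-split above $\ell$, use the mass formula to produce a congruence on the compact form, and transfer via the endoscopic classification. Two points of comparison are worth flagging. First, your cuspidality argument for $\pi_E$ (Steinberg at $\lambda$ is square-integrable, hence cannot sit inside a proper isobaric sum) is logically equivalent to the paper's but runs \emph{after} transfer; the paper instead argues \emph{before} transfer via the relevance condition (\cref{prop:cusp}): any parameter $\psi$ contributing to the compact form must be locally relevant at $\ell$, where $G'(\Q_\ell)\cong D_\lambda^\times$ forces $\psi$ simple, and primality of $n$ then forces $\psi$ generic unless $\pi'$ is $1$-dimensional. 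This is cleaner and avoids having to analyse local components of residual representations. Second, you only obtain $\pi'\neq\one_{G'}$, whereas you need $\pi'$ \emph{non-abelian} (a nontrivial $1$-dimensional $\pi'=\chi\circ\det$ would transfer to something non-cuspidal). For $E=\Q(i)$ this is automatic because $|\Cl(\UU(1))|=1$ (as the paper notes in \cref{ex:Qi}), so $\one_{G'}$ is the only abelian form at your level; you should state this explicitly. In general the paper handles this via a congruence-module argument (\cref{prop:non-abel}). Finally, no auxiliary prime is needed for $D$: since $\ell$ splits as $\lambda\bar\lambda$ in $\Q(i)$, invariants $\pm 1/n$ at $\lambda,\bar\lambda$ already sum to zero.
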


The asterisk in the theorem refers to an underlying assumption of
the endoscopic classification for unitary groups when $n > 3$, to be
discussed below.

We can regard this as an Eisenstein congruence as follows.  
Suppose $G/F$ is semisimple with a Borel subgroup $B$.  For a character
$\chi$ of the Levi of $B$, consider the principal series representation $I(\chi)$
induced from $\chi$.  Choosing standard sections of $I(\chi)$ yields Eisenstein
series, which are not in general $L^2$.  In particular, if $\chi = \delta_G^{-1/2}$
where $\delta_G$ denotes the modulus character, then $I(\chi)$ contains
$\one_G$ as a subrepresentation, and $\one_G$ contributes to the residual
part of the discrete $L^2$ spectrum.
Note one can reformulate the weight 2 Eisenstein series
congruence for elliptic modular forms from \cite{mazur}, \cite{me:cong}
as congruences with $\one_G$ for $G = \PGL(2)$.

The Hecke eigenvalues for $\one_G$ are relatively simple to describe,
being simply the degrees of the corresponding Hecke operators.
For instance, if 
$G=\UU(2)$ or $\UU(3)$, the local spherical Hecke algebra 
is generated by a single Hecke operator $T_q$.
For $G=\UU(2)$, the spherical eigenvalue of $\one_G$ for $T_q$ is
$q+1$ if $q$ is split in $E/\Q$ and $q^2+q$ if $q$ is inert in $E/\Q$.
For $G=\UU(3)$, the spherical eigenvalue of $\one_G$ for $T_q$ is
$q^2+q+1$ if $q$ is split in $E/\Q$ and $q^4+q$ if $q$ is inert in $E/\Q$.
In general, there are many local Hecke operators at $q$.  
See \eqref{eq:congvol} and \eqref{eq:congvol-Un} for
a general description of the unramified Hecke eigenvalues for $\one_G$.

As a specific example of the Bernoulli number divisibility condition, for any $\ell \equiv 1 \bmod 4$, we may take $p=61$ if $n=7$ or $p \in
\{ 19, 61, 277, 691 \}$ if $n=13$.

We also remark that the condition $p > n$ is not needed in our general result.

To our knowledge, these are the first general Eisenstein congruence results
in higher rank for Eisenstein series attached to minimal parabolic
subgroups.  Some instances of Eisenstein congruences in higher rank
are known for Eisenstein series attached to maximal parabolic subgroups,
e.g., see \cite{bergstrom-dummigan}.

There are two main steps to the proof: 
(1) construct appropriate
congruences on certain compact inner forms, and (2)
transfer these congruences to the quasi-split forms via functoriality.  

We begin by explaining (1) in a quite general setting.
Let $F$ be a totally real number field and $G/F$ be a reductive group which is
compact at infinity. 
Gross \cite{gross:amf} defined a notion of algebraic modular forms on $G$.  
 Let $K = \prod K_v$ be a suitably nice compact open subgroup of $G(\A)$.
In particular, 
we assume $K_v$ is a hyperspecial maximal compact subgroup almost everywhere and 
$K_v = G_v$ for $v | \infty$.
Let $\calA(G,K)$ denote
the space of algebraic modular forms with level $K$ and trivial weight.  We may view
$\calA(G,K)$ as the space of $\C$-valued functions on the finite set
$\Cl(K) = G(F) \bs G(\A) / K$.  
Let $x_1, \dots, x_h \in G(\A)$ be a set of representatives for $\Cl(K)$ and 
put $w_i = |G(F) \cap x_i K x_i^{-1}|$.  On $\calA(G,K)$, we consider 
the inner product $(\phi, \phi') = \sum \frac 1{w_i} \phi(x_i) \overline{\phi'(x_i)}$.
This space has a basis of orthogonal eigenforms for the unramified Hecke algebra.
The constant function $\one$ is an eigenform, which we think of as a 
compact analogue of an Eisenstein series 
associated to the minimal parabolic of the quasi-split form. 
Let $\calA_0(G,K)$ be the orthogonal
complement on $\one$ in $\calA(G,K)$.  The mass of $K$ is defined to be
\[ m(K) = (\one, \one) =  \frac 1{w_1} + \cdots + \frac 1{w_h}. \]
We say two eigenforms are congruent mod $p$ if their automorphic representations are.

\begin{thmb} (\cref{gen-prop}) \hypertarget{thmb} If $p | m(K)$, then there exists an eigenform 
$\phi \in \calA_0(G,K)$ which is Hecke congruent to $\one$ mod $p$.
\end{thmb}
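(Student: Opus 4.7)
The plan is to show that $p \mid m(K)$ forces the Eisenstein idempotent to lie outside the integral Hecke algebra localized at $p$, and that this failure produces the required congruent eigenform in $\calA_0(G,K)$. Concretely, I take $\Lambda = \calA(G,K)_\Z$, the $\Z$-lattice of integer-valued functions on $\Cl(K)$ with basis $e_1, \dots, e_h$ given by characteristic functions of the classes, and let $\mathbb T \subset \mathrm{End}_\Z(\Lambda)$ be the (commutative) subring generated by the unramified Hecke operators outside $S$. Since $\mathrm{End}_\Z(\Lambda)$ is a free $\Z$-module of rank $h^2$, $\mathbb T$ is finite over $\Z$; on tensoring with $R = \Z_{(p)}$ it decomposes as a finite product $\mathbb T_R = \prod_{\mathfrak m}(\mathbb T_R)_{\mathfrak m}$ over its maximal ideals. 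The Hecke eigenvalue character $\lambda_0 \colon \mathbb T \to \Z$ of $\one$ determines a distinguished maximal ideal $\mathfrak m_0 = \ker(\lambda_0 \bmod p)$, and under the standard dictionary between eigenforms (up to Galois) and characters of $\mathbb T$, the conclusion of the theorem is equivalent to $\mathrm{rank}_R (\mathbb T_R)_{\mathfrak m_0} \geq 2$.

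Next I would write down the key integrality input. The decomposition $\calA(G,K)_\Q = \Q \one \oplus \calA_0(G,K)_\Q$ gives an idempotent $e_1 \in \mathbb T \otimes \Q$ equal to the orthogonal projection onto $\Q \one$ with respect to the inner product of the excerpt; a direct calculation gives
\[
e_1 e_j \;=\; \frac{(e_j, \one)}{(\one, \one)} \, \one \;=\; \frac{1}{w_j \, m(K)} \, \one.
\]
Arguing by contraposition, suppose no eigenform in $\calA_0(G,K)$ is Hecke congruent to $\one$ mod $p$; by the dictionary, $\mathrm{rank}_R (\mathbb T_R)_{\mathfrak m_0} = 1$. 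Since $\mathbb T_R$ is $R$-torsion-free (it embeds into $\mathrm{End}_R(\Lambda \otimes R)$) and contains $R$, the local ring $(\mathbb T_R)_{\mathfrak m_0}$ is a finite $R$-subalgebra of $\Q$ containing $R$, hence equal to $R$ (as $R$ is integrally closed in $\Q$). Therefore $\mathbb T_R$ splits as $R \times \mathbb T'$ with the Eisenstein factor isolated, and the idempotent $e_1$ lies in $\mathbb T_R$. Applying $e_1$ to $e_j \in \Lambda \otimes R$ then forces $\tfrac{1}{w_j \, m(K)} \in R$, giving $v_p(m(K)) \leq -v_p(w_j) \leq 0$ and contradicting $p \mid m(K)$.

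The main obstacle is establishing the dictionary invoked in the first paragraph: one must work over a coefficient ring large enough to contain the eigenvalues of all forms in $\calA_0(G,K)$, extend $\lambda_0$ accordingly, and verify that Hecke congruences mod $p$ between eigenforms correspond exactly to coincidence of the associated maximal ideals of the integral Hecke algebra. This is standard in the style of Deligne--Serre and of Mazur's Eisenstein ideal machinery, but deserves care, especially regarding how the ``$\frakp$ above $p$ in a sufficiently large number field'' from the definition of Hecke congruence interacts with the product decomposition of $\mathbb T_R$. Once the dictionary is in place, the rest of the argument is the short integrality computation above.
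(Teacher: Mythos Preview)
Your approach is essentially correct and genuinely different from the paper's. The paper argues constructively: it directly exhibits an integer-valued $\phi' \in \calA_0^\Z(G,K)$ with $\phi' \equiv \one \bmod p^r$ (where $r = v_p(m(K))$) by writing $\phi'(x_i) = 1 + p^r a_i$ and solving the single linear condition $(\phi',\one)=0$ for the $a_i$ via a gcd computation on the $w_i$, then applies a Deligne--Serre style lifting lemma (their \cref{lift-lem}) to pass from this mod-$p$ eigenform to an honest eigenform. Your route is the Eisenstein-ideal one: $p \mid m(K)$ obstructs $p$-integrality of the Eisenstein idempotent, forcing the $\mathfrak m_0$-component of the Hecke algebra to have rank $\geq 2$. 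The paper's argument is more elementary and yields the sharper mod $p^r$ statement used in their depth-of-congruence remark (\cref{rem:depth}); yours sits naturally in Mazur's framework and makes the congruence module visible from the start, which is in fact the viewpoint the paper adopts later in \cref{sec:cong-mod}.

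One genuine technical slip: the decomposition $\mathbb T_R = \prod_{\mathfrak m} (\mathbb T_R)_{\mathfrak m}$ fails for $R = \Z_{(p)}$ in general, since $\Z_{(p)}$ is not Henselian (e.g.\ $\mathbb T = \Z[i]$, $p=5$ gives a domain with two maximal ideals). Take $R = \Z_p$ instead; then $\mathbb T \otimes \Z_p$ is finite over a complete local ring and does split as a product of local factors, after which your argument runs verbatim. Two further points to tidy: first, you should dispose of the case where the $\lambda_0$-eigenspace in $\calA(G,K)$ has dimension $>1$ (the conclusion is then trivial), since only under multiplicity one does the Hecke-algebra idempotent agree with the orthogonal projection you need for the formula $e_1 \delta_j = (w_j\, m(K))^{-1}\one$; second, avoid the notational collision between $e_1$ as idempotent and $e_1,\dots,e_h$ as your basis of characteristic functions.
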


Explicit mass formulas have been computed in a wide variety of settings---e.g.,
see \cite{shimura:bull} and \cite{GHY}.  We explicate these mass formulas in 
a number of cases below.  For the relevant inner forms and compact open
subgroups for \hyperlink{thma}{Theorem A*}, the mass is $(2^n n!)^{-1} \prod_{r=1}^{n-1} (\ell^r -1)$
times the product of Bernoulli numbers that appears in the theorem.

Now we briefly explain step (2).  \hyperlink{thmb}{Theorem B} gives us Eisenstein
congruences on definite unitary groups.  To obtain \hyperlink{thma}{Theorem A*},
we work with an inner form $G$ of $\UU(n)$ which is compact at infinity and compact
mod center at $\ell$, i.e., $G$ is unitary group over a division algebra. 
By comparing the endoscopic classification of discrete $L^2$
automorphic representations of $G$ with those of the quasi-split form $\UU(n)$, 
one gets a transfer of automorphic representations of $G$ to those of $\UU(n)$.
Since $G$ is compact mod center at $\ell$ and $n$ is prime, 
if $\pi$ is a non-abelian (not $1$-dimensional) 
automorphic representation of $G$, the transfer to $G'$ must be non-endoscopic
and have cuspidal base change to $\GL_n(\A_E)$.  In this case, there are no
abelian automorphic representations occurring in $\calA_0(G,K)$, which gives
\hyperlink{thma}{Theorem A*}.  For definite unitary groups associated to a general 
CM extension $E/F$, one can refine \hyperlink{thmb}{Theorem B} to obtain
a non-abelian $\phi$ under the condition that $p | \frac{m(K)}{n|\Cl(\UU_1(E/F))|}$.
See \cref{thm:main-Un} for a precise statement.

The endoscopic classification results that we use were obtained (conditional on
stabilization of trace formulas) in
\cite{mok} for $\UU(n)$ and were announced in \cite{KMSW} for inner forms.
However, the proof for the case of inner forms, while known in many situations, is still
work in progress, and we assume this classification in \hyperlink{thma}{Theorem A*}.  For $n=3$, the endoscopic classification was completed
for all inner forms in \cite{rogawski}, and thus our results are unconditional at least
for $n=3$.

On the other hand, if one carries out this procedure for unitary groups over
fields (so not compact at a finite place for $n > 2$), in all cases we have
checked it appears that all Eisenstein congruences coming from the mass
formula can also be explained as endoscopic lifts of GL(2)
Eisenstein congruences in higher weight.  In particular, we suggest
that one \emph{only} sees weight $\le k$ GL(2) Eisenstein congruences in 
mass formulas for suitable $\UU(k)$ ($k \ge 2$).
We use this to formulate precise conjectures
about weight $k$ Eisenstein congruences on GL(2) which generalize 
existing results.  See \cref{conj:wtk,rem:wtk}.  
However, at least for small $p$, one does not see all weight
$k$ GL(2) Eisenstein congruences in mass formulas for $\UU(k)$
(see \cref{ex:endo-nomass}).

We also consider Eisenstein congruences coming from mass formulas
for groups not of type $A_n$: specifically symplectic and odd orthogonal groups (with a focus on SO(5)) and $G_2$.  Again, in these, it appears
that such congruences are also explained by endoscopic lifts.
 (These groups have no inner forms which are compact at a finite place.)
For instance, if $G'=\SO(5) \simeq \PGSp(4)$, 
the Eisenstein congruences we obtain correspond to scalar weight 3
Siegel modular forms, and they appear to simply be Saito--Kurokawa lifts of Eisenstein congruences of weight 4 modular forms on GL(2).  
Again, this suggests some refinements of known results on Eisenstein
congruences for GL(2).  See for instance \cref{conj:wt4}.

Now we briefly outline the contents of our paper.

In \cref{sec2}, we give a general treatment of 
(trivial weight) algebraic modular forms on reductive
groups compact at infinity and congruences via mass formulas.
In \cref{sec:local}, we discuss local Hecke algebras and the local
versions of Eisenstein congruences for the groups we will consider.
In \cref{sec:lifted-cong}, we explain how, under certain conditions, these 
local results let us obtain
Eisenstein congruences on unitary groups via functorial liftings
(endoscopic and symmetric powers) of Eisenstein congruences of smaller
groups.

In \cref{sec:unitary}, we first explain how to obtain Eisenstein congruences on
$\UU(n)$ for $n$ prime which are not endoscopic.  
Then we investigate the case of definite unitary groups over fields
with some examples leading to \cref{conj:wtk}.  
In \cref{sec:orthog} we discuss
orthogonal and symplectic groups, with a focus on $\SO(5)$,
and then consider $G_2$ in \cref{sec:G2}.
In \cref{sec:GL2} we use the techniques here to slightly refine
our results on weight 2 Eisenstein congruences for GL(2) from \cite{me:cong}.

Finally, in \cref{sec:special} we show that if $\pi$ is a cuspidal representation of
$\UU(p)$ with trivial central character such that $\pi_v$ is an unramified twist of Steinberg
at some finite $v$, there exists a cuspidal $\pi'$ on $\UU(p)$ with the same level structure as
$\pi$ which is Hecke congruent to $\pi$ mod $p$ and $\pi'_v$ is Steinberg
at $v$.  This is a higher rank analogue of a mod 2 congruence result on 
$\GL(2)$ from \cite{me:cong}.


\subsection*{Notation}

Excluding the local section \cref{sec:local}, $F$ will denote a number field, $\frako = \frako_F$ its ring of integers, $\A = \A_F$ 
its adele ring, and $v$ a place of $F$.  We also denote the finite adeles by
$\A_f$ and put $\hat \frako = \prod_{v < \infty} \frako_v$.  At a finite place $v$,
we denote by $\frakp_v$ the prime ideal and $q_v$ the size of the residue field.

For a group $G$, we denote its center by $Z(G)$, or just by $Z$ if $G$ is 
understood.  For an algebraic group $G$ over $F$, we
often write $G_v$ for $G(F_v)$.  By an automorphic representation, by
default we mean an irreducible $L^2$-discrete automorphic representation.

Finally $p$ will typically denote our congruence prime.
To denote other primes, we generally use $v$ to denote other primes, 
or $\ell$ or $q$ when $F=\Q$.


\subsection*{Acknowledgements} We thank Wei Zhang for suggesting the use 
of unitary groups compact at a finite place, and Sug Woo Shin for answering
a question about their endoscopic classification.  We are grateful to Markus
Kirschmer for providing a ``pre-release'' version of his Magma code with David Lorch to carry out Hermitian lattice calculations.
We also thank Hiraku Atobe, Tobias Berger,
Ralf Schmidt, Takashi Sugano and Shunsuke Yamana for helpful discussions.  This project was
supported by the University of Oklahoma's VPR Faculty Investment Program and 
CAS Senior Faculty Research Fellowship, as well
as a JSPS Invitation Fellowship (Short-term, S18025).  K.M.\ was partially
supported by grants from the Simons Foundation/SFARI (240605 and 512927, KM).
S.W.\ was partially
supported by JSPS Grant-in-Aid for Scientific Research (No.~18K03235).


\section{Congruences from mass formulas} \label{sec2}


Let $F$ be a totally real number field.  Let $G$ be a connected reductive linear 
algebraic group over $F$ such that $G_\infty$ is compact.  
Let $K = \prod K_v$ be
an open compact subgroup of $G(\A)$ such that 
$K_v = G_v$ for $v | \infty$.  For later use of the theory of Hecke operators, 
we will also assume $K_v$ is a hyperspecial maximal compact subgroup for 
all $v$ outside of a finite set of places $S$, which contains all infinite places.

Fix a nonzero Haar measure $dg$ on $G(\A)$ which is a product of local Haar 
measures $dg_v$.
The mass of $K$ is defined to be
\begin{equation}
 m(K) = \frac{\vol(G(F) \bs G(\A), dg)}{\vol(K, dg)}.
\end{equation}
(As usual, we give the discrete subgroup $G(F)$ the counting measure and
the volume of the quotient $G(F) \bs G(\A)$ really means with respect to the quotient
measure.)  This is nonzero, finite, and independent of the choice of $dg$.
Note that if $K' \subset K$ is also a compact open subgroup, then
$m(K') = [K:K'] m(K)$.

Consider the classes $\Cl(K) = G(F) \bs G(\A) / K$.  We identify $\Cl(K)$ with a set
of representatives $\{ x_1, \ldots, x_h \}$, where $x_i \in G(\A)$.  Note 
$\vol(G(F) \bs G(F) x_i K, dg) = \frac 1{w_i} \vol(K, dg)$ where $w_i = | G(F) \cap
x_i K x_i^{-1} |$.  Thus we can also express the mass as
\begin{equation}
m(K) = \frac 1{w_1} + \dots + \frac 1{w_h}.
\end{equation}
Consequently, $m(K) \in \Q$.

If $G$ is a unitary, symplectic or orthogonal group, and $K$ is the stabilizer of a 
lattice $\Lambda$, then this mass corresponds to the classical mass of $\Lambda$.
Mass formulas have been calculated in a considerable amount of generality
in many works, e.g.\ see \cite{GHY} or  \cite{shimura:bull}.  We will explicate
these in some cases below.


\subsection{Algebraic modular forms}

The basic theory of algebraic modular forms was developed in \cite{gross:amf}.
Below, we will review aspects necessary for our applications.

We define the space of algebraic modular forms 
on $G(\A)$ with level $K$
and trivial weight to be
\begin{equation}
\calA(G,K) = \{ \phi : \Cl(K) \to \C \}.
\end{equation}
As $\calA(G, K) \subset L^2(G(F) \bs G(\A))$, we can decompose this
space as
\begin{equation} \label{eq:AGK-rep-decomp}
 \calA(G, K) = \bigoplus \pi^K,
\end{equation}
where $\pi$ runs over irreducible automorphic representations of $G(\A)$ with 
trivial infinity type.  If $\pi^K \ne 0$, we will say $\pi$ occurs in $\calA(G, K)$.
Since $G(F) \bs G(\A)$ is compact,
$L^2(G(F) \bs G(\A))$ decomposes discretely and each $\pi$ above is
finite dimensional.
The usual inner product on $L^2(G(F) \bs G(\A))$ restricts
to an inner product on $\calA(G,K)$, which after suitable normalization we can
take to be
\[ (\phi, \phi') = \sum \frac 1{w_i} \phi(x_i) \overline{\phi'(x_i)}. \]

Let $Z$ denote the center of $G$ and $K_Z = K \cap Z(\A)$.  
Note that $\Cl(K_Z) = Z(F) \bs Z(\A) / K_Z$
acts on elements of $\calA(G,K)$ by (left or right) multiplication. 
Let $\omega : \Cl(K_Z) \to \C$ be a ``class character''.\footnote{If we relax our 
compact at infinity condition to compact mod
center at infinity, and suppose $Z = \GL(1)$ and $K_Z = \hat \frako_F^\times \times F_\infty^\times$, this is just an ideal class character of $F$.}  
Define the space of algebraic modular forms with central character $\omega$,
level $K$ and trivial weight to be
\[ \calA(G, K; \omega) = \{ \phi \in \calA(G, K) : \phi(zg) = \omega(z) \phi(g) \text{ for }
z \in Z(\A), \, g \in G(\A) \}. \]
By decomposing $\calA(G,K)$ with respect to the action of $\Cl(K_Z)$, we obtain
a decomposition
\[ \calA(G, K) = \bigoplus_\omega \calA(G,K; \omega), \]
where $\omega$ runs over characters of $\Cl(K_Z)$.  We also have decompositions
of the form \eqref{eq:AGK-rep-decomp} for each $\calA(G, K; \omega)$, where
now one runs over $\pi$ with central character $\omega$.

If $\chi : G(\A) \to \C$ is a 1-dimensional representation and $\ker \chi \supset
G(F) K$, then we may view $\chi$ as an element of $\calA(G, K)$.  
In particular, the space for trivial representation is the span of the constant function
$\one \in \calA(G, K)$.  Define the codimension 1 subspace
\[ \calA_0(G, K) = \{ \phi \in \calA(G, K) : (\phi, \one) = 0 \}, \]
and put $\calA_0(G, K; \omega) = \calA(G,K, \omega) \cap \calA_0(G, K)$.
We say
$\phi \in \calA(G, K)$ is non-abelian if it is not a linear combination of 1-dimensional
representations of $G(\A)$.

In the special case $G = B^\times$, where $B$ is a definite quaternion algebra
over $F=\Q$ and $K$ is the multiplicative group of an 
Eichler order of level $N$, then $\one \in \calA(G, K)$ corresponds to a 
weight 2 Eisenstein series on $\GL(2)$ and the Jacquet--Langlands 
correspondence gives a Hecke isomorphism of $\calA_0(G, K)$ with 
the subspace of $S_2(N)$ which are $p$-new for $p$ ramified in $B$.
However, in general $\calA_0(G, K)$ may contain many abelian forms,
as well as many non-abelian forms $\phi$ (even in $\pi^K$ for some $\pi$
occurring in $\calA_0(G, K)$) which do not correspond to cusp
forms via a generalized Jacquet--Langlands correspondence.  For higher rank 
$G$, it is a difficult problem to describe the set of $\phi$ which correspond
to cusp forms on the quasi-split form of $G$.

Finally, for a subring $\calO$ of $\C$, and a space of algebraic modular forms 
(e.g., $\calA(G, K)$) we denote with a superscript $\calO$ 
(e.g., $\calA^\calO(G,K)$)  the subring of $\calO$-valued algebraic modular forms.


\subsection{Hecke operators}
For $g \in G(\A)$ and $\phi \in \calA(G, K)$, we define the Hecke operator
\begin{equation} \label{eq:hecke-def}
(T_g \phi)(x) = \sum \phi(xg_i), \quad KgK = \coprod g_i K.
\end{equation}
By right $K$-invariance of $\phi$, this is independent of the choice of representatives
$g_i$ in the coset decomposition $KgK = \coprod g_i K$, and $T_{g'} = T_g$
if $g' \in KgK$.  Clearly each $\pi^K$ is stable under $T_g$ for
each $\pi$ occurring in $\calA(G, K)$.  In particular, $T_g$ acts on the subspaces
$\calA_0(G, K)$ and $\calA_0(G, K, \omega)$.

We also note that each $T_g$ is integral
in the sense that, viewing each $\phi$ as a column vector $(\phi(x_i)) \in \C^h$,
the action is given by left multiplication by an integral matrix in $M_h(\Z)$.  
Consequently, for any
subring $\calO \subset \C$, $T_g$ restricts to an operator on $\calA^\calO(G, K)$
(and similarly, $\calA^\calO_0(G, K)$, etc.).  Moreover, all eigenvalues for
$T_g$ are algebraic integers.

Consider a representation $\pi = \bigotimes' \pi_v$ occurring in $\calA(G, K)$.
Take $v \not \in S$.  Then $\pi_v$ is $K_v$-spherical, and $\dim \pi_v^{K_v} = 1$.
Viewing $g_v \in G(F_v)$ as an element of $G(\A)$ which is $g_v$ at $v$
and 1 at all other places, we can consider the (global) Hecke operator
$T_{g_v}$.   Then $T_{g_v}$ acts by a scalar on $\pi_v^{K_v}$, and hence
is diagonalizable on $\calA(G, K)$.  

In fact, the $T_{g_v}$'s are simultaneously diagonalizable for all $v \not \in S$ 
and all $g_v \in G(F_v)$.  Specifically, let us call any nonzero $\phi \in \calA(G,K)$
such that $\phi \in \pi^K$ for some $\pi$ an eigenform.  Such a $\phi$ is a simultaneous
eigenform for all $T_{g_v}$'s with $v \not \in S$.  We denote the corresponding eigenvalue by $\lambda_{g_v}(\phi)$.  Then any basis of $\calA(G,K)$ of eigenforms 
simultaneously diagonalizes the $T_{g_v}$'s ($v \not \in S$).

Note that $\one$ is always an eigenform, and $\lambda_{g_v}(\one)$
is the degree of $T_{g_v}$, i.e., the number of $g_i$'s occurring in the decomposition
$Kg_vK = \coprod g_i K$, which equals $\vol(K_v g_v K_v) / \vol(K_v)$.


\subsection{Congruences}

Let $\phi, \phi' \in \calA(G, K)$ be eigenforms.
We say $\phi$ and $\phi'$ are Hecke congruent mod $p$ (away from $S$) 
if, for all $v\not \in S$ and all $g_v \in G(F_v)$, 
$\lambda_{g_v}(\phi) \equiv \lambda_{g_v}(\phi') \bmod \frakp$,
where $\frakp$ is a prime of some finite extension of $\Q$.

For a subring $\calO \subset \C$, ideal $\mathfrak n$ in $\calO$ and
$\phi_1, \phi_2 \in \calA^\calO(G,K)$, we write $\phi_1 \equiv \phi_2
\bmod \mathfrak n$ if $\phi_1(x_i) \equiv \phi_2(x_i) \bmod \frakp$ for all
$x_i \in \Cl(K)$.  Note if $\phi_1$ and $\phi_2$ are eigenforms
(or just mod $\frakp$ eigenforms), then $\phi_1 \equiv \phi_2 \bmod \frakp$
implies $\phi_1$ and $\phi_2$ are Hecke congruent mod $\frakp$.

In addition, if $\alpha \in \Q$, by $p | \alpha$ we mean $p$ divides the numerator
of $\alpha$.

\begin{prop} \label{gen-prop}
 Suppose $p | m(K)$.  Then there exists an eigenform $\phi
\in \calA_0(G, K)$ which is Hecke congruent to $\one \bmod p$.
\end{prop}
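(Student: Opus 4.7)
The strategy is to build an explicit nonzero element $\phi^* \in \calA_0^{\Z}(G,K)$ whose reduction modulo $p$ is a simultaneous eigenvector for every $T_{g_v}$ ($v \notin S$) with the Eisenstein eigensystem $T_{g_v} \mapsto \deg(T_{g_v})$, and then to invoke the Deligne--Serre lifting lemma to upgrade this mod $p$ eigenvector to a characteristic-zero eigenform $\phi \in \calA_0(G,K)$ that is Hecke congruent to $\one$ mod $p$.

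For the construction, I would write $m(K) = a/b$ in lowest terms. The hypothesis $p \mid m(K)$ says $p \mid a$, so $p \nmid b$. Let $e_1 \in \calA^\Z(G,K)$ denote the indicator function of $x_1 \in \Cl(K)$ and set
\[
\phi^* := b \cdot \one - w_1 a \cdot e_1 \in \calA^\Z(G,K).
\]
A direct computation gives $(\phi^*, \one) = b \cdot m(K) - a = 0$, so $\phi^* \in \calA_0^\Z(G,K)$. As $\phi^*(x_j) = b \not\equiv 0 \pmod p$ for any $j \ne 1$, $\phi^*$ is nonzero mod $p$; and since $\calA^\Z/\calA_0^\Z$ embeds into $\Q$ via $\phi \mapsto (\phi, \one)$ (hence is torsion-free), $\phi^*$ remains nonzero in the quotient $\calA_0^\Z(G,K)/p$. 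To check the Eisenstein congruence, I would use $T_{g_v} \one = \deg(T_{g_v}) \one$ to write
\[
(T_{g_v} - \deg(T_{g_v}))\phi^* = -w_1 a \cdot (T_{g_v} - \deg(T_{g_v})) e_1;
\]
the right-hand side is integral (by integrality of $T_{g_v}$ and of $\deg(T_{g_v})$) and is divisible by $a$, hence by $p$. Therefore $T_{g_v} \phi^* \equiv \deg(T_{g_v}) \phi^* \pmod p$ for all $v \notin S$ and $g_v \in G_v$.

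Finally, the Hecke algebra generated by the $T_{g_v}$ ($v \notin S$) is commutative and acts diagonalizably on $\calA(G,K) \otimes \C$, as recorded just before the proposition. Applying the Deligne--Serre lifting lemma to the free $\Z$-module $\calA_0^\Z(G,K)$ and the mod $p$ character $T_{g_v} \mapsto \deg(T_{g_v})$ realized by $\phi^*$, one obtains a finite extension of $\Q$ with ring of integers $\calO$, a prime $\frakp \mid p$ of $\calO$, and an eigenform $\phi \in \calA_0(G,K)$ defined over $\calO$ with $\lambda_{g_v}(\phi) \equiv \deg(T_{g_v}) = \lambda_{g_v}(\one) \pmod \frakp$ for all $v \notin S$ and $g_v \in G_v$. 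The only real content is the explicit construction of $\phi^*$, which packages the divisibility $p \mid m(K) = (\one,\one)$ as a concrete mod $p$ Hecke eigenvector in the orthogonal complement of $\one$; everything else is routine integrality bookkeeping plus the standard Deligne--Serre lift, and I do not anticipate a genuine obstacle.
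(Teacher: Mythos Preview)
Your proof is correct and follows essentially the same strategy as the paper: construct an explicit integral element of $\calA_0(G,K)$ congruent to a nonzero multiple of $\one$ modulo $p$, then apply the Deligne--Serre lifting lemma. The paper's construction (writing $\phi'(x_i) = 1 + p^r a_i$ for suitably chosen $a_i$, where $r = v_p(m(K))$) is arranged to yield a congruence mod $p^r$, a refinement exploited later in \cref{sec:eis-Un}, but your simpler one-coordinate adjustment $\phi^* = b\,\one - w_1 a\, e_1$ is entirely adequate for the proposition as stated.
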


\begin{proof} One can use the same arguments as those given for GL(2) in 
\cite{me:cong} and \cite{me:cong2}.  In fact we give a slightly more refined argument
than what we need for this proposition in order to use it later in 
\cref{sec:eis-Un}.

Let $r = v_p(m(K)) \ge 1$.
The first step is note that there exists a $\Z$-valued $\phi' \in \calA_0^\Z(G, K)$
such that $\phi' \equiv \one \bmod p^r$, i.e., $\phi'(x_i) \equiv 1 \bmod p^r$ for
$i = 1, \dots, h$.  To see this, consider $\phi' \in \calA^\Z(G, K)$ 
such that each $\phi'(x_i) = 1+p^r a_i$ for some $a_i \in \Z$.  We claim we can choose the $a_i$'s so that $(\phi', \one) = 0$, i.e., 
$p^r \sum \frac{a_i}{w_i} = - \sum \frac 1{w_i} = -m(K)$.
Let $w = \prod w_i$ and $w_i^* = \frac w{w_i}$.  Then we want 
$a_i \in \Z$ such that $\sum a_i w_i^* = -w\frac{m(K)}{p^r}$.  Note that
$p^j | w_i^*$ for some $i$ implies $p^{j} | w$ and thus $p^{j+r} | w m(K)$.  Thus
$\gcd(w_1^*, \ldots, w_h^*) | w\frac{m(K)}{p^r}$, and we may choose the $a_i$'s
as claimed.

Take such a $\phi'$, which is a mod $p$ eigenform.
Now we want to pass from $\phi'$ to an eigenform $\phi$ which
is Hecke congruent to $\phi'$ mod $p$.  
For this, one can either use the Deligne--Serre lifting
lemma as in the proof of \cite[Theorem 5.1]{me:cong2} or the reduction 
argument as in proof of \cite[Theorem 2.1]{me:cong}.
Specifically, the subsequent \cref{lift-lem} is a slight refinement of the
latter, and applying it with $\calO = \Z$, $\phi_1 = \one$, $\phi_2 = \phi'$
and $W = \calA_0(G,K)$ gives the desired $\phi$.
\end{proof}

\begin{lem} \label{lift-lem}  Let $\calO$ be the ring of integers of a number field $L$,
and $\frakp$ a prime of $\calO$ above a rational prime $p$.
Let $\phi_1 \in \calA^\calO(G,K)$ be an eigenform.  Let $W$ be
a Hecke-stable subspace of $\calA(G,K)$.  Suppose
there exists $\phi_2 \in \calA^\calO(G, K)$ such that $\phi_2 \equiv
\phi_1 \bmod \frakp$ and $\phi_2$ has nonzero projection to $W$.
Then there exists an eigenform $\phi \in W$ such that $\phi$
is Hecke congruent to $\phi_1 \bmod p$ for all Hecke operators $T_g$.
\end{lem}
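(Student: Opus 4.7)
The plan is to adapt the Deligne--Serre lifting lemma to the Hecke-stable subspace $W$. First I would enlarge $L$ (replacing $\calO$ and $\frakp$ accordingly) so that every Hecke eigensystem appearing in $\calA(G,K)$ is $\calO$-valued and $W$ is defined over $L$. Let $\mathbb T$ denote the commutative spherical Hecke algebra away from $S$; it acts semisimply, so we may write $\calA(G,K) = \bigoplus_\lambda V_\lambda$ as a decomposition into simultaneous eigenspaces, and since $W$ is Hecke-stable and defined over $L$ we have $W = \bigoplus_{\lambda \in I} V_\lambda$ for some subset $I$ of eigensystems. The conclusion then amounts to producing some $\lambda \in I$ whose system of eigenvalues reduces to $\bar\lambda_1$ modulo a prime above $p$; any eigenform in the corresponding $V_\lambda$ would be Hecke congruent to $\phi_1$ mod $p$.

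Next I would fix a $\mathbb T$-stable $\calO$-lattice $W^\calO := \pi_W(\calA^\calO(G,K))$, where $\pi_W \colon \calA(G,K) \to W$ is the $\mathbb T$-equivariant projection along the remaining eigenspaces, so that $v := \pi_W(\phi_2)$ is a nonzero element of $W^\calO$ by hypothesis. The congruence $\phi_2 \equiv \phi_1 \bmod \frakp$ together with the integrality of each $T \in \mathbb T$ and the eigenform property $T\phi_1 = \lambda_1(T)\phi_1$ gives
\[
(T - \lambda_1(T))\phi_2 \in \frakp\, \calA^\calO(G,K)
\]
for every $T$, and applying $\pi_W$ yields $(T - \lambda_1(T))\, v \in \frakp W^\calO$. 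After rescaling $v$ by an appropriate power of a uniformizer $\pi_\frakp$ (passing if necessary to a prime $\frakp'$ above $p$ in an enlarged $\calO'$), I would obtain a nonzero class $\bar v \in W^{\calO'}/\frakp' W^{\calO'}$ that is a mod-$\frakp'$ eigenvector for $\mathbb T$ with system of eigenvalues $\bar\lambda_1$. Then the Deligne--Serre lifting lemma applied to the finitely generated $\mathbb T$-module $W^{\calO'}$ produces an eigenform $\phi \in W$ whose Hecke eigenvalues lift $\bar\lambda_1$ modulo a prime above $p$; this is the desired $\phi$.

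The hard part will be justifying the rescaling in the case $\phi_1 \notin W$: then $\pi_W(\phi_1) = 0$ forces $v \in \frakp W^\calO$ with some valuation $r \ge 1$, and naively dividing by $\pi_\frakp^r$ only gives $(T - \lambda_1(T))(\pi_\frakp^{-r} v) \in \pi_\frakp^{1-r} W^\calO$, which is too weak to conclude that $\bar v$ is an eigenvector rather than merely a generalized eigenvector. Extracting enough extra $\frakp$-divisibility from $(T - \lambda_1(T))\, v$ to match the valuation of $v$ requires exploiting both that $v$ arises from an $\calO$-integral element via $\pi_W$ and the precise way $\phi_2 - \phi_1$ sits inside $\frakp\, \calA^\calO$; this is the crux of the ``reduction argument'' cited in the proof of \cref{gen-prop}, adapted from \cite{me:cong}.
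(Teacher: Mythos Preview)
Your identification of the difficulty is exactly right, but your proposal does not resolve it: you end by deferring to ``the reduction argument cited in the proof of \cref{gen-prop}, adapted from \cite{me:cong},'' yet in the paper that reduction argument \emph{is} the proof of \cref{lift-lem} itself. So the proposal is circular at the crucial step.

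Concretely, the rescaling really does fail in your framework. With $v = \pi_W(\phi_2)$ and $(T-\lambda_1(T))v \in \frakp W^\calO$, once $v \in \frakp^r W^\calO$ with $r \ge 1$ you only obtain $(T-\lambda_1(T))(\pi_\frakp^{-r}v) \in \frakp^{1-r} W^\calO$, which gives no information modulo $\frakp$. Nothing in the hypotheses lets you upgrade the single $\frakp$-divisibility of $(T-\lambda_1(T))\phi_2$ to $\frakp^{r+1}$-divisibility after projecting, and the Deligne--Serre lemma needs an honest nonzero mod-$\frakp$ eigenvector in the lattice for $W$, which you have not produced.

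The paper avoids this entirely by never projecting to $W$. It works inside $\calA^\calO(G,K)$ and considers the set $\Phi$ of integral forms that are congruent to a \emph{nonzero multiple} of $\phi_1$ modulo $\frakp$ and have nonzero projection to $W$; $\phi_2 \in \Phi$ by hypothesis. After enlarging $L$ so that there is an eigenbasis $\psi_1,\dots,\psi_h$ with values in $\calO$, one picks $\phi = c_1\psi_1+\dots+c_m\psi_m \in \Phi$ with $m$ minimal and $\psi_1 \in W$. For any $T_g$, the element $\phi' = (T_g - \lambda_g(\psi_1))\phi$ has strictly smaller support and satisfies $\phi' \equiv (\lambda_g(\phi_1)-\lambda_g(\psi_1))\phi \bmod \frakp$; minimality then forces $\lambda_g(\psi_1) \equiv \lambda_g(\phi_1) \bmod \frakp$, so $\psi_1 \in W$ is the desired eigenform. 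The point is that the ``nonzero projection to $W$'' condition is carried along as a constraint on elements of the full lattice $\calA^\calO$, rather than being imposed by passing to a sublattice where the congruence information may be lost to $\frakp$-divisibility.
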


\begin{proof} 
Enlarge $L$ if necessary to assume that $\calA^\calO(G,K)$
contains a basis of eigenforms $\psi_1, \dots, \psi_h$.  
Let $\Phi$ denote the collection of $\phi \in \calA^\calO(G,K)$ such
that $\phi$ is congruent to a nonzero multiple of $\phi_1 \bmod \frakp$
and $\phi$ has nonzero projection to $W$.  The hypothesis on $\phi_2$ means
$\Phi \ne \emptyset$.
Let $m$ be minimal
such that, after a possible reordering of $\psi_1, \dots, \psi_h$,
there exists $\phi = c_1 \psi_1 + \dots +c_m \psi_m \in \Phi$ with each $c_i \in L^\times$
and $\psi_1 \in W$.   
Take such a $\phi$.  

Fix any Hecke operator $T_{g}$, and put $\phi' = [T_{g} - \lambda_{g}(\psi_1)]\phi$.
Then note that
\[ \phi' \equiv (\lambda_{g}(\phi_1)
- \lambda_{g}(\psi_1)) \phi \mod \frakp. \]
Hence $\phi' \in \Phi$ unless 
$\lambda_{g}(\psi_1) \equiv \lambda_{g}(\phi_1) \bmod \frakp$.
But $\phi'$ is of the form $c_2' \psi_2 + \dots + c_m' \psi_m$ for some $c_i' \in L^\times$.
Thus $\phi' \not \in \Phi$ by minimality of $m$.  Consequently,
$\lambda_{g}(\psi_1) \equiv \lambda_{g}(\phi_1) \bmod \frakp$ for all $g$, and
we may take $\psi_1$ for our desired $\phi$.
\end{proof}

\begin{rem} \label{rem:depth}
Let $\frakp$ be the prime above $p$ in a sufficiently large
extension of $\Q_p$, with ramification index $e$.  The work
\cite{BKK} considers the notion of \emph{depth} of congruences,
which is $\frac 1e$ times the number of Hecke eigensystems satisfying a 
congruence mod $\frakp$ counted with multiplicity 
(a congruence mod $\frakp^r$ means multiplicity $r$).  Combining this
theorem with Proposition 4.3 of \emph{op.\ cit.}\ gives a lower bound on the depth of
congruences of $v_p(m(K))$.
\end{rem}

We can also guarantee the existence of such a $\phi$ with trivial central
character.

\begin{cor} \label{gen-cor}
Set $\bar G = G/Z$.
Suppose that $\bar G(k)=G(k)/Z(k)$ holds for any field $k$ of characteristic zero,
 and $p | \frac{m(K)}{m(K_Z)}$.  Then there exists an 
eigenform $\phi \in \calA_0(G, K; 1)$ which is Hecke congruent to $\one
\bmod p$.
\end{cor}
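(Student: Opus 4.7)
The plan is to reduce directly to Proposition \ref{gen-prop} applied to the quotient group $\bar G = G/Z$. Let $\bar K$ denote the image of $K$ in $\bar G(\A)$. Under the standing hypothesis $\bar G(k) = G(k)/Z(k)$ applied at each place and at $F$, we have $\bar G(F_v) = G(F_v)/Z(F_v)$ and $\bar G(F) = G(F)/Z(F)$, so $\bar K_v = K_v/K_{Z,v}$ is a compact open subgroup of $\bar G(F_v)$, and $\bar K = \prod \bar K_v$ is compact open in $\bar G(\A)$. Since $G_\infty$ is compact, so is $\bar G_\infty$, and $\bar K_v = \bar G_v$ for $v \mid \infty$. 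For $v \not\in S$, the image $\bar K_v$ of a hyperspecial maximal compact is a hyperspecial maximal compact in $\bar G(F_v)$ (possibly after enlarging $S$ to exclude a finite set of bad places where this fails for the quotient group). Thus the pair $(\bar G, \bar K)$ satisfies the hypotheses of Proposition \ref{gen-prop}.

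The second ingredient is the identification of spaces of algebraic modular forms. Functions in $\calA(G, K; 1)$ are precisely functions on $Z(\A) G(F) \bs G(\A) / K$, and by the hypothesis this set is canonically identified with $\bar G(F) \bs \bar G(\A) / \bar K$. This yields a Hecke-equivariant isomorphism
\[ \calA(G, K; 1) \;\cong\; \calA(\bar G, \bar K), \]
sending the trivial eigenform $\one$ on $G$ to the trivial eigenform on $\bar G$, and sending $\calA_0(G, K; 1)$ to $\calA_0(\bar G, \bar K)$. Here the Hecke operators $T_{g_v}$ for $g_v \in G(F_v)$ with $v \not\in S$ correspond to the Hecke operators $T_{\bar g_v}$ for $\bar g_v \in \bar G(F_v)$; on the level of unramified spherical Hecke algebras this is the standard identification for groups with trivial central character.

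Third, I would verify the mass identity $m(\bar K) = m(K)/m(K_Z)$. Choosing compatible Haar measures $dg, dz, d\bar g$ on $G(\A), Z(\A), \bar G(\A)$ via the exact sequence $1 \to Z \to G \to \bar G \to 1$, one has
\[ \vol(\bar G(F) \bs \bar G(\A), d\bar g) = \frac{\vol(G(F)\bs G(\A), dg)}{\vol(Z(F) \bs Z(\A), dz)}, \qquad \vol(\bar K, d\bar g) = \frac{\vol(K, dg)}{\vol(K_Z, dz)}, \]
and dividing gives $m(\bar K) = m(K)/m(K_Z)$.

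Putting these together, the assumption $p \mid m(K)/m(K_Z)$ becomes $p \mid m(\bar K)$, so Proposition \ref{gen-prop} applied to $(\bar G, \bar K)$ produces an eigenform $\bar\phi \in \calA_0(\bar G, \bar K)$ which is Hecke congruent to $\one$ mod $p$. Pulling $\bar\phi$ back through the isomorphism of the second step yields the desired $\phi \in \calA_0(G, K; 1)$. The main technical point is purely bookkeeping: confirming that for $v \not\in S$ the hyperspecial maximal compact $K_v$ descends to a hyperspecial maximal compact in $\bar G(F_v)$ (possibly enlarging $S$ by a finite set) and that the spherical Hecke algebras match under the quotient, so that the Hecke congruence obtained on $\bar G$ is equivalent to a Hecke congruence on $G$ away from $S$.
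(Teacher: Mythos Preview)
Your proposal is correct and follows essentially the same approach as the paper: identify $\calA_0(G,K;1)$ with $\calA_0(\bar G,\bar K)$ for $\bar K = Z(\A)K/Z(\A)$, observe $m(\bar K)=m(K)/m(K_Z)$, and apply Proposition~\ref{gen-prop} to $(\bar G,\bar K)$. The paper's proof is terse and simply asserts these three points; you have spelled out the bookkeeping (compactness at infinity, descent of hyperspecial compacts, the Haar-measure computation for the mass identity) in more detail than the paper does.
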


\begin{proof} Let $\bar K = Z(\A) K / Z(\A)$.  Then
$\calA_0(G, K; 1)$ may be identified with $\calA_0(\bar G, \bar K)$.
Now note that $m(\bar K) = \frac{m(K)}{m(K_Z)}$, and apply the proposition
to $\calA(\bar G, \bar K)$.
\end{proof}
The assumption for $\bar G$ in Corollary \ref{gen-cor} is satisfied when $G$ is a unitary group of odd degree. 

Ideally, we would like to be able to say when there is such a $\phi$
which is non-abelian, or more generally non-endoscopic.  This topic is the focus
of the remainder of the paper.


\section{Local congruences} \label{sec:local}


\subsection{Algebraic groups, $L$-parameters and Satake parameters}

We recall well-known facts and results for $L$-parameters of discrete series of real groups, and Stake transforms and parameters of spherical representations of $p$-adic groups.
For details of $L$-parameters over the real field, we refer to \cite{Borel}, \cite[Section 7]{Kottwitz}, \cite{lansky-pollack}, and \cite{CR}.
For details of spherical representations over $p$-adic fields, we refer to \cite[Chapter II]{Borel}, \cite[Chapter III]{Cartier}, \cite{Gross}, \cite{Minguez} and \cite{Satake}.


\subsubsection{Some quasi-split reductive groups}\label{sec:reductive}

Let $F$ be a field of characteristic zero.
Assume that $E$ is either $F\times F$ or a quadratic extension of $F$.
When $E$ is a quadratic extension of $F$, we write $\sigma$ for the non-trivial element in $\mathrm{Gal}(E/F)$.
When $E= F\times F$, we define $\sigma(x,y)=(y,x)$ for $x,y\in F$.
Set $\overline{z}=\sigma(z)$ $(z\in E)$ and let $\Phi_n$ denote an $n\times n$ matrix with alternating $\pm 1$'s on the anti-diagonal and zeros elsewhere. 
A quasi-split unitary group $\UU(n)=\UU_{E/F}(n)$ of degree $n$ over $F$ is defined by
\begin{equation}\label{eq:unitarygroup}
\UU(n)=\UU_{E/F}(n):= \{  g\in \mathrm{Res}_{E/F}\GL(n) \mid g \Phi_n {}^t\!\overline{g}=\Phi_n \}.  
\end{equation}
Note that $\UU(n,E) \simeq \GL(n,E)$.
If $E=F\times F$, then $\UU(n)$ is isomorphic to $\GL(n)$ over $F$.

A split  symplectic similitude group $\GSp(4)$ over $F$ is defined by
\[
\GSp(4):=\{ g\in\GL(4) \mid \exists \mu(g)\in\mathbb{G}_m \;\; {\rm s.t.} \;\;  gJ {}^t\!g=\mu(g)J \} ,\qquad J:=\begin{pmatrix} O_2& -I_2 \\ I_2&O_2 \end{pmatrix}. 
\]

An exceptional split simple algebraic group $G_2$ over $F$ is defined by an automorphism group of a split octonion over $F$, see \cite{SV}. 
It is known that $G_2$ is realized as a closed subgroup in a split special orthogonal group $\SO(Q)$, where $Q$ is a symmetric matrix of degree seven.
We give such a realization which is a slight modification of that in \cite[Section 4.2]{CNP}.
Let $Q:=E_{71}+E_{62}+E_{53}+2E_{44}+E_{35}+E_{26}+E_{17}$ where $E_{ij}$ denotes the matrix unit of $(i,j)$ in $M_7$.  Consider the vector space 
$\mathfrak{t}$ (a Cartan subalgebra) given by
\[
\mathfrak{t}:=\langle H_1:=\diag(0,-1,1,0,-1,1,0), \quad  H_2:=\diag(-1,-1,0,0,0,1,1) \rangle .
\]
We may assume that the fundamental roots $\alpha$ and $\beta$ satisfy $\alpha(H_1)=1$, $\alpha(H_2)=0$, $\beta(H_1)=-2$, $\beta(H_2)=-1$ ($\alpha$ is short, $\beta$ is long).
Then $\mathrm{Lie}(G_2)$ is generated by $H_1$, $H_2$, $X_\alpha:=E_{12}-E_{34}+2E_{45}-E_{67}$, $X_\beta:=-E_{23}+E_{56}$, and $X_{-3\alpha-2\beta}:=E_{61}-E_{71}$.
In particular, these give a Chevalley basis that we can use to construct $G_2$ as a closed subgroup of $\SO(Q)$.
Let $T$ denote the split torus of $G_2$ over $F$ such that $\mathfrak{t}=\mathrm{Lie}(T)$.
We choose an isomorphism $T\cong \mathbb{G}_m\times\mathbb{G}_m$ over $F$ 
corresponding to basis elements $H_1$ and $H_2$, namely $\alpha(a,b)=a$, $\beta(a,b)=a^{-2}b^{-1}$, where $(a,b)$ is identified with $\diag(b^{-1},a^{-1}b^{-1},a,1,a^{-1},ab,b)$.

\subsubsection{Discrete series of $\UU_{\C/\R}(n,\R)$}\label{sec:20190531}

The group $W_\R$ is given by $W_\R=\C^\times\sqcup \C^\times j$, $j^2=-1$, $zj=j\overline{z}$ $(z\in\C^\times)$.
Let $\psi:W_\R\to \GL(n,\C)\rtimes W_\R$ denote a tempered discrete relevant $L$-parameter of $\UU_{\C/\R}(n,\R)$ which is of the form
\begin{equation}\label{eq:LDS}
\psi(z)=\diag(  (z/\overline{z})^{l_1+\frac{n+1}{2}-1}, (z/\overline{z})^{l_2+\frac{n+1}{2}-2},\dots,   (z/\overline{z})^{l_n+\frac{n+1}{2}-n} )\rtimes z  ,\quad \psi(j)=\Phi_n\rtimes j 
\end{equation}
where $l_1,\dots,l_n\in\Z$ and $l_1\geq l_2\geq \cdots\geq l_n$.
Indeed, $\psi$ corresponds to a $L$-packet of discrete series of $\UU_{\C/\R}(n,\R)$.
When $l_1+\cdots+l_n=0$, the image of $\psi$ is contained in $\SL(n,\C)$, namely the central character of the corresponding discrete series is trivial.
In particular, if $(l_1,\dots,l_n)=(0,\dots,0)$, then the $L$-parameter $\psi$ corresponds to the $L$-packet of discrete series of $\UU_{\C/\R}(n,\R)$ with the same infinitesimal character as that of the trivial representation.

\subsubsection{Discrete series of $\GL(2,\R)$}

For each $l\in\Z$, we write $\psi_l:W_\R\to \GL(2,\C)$ for a tempered discrete relevant $L$-parameter of $\GL(2,\R)$ which is of the form
\begin{equation}\label{eq:gl2r}
\psi_l(z)=\begin{pmatrix}  (z/\overline{z})^{l/2} & 0 \\  0 & (z/\overline{z})^{-l/2} \end{pmatrix}  ,\quad \psi_l(j)=\begin{pmatrix} 0& (-1)^l \\ 1 & 0 \end{pmatrix}.
\end{equation}
Note that $\psi_l\cong\psi_{-l}$ up to conjugation.
If $l\neq 0$, then $\psi$ corresponds to the discrete series of $\GL(2,\R)$ including $[e^{i\theta}\mapsto e^{i(l+1)\theta}]$ as a minimal $\SO(2)$-type.

\subsubsection{Discrete series of $\GSp(4,\R)$}

Choose an element $(a,b)\in \Z^{\oplus 2}$ such that
\[
a\equiv b \mod 2.
\]
We denote by $\psi_{a,b}:W_\R\to \GSp(4,\C)$ a tempered discrete relevant $L$-parameter of $\GSp(4,\R)$ which is of the form
\begin{equation}\label{eq:gsp4r}
\psi_{a,b}(z)=\begin{pmatrix}  (z/\overline{z})^{a/2} & 0 & 0 & 0 \\  0 & (z/\overline{z})^{b/2} & 0 & 0 \\  0 & 0 & (z/\overline{z})^{-a/2} & 0  \\  0 & 0 & 0 & (z/\overline{z})^{-b/2} \end{pmatrix}  ,\;\; \psi_{a,b}(j)=\begin{pmatrix} 0 & 0& (-1)^a & 0 \\ 0 & 0 & 0 &  (-1)^b \\ 1 & 0& 0& 0 \\ 0 & 1& 0& 0 \end{pmatrix}.
\end{equation}
Note that $\psi_{a,b}\cong\psi_{b,a}$ and $\psi_{a,b}\cong\psi_{\pm a, \pm b}$ up to conjugation.
If $a>b>0$, then the $L$-parameter $\psi_{a,b}$ corresponds to a holomorphic discrete series of $\GSp(4,\R)$ with minimal $\UU_{\C/\R}(2,\R)$-type $\det^{\frac{a-b}{2}+2}\otimes \mathrm{Sym}_{b-1}$.
In particular, $\psi_{3,1}$ corresponds to the trivial representation of the compact real form of $\GSp(4)$.
The elliptic endoscopic group $(\GL(2)\times\GL(2))/\{(x,x^{-1})\mid x\in\mathbb{G}_m\}$ comes from the centralizer of $\diag(1,-1,1,-1)$ in $\GSp(4,\C)$.

\subsubsection{Discrete series of $G_2(\R)$}\label{sec:g2str}

Recall the notations $Q$, $\alpha$, $\beta$, and $T$ for $G_2$ 
given in \cref{sec:reductive}.
Choose an element $(a,b)\in (2 \Z)^{\oplus 2}$.
We denote by $\tilde \psi_{a,b}:W_\R\to G_2(\C)$ a tempered discrete relevant $L$-parameter of $\GSp(4,\R)$ which is of the form
\[
\tilde \psi_{a,b}(z)=(  (z/\overline{z})^{a/2} , (z/\overline{z})^{b/2} )\in T(\C)\subset G_2(\C) \subset \SO(Q,\C)
\]
and $\tilde\psi_{a,b}(j)=Q-E_{44}$.
Note that we have $\tilde\psi_{a,b}\cong\tilde\psi_{-a,a+b}$ and $\tilde\psi_{a,b}\cong\tilde\psi_{-a-b, b}$ up to conjugation.
When $a>b>0$, the $L$-parameter $\tilde\psi_{a,b}$ corresponds to an $L$-packet of a discrete series for $G_2(\R)$. 
In particular, $\tilde\psi_{4,2}$ corresponds to the trivial representation of the compact real form of $G_2$, and also corresponds to the quaternionic discrete series of $G_2(\R)$ whose minimal $(\SU(2)\times\SU(2))/\{\pm 1\}$-type is $\mathrm{Sym}_4\otimes 1$.
The elliptic endoscopic group $\SO(2,2)$ of $G_2$ arises from the centralizer of $(1,-1)$ in $G_2(\C)$.
For odd integers $c$ and $d$, the $L$-embedding of $\psi_c\oplus\psi_d$ for $\SO(4,\C)=(\SL(2,\C)\times\SL(2,\C))/\{\pm 1\}$ is $\tilde\psi_{c+d,c-d}(\cong \tilde\psi_{d-c,2c} \cong \tilde\psi_{2c,d-c}\cong\cdots)$.

\subsubsection{Satake parameters}

Now we review Satake transforms and parameters of spherical representations.
Let $F$ be a nonarchimedean local field of characteristic 0, and $G$ a connected reductive group over $F$.
For simplicity, we write $G$ for the group $G(F)$ of $F$-rational points in $G$.
Suppose that $G$ is unramified.
Then $G$ has a hyperspecial maximal compact subgroup $K$ and a Borel subgroup $B$ containing a maximal $F$-torus $T$.  We have an Iwasawa decomposition $G=BK$ and a Cartan decomposition $G=KTK$.
Also, $T\cap K$ is the maximal compact subgroup of $T$.
Denote by $X^\un(T)$ the group of unramified characters of $T$, i.e., characters 
trivial on $T\cap K$.
We have $T/(T\cap K) \cong \Z^n$, where $n$ is the rank of $G$.
The Weyl group $W:=N_G(T)/T$ acts on $X^\un(T)$ via $(w\chi)(t)=\chi(w^{-1}tw)$ for $t\in T$, $w\in W$.  As usual, $N_G(T)$ denotes the normalizer of $T$ in $G$.

We write $\cH(G,K)$ for the spherical Hecke algebra of $G$ and $K$, that is,
\[
\cH(G,K):=\{  f\in C_c^\inf(G) \mid f(h x h')=f(x) \quad \text{for } x\in G , \;\; h,h'\in K   \}.
\]
The convolution $(f_1*f_2)(x):=\int_G f_1(xy^{-1}) f_2(y) \, \d y$ defines a product on $\cH(G,K)$, where $\d y$ is the Haar measure on $G$ normalized by $\int_K \d y=1$.
The Satake transform $\scS:\cH(G,K)\to \cH(T,T\cap K)^W$ is a $\C$-algebra isomorphism defined by
\[
\scS f(t):=\delta_B(t)^{1/2}\int_N f(t u)\, \d u = \delta_B(t)^{-1/2}\int_N f(ut)\, \d u  \qquad (f\in\cH(G,K))
\]
where $\delta_B$ is the modulus function of $B$, $N$ is the unipotent radical of $B$, and $\d u$ denotes the Haar measure on $N$ satisfying $\int_{N\cap K}\d u=1$.
Let  $\omega_\chi:\cH(G,K)\to \C$ be the linear map defined by
\[
\omega_\chi(f):=\int_T \scS f(t) \, \chi(t) \, \d t \qquad (f\in\cH(G,K))
\]
where $\d t$ is a Haar measure on $T$ such that $\int_{T\cap K}\d t=1$.
It is known that, for any nonzero $\C$-algebra homomorphism $\lambda:\cH(G,K)\to \C$, there is a unique character $\chi\in X^\un(T)/W$ such that $\lambda=\omega_\chi$. 
We denote by $\mathrm{Irr}^K(G)$ the set of equivalence classes of $K$-spherical irreducible representations of $G$.
Each $\pi\in \mathrm{Irr}^K(G)$ defines a $\C$-algebra homomorphism $\lambda_\pi :\cH(G,K)\to \C$ by $\Tr \, \pi(f)$, so we may associate a character $\chi\in X^\un(T)/W$
to $\pi$ via $\omega_\chi=\lambda_\pi$.
This gives a bijection from $\mathrm{Irr}^K(G)$ to $X^\un(T)/W$.

There is a maximal $F$-split torus $A$ in $T$.
The inclusion mapping $A\to T$ induces isomorphisms $A/(A\cap K) \cong T/(T\cap K)$ and $X^\un(A)\cong X^\un(T)$.
Choose a basis $(a_1,a_2,\dots,a_n)$ in $A/A\cap K$.
For each $\chi\in X^\un(A)$, one has an $n$-tuple
\[
\alpha=(\chi(a_1),\chi(a_2),\dots,\chi(a_n)) .
\]
Write $\omega_\alpha = \omega_\chi$.
The $n$-tuple $\alpha$ determines a unique spherical representation 
corresponding to $\omega_\alpha$ as above, and $\alpha$ is called the 
Satake parameter.  By abuse of notation, we also denote the spherical
representation by $\omega_\alpha$.
Note that in the case of $\alpha_0=(\delta_B(a_1)^{1/2},\dots,\delta_B(a_n)^{1/2})$,  
$\omega_{\alpha_0}$ corresponds to the trivial representation, that is,
\begin{equation}\label{eq:trivial}
\omega_{\alpha_0}(f)=\int_G f(g) \, \d g.
\end{equation}

Throughout this section, to consider mod $p$ congruences between Hecke eigenvalues we always assume that, for each double coset $KaK$ $(a\in A)$, the Hecke eigenvalue $\omega_\alpha(f_{KaK})$ is an algebraic integer for the characteristic functions $f_{KaK}$ of $KaK$.
Here, mod $p$ is used in the same sense as above (mod $\frakp$ for a suitable 
prime $\frakp$ above $p$).


\subsection{Local congruence lifts} \label{sec:32}

Now we consider local lifts of Eisenstein congruences.
Let $F$ denote a nonarchimedean field of characteristic 0, and $|\;|$ the normalized valuation of $F$.
The integer ring of $F$ is given by $\fO:=\{ x\in F\mid |x|\leq 1\}$.
We choose a prime element $\varpi$ of $\fO$ and put $q:=|\fO/\varpi\fO|$, that is, $|\varpi|=q^{-1}$.

\subsubsection{Symmetric functions}

We shall review some symmetric function theory.
For details, we refer to \cite{Macdonald}.

A partition means any sequence $\lambda=(\lambda_1,\lambda_2,\dots)$ such that $\lambda_j\in\Z_{\geq 0}$, $\lambda_j\geq \lambda_{j+1}$, and $l(\lambda):=|\{ j \in \Z_{>0} \mid \lambda_j>0\}|$ is finite.
We call $l(\lambda)$ the length of $\lambda$.
We write $\lambda \geq \mu$ if $\lambda_1+\cdots+\lambda_j\geq \mu_1+\cdots+\mu_j$ holds for all $j$, where $\lambda=(\lambda_1,\lambda_2,\dots)$ and $\mu=(\mu_1,\mu_2,\dots)$.
This relation is a partial ordering on partitions.
For each partition $\lambda$, we denote by $\lambda^c=(\lambda_1^c,\lambda_2^c,\dots)$ the conjugate of $\lambda$, i.e., $\lambda_j^c=|\{ k\in\Z_{>0} \mid \lambda_k\geq j\}|$.
Set $|\lambda|:=\lambda_1+\lambda_2+\cdots$.
Notice that $\lambda\geq \mu$ does not hold if $|\lambda|<|\mu|$.
If a partition $\lambda$ satisfies $l(\lambda)\leq n$, then we write $\lambda=(\lambda_1,\dots,\lambda_n)$ as a $n$-tuple.

Let $S_n$ denote the symmetric group of degree $n$.
We write $m_\lambda$ for the monomial symmetric function of $\lambda$, i.e.,
\[
m_\lambda(x_1,\dots,x_n) := \sum_{w\in S_n/S_n^\lambda }x_{w(1)}^{\lambda_1}\cdots x_{w(n)}^{\lambda_n}  , 
\]
where $S_n^\lambda$ denotes the stabilizer of $\lambda$ in $S_n$. 
We also write $e_r$ for the $r$-th elementary symmetric function, i.e.,
\[
e_r(x_1,\dots,x_n):=\sum_{1\leq m_1<m_2<\cdots<m_r\leq n} x_{m_1} x_{m_2}\cdots x_{m_r}.
\]
Obviously one has the relation $e_r=m_{(1^r,0^{n-r})}$. 

Let $\Lambda_n:=\Z[x_1,\dots,x_n]^{S_n}$.
It is obvious that $m_\lambda$ (for $l(\lambda)\leq n)$ form a $\Z$-basis of $\Lambda_n$.
By \cite[(2.3) on p.20]{Macdonald}, for a partition $\lambda=(\lambda_1,\dots,\lambda_n)$, there exist non-negative integers $a_{\lambda\mu}$ such that
\begin{equation}\label{eq:elemono}
e_{\lambda_1^c}\cdots e_{\lambda_r^c} = m_\lambda + \sum_{\mu<\lambda} a_{\lambda \mu} m_\mu
\end{equation}
where $r=\lambda_1$ and $|\mu|=|\lambda|$.
Hence, we also have $\Lambda_n=\Z[e_1,\dots,e_n]$.

\subsubsection{$\GL(n)$}\label{sec:GL(n)}

Set
\[
G:=\GL(n,F), \quad G^+:=G\cap M(n,\fO), \quad \text{and} \quad  K:=\GL(n,\fO).
\]
Let $T=T_n$ denote the usual maximal split torus in $G$ consiting of diagonal matrices, and $B=TN$ the standard Borel subgroup consisting of upper triangular matrices with
unipotent radical $N$.
We denote by $X_j$ the characteristic function of $\{ \diag(x_1,\dots,x_n) \in T \mid x_k\in\fO^\times \; (\forall k\neq j) $, $x_j\in\varpi\fO^\times  \}$.
For $u \in \Z$, $X_j^u$ denotes the characteristic function of $\{ \diag(x_1,\dots,x_n) \in T \mid x_k\in\fO^\times \; (\forall k\neq j) $, $x_j\in\varpi^u\fO^\times  \}$.
The Satake transform gives an isomorphism between $\cH(G^+,K)$ and $\C[X_1,\dots, X_n]^{S_n}$.

For each partition $\lambda$ with $l(\lambda)\leq n$, we set
\[
\varpi^\lambda:=\diag(\varpi^{\lambda_1},\varpi^{\lambda_2},\dots,\varpi^{\lambda_n}).
\]
The set $\{ \varpi^\lambda \mid \lambda$ is a partition with $l(\lambda)\leq n\}$ is a system of representative elements of $K\bsl G^+/K$.
Let $c_\lambda$ denote the characteristic function of $K\varpi^\lambda K$.
The functions $c_\lambda$ $(l(\lambda) \leq n)$ form a basis of $\cH(G^+,K)$ as a vector space over $\C$.
We set
\begin{equation}\label{eq:rhotrivial}
\rho=\rho_n:=\frac{1}{2}(n-1,n-3,\dots,1-n) ,
\end{equation}
\[
\langle (t_1,\dots,t_n),(t_1',\dots,t_n')\rangle:=t_1t_1'+\cdots+t_n t_n'.
\]
We write $\widehat{c}_\lambda$ for the Satake transform $\scS(c_\lambda)$ of $c_\lambda$, that is, $\widehat{c}_\lambda(X)$ is in $\C[X]^{S_n}$ where $X=(X_1,X_2,\dots,X_n)$.
It is known that $\widehat{c}_\lambda$ is described by the Hall--Littlewood symmetric function $P_\lambda(x_1,\dots,x_n;t)$, namely $\widehat{c}_\lambda(X)=q^{\langle\lambda,\rho\rangle}P_\lambda(X;q^{-1})$ (see \cite[(3.3) on p.299]{Macdonald}).
The following fact is well known.
\begin{lem}\label{lem:subdeterminant}
For any partition $\lambda=(\lambda_1,\dots,\lambda_n)$ and any $g\in G$, the element $g$ belongs to $K\varpi^\lambda K$ if and only if $d_r(g)=q^{-\lambda_n-\lambda_{n-1}-\cdots-\lambda_{n-r+1}}$ for all $1\leq r\leq n$, where $d_r(g)$ denotes the maximum of the valuations of all $r\times r$ minors of $g$.
\end{lem}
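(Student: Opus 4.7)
The plan is to recognize this statement as the local elementary divisor theorem phrased via bi-$K$-invariants, and to split the proof into three short steps: bi-$K$-invariance of $d_r$, an explicit evaluation of $d_r(\varpi^\lambda)$, and a deduction using the Cartan decomposition. The forward direction will be immediate from the first two steps, while the converse uses uniqueness in the Cartan decomposition.

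First I would check that $d_r$ is bi-$K$-invariant. By the Cauchy--Binet formula, every $r\times r$ minor of $k_1 g k_2$ is an $\fO$-linear combination of $r\times r$ minors of $g$ (with coefficients given by minors of $k_1$ and $k_2$, which lie in $\fO$), so $d_r(k_1 g k_2) \le d_r(g)$; applying the same fact to $k_1^{-1}$ and $k_2^{-1}$ gives the reverse inequality, hence equality. Next I would compute $d_r(\varpi^\lambda)$ directly: since $\varpi^\lambda$ is diagonal, its only nonzero $r\times r$ minors are the principal ones, indexed by $r$-subsets $I\subset\{1,\dots,n\}$, with values $\varpi^{\sum_{j\in I}\lambda_j}$. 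Maximizing the absolute value amounts to minimizing $\sum_{j\in I}\lambda_j$, and since $\lambda_1\ge\cdots\ge\lambda_n$ the minimum is attained by $I=\{n-r+1,\dots,n\}$, giving $d_r(\varpi^\lambda) = q^{-(\lambda_n+\lambda_{n-1}+\cdots+\lambda_{n-r+1})}$. Combining the two steps proves the ``only if'' direction.

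For the converse, I would invoke the Cartan decomposition $G^+ = \coprod_\mu K\varpi^\mu K$ over partitions $\mu$ with $l(\mu)\le n$, noting that the hypothesis $d_1(g)\le 1$ already forces $g\in G^+$ (it says every entry of $g$ lies in $\fO$). If $g\in K\varpi^\mu K$, then the previous two steps give $d_r(g) = q^{-(\mu_n+\cdots+\mu_{n-r+1})}$ for every $r$; equating with the prescribed values and taking successive differences yields $\mu_j=\lambda_j$ for all $j$, so $\mu=\lambda$ and $g\in K\varpi^\lambda K$. There is no real obstacle here, as the essential content is Smith normal form over the DVR $\fO$; the only combinatorial point to keep in mind is that the $r$-subset minimizing $\sum_{j\in I}\lambda_j$ is precisely $\{n-r+1,\dots,n\}$, which uses the monotonicity of $\lambda$.
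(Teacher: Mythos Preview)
Your argument is correct and is the standard one: bi-$K$-invariance of $d_r$ via Cauchy--Binet, direct evaluation on the diagonal representative, and recovery of $\lambda$ from the partial sums using the Cartan decomposition. The paper does not actually supply a proof of this lemma; it simply records it as a well-known fact and moves on, so there is nothing to compare against.
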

This lemma leads to the equality
\begin{equation}\label{eq:maxorder201921}
\int_N  c_\lambda( u\varpi^\mu    ) \, \d u=\begin{cases} 0 & \text{if $|\mu|\neq |\lambda|$, or  if $\mu \not \leq \lambda$ and $|\mu|=|\lambda|$,} \\  1 & \text{if $\mu = \lambda$.} \end{cases}
\end{equation}
\begin{lem}\label{lem:subringGL}
For any partition $\lambda$, $l(\lambda)\leq n$, the symmetric function $\widehat{c}_\lambda$ belongs to the subalgebra $\Z[Z_1,\dots,Z_n]$, where $Z_r:=\widehat{c}_{(1^r,0^{n-r})}=q^{\frac{(n-r)r}{2}} e_r$ $(1\leq r\leq n)$.
\end{lem}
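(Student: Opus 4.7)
The plan is to prove the lemma by induction on the dominance partial order on partitions (with $|\lambda|$ serving as a compatible grading, so the base case is $\lambda=(0,\dots,0)$, where $\widehat{c}_\lambda=1\in\Z[Z_1,\dots,Z_n]$). Since the Satake transform is a $\C$-algebra isomorphism, the inductive step reduces to a combinatorial identity on symmetric functions. Set $r:=\lambda_1$ and consider
\[
Y_\lambda \;:=\; Z_{\lambda_1^c}Z_{\lambda_2^c}\cdots Z_{\lambda_r^c} \;=\; q^{\sum_j \lambda_j^c(n-\lambda_j^c)/2}\, e_{\lambda_1^c}\cdots e_{\lambda_r^c} \;\in\; \Z[Z_1,\dots,Z_n].
\]
The first step is the bookkeeping identity
\[
\sum_{j=1}^{r} \frac{\lambda_j^c(n-\lambda_j^c)}{2} \;=\; \langle \lambda, \rho\rangle,
\]
which follows from the standard relation $\sum_j(\lambda_j^c)^2 = \sum_l (2l-1)\lambda_l$ and guarantees that $Y_\lambda$ and $\widehat{c}_\lambda = q^{\langle\lambda,\rho\rangle}P_\lambda(X;q^{-1})$ share the same leading $q$-exponent.

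Next I combine \eqref{eq:elemono} with the classical triangular expansion $P_\lambda(X;t)=m_\lambda+\sum_{\mu<\lambda,\,|\mu|=|\lambda|}u_{\lambda\mu}(t)\,m_\mu$, $u_{\lambda\mu}(t)\in\Z[t]$ (see \cite[Ch.~III]{Macdonald}), to obtain
\[
Y_\lambda - \widehat{c}_\lambda \;=\; \sum_{\mu<\lambda,\;|\mu|=|\lambda|} \bigl(a_{\lambda\mu}-u_{\lambda\mu}(q^{-1})\bigr)\, q^{\langle\lambda,\rho\rangle}\, m_\mu,
\]
where the leading term $q^{\langle\lambda,\rho\rangle}m_\lambda$ cancels by the identity just verified. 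I then invert the triangular expansion $\widehat{c}_\mu=q^{\langle\mu,\rho\rangle}m_\mu+(\text{strictly lower dominance terms})$ to write each $q^{\langle\lambda,\rho\rangle}m_\mu$ as a combination of $\widehat{c}_\nu$ with $\nu\le\mu$. The crucial arithmetic input here is the inequality $\langle\lambda,\rho\rangle-\langle\nu,\rho\rangle\in\Z_{>0}$ for every $\nu<\lambda$ with $|\nu|=|\lambda|$; this follows by Abel summation from the dominance inequalities $S_i^\nu\le S_i^\lambda$ (together with integrality of $\sum i(\nu_i-\lambda_i)$), and it is precisely what keeps all coefficients in $\Z$ rather than in $\Z[q^{\pm 1/2}]$.

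Putting the pieces together, $Y_\lambda-\widehat{c}_\lambda$ becomes a $\Z$-linear combination of $\widehat{c}_\mu$ with $\mu<\lambda$, $|\mu|=|\lambda|$. By the inductive hypothesis each such $\widehat{c}_\mu$ lies in $\Z[Z_1,\dots,Z_n]$, so $\widehat{c}_\lambda=Y_\lambda-\sum(\cdots)\widehat{c}_\mu$ does too, completing the induction.

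The main obstacle is the careful bookkeeping of $q$-exponents: since $\rho$ has half-integer entries when $n$ is even, both $\widehat{c}_\lambda$ and the $Z_r$ a priori involve half-integer powers of $q$, and the argument succeeds only because all the differences $\langle\lambda,\rho\rangle-\langle\mu,\rho\rangle$ that arise are genuine positive integers. The auxiliary identity $\sum_j \lambda_j^c(n-\lambda_j^c)/2=\langle\lambda,\rho\rangle$ is what ensures the leading $q$-powers match so that the dominant term cancels cleanly; verifying this and tracking that every resulting coefficient remains in $\Z$ is the delicate point of the argument.
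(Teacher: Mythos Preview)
Your overall strategy---form $Y_\lambda=Z_{\lambda_1^c}\cdots Z_{\lambda_r^c}$, match the leading term via the identity $\sum_j \lambda_j^c(n-\lambda_j^c)/2=\langle\lambda,\rho\rangle$, and induct on dominance---is exactly the paper's approach. The gap is in the step where you assert that the coefficients of $Y_\lambda-\widehat c_\lambda$ in the $\widehat c_\nu$-basis lie in $\Z$. From the Hall--Littlewood expansion you cite, the coefficient of $m_\mu$ in $\widehat c_\lambda$ is $u_{\lambda\mu}(q^{-1})\,q^{\langle\lambda,\rho\rangle}$, so after rewriting in terms of $q^{\langle\mu,\rho\rangle}m_\mu$ this becomes $u_{\lambda\mu}(q^{-1})\,q^{\langle\lambda-\mu,\rho\rangle}$. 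Knowing only that $u_{\lambda\mu}(t)\in\Z[t]$ and that $\langle\lambda-\mu,\rho\rangle\in\Z_{>0}$ does \emph{not} force this to be an integer: you also need the degree bound $\deg_t u_{\lambda\mu}\le \langle\lambda-\mu,\rho\rangle=n(\mu)-n(\lambda)$, and the same issue recurs when you invert the triangular system. Your stated ``crucial arithmetic input'' only rules out genuine half-integer powers of $q$; it does nothing to control the negative powers of $q$ coming from $u_{\lambda\mu}(q^{-1})$.

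The paper closes this gap not via Hall--Littlewood combinatorics but via \eqref{eq:maxorder201921}: the Satake transform of $c_\lambda$ at $\varpi^\mu$ is $q^{\langle\mu,\rho\rangle}$ times the volume $\int_N c_\lambda(u\varpi^\mu)\,du$, which is a nonnegative integer. This gives directly $\widehat c_\lambda=q^{\langle\lambda,\rho\rangle}m_\lambda+\sum_{\mu<\lambda}a_{1,\lambda\mu}\,q^{\langle\mu,\rho\rangle}m_\mu$ with $a_{1,\lambda\mu}\in\Z_{\ge 0}$, so the transition between $\{\widehat c_\lambda\}$ and $\{q^{\langle\mu,\rho\rangle}m_\mu\}$ is unitriangular over $\Z$ on the nose. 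With that in hand, your argument (equivalently the paper's) goes through: $Y_\lambda-\widehat c_\lambda$ is then a $\Z$-combination of $q^{\langle\mu,\rho\rangle}m_\mu$ for $\mu<\lambda$ (here you genuinely use $\langle\lambda-\mu,\rho\rangle\in\Z_{>0}$), hence of $\widehat c_\mu$ for $\mu<\lambda$, and induction finishes. So the fix is to replace the bare citation of $u_{\lambda\mu}\in\Z[t]$ by the integrality coming from \eqref{eq:maxorder201921}.
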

\begin{proof}
It follows from \eqref{eq:maxorder201921} and the symmetry of $\widehat{c}_\lambda$ that, for some $a_{1,\lambda \mu} \in \Z_{\ge 0}$, we have
\[
\widehat{c}_\lambda=q^{\langle\lambda,\rho\rangle}m_\lambda + \sum_{\mu<\lambda} a_{1,\lambda \mu} \,  q^{\langle\mu,\rho\rangle}m_\mu
\]
where $|\mu|=|\lambda|$.
Note that  \cref{lem:subdeterminant} ensures that, if $\mu>\lambda$, then such $m_\mu$ does not appear in the above equality.
We have $\langle\lambda-\mu,\rho\rangle\in\Z_{>0}$ for any $\mu$ satisfying $\mu<\lambda$
 and $|\mu|=|\lambda|$, and $\langle\lambda,\rho\rangle=\sum_{j=1}^{\lambda_1}\langle(1^{\lambda_j^c},0^{n-\lambda_j^c}),\rho\rangle$.
Hence, this assertion follows from \eqref{eq:elemono}.
\end{proof}

Take the basis of $T/(T\cap K)$ given by
\[
\diag(\varpi,1,\dots,1),\;\; \diag(1,\varpi,\dots,1), \; \dots,  \; \diag(1,1,\dots,\varpi).
\]
As above, write $\omega_\alpha$ for the $K$-spherical representation of $G$ with the Satake parameter $\alpha=(\alpha_1,\alpha_2,\dots,\alpha_n)$.
Then, one has
\[
\widehat{c}_\lambda(\alpha)=\omega_\alpha(c_\lambda).
\]
Set
\[
\mathbb{L}=\mathbb{L}_n:=\rho+\{  (l_1,l_2,\dots,l_n)\in\Z^{\oplus n} \mid l_1\geq l_2 \geq \cdots \geq l_n \}.
\]
Choose an $n$-tuple $k=(k_1,k_2,\dots,k_n)\in \mathbb{L}$.
Here $k$ corresponds to the $L$-parameter $\C^\times\ni z\mapsto \diag( (z/\overline{z})^{k_1},(z/\overline{z})^{k_2},\dots,(z/\overline{z})^{k_n}) \in \GL(n,\C)$ for the discrete series of $\UU_{\C/\R}(n,\R)$.
We set
\[
q^k:=(q^{k_1},q^{k_2},\dots,q^{k_n}).
\]
Since such an $L$-parameter is associated to the local component of a global
representation induced from the Borel, we may think of $\widehat c_\lambda(q^k)$ 
as a local Eisenstein Hecke eigenvalue.
One can show $\widehat c_\lambda(q^k)$ is in $\Z[q^{-1}]$ using  \cref{lem:subringGL}.
Since $\omega_{q^\rho}$ is the trivial representation, one gets
\begin{equation}\label{eq:congvol}
 \widehat c_\lambda(q^\rho) = \vol(K\varpi^\lambda K) .
\end{equation}
If there exists a prime number $p$ such that $(p,q)=1$ and
\begin{equation}\label{eq:eigln}
\widehat c_\lambda (\alpha) \equiv \widehat c_\lambda(q^k) \mod p
\end{equation}
holds for any partition $\lambda$, then we say that the spherical representation 
$\omega_\alpha$ satisfies the Eisenstein congruence mod $p$ for the $L$-parameter (associated to) $k$.
Here, as in the global case, what we mean by this congruence notation
is that both quantities lie in some ring of integers $\mathcal O$ and they are
congruent modulo a prime ideal of $\mathcal O$ above $p$.
It follows from \eqref{eq:congvol} that $\omega_\alpha$ satisfies the Eisenstein congruence mod $p$ for the $L$-parameter $\rho$ if and only if
\[
\widehat c_\lambda(\alpha) \equiv \vol(K\varpi^\lambda K) \mod p
\]
holds for any partition $\lambda$.

\begin{prop}\label{pr:glcong1}
Let $\omega_{\alpha}$ (resp.\ $\omega_{\alpha'}$) be the spherical representation of $\GL(n,F)$ (resp.\ $\GL(n',F)$) with the Satake parameter $\alpha=(\alpha_1,\dots,\alpha_n)$ (resp.\ $\alpha'=(\alpha_1',\dots,\alpha_{n'}')$).
Assume that $\omega_{\alpha}$ (resp.\ $\omega_{\alpha'}$) satisfies the Eisenstein congruence mod $p$ for $k=(k_1,\dots,k_n)$ (resp.\ $k'=(k_1',\dots,k_{n'}')$).
For $m = n, n'$, choose $u_m \in \Z$, set
\[ 
\nu_m:= \begin{cases} u_m & \text{if $m$ is even,} \\ 1/2+u_m & \text{if $m$ is odd,}
 \end{cases}
\]
and let $\beta_m \in \C^\times$ such that $q^{-\nu_m}\beta_m$ is an
algebraic integer with $q^{-\nu_m}\beta_m\equiv 1 \bmod p$. 
We also set $\alpha \beta_{n'}=(\alpha_1 \beta_{n'},\dots,\alpha_n \beta_{n'})$, $\alpha' \beta_{n}=(\alpha_1' \beta_{n},\dots,\alpha_{n'}' \beta_{n})$, $k+\nu_{n'}=(k_1+\nu_{n'},\dots,k_n+\nu_{n'})$, and $k'+\nu_n=(k_1'+\nu_n,\dots,k_{n'}'+\nu_n)$.
Suppose that $(k+\nu_{n'},k'+\nu_n)$ is in $\mathbb{L}_{n+n'}$ by changing its ordering.
Then the spherical representation $\omega_{(\alpha \beta_{n'},\alpha' \beta_{n})}$ of $\GL(n+n',F)$ satisfies the Eisenstein congruence mod $p$ for $L$-parameter $(k+\nu_{n'},k'+\nu_n)$.
\end{prop}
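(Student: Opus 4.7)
The plan is to exploit \cref{lem:subringGL}, which expresses every $\widehat c_\lambda$ for $\GL(n+n',F)$ as an integer polynomial in the generators $Z_r=\widehat c_{(1^r,0^{n+n'-r})}=q^{(n+n'-r)r/2}e_r$. It therefore suffices to verify the Eisenstein congruence \eqref{eq:eigln} only on the $Z_r$'s, since congruences mod $\frakp$ between algebraic integers are preserved under integer polynomial operations, and both sides of \eqref{eq:eigln} are algebraic integers by the standing assumption.

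First I would expand $e_r$ of a concatenation of variables as $e_r(x,y)=\sum_{i+j=r}e_i(x)e_j(y)$ to obtain
\[
Z_r^{(n+n')}(\alpha\beta_{n'},\alpha'\beta_n)=\sum_{i+j=r}q^{[i(n'-j)+j(n-i)]/2}\,\beta_{n'}^i\beta_n^j\,Z_i^{(n)}(\alpha)\,Z_j^{(n')}(\alpha'),
\]
where the shift in $q$-exponent comes from $(n+n'-r)r/2-(n-i)i/2-(n'-j)j/2=[i(n'-j)+j(n-i)]/2$. The analogous formula with $\alpha\mapsto q^k$, $\alpha'\mapsto q^{k'}$, and $\beta_m\mapsto q^{\nu_m}$ gives $Z_r^{(n+n')}(q^{k+\nu_{n'}},q^{k'+\nu_n})$, using $e_i(q^{k+\nu_{n'}})=q^{i\nu_{n'}}e_i(q^k)$.

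Second, I would compare the two expansions summand by summand. Since $(p,q)=1$, the factor $q^{i\nu_{n'}+j\nu_n}$ is a $\frakp$-unit even when $\nu_m$ is a half-integer or negative, so the hypothesis $q^{-\nu_m}\beta_m\equiv 1\bmod \frakp$ upgrades cleanly to $\beta_{n'}^i\beta_n^j\equiv q^{i\nu_{n'}+j\nu_n}\bmod \frakp$. Applying the individual Eisenstein congruences of $\omega_\alpha$ and $\omega_{\alpha'}$ to $c_{(1^i,0^{n-i})}$ and $c_{(1^j,0^{n'-j})}$ respectively gives $Z_i^{(n)}(\alpha)\equiv Z_i^{(n)}(q^k)$ and $Z_j^{(n')}(\alpha')\equiv Z_j^{(n')}(q^{k'})$ mod $\frakp$. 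Term-by-term substitution then yields $Z_r^{(n+n')}(\alpha\beta_{n'},\alpha'\beta_n)\equiv Z_r^{(n+n')}(q^{k+\nu_{n'}},q^{k'+\nu_n})\bmod\frakp$ for every $r$, which by the first paragraph suffices.

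The main obstacle, mild but genuine, is bookkeeping the fractional $q$-exponents that appear when $n$ or $n'$ is odd: the definition $\nu_m=\tfrac12+u_m$ in the odd case is tuned precisely so that $q^{[i(n'-j)+j(n-i)]/2}$ and $q^{i\nu_{n'}+j\nu_n}$ differ by an integer power of $q$, which is also what forces the integrality condition $(k+\nu_{n'},k'+\nu_n)\in\mathbb L_{n+n'}$ after reordering. Once these parities are matched, the argument is essentially the observation that the Satake parameter of the $(n,n')$-parabolic induction is the concatenation of the component parameters, so congruences pass through parabolic induction.
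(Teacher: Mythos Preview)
Your proposal is correct and follows essentially the same route as the paper: reduce via \cref{lem:subringGL} to the generators $Z_r$, expand $e_r$ of a concatenation as $\sum_{i+j=r}e_i\cdot e_j$, and check the congruence term by term. The only presentational difference is that the paper factors each summand as $q^{[i(n'-j)+j(n-i)]/2+\nu_{n'}i+\nu_n j}\,(q^{-\nu_{n'}}\beta_{n'})^i(q^{-\nu_n}\beta_n)^j\,Z_i^{(n)}(\alpha)Z_j^{(n')}(\alpha')$, so that the $q$-exponent is visibly an integer and the $\beta$-factors are algebraic integers congruent to $1$ by hypothesis; this is exactly the parity bookkeeping you describe in your final paragraph.
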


This proposition is relevant for local endoscopic lifts of unitary groups 
at a split finite place.

\begin{proof}
By using  \cref{lem:subringGL}, it suffices to prove the Eisenstein congruence of $\widehat c_{(1^r,0^{n+n'-r})}(\alpha \beta_{n'},\alpha' \beta_{n})$ for
 $1\leq r\leq n+n'$.
It is obvious that  there exist integers $a_{2,j,j'}$ such that
\begin{multline*}
q^{\frac{(n+n'-r)r}{2}}e_r(\alpha \beta_{n'},\alpha' \beta_{n})=\\
\sum_{j+j'=r}a_{2,j,j'}q^{\frac{(n-j)j'+(n'-j')j}{2}+\nu_{n'}j+\nu_{n}j'} (q^{-\nu_{n'}} \beta_{n'})^j (q^{-\nu_{n}} \beta_{n})^{j'} \,  q^{\frac{(n-j)j}{2}}e_j(\alpha)\,  q^{\frac{(n'-j')j'}{2}}e_{j'}(\alpha') .
\end{multline*}
Hence, the statement follows from the facts that $\frac{(n-j)j'+(n'-j')j}{2}+\nu_{n'}j+\nu_{n}j'\in\Z$, $q^{\frac{(n-j)j}{2}}e_j(\alpha) \equiv q^{\frac{(n-j)j}{2}}e_r(q^k) \bmod p$, and $q^{\frac{(n'-j')j'}{2}}e_{j'}(\alpha') \equiv q^{\frac{(n'-j')j'}{2}}e_r(q^{k'}) \bmod p$.
\end{proof}

\begin{prop}\label{pr:glnsymcong}
Let $\omega_{\alpha}$ be the spherical representation of $\GL(2,F)$ with the Satake parameter $\alpha=(\alpha_1,\alpha_2)$.
Assume that $\omega_{\alpha}$ satisfies the Eisenstein congruence mod $p$ for $k=(k_1,k_2)$.
The spherical representation of $\GL(m+1,F)$ with the Satake parameter $(\alpha_1^m,\alpha_1^{m-1}\alpha_2,\dots,\alpha_2^{m})$ satisfies the Eisenstein congruence mod $p$ for $L$-parameter $(m k_1, (m-1)k_1+k_2 , \dots , m k_2)$.
Furthermore, when $k_1-k_2\ge m$, the spherical representation of $\GL(2m,F)$ with the Satake parameter $(\alpha_1 q^{\rho_m}, \alpha_2 q^{\rho_m})$ satisfies the Eisenstein congruence mod $p$ for $L$-parameter $((k_1,\dots,k_1)+\rho_m, (k_2,\dots,k_2)+\rho_m)$.
\end{prop}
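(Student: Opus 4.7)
My plan is to exploit the fact that both assertions follow from a \emph{termwise} congruence of Satake parameters, after which one applies \cref{lem:subringGL} to reduce to elementary symmetric functions. Since $\widehat c_\lambda \in \Z[Z_1,\dots,Z_n]$ with $Z_r = q^{(n-r)r/2}e_r$, and since $(p,q)=1$ makes all powers of $q$ invertible modulo any prime over $p$, the Eisenstein congruence $\widehat c_\lambda(\alpha) \equiv \widehat c_\lambda(q^k)\bmod \frakp$ for all $\lambda$ is equivalent to the system $e_r(\alpha) \equiv e_r(q^k) \bmod \frakp'$ for $r=1,\dots,n$, after possibly extending the coefficient ring.

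Applied to the hypothesis on $\omega_\alpha$, this yields $\alpha_1+\alpha_2 \equiv q^{k_1}+q^{k_2}$ and $\alpha_1\alpha_2 \equiv q^{k_1+k_2}$ modulo a prime $\frakp'$. The pairs $\{\alpha_1,\alpha_2\}$ and $\{q^{k_1},q^{k_2}\}$ are then the root-multisets of monic quadratics with congruent coefficients; passing to a splitting field and a prime $\frakp''$ above $\frakp'$, and relabeling, I obtain $\alpha_j \equiv q^{k_j}\bmod \frakp''$ for $j=1,2$.

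For the symmetric-power lift, the lifted Satake parameter $\beta_j := \alpha_1^{m+1-j}\alpha_2^{j-1}$ is therefore congruent mod $\frakp''$ to $q^{(m+1-j)k_1+(j-1)k_2}$; that is, $\beta$ and $q^{k'}$ agree termwise. Consequently every elementary symmetric polynomial $e_r(\beta)$ is congruent to $e_r(q^{k'})$ mod $\frakp''$. Since both sides are invariant under the Galois group relating $\frakp''$ to $\frakp$, the congruence actually holds mod $\frakp$, and \cref{lem:subringGL} then upgrades it to $\widehat c_\lambda(\beta) \equiv \widehat c_\lambda(q^{k'}) \bmod \frakp$ for every partition $\lambda$, which is the desired Eisenstein congruence.

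The same principle handles the second assertion: the $2m$-tuple $(\alpha_1 q^{\rho_m},\alpha_2 q^{\rho_m})$ is termwise congruent modulo $\frakp''$ to $(q^{k_1}q^{\rho_m},q^{k_2}q^{\rho_m}) = q^{k''}$, so all elementary symmetric functions, and hence all $\widehat c_\lambda$-values, match modulo $\frakp$. The hypothesis $k_1-k_2\ge m$ enters only to ensure that $k''$ is strictly decreasing and hence lies in $\mathbb{L}_{2m}$. I do not anticipate any genuine obstacle; the argument is essentially formal, and the only bookkeeping concerns the half-integer exponents arising from $Z_r = q^{(n-r)r/2}e_r$, which are harmless because $q$ is a unit modulo $p$.
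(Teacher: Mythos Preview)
Your argument is correct, but it takes a different route from the paper. The paper's one-line proof invokes \cref{lem:subringGL} and says the congruences for $Z_r$ are ``easy''; by analogy with the proof of \cref{pr:glcong1}, what is meant is that one expresses $Z_r$ of the lifted parameter directly as a $\Z[q]$-polynomial in $Z_1(\alpha),Z_2(\alpha)$. Indeed, since the symmetric-power map (resp.\ the $q^{\rho_m}$-dilation map) is the \emph{same} polynomial map regardless of input, $e_r(\beta)$ is a fixed $\Z$-polynomial in $e_1(\alpha),e_2(\alpha)$, and one checks that after multiplying by $q^{(n-r)r/2}$ the half-integer powers of $q$ cancel, yielding $Z_r(\beta)\in\Z[q,Z_1(\alpha),Z_2(\alpha)]$; the congruence for $Z_r$ then follows immediately from the assumed congruences for $Z_1(\alpha),Z_2(\alpha)$, without ever leaving the original ring.

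Your detour through a splitting field to obtain the termwise congruences $\alpha_j\equiv q^{k_j}\bmod\frakp''$ is valid but unnecessary, and the ``Galois descent'' you invoke afterwards is both imprecisely phrased and not actually needed: the paper's definition of Eisenstein congruence mod $p$ allows any prime above $p$ in any finite extension, so the congruence $\widehat c_\lambda(\beta)\equiv\widehat c_\lambda(q^{k'})\bmod\frakp''$ that you obtain already suffices. (If you did want to descend to $\frakp$, the correct mechanism is simply $\frakp''\cap\mathcal O_L=\frakp$ once you verify $\widehat c_\lambda(\beta)\in\mathcal O_L$, not a Galois-invariance argument.) What your approach buys is conceptual transparency---it makes clear that the lifted roots match termwise---at the cost of an auxiliary extension and some bookkeeping; the paper's direct approach avoids these and stays in the original ring throughout.
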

\begin{proof}
The assertions follow from Lemma \ref{lem:subringGL}, since it is easy to prove the congruences for $Z_r$.
\end{proof}

\subsubsection{$\UU_{E/F}(N)$}\label{sec:U(N)}

Let $E$ be an unramified quadratic extension of $F$.
Then $G=\UU_{E/F}(N,F)$ is unramified.
Assume that $K$ is a hyperspecial maximal compact subgroup of $G$.
Let $T$ denote the maximal $F$-torus consisting of diagonal matrices.
The torus $T$ includes a maximal $F$-split torus $A$.
We set $n:=[N/2]$, so $N=2n$ or $2n+1$.
Then $A$ is isomorphic to $\mathbb{G}_m^n$ over $F$.
Write $X_j$ for the characteristic function of $\{ \diag(x_1,\dots,x_n,x_n^{-1},\dots,x_1^{-1}) \in A$ (resp.\ $\diag(x_1,\dots,x_n,1,x_n^{-1},\dots,x_1^{-1}) \in A$) $\mid x_k\in\fO^\times \; (\forall k\neq j) $, $x_j\in\varpi\fO^\times  \}$ if $N=2n$ (resp.\ $N=2n+1$).
The Satake transform gives an isomorphism between $\cH(G,K)$ and $\C[X_1^{\pm 1},X_2^{\pm 1},\dots, X_n^{\pm 1}]^{W_n}$. Here $W_n=S_n\rtimes \{\pm 1\}^n$ where
the $-1$ in the $j$-th component acts on $X_j$ by $X_j\mapsto X_j^{-1}$.
Hence we obtain the following isomorphism
\begin{equation*}
\cH(G,K) \cong \C[Y_1,Y_2,\dots,Y_n]^{S_n}
\end{equation*}
where $Y_j=X_j+X_j^{-1}$.
Here we have used
\begin{equation}\label{eq:polyunitary}
X_j^k+X_j^{-k}= (X_j+X_j^{-1})^k -\sum_{u=1}^{k-1} \begin{pmatrix} k \\ u \end{pmatrix} X_j^{k-2u}.
\end{equation}

For each partition $\lambda=(\lambda_1,\lambda_2,\dots,\lambda_n)$ with $l(\lambda)\leq n$, we set
\[
\varpi^\lambda:=\begin{cases}
\diag(\varpi^{\lambda_1},\varpi^{\lambda_2},\dots,\varpi^{\lambda_n},\varpi^{-\lambda_n},\dots, \varpi^{-\lambda_1}) & \text{if $N=2n$,} \\
\diag(\varpi^{\lambda_1},\varpi^{\lambda_2},\dots,\varpi^{\lambda_n},1,\varpi^{-\lambda_n},\dots, \varpi^{-\lambda_1}) & \text{if $N=2n+1$.} 
\end{cases}
\]
The set $\{ \varpi^\lambda \mid  l(\lambda)\leq n\}$ is a system of representative elements of $K\bsl G/K$.
Let $c_\lambda$ denote the characteristic function of $K\varpi^\lambda K$.
The functions $c_\lambda$ with $l(\lambda)\leq n$ form a basis of $\cH(G,K)$.
We set
\[
\widehat{c}_\lambda:=\scS(c_\lambda), \quad 
\varrho=\varrho_N:=\begin{cases}
(2n-1,2n-3,\dots,1) & \text{if $N=2n$,} \\
(2n,2n-2,\dots,2) & \text{if $N=2n+1$.} 
\end{cases}
\]
\begin{lem}\label{lem:unitsym2}
Set $X=(X_1,\dots,X_n)$ and $Y=(Y_1,\dots,Y_n)$ $(Y_j=X_j+X_j^{-1})$.
For any partition $\lambda$, $l(\lambda)\leq n$, the function $\widehat{c}_\lambda$ is in $\Z[\tilde Z_1,\dots,\tilde Z_n]$, where
\[
\tilde Z_r(X):=q^{Nr-r^2} \, e_r(Y)\in \C[X_1^{\pm 1},X_2^{\pm 1},\dots, X_n^{\pm 1}]^{W_n} \quad (1\leq r\leq n).
\]
We note $Nr-r^2=\langle (1^r,0^{n-r}),\varrho\rangle$.
\end{lem}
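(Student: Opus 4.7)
Plan: Follow the strategy of \cref{lem:subringGL}, adapted to the Weyl group $W_n = S_n \rtimes \{\pm 1\}^n$ and to the variables $Y_j = X_j + X_j^{-1}$. Let $\tilde m_\lambda(X)$ denote the $W_n$-orbit sum of $X_1^{\lambda_1} \cdots X_n^{\lambda_n}$; as $\lambda$ ranges over partitions with $l(\lambda) \leq n$, these form a $\Z$-basis of $\Z[X_1^{\pm 1}, \ldots, X_n^{\pm 1}]^{W_n}$.

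The first step is to establish the triangular expansion
\[
\widehat{c}_\lambda = q^{\langle \lambda, \varrho\rangle} \tilde m_\lambda + \sum_{\mu < \lambda} b_{\lambda\mu}\, q^{\langle \mu, \varrho\rangle} \tilde m_\mu, \qquad b_{\lambda\mu} \in \Z_{\geq 0},
\]
in analogy with the expansion derived in the proof of \cref{lem:subringGL}. This follows by combining (i) the $W_n$-symmetry of $\widehat{c}_\lambda = \scS(c_\lambda)$; (ii) the computation $\delta_B(\varpi^\lambda)^{1/2} = q^{\langle \lambda, \varrho\rangle}$ obtained directly from the modulus character of the Borel subgroup of $\UU_{E/F}(N)$; and (iii) the unitary analog of \eqref{eq:maxorder201921}, asserting that $\int_N c_\lambda(u\varpi^\mu)\,\d u \in \Z_{\geq 0}$ with value $1$ when $\mu = \lambda$ and $0$ when $\mu \not\leq \lambda$. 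Ingredient (iii) in turn requires a Hermitian-lattice analog of \cref{lem:subdeterminant} that classifies the double coset $K\varpi^\mu K$ via invariants of the Hermitian form on $\fO_E$-lattices in $E^N$.

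Next, I would establish a matching triangular expansion for the claimed generators. Applying \eqref{eq:elemono} in the $Y$-variables yields $\prod_{j=1}^{\lambda_1} e_{\lambda_j^c}(Y) = m_\lambda(Y) + \sum_{\mu < \lambda} a_{\lambda\mu}\, m_\mu(Y)$; then re-expanding each $m_\nu(Y)$ into the $\tilde m$-basis via \eqref{eq:polyunitary} and invoking the partition identity $\langle \lambda, \varrho\rangle = \sum_{j=1}^{\lambda_1}(N\lambda_j^c - (\lambda_j^c)^2)$ (the direct $\varrho$-analog of the identity used in \cref{lem:subringGL}, provable by transposing the Young diagram and summing the arithmetic progressions defining $\varrho$) gives
\[
\tilde Z_{\lambda_1^c} \cdots \tilde Z_{\lambda_{\lambda_1}^c} = q^{\langle \lambda, \varrho\rangle} \tilde m_\lambda + \sum_{\mu < \lambda} c_{\lambda\mu}\, q^{\langle \mu, \varrho\rangle} \tilde m_\mu, \qquad c_{\lambda\mu} \in \Z_{\geq 0}.
\]
Since $\langle \lambda - \mu, \varrho\rangle \in \Z_{>0}$ for $\mu < \lambda$, the difference $\widehat{c}_\lambda - \tilde Z_{\lambda_1^c} \cdots \tilde Z_{\lambda_{\lambda_1}^c}$ is a $\Z$-linear combination of terms $q^{\langle \mu, \varrho\rangle}\tilde m_\mu$ with $\mu < \lambda$. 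Downward induction on the dominance order (applied to each such $\mu$ via the first expansion to produce $\widehat{c}_\mu$'s, already in $\Z[\tilde Z_1,\ldots,\tilde Z_n]$ by induction) then yields the claim.

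The main obstacle is ingredient (iii) of the first step: for $\UU_{E/F}(N)$ the Hermitian-lattice analog of \cref{lem:subdeterminant} requires a Smith-normal-form-style classification of $K$-orbits on Hermitian $\fO_E$-lattices in $E^N$, which is technically more involved than its $\GL$ counterpart, with separate bookkeeping for the cases $N = 2n$ and $N = 2n+1$ to ensure the integrality of the $b_{\lambda\mu}$ and to account for the parity constraints arising from the $Y_j = X_j + X_j^{-1}$ substitution.
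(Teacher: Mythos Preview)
Your approach is correct and essentially the same as the paper's: establish a triangular expansion of $\widehat{c}_\lambda$ with integer coefficients, establish a matching one for the monomials in the $\tilde Z_r$'s via \eqref{eq:elemono}, and conclude by downward induction on dominance. The paper carries this out with the intermediate basis $m_\mu(Y)$ rather than your $\tilde m_\mu(X)$; both choices work, the $m_\mu(Y)$ basis being marginally more direct since $\tilde Z_r$ is already defined as $q^{Nr-r^2}e_r(Y)$.

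The one substantive correction concerns what you flag as the ``main obstacle.'' No separate Hermitian-lattice analog of \cref{lem:subdeterminant} is needed. Since $E/F$ is unramified, the hyperspecial $K$ equals $G\cap\GL(N,\fO_E)$, and for $g\in G$ the relation $g^{-1}=\Phi_n^{-1}\,{}^t\bar g\,\Phi_n$ forces the $\GL(N,E)$ elementary divisors of $g$ to be of the symmetric shape $(\lambda_1,\dots,\lambda_n,[0,]-\lambda_n,\dots,-\lambda_1)$. Hence $K$-double cosets in $G$ are exactly the intersections with $\GL(N,\fO_E)$-double cosets in $\GL(N,E)$, and \cref{lem:subdeterminant} applies verbatim. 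This is precisely what the paper does (``using \cref{lem:subdeterminant} and \eqref{eq:polyunitary}''), with \eqref{eq:polyunitary} handling the passage between the $X$- and $Y$-bases. So your ingredient (iii) follows immediately from the $\GL(N,E)$ criterion, with the caveat that the relevant dominance order now allows $|\mu|<|\lambda|$ (the determinant constraint $|\Lambda|=0$ is automatic for $g\in G$); this is why the paper's expansion has a second sum over $\mu$ with $|\mu|\le|\lambda|-2$.
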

\begin{proof}
For each $1\leq r\leq n$, one can prove
\begin{equation}\label{eq:20190511}
\widehat{c}_{(1^r,0^{n-r})}(X)\in \tilde Z_r(X)+\Z[\tilde Z_1,\dots,\tilde Z_{r-1}]
\end{equation}
by \cref{lem:subdeterminant}, $\scS(c_\lambda)\in\C[Y_1,Y_2,\dots,Y_n]^{S_n}$ and a direct calculation for representative elements of left cosets of $K\varpi^\lambda K$.
Furthermore, for any partition $\lambda$, $l(\lambda)\leq n$,  using \cref{lem:subdeterminant} and \eqref{eq:polyunitary}, one gets
\[
\widehat{c}_\lambda(X)=q^{\langle \lambda,\varrho\rangle} m_\lambda(Y) +\sum_{\mu<\lambda, \, |\mu|=|\lambda|} a_{3,\lambda \mu}q^{\langle \mu,\varrho\rangle} m_\mu(Y) +\sum_{\mu<\lambda, \, |\mu|\leq |\lambda|-2} a_{4,\lambda \mu}q^{\langle \mu,\varrho\rangle} m_\mu(Y)
\]
for some $a_{3,\lambda\mu}\in\Z_{\geq 0}$ and $a_{4,\lambda \mu}\in\Z$.
Hence the proof is completed by \eqref{eq:elemono} and \eqref{eq:20190511}.
\end{proof}

Set $a_j:=\diag(\overbrace{1,\dots,1}^{j-1},\varpi,1,\dots,1,\varpi^{-1},\overbrace{1,\dots,1}^{j-1}) \in A$.
Then $a_1,a_2,\dots,a_n$ form a basis of $A/A\cap K$.
For the Satake parameter $\alpha=(\alpha_1,\dots,\alpha_n)$, let $\omega_\alpha$ denote the associated irreducible spherical representation of $G$.
We have $\widehat c_\lambda(\alpha)=\omega_\alpha(c_\lambda)$. 
We shall use the same notations $\rho=\rho_N$, $\mathbb{L}=\mathbb{L}_N$, and $q^k$ as in the previous subsection.
Choose an $N$-tuple $k=(k_1,k_2,\dots,k_N)\in\mathbb{L}$ and a prime number $p$ such that $(p,q)=1$.
We set
\[
\tilde k:=(k_1-k_N,k_2-k_{N-1},\dots,k_n-k_{N-n+1}),
\]
\[
k':=(k_1+k_N,k_2+k_{N-1},\dots,k_n+k_{N-n+1}).
\]
For $t=(t_1,\dots,t_n)$ and $s=(s_1,\dots,s_n)$, we set
\[
(-1)^t q^s:=((-1)^{t_1}q^{s_1},\dots,(-1)^{t_n}q^{s_n}).
\]
Then, defining $\tilde\rho$ and $\rho'$ in the same manner, one has $\varrho=\tilde\rho$, $\rho'=(0,\dots,0)$, and
\begin{equation} \label{eq:congvol-Un}
\widehat c_\lambda((-1)^{\rho'}q^{\tilde\rho}) = \vol(K\varpi^\lambda K).
\end{equation}
We say that the spherical representation $\omega_\alpha$ satisfies the Eisenstein congruence mod $p$ for the $L$-parameter (associated to) $k$ if
\begin{equation}\label{eq:eigunitary}
\widehat c_\lambda (\alpha) \equiv \widehat c_\lambda((-1)^{k'}q^{\tilde k}) \mod p 
\end{equation}
holds for any partition $\lambda$.
In particular, $\omega_\alpha$ satisfies the Eisenstein congruence mod $p$ for the parameter $\rho$ if and only if
\[
\widehat c_\lambda(\alpha) \equiv \vol(K\varpi^\lambda K) \mod p
\]
holds for any partition $\lambda$.
Notice that this local Eisenstein congruence was defined from the viewpoint of the $L$-parameter of $\omega_\alpha$ over $F$ (cf. \cite[Section 3.2.6]{Minguez}).

\begin{prop}\label{pr:unicong1}
Let $\omega_{\alpha}$ (resp.\ $\omega_{\alpha'}$) be the spherical representation of $\UU_{E/F}(N,F)$ (resp.\ $\UU_{E/F}(N',F)$) with the Satake parameter $\alpha=(\alpha_1,\dots,\alpha_n)$ (resp.\ $\alpha'=(\alpha_1',\dots,\alpha_{n'}')$), where $n=[N/2]$ (resp.\ $n'=[N'/2]$).
Assume that $\omega_{\alpha}$ (resp.\ $\omega_{\alpha'}$) satisfies the Eisenstein congruence mod $p$ for $k=(k_1,\dots,k_N)$ (resp.\ $k'=(k_1',\dots,k_{N'}')$).
For a natural number $m=N$, $N'$, choose an integer $u_m$, and set
\[
\mu_m:=\begin{cases}-1 & \text{if $m$ is odd}, \\ 1 & \text{if $m$ is even}, \end{cases}   \qquad  \nu_m:=\begin{cases}1/2+u_m & \text{if $m$ is odd}, \\ u_m & \text{if $m$ is even}. \end{cases}
\]
Further, we put
\[
\mu_m(\alpha_1,\dots,\alpha_n):=(\mu_m\alpha_1,\dots,\mu_m\alpha_n), \quad (k_1,\dots,k_n)+\nu_m:=(k_1+\nu_m,\dots,k_n+\nu_m).
\]

\vspace{1mm}
\noindent
{\bf Case 1.}
Suppose $N$ and $N'$ are not both odd.
Suppose that there exists a new sequence $r=(r_1,r_2,\dots,r_{N+N'})\in \mathbb{L}_{N+N'}$ such that $(r_1,r_2,\dots,r_{N+N'})$ is obtained from $(k+\nu_{N'},k'+\nu_N)$ by changing the ordering, for any $1\leq j\leq [(N+N')/2]$ we have $(r_j,r_{N+N'-j+1})=(k_u+\nu_{N'},k_{N-u+1}+\nu_{N'})$ for some $1\leq u\leq [N/2]$ or $(r_j,r_{N+N'-j+1})=(k'_{u'}+\nu_N,k_{N'-u'+1}'+\nu_N)$ for some $1\leq u'\leq [N'/2]$.
Under these assumptions, the spherical representation $\omega_{(\mu_{N'}\alpha,\mu_N\alpha')}$ of $\UU_{E/F}(N+N',F)$ satisfies the Eisenstein congruence mod $p$ for $L$-parameter $r$.

\vspace{1mm}
\noindent
{\bf Case 2.}
Suppose $N$ and $N'$ are both odd.
Suppose that there exists a new sequence $r=(r_1,r_2,\dots,r_{N+N'})\in \mathbb{L}_{N+N'}$ such that $(r_1,r_2,\dots,r_{N+N'})$ is obtained from $(k+\nu_{N'},k'+\nu_N)$ by changing the ordering, for any $1\leq j\leq (N+N')/2-1$ we have $(r_j,r_{N+N'-j+1})=(k_u+\nu_{N'},k_{N-u+1}+\nu_{N'})$ for some $1\leq u\leq [N/2]$ or $(r_j,r_{N+N'-j+1})=(k'_{u'}+\nu_N,k_{N'-u'+1}'+\nu_N)$ for some $1\leq u'\leq [N'/2]$, and we have $(r_{(N+N')/2},r_{(N+N')/2+1})=(k_{(N+1)/2}+\nu_{N'},k_{(N'+1)/2}+\nu_{N})$.
Furthermore, we suppose
\[
\widehat c_{(1)} (-1) \equiv \widehat c_{(1)}((-1)^{f'}q^{\tilde f}) \mod p \quad \text{where } f=(k_{(N+1)/2}+\nu_{N'},k_{(N'+1)/2}+\nu_{N}) .
\]
Under these assumptions, the spherical representation $\omega_{(\mu_{N'}\alpha,\mu_N\alpha',-1)}$ of $\UU_{E/F}(N+N',F)$ satisfies the Eisenstein congruence mod $p$ for $L$-parameter $r$.
\end{prop}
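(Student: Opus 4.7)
The plan is to mirror the strategy of \cref{pr:glcong1}: \cref{lem:unitsym2} reduces the desired congruence on $\widehat{c}_\lambda^{(N+N')}$ to congruences on the generators $\tilde Z_r^{(N+N')}$ of the spherical Hecke algebra of $\UU_{E/F}(N+N',F)$, and these will be checked by expanding the elementary symmetric functions of the $Y$-coordinates at the concatenated Satake parameter as a sum over $j + j' = r$.

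The first step is to align the $Y$-coordinates on the two sides. Since the substitution $X \mapsto \mu X$ with $\mu = \pm 1$ sends $Y = X + X^{-1}$ to $\mu Y$, the $Y$-coordinates of $(\mu_{N'}\alpha, \mu_N\alpha')$ (with an extra entry $-1$ in Case 2) are $\mu_{N'}(\alpha_j + \alpha_j^{-1})$ for $j = 1, \dots, n$, $\mu_N(\alpha_{j'}' + (\alpha_{j'}')^{-1})$ for $j' = 1, \dots, n'$, and (in Case 2) $-1 + (-1)^{-1} = -2$ for the middle coordinate. On the right-hand side, a paired coordinate $(r_j, r_{N+N'-j+1}) = (k_u + \nu_{N'}, k_{N-u+1} + \nu_{N'})$ yields Satake coordinate $(-1)^{(k_u + k_{N-u+1}) + 2\nu_{N'}} q^{k_u - k_{N-u+1}}$; using the identity $(-1)^{2\nu_m} = \mu_m$ (checked by the parity of $m$), this equals $\mu_{N'}$ times the $u$-th coordinate of the target Satake parameter appearing in the Eisenstein congruence for $\omega_\alpha$, and hence its $Y$-value equals $\mu_{N'}$ times the corresponding target $Y$-value. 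An analogous statement holds for pairs coming from $k'$, and in Case 2 the middle pair yields $Y$-value $(-1)^{f'}(q^{\tilde f} + q^{-\tilde f})$.

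The second step is the main expansion. Using $e_r(u, v) = \sum_{j + j' = r} e_j(u)\, e_{j'}(v)$ together with the identity
\[
(N+N')r - r^2 = (Nj - j^2) + (N'j' - (j')^2) + j'(N-j) + j(N'-j')
\]
valid for $j + j' = r$, each summand of $\tilde Z_r^{(N+N')}(\mu_{N'}\alpha, \mu_N\alpha')$ takes the form $\mu_{N'}^j \mu_N^{j'} q^{j'(N-j) + j(N'-j')} \tilde Z_j^{(N)}(\alpha)\, \tilde Z_{j'}^{(N')}(\alpha')$, a product of algebraic integers (by \eqref{eq:20190511} and the integrality of Hecke eigenvalues) times a non-negative power of $q$ (using $j \le n$, $j' \le n'$). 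The hypothesized Eisenstein congruences on $\omega_\alpha$ and $\omega_{\alpha'}$ replace the two $\tilde Z$-factors by the corresponding quantities at the target Satake parameters mod $p$, and summing over $j + j' = r$ concludes Case 1.

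For Case 2, separate the extra middle $Y$-coordinate using $e_r(u, v, w) = e_r(u, v) + w\, e_{r-1}(u, v)$. The $e_r(u,v)$-part reduces to Case 1 at level $r$, while the $w\, e_{r-1}(u, v)$-part yields a middle contribution equal to $w \cdot q^{(N+N') - 2r + 1} \tilde Z_{r-1}^{(N+N')}(\mu_{N'}\alpha, \mu_N\alpha')$: on the LHS $w = -2$, on the RHS $w = (-1)^{f'}(q^{\tilde f} + q^{-\tilde f})$. The additional $\UU_{E/F}(2)$ hypothesis, which (after removing the common constant in $\widehat c_{(1)} = \tilde Z_1^{(2)} + \text{const}$ and unfolding $\tilde Z_1^{(2)} = q(X + X^{-1})$) is exactly $-2q \equiv (-1)^{f'}(q^{\tilde f + 1} + q^{1-\tilde f}) \pmod p$, supplies the required middle congruence. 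The main obstacle throughout is the careful bookkeeping of signs, $\nu$-shifts and $q$-exponents to ensure that every summand is an algebraic integer and that the target-side $Y$-coordinates align with the left-hand-side $Y$-coordinates up to the factors $\mu_{N'}, \mu_N$; once this alignment is established, the proof propagates mechanically via the Eisenstein congruences of the smaller unitary groups.
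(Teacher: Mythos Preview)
Your proof is correct and follows exactly the approach of the paper: reduce via \cref{lem:unitsym2} to the generators $\tilde Z_r$, expand $e_r$ over $j+j'=r$ as in \cref{pr:glcong1}, and use the key identity $(-1)^{2\nu_m}=\mu_m$ to align the $Y$-coordinates. You have simply spelled out the details that the paper leaves implicit, including the exponent identity and the handling of the extra middle coordinate in Case~2.
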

\begin{proof}
By  \cref{lem:unitsym2}, it is sufficient to prove the Eisenstein congruence of $\tilde Z_r(\mu_{N'}\alpha,\mu_{N}\alpha')$ $(1\leq \forall r\leq n+n')$ and $\tilde Z_r(\mu_{N'}\alpha,\mu_{N}\alpha',-1)$ $(1\leq \forall r\leq n+n'+1)$.
Hence, this assertion follows from the same argument as in the proof of  \cref{pr:glcong1}.
Notice that $(-1)^{2\nu_m}=\mu_m$ holds.
\end{proof}

\begin{prop}\label{pr:unisymcong}
Let $\omega_{\alpha}$ be the spherical representation of $\UU_{E/F}(2,F)$ with the Satake parameter $\alpha=(\alpha_1)$.
Assume that $\omega_{\alpha}$ satisfies the Eisenstein congruence mod $p$ for $k=(k_1,k_2)$.
Choose a natural number $m$, and we put $\delta_m:=2$ if $m$ is even, and $\delta_m:=1$ if $m$ is odd.
The spherical representation of $\UU_{E/F}(m+1,F)$ with the Satake parameter $(\alpha_1^m,\alpha_1^{m-2},\dots,\alpha_1^{\delta_m})$ satisfies the Eisenstein congruence mod $p$ for $L$-parameter $(m k_1, (m-1)k_1+k_2 , \dots , m k_2)$.
Further, when $k_1-k_2\ge m$, the spherical representation of $\UU_{E/F}(2m,F)$ with the Satake parameter $(\alpha_1 q^{\rho_m})$ satisfies the Eisenstein congruence mod $p$ for $L$-parameter $((k_1,\dots,k_1)+\rho_m, (k_2,\dots,k_2)+\rho_m)$.
\end{prop}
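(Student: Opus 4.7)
The plan is to mirror the proof of \cref{pr:glnsymcong}: by \cref{lem:unitsym2}, $\widehat c_\lambda$ lies in $\Z[\tilde Z_1,\dots,\tilde Z_{n'}]$ with $\tilde Z_r=q^{N'r-r^2}e_r(Y_1,\dots,Y_{n'})$ and $Y_j=X_j+X_j^{-1}$, where $n'=[(m+1)/2]$ in Case~1 and $n'=m$ in Case~2. Since each $e_r$ is an integer polynomial in the $Y_j$'s, it suffices to establish the congruence on each $\tilde Z_r$; in fact it is often enough to match the $Y_j$'s individually. I first unpack the $\UU(2)$ hypothesis. There $n=1$ and the only nontrivial Hecke generator is $\tilde Z_1=q(X_1+X_1^{-1})$, so since $(p,q)=1$ the hypothesis reads
\[
 \alpha_1+\alpha_1^{-1} \;\equiv\; \epsilon\,(q^{k_1-k_2}+q^{k_2-k_1}) \pmod{p}, \qquad \epsilon := (-1)^{k_1+k_2}.
\]

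For Case~1, set $c_j := m+2-2j$, so the claimed Satake parameter is $X_j=\alpha_1^{c_j}$. A direct computation for the lifted $L$-parameter $k^{\mathrm{new}}=(mk_1,(m-1)k_1+k_2,\dots,mk_2)$ gives the pairing $(k^{\mathrm{new}})'_j=m(k_1+k_2)$ and $\tilde k^{\mathrm{new}}_j=c_j(k_1-k_2)$, so that
\[
 Y_j^L \;=\; \epsilon^{m}\bigl(q^{c_j(k_1-k_2)}+q^{-c_j(k_1-k_2)}\bigr).
\]
The recursion \eqref{eq:polyunitary} expresses $\alpha^c+\alpha^{-c}$ as an integer polynomial in $\alpha+\alpha^{-1}$, so applying the $\UU(2)$ congruence yields
\[
 \alpha_1^{c_j}+\alpha_1^{-c_j} \;\equiv\; \epsilon^{c_j}\bigl(q^{c_j(k_1-k_2)}+q^{-c_j(k_1-k_2)}\bigr) \pmod{p}.
\]
The key parity check is $c_j\equiv m\pmod 2$, which forces $\epsilon^{c_j}=\epsilon^{m}$; hence $Y_j^{\mathrm{Sat}}\equiv Y_j^{L}\pmod p$. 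Taking elementary symmetric functions of the $Y_j$'s then gives $\tilde Z_r^{\mathrm{Sat}}\equiv \tilde Z_r^{L}\pmod p$ for every $r\le n'$, finishing Case~1.

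Case~2 proceeds in the same spirit. The antisymmetry $(\rho_m)_{m+1-j}=-(\rho_m)_j$ shows that the natural $\UU(2m)$-pairing $(j,2m{+}1{-}j)$ of the concatenated $L$-parameter joins the first-block entry $k_1+(\rho_m)_j$ with the second-block entry $k_2-(\rho_m)_j$, forcing $(k^{\mathrm{new}})'_j=k_1+k_2$ (independent of $j$) and prescribing $\tilde k^{\mathrm{new}}_j$ in terms of $(\rho_m)_j$. The assumption $k_1-k_2\ge m$ is exactly what guarantees the concatenation lies in $\mathbb L_{2m}$ after reordering. I would then expand each $\tilde Z_r(\alpha_1 q^{\rho_m})$ as a polynomial in $\alpha_1+\alpha_1^{-1}$ (using the $\alpha_1\leftrightarrow\alpha_1^{-1}$ symmetry of $\tilde Z_r$ inherited from $(X_j,X_j^{-1})\leftrightarrow(X_{m+1-j}^{-1},X_{m+1-j})$) and compare with the corresponding expansion on the $L$-parameter side, invoking the $\UU(2)$ congruence and \eqref{eq:polyunitary}. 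The main obstacle in both cases is organizational: tracking the sign $\epsilon$, the parities of the relevant exponents, and the $q$-shifts from $\rho_m$ so that the Satake and $L$-parameter sides line up. Once this bookkeeping is in place, the propagation from rank one to higher rank is routine, just as for $Z_r$ in the proof of \cref{pr:glnsymcong}.
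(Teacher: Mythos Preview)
Your approach is essentially the same as the paper's. The paper computes $k'$ and $\tilde k$ for the two target $L$-parameters (obtaining $k'=(k_1+k_2)(m,\dots,m)$, $\tilde k=(k_1-k_2)(m,m-2,\dots,\delta_m)$ in Case~1 and $k'=(k_1+k_2)(1,\dots,1)$, $\tilde k=(k_1-k_2-m)(1,\dots,1)+\varrho_{2m}$ in Case~2), then invokes the single identity $x^l+x^{-l}\in\Z[x+x^{-1}]$ and declares the result, exactly as you do via \cref{lem:unitsym2} and \eqref{eq:polyunitary}. Your Case~1 is in fact more explicit than the paper: you verify the entrywise congruence $Y_j^{\mathrm{Sat}}\equiv Y_j^{L}$ by the parity check $c_j\equiv m\bmod 2$, which is the content behind the paper's one-line appeal to the Chebyshev identity. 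Your Case~2 is at the same level of sketch as the paper --- you correctly isolate the $\alpha_1\leftrightarrow\alpha_1^{-1}$ symmetry and defer the bookkeeping, which is all the paper does too.
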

\begin{proof}
If we put $k=(m k_1, (m-1)k_1+k_2 , \dots , m k_2)$ (resp. $(k_1+\rho_m,k_2+\rho_m)$), then $k'=(k_1+k_2)(m,m,\dots,m)$ and $\tilde k=(k_1-k_2) (m,m-2,\dots,\delta_m)$ (resp. $k'=(k_1+k_2)(1,1,\dots,1)$ and $\tilde k=(k_1-k_2-m) (1,1,\dots,1)+\varrho_{2m}$).
Since we have $x^l+x^{-l}=(x+x^{-1})^l+c_1(x+x^{-1})^{l-2}+\cdots$ for some $c_1, \ldots\in\Z$, we obtain the assertions.
\end{proof}

\subsubsection{$\GL(2)$ and $\PGL(2)$}\label{sec:GL(2)}

Let $G:=\GL(2,F)$, $K:=G(\fO)$, and let $T$ denote the maximal split torus consisting of diagonal matrices in $G$.
We choose $\diag(\varpi,1)$ and $\diag(1,\varpi)$ as a basis of the group $T/T\cap K$.
For each spherical representation $\omega_\alpha$ with the Satake parameter $\alpha=(\alpha_1,\alpha_2)$, we say that $\omega_\alpha$ satisfies the Eisenstein congruence mod $p$ for weight $k$ and character $(\chi_1,\chi_2)$ if
\begin{equation}\label{eq:eigl2}
q^{(k-1)/2}(\alpha_1+\alpha_2)\equiv \chi_1(\varpi)q^{k-1}+\chi_2(\varpi) \mod p ,\quad  \alpha_1\alpha_2\equiv \chi_1(\varpi)\chi_2(\varpi) \mod p
\end{equation}
hold, where $\chi_1$ and $\chi_2$ are unramified characters on $F^\times$ and weight $k\in \Z_{\geq 1}$ means the $L$-parameter $\psi_{(k-1)/2}$ of $\GL(2,\R)$ cf. \eqref{eq:gl2r}.
Here, we assume that $q^{(k-1)/2}(\alpha_1+\alpha_2)$ and $\alpha_1\alpha_2$ are algebraic as always.
Note that \eqref{eq:eigl2} is equivalent to \eqref{eq:eigln}, $n=2$, $(k_1,k_2)=((k-1)/2,-(k-1)/2)$.
In particular, if $k=2$ and $\chi_1$, $\chi_2$ are trivial, then \eqref{eq:eigl2} means
\begin{equation}\label{eq:eigl2tr}
q^{1/2}(\alpha_1+\alpha_2)\equiv \vol(K\diag(1,\varpi)K) \mod p ,\quad  \alpha_1\alpha_2\equiv \vol(K) \mod p .
\end{equation}

For the group $\PGL(2,F)$, Eisenstein congruences can be defined by putting $\alpha_2=\alpha_1^{-1}$ and supposing that weight $k$ is even in the situation of $G$.
We choose $\diag(\varpi,1)$ as a generator on $T/(T\cap K)Z$, where $Z$ denotes the center of $G$.
For the spherical representation $\omega_\alpha$ of $\PGL(2,F)$ with the Satake parameter $\alpha$, we say that $\omega_\alpha$ satisfies the Eisenstein congruence mod $p$ for weight $k$ if
\begin{equation}\label{eq:eipgl2}
q^{(k-1)/2}(\alpha+\alpha^{-1}) \equiv q^{k-1} + 1 \mod p.
\end{equation}

\subsubsection{$\GSp(4)$}

Set $G:=\GSp(4,F)$, $K:=G(\fO)$, and $G^+:=G \cap M(4,\fO)$.
We write $T$ for the maximal split torus consisting of diagonal matrices in $G$.
Choose
\[
\diag(1,1,\varpi,\varpi), \quad \diag(\varpi,1,\varpi^{-1},1), \quad \diag(1,\varpi,1,\varpi^{-1}),
\]
as a basis of the group $T/T\cap K$.
The Hecke algebra $\cH(G^+,K)$ is generated by $c_1$ and $c_2$, where $c_1$ (resp. $c_2$) denotes the characteristic function of $K\diag(1,1,\varpi,\varpi) K$ (resp. $K\diag(1,\varpi,\varpi^2,\varpi) K$).
We write $\widehat{c}_1$ and $\widehat{c}_2$ for the Satake transform of $c_1$ and $c_2$ respectively.

Consider the spherical representation $\omega_\alpha$ with the Satake parameter $\alpha=(\alpha_0, \alpha_1,\alpha_2)$. 
Then, it is known that the values $\widehat c_1(\alpha)=\omega_\alpha(c_1)$ and $\widehat c_2(\alpha)=\omega_\alpha(c_2)$ are given by
\begin{equation}\label{eq:heckegsp1}
\widehat c_1(\alpha) = q^{3/2}(\alpha_0+\alpha_0\alpha_1+\alpha_0\alpha_2+\alpha_0\alpha_1\alpha_2),
\end{equation}
\begin{equation}\label{eq:heckegsp2}
\widehat c_2(\alpha) = q^2(\alpha_0^2\alpha_1+\alpha_0^2\alpha_2+\alpha_0^2\alpha_1^2\alpha_2+\alpha_0^2\alpha_1\alpha_2^2)+(q^2-1)\alpha_0^2\alpha_1\alpha_2,
\end{equation}
see, e.g., \cite[(3.15)]{Gross}, \cite[Section 6.1]{ralf-brooks}.
Furthermore, it is easy to prove that $\omega_\alpha(c)$ is in $\Z[\widehat c_1(\alpha),\widehat c_2(\alpha)]$ for each characteristic functions $c$ of double cosets $K\diag(\varpi^{c_1},\varpi^{c_2},\varpi^{-c_1+c_3},\varpi^{-c_2+c_3})K$, see, e.g., \cite[(2.9)]{Gross}.
Therefore, the Eisenstein congruences of $\omega_\alpha(c)$ are determined by those of $\widehat c_1(\alpha)$ and $\widehat c_2(\alpha)$, and so it is enough to consider congruences only for $\widehat c_1(\alpha)$ and $\widehat c_2(\alpha)$.

The parameter $(k_1,k_2)\in\Z^{\oplus 2}$ $(k_1+1\geq k_2\geq 2)$ corresponds to the $L$-parameter $\psi_{k_1+k_2-3,k_1-k_2+1}$, cf. \eqref{eq:gsp4r}.
If $k_1\geq k_2\geq 3$, then $(k_1,k_2)$ means the minimal $\UU(2)$-type $\det^{k_2}\otimes \mathrm{Sym}_{k_1-k_2}$ of the holomorphic discrete series for $\psi_{k_1+k_2-3,k_1-k_2+1}$.
We say that $\omega_\alpha$ satisfies the Eisenstein congruence mod $p$ for weight $(k_1,k_2)$ if
\begin{equation}\label{eq:eigsp1}
q^{\frac{k_1+k_2}{2}-3} \, \widehat c_1(\alpha) \equiv   q^{k_1+k_2-3}+q^{k_1-1}+q^{k_2-2}+1 \mod p ,
\end{equation}
\begin{equation}\label{eq:eigsp2}
q^{k_1-3}\, \widehat c_2(\alpha) \equiv q^{2k_1-2}+q^{k_1+k_2-3}+q^{k_1-k_2+1}+q^{k_1-1}-q^{k_1-3}+1 \mod p 
\end{equation}
hold.
When $(k_1,k_2)=(3,3)$, the congruences \eqref{eq:eigsp1} and \eqref{eq:eigsp2} are respectively equivalent to
\begin{equation}\label{eq:eigsp1tr}
\widehat c_1(\alpha) \equiv \vol(K\diag(1,1,\varpi,\varpi) K) \mod p ,
\end{equation}
\begin{equation}\label{eq:eigsp2tr}
\widehat c_2(\alpha) \equiv \vol(K\diag(1,\varpi,\varpi^2,\varpi) K) \mod p . 
\end{equation}
The following propositions follow from a direct calculation for \eqref{eq:eigl2}, \eqref{eq:heckegsp1}, \eqref{eq:heckegsp2}, \eqref{eq:eigsp1}, and \eqref{eq:eigsp2}.
\begin{prop}\label{pr:yoshida}
Let $k\geq k'\geq 1$ and $\omega_{\alpha}$ (resp. $\omega_{\alpha'}$) be the spherical representation of $\GL(2,F)$ with the Satake parameter $\alpha=(\alpha_1,\alpha_2)$ (resp. $\alpha'=(\alpha_1',\alpha_2')$).
Suppose that $k+k'$ is even, $\alpha_1\alpha_2=\alpha_1'\alpha_2'$ holds, and $\omega_\alpha$ (resp. $\omega_{\alpha'}$) satisfies the Eisenstein congruence mod $p$ for weight $k$ (resp. $k'$). 
Then, the spherical representation $\omega_\beta$ of $G$ with the Satake parameter $\beta=(\alpha_1,\frac{\alpha_1'}{\alpha_1},\frac{\alpha_2'}{\alpha_1})$ satisfies the Eisenstein congruence mod $p$ for weight $(\frac{k+k'}{2},\frac{k-k'}{2}+2)$.
\end{prop}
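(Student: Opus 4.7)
The plan is to verify the two $\GSp(4)$ Eisenstein congruences \eqref{eq:eigsp1} and \eqref{eq:eigsp2} for weight $(k_1,k_2)=\bigl(\tfrac{k+k'}{2},\tfrac{k-k'}{2}+2\bigr)$ by directly substituting the Satake parameter $\beta=(\alpha_1,\alpha_1'/\alpha_1,\alpha_2'/\alpha_1)$ into the explicit formulas \eqref{eq:heckegsp1} and \eqref{eq:heckegsp2}, using the hypothesis $\alpha_1\alpha_2=\alpha_1'\alpha_2'$ to symmetrize, and then feeding in the two hypothesized $\GL(2)$ Eisenstein congruences together with $\alpha_1\alpha_2\equiv 1\bmod p$ (the trivial-character instance of the second line of \eqref{eq:eigl2}).

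The first step is to simplify $\widehat c_1(\beta)$ and $\widehat c_2(\beta)$. The four monomials in \eqref{eq:heckegsp1} evaluated at $\beta$ are $\alpha_1$, $\alpha_1'$, $\alpha_2'$, and $\alpha_1'\alpha_2'/\alpha_1$, and the central-character identity rewrites the last as $\alpha_2$, yielding $\widehat c_1(\beta)=q^{3/2}\bigl[(\alpha_1+\alpha_2)+(\alpha_1'+\alpha_2')\bigr]$. A similar but slightly longer calculation for the six terms of \eqref{eq:heckegsp2} (using the identity once more to rewrite $\beta_0^2\beta_1^2\beta_2=\alpha_1'\alpha_2$ and $\beta_0^2\beta_1\beta_2^2=\alpha_2\alpha_2'$) gives the clean symmetric form
\[
\widehat c_2(\beta)=q^2(\alpha_1+\alpha_2)(\alpha_1'+\alpha_2')+(q^2-1)\alpha_1\alpha_2.
\]

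Then I would check the two $\GSp(4)$ congruences in turn. Using $k_1+k_2=k+2$, the LHS of \eqref{eq:eigsp1} collapses to $q^{(k-1)/2}\bigl[(\alpha_1+\alpha_2)+(\alpha_1'+\alpha_2')\bigr]$; the first summand is $\equiv q^{k-1}+1$ by the weight-$k$ hypothesis, and the second equals $q^{(k-k')/2}\cdot q^{(k'-1)/2}(\alpha_1'+\alpha_2')\equiv q^{(k-k')/2}(q^{k'-1}+1)=q^{(k+k')/2-1}+q^{(k-k')/2}$, which sums to the RHS. For \eqref{eq:eigsp2} the cross term $q^{k_1-1}(\alpha_1+\alpha_2)(\alpha_1'+\alpha_2')$ factors as $[q^{(k-1)/2}(\alpha_1+\alpha_2)]\cdot[q^{(k'-1)/2}(\alpha_1'+\alpha_2')]\equiv(q^{k-1}+1)(q^{k'-1}+1)$, while $q^{k_1-3}(q^2-1)\alpha_1\alpha_2\equiv q^{(k+k')/2-1}-q^{(k+k')/2-3}$; summing and matching $2k_1-2=k+k'-2$, $k_1+k_2-3=k-1$, $k_1-k_2+1=k'-1$ recovers the RHS.

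There is no serious obstacle beyond careful bookkeeping of $q$-exponents. The parity hypothesis $k+k'\in 2\Z$ is exactly what guarantees that the bridging exponent $(k-k')/2$ and the various $(k_1+k_2)/2-3$, $k_1-3$ prefactors combine with the half-integer powers of $q$ on the $\GL(2)$ side to yield only integer powers of $q$ on the $\GSp(4)$ side, and the identity $\alpha_1\alpha_2=\alpha_1'\alpha_2'$ is exactly what makes $\widehat c_1(\beta)$ and $\widehat c_2(\beta)$ symmetric in $(\alpha_1,\alpha_2)$ and $(\alpha_1',\alpha_2')$ so that the two $\GL(2)$ congruences can be applied independently to the factors.
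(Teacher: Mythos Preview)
Your proof is correct and is exactly the ``direct calculation for \eqref{eq:eigl2}, \eqref{eq:heckegsp1}, \eqref{eq:heckegsp2}, \eqref{eq:eigsp1}, and \eqref{eq:eigsp2}'' that the paper's one-line proof alludes to; you have simply carried out the details that the paper omits. The key simplifications $\widehat c_1(\beta)=q^{3/2}\bigl[(\alpha_1+\alpha_2)+(\alpha_1'+\alpha_2')\bigr]$ and $\widehat c_2(\beta)=q^2(\alpha_1+\alpha_2)(\alpha_1'+\alpha_2')+(q^2-1)\alpha_1\alpha_2$, obtained via $\alpha_1\alpha_2=\alpha_1'\alpha_2'$, are exactly right, and the exponent bookkeeping checks out.
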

In this proposition, $\omega_\beta$ is the local Yoshida lift of $\omega_\alpha$ and $\omega_{\alpha'}$.
If one chooses $k'=2$ and $\alpha'=(q^{1/2},q^{-1/2})$, then $\omega_\beta$ is the
local Saito-Kurokawa lift.
\begin{prop}\label{pr:spsym}
Let $k \geq 1$ and $\omega_{\alpha}$ be the spherical representation of $\GL(2,F)$ with $\alpha=(\alpha_1,\alpha_2)$.
Suppose that $\omega_\alpha$ satisfies the Eisenstein congruence mod $p$ for weight $k$. 
Then, the spherical representation $\omega_\beta$ of $G$ with $\beta=(\alpha_1^3, \alpha_2/\alpha_1, (\alpha_2/\alpha_1)^2)$ satisfies the Eisenstein congruence mod $p$ for weight $(k-\frac{1}{2},\frac{k+1}{2})$.
\end{prop}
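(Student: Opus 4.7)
The plan is to verify the defining Eisenstein congruences \eqref{eq:eigsp1} and \eqref{eq:eigsp2} for $\omega_\beta$ by direct computation, in the spirit of the proof of \cref{pr:yoshida}. First I substitute $\beta = (\alpha_1^3, \alpha_2/\alpha_1, (\alpha_2/\alpha_1)^2)$ into the explicit formulas \eqref{eq:heckegsp1} and \eqref{eq:heckegsp2}. The key simplification is that the four monomials $\beta_0, \beta_0\beta_1, \beta_0\beta_2, \beta_0\beta_1\beta_2$---the eigenvalues of the diagonal image in $\GSp(4,\C)$---collapse to $\alpha_1^3, \alpha_1^2\alpha_2, \alpha_1\alpha_2^2, \alpha_2^3$, i.e., the eigenvalues of the symmetric cube of the Satake element of $\omega_\alpha$. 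Consequently $\widehat c_1(\beta)$ and $\widehat c_2(\beta)$ are symmetric polynomials in $\alpha_1, \alpha_2$, and after expressing them in terms of $S = \alpha_1 + \alpha_2$ and $P = \alpha_1\alpha_2$ one finds
\[
\widehat c_1(\beta) = q^{3/2} S(S^2 - 2P), \qquad \widehat c_2(\beta) = q^2 P S^2(S^2 - 3P) + (q^2-1) P^3.
\]

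Next I apply the Eisenstein congruence hypothesis \eqref{eq:eigl2} for $\omega_\alpha$ at weight $k$, giving $q^{(k-1)/2} S \equiv \chi_1(\varpi) q^{k-1} + \chi_2(\varpi) \bmod p$ and $P \equiv \chi_1(\varpi)\chi_2(\varpi) \bmod p$. Substituting these reduces $\widehat c_1(\beta)$ and $\widehat c_2(\beta)$ modulo $p$ to polynomials in $q^{\pm(k-1)/2}$ and the character values, which I then expand via the binomial theorem and regroup by powers of $q$. Finally I compare with the right-hand sides of \eqref{eq:eigsp1} and \eqref{eq:eigsp2} at the target weight $(k_1, k_2)$, chosen so that its $L$-parameter is the symmetric cube of the $L$-parameter for weight $k$ on $\GL(2)$; matching powers of $q$ and the character contributions to the similitude on both sides then finishes the verification.

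I do not anticipate any substantial obstacle; this is pure bookkeeping with elementary symmetric functions, parallel to the proofs of \cref{pr:glnsymcong,pr:unisymcong}. The only care required is in handling the extra term $(q^2-1)\beta_0^2\beta_1\beta_2$ in $\widehat c_2$ and aligning the prefactors $q^{\frac{k_1+k_2}{2}-3}$ and $q^{k_1-3}$ from \eqref{eq:eigsp1}--\eqref{eq:eigsp2} with the $q^{(k-1)/2}$ factors arising from $S$.
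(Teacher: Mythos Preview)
Your approach is correct and is exactly what the paper does: it states that the proposition ``follows from a direct calculation for \eqref{eq:eigl2}, \eqref{eq:heckegsp1}, \eqref{eq:heckegsp2}, \eqref{eq:eigsp1}, and \eqref{eq:eigsp2},'' and your plan carries out precisely that computation, with the key observation that $(\beta_0,\beta_0\beta_1,\beta_0\beta_2,\beta_0\beta_1\beta_2)=(\alpha_1^3,\alpha_1^2\alpha_2,\alpha_1\alpha_2^2,\alpha_2^3)$ is the symmetric cube. One small remark: the proposition as stated uses the Eisenstein congruence ``for weight $k$'' without characters (i.e.\ the trivial-character case \eqref{eq:eigl2tr}), matching the fact that the $\GSp(4)$-side congruences \eqref{eq:eigsp1}--\eqref{eq:eigsp2} have no character parameter, so you should set $\chi_1=\chi_2=1$ in your write-up.
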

In this proposition, $\omega_\beta$ is the symmetric cube lift of $\omega_\alpha$.

\subsubsection{$G_2$}
Recall the notations in  \cref{sec:g2str} for $G_2$.
Set $G=G_2(F)$ and $K:=G_2(\fO)$. 
According to \cite[Section 5]{Gross} and \cite[Section 13]{GGS}, we write $\lambda_1$ and $\lambda_2$ for the fundamental co-weights, and then they satisfy $\lambda_1(t)=(t^3,t^{-2})$ and $\lambda_2(t)=(t^4,t^{-3})$ $(t\in\mathbb{G}_m)$.
The Hecke algebra $\cH(G,K)$ is generated by $c_1$ and $c_2$, which denote the characteristic functions of $K\lambda_1(\varpi)K$ and $K\lambda_2(\varpi)K$ respectively.
We choose $\lambda_1(\varpi)$ and $\lambda_2(\varpi)$ as a basis of the group $T/T\cap K$.
Let $\omega_\alpha$ be the spherical representation of $G$ with the Satake parameter $\alpha=(\alpha_1,\alpha_2)$. 
By \cite[(5.7)]{Gross}, the eigenvalues $\widehat c_1(\alpha)=\omega_\alpha(c_1)$ and $\widehat c_2(\alpha)=\omega_\alpha(c_2)$ satisfy
\begin{equation}
\widehat c_1(\alpha)=q^3-1+q^3(\alpha_1+\alpha_1^{-1}+\alpha_1^2\alpha_2^{-1}+\alpha_1^{-2}\alpha_2+\alpha_1^3\alpha_2^{-1}+\alpha_1^{-3}\alpha_2),
\end{equation}
\begin{multline}
\widehat c_2(\alpha)=2q^5-q^4-1-\widehat c_1(\alpha) +q^5(\alpha_1^2\alpha_2^{-1}+\alpha_1^{-2}\alpha_2+\alpha_1^3\alpha_2^{-2}+\alpha_1^{-3}\alpha_2^2\\
+\alpha_1\alpha_2^{-1}+\alpha_1^{-1}\alpha_2+\alpha_1+\alpha_1^{-1}+\alpha_1^3\alpha_2^{-1}+\alpha_1^{-3}\alpha_2+\alpha_2+\alpha_2^{-1}).
\end{multline}
The Eisenstein congruences mod $p$ of $\widehat c_1(\alpha)$ and $\widehat c_2(\alpha)$ provide those of any characteristic functions of double cosets $K\alpha K$.

The parameter $(a,b)\in (2\Z)^{\oplus 2}$ corresponds to the $L$-parameter $\tilde \psi_{a,b}$ over $\R$.
We say that $\omega_\alpha$ satisfies the Eisenstein congruence mod $p$ for weight $(a,b)$ if
\begin{equation}\label{eq:g2cong1}
\widehat c_1(\alpha) \equiv q^3-1+q^3(q^{a/2}+q^{-a/2}+q^{b/2}+q^{-b/2}+q^{(a+b)/2}+q^{-(a+b)/2}) \mod p,
\end{equation}
\begin{multline}\label{eq:g2cong2}
\widehat c_2(\alpha) \equiv 2q^5-q^4-1-\widehat c_1(\alpha)  +q^5(q^{\frac{a}{2}} + q^{-\frac{a}{2}} + q^{\frac{b}{2}} + q^{-\frac{b}{2}} \\
+ q^{\frac{a+b}{2}} + q^{-\frac{a+b}{2}} + q^{\frac{a-b}{2}} + q^{-\frac{a-b}{2}} +q^{a+\frac{b}{2}}+q^{-a-\frac{b}{2}} + q^{\frac{a}{2}+b} + q^{-\frac{a}{2}-b}    \mod p .
\end{multline}
When $(a,b)=(4,2)$, the congruences \eqref{eq:g2cong1} and \eqref{eq:g2cong2} are equivalent to
\begin{equation}\label{eq:g2cong1v2}
\widehat c_1(\alpha) \equiv q^6+q^5+q^4+q^3+q^2+q=\vol(K\lambda_1(\varpi)K) \mod p,
\end{equation}
\begin{equation}\label{eq:g2cong2v2}
\widehat c_2(\alpha) \equiv q^{10}+q^9+q^8+q^7+q^6+q^5=\vol(K\lambda_2(\varpi)K)    \mod p 
\end{equation}
cf. \cite[Section 7]{Gross}.
\begin{prop}\label{pr:g2cong}
Let $c$ and $d$ be odd natural numbers, and $\omega_{\beta_1}$ (resp. $\omega_{\beta_2}$) be the spherical representation of $\PGL(2,F)$ with the Satake parameter $\beta_1$ (resp. $\beta_2$).
Suppose that $\omega_{\beta_1}$ (resp. $\omega_{\beta_2}$) satisfies the Eisenstein congruence mod $p$ for weight $c+1$ (resp. $d+1$). 
Then, the spherical representation $\omega_\alpha$ of $G$ with the Satake parameter $\alpha=(\beta_1\beta_2,\beta_2^{-2})$ satisfies the Eisenstein congruence mod $p$ for weight $(c+d,c-d)$.
\end{prop}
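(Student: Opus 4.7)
The plan is a direct computational verification mirroring the proofs of the preceding Propositions~\ref{pr:yoshida} and \ref{pr:spsym}. The explicit formulas for $\widehat c_1(\alpha)$ and $\widehat c_2(\alpha)$ at $G_2$ are polynomial in $\alpha_1^{\pm 1}, \alpha_2^{\pm 1}$; substituting $\alpha_1 = \beta_1\beta_2$ and $\alpha_2 = \beta_2^{-2}$ yields a Laurent polynomial in $\beta_1, \beta_2$ that should be shown to be congruent mod $p$ to the weight $(c+d,c-d)$ right-hand sides of \eqref{eq:g2cong1} and \eqref{eq:g2cong2}. Conceptually, the substitution encodes the $L$-embedding from the endoscopic group $\SO(4,\C)=(\SL(2,\C)\times\SL(2,\C))/\{\pm 1\}$ into $G_2(\C)$ that sends $\psi_c\oplus\psi_d$ to $\tilde\psi_{c+d,c-d}$, as recalled in \cref{sec:g2str}.

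First I would unpack the hypotheses: the $\PGL(2)$ congruences \eqref{eq:eipgl2} for weights $c+1$ and $d+1$ say
\[
q^{c/2}(\beta_1+\beta_1^{-1})\equiv q^c+1,\qquad q^{d/2}(\beta_2+\beta_2^{-1})\equiv q^d+1 \pmod{\frakp}.
\]
Since $(p,q)=1$, these are equivalent to $\beta_i+\beta_i^{-1}\equiv q^{e/2}+q^{-e/2}\bmod\frakp$ (with $e=c$ or $d$). An easy induction using the Chebyshev-type recursion $\beta^{n+1}+\beta^{-n-1}=(\beta+\beta^{-1})(\beta^n+\beta^{-n})-(\beta^{n-1}+\beta^{-n+1})$ promotes these to
\[
\beta_1^n+\beta_1^{-n}\equiv q^{nc/2}+q^{-nc/2},\qquad \beta_2^n+\beta_2^{-n}\equiv q^{nd/2}+q^{-nd/2}\pmod\frakp,
\]
and multiplying gives $(\beta_1^a+\beta_1^{-a})(\beta_2^b+\beta_2^{-b})\equiv (q^{ac/2}+q^{-ac/2})(q^{bd/2}+q^{-bd/2})\bmod\frakp$ for all $a,b\in\Z_{\ge 0}$.

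Next I would expand $\widehat c_1(\alpha)$ and $\widehat c_2(\alpha)$ after the substitution, grouping the monomials $\beta_1^i\beta_2^j$ into Weyl-symmetric packets that can be evaluated via the displayed identities. The target expressions on the right of \eqref{eq:g2cong1} and \eqref{eq:g2cong2} for weight $(c+d, c-d)$ are built out of $q^{(c+d)/2}+q^{-(c+d)/2}$, $q^{(c-d)/2}+q^{-(c-d)/2}$, $q^c+q^{-c}$, and various crossed products, which are precisely the right-hand sides of the product identities above. In particular, since $\widehat c_1,\widehat c_2$ are invariant under the Weyl group $W_{G_2}$, one is free to replace $\alpha$ by any $W_{G_2}$-conjugate before matching monomials; the three cosets of $W_{\SL_2}\times W_{\SL_2}$ inside $W_{G_2}$ produce exactly the three pairs of exponents appearing in the formula for $\widehat c_1$.

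The main obstacle is the combinatorial bookkeeping: after substitution the monomials have the asymmetric form $\beta_1^i\beta_2^j$ rather than cleanly splitting as products of symmetric $\PGL(2)$-invariants, so one must reorganize them by averaging over the coset representatives of $W_{\SL_2\times \SL_2}\backslash W_{G_2}$ to recognize them as the product expressions above. Once this reorganization is in place the congruence is a finite check, as in the proofs of \cref{pr:yoshida,pr:spsym}. The verification of \eqref{eq:g2cong2} is formally heavier than that of \eqref{eq:g2cong1}, but has the same structure, and the additional constant terms $2q^5-q^4-1$ in $\widehat c_2$ match on both sides without using the hypotheses.
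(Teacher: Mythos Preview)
Your proposal is correct and takes essentially the same approach as the paper: the paper's proof consists of the single line ``This follows from a direct calculation, see \cite[Section 4.3]{lansky-pollack},'' and what you outline is precisely that direct calculation---substitute $\alpha=(\beta_1\beta_2,\beta_2^{-2})$ into the explicit Hecke-eigenvalue formulas for $\widehat c_1,\widehat c_2$, rewrite the result in terms of the $\PGL(2)$ invariants $\beta_i+\beta_i^{-1}$, and invoke the hypotheses \eqref{eq:eipgl2}. Your identification of the bookkeeping obstacle (that the monomials $\beta_1^i\beta_2^j$ must be regrouped via the $W_{G_2}$-symmetry before they split as products of $\PGL(2)$ invariants) is exactly the content of the Lansky--Pollack reference, where the restriction of the $7$- and $14$-dimensional $G_2$-characters along the $\SL_2\times\SL_2$ embedding is worked out.
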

\begin{proof}
This follows from a direct calculation, see \cite[Section 4.3]{lansky-pollack}
\end{proof}

If we choose $(c,d)=(3,1)$ or $(1,5)$ in \cref{pr:g2cong}, then one gets the Eisenstein congruence mod $p$ of $\omega_\alpha$ for weight $(4,2)$. (Note $\tilde\psi_{6,-4}\cong \tilde\psi_{4,2}$.)


\section{Lifting congruences} \label{sec:lifted-cong}


In this section, we explain how the results of the previous section
imply that various functorial lifts of unitary groups 
preserve Eisenstein congruences.  For simplicity we will restrict to the
case $F=\Q$, and sometimes assume $E$ has class number 1,
but this is not crucial.
We discuss lifts to $\SO(5)$ and $G_2$ in \cref{sec:lift-GL2,sec:lift-G2}.

\subsection{Eisenstein series and congruences for GL(2)}

Set $F=\Q$, and choose a natural number $N$, an integer $k\ge 2$, and a Dirichlet character $\chi$ on $\Z/N\Z$ such that $\chi(-1)=(-1)^k$.
We also denote by $\chi=\otimes_v \chi_v$ its lift on $F^\times\R_{>0}\bsl \A^\times$.
Let $S_k(N,\chi)$ denote the space of holomorphic cusp forms of weight $k$ and level $N$ (i.e., $\Gamma_0(N)$) with nebentypus $\chi$.
For simplicity, we put $S_k(N)=S_k(N,\chi)$ if $\chi$ is trivial. 
It is well known that each cusp form $f$ in $S_k(N,\chi)$ lifts to a function on $\GL(2,\Q)\bsl \GL(2,\A_F)$, and if $f$ is a Hecke eigenform, it generates 
an irreducible automorphic representation $\pi_f^{\GL(2)}=\otimes_v \pi_{f,v}^{\GL(2)}$ in $L^2_\mathrm{cusp}(\GL(2,F)\bsl \GL(2,\A_F),\chi)$.
The $L$-parameter of $\pi_{f,\inf}^{\GL(2)}$ is $\psi_{\frac{k-1}{2}}$, cf.\ \eqref{eq:gl2r}.
Let $N=N_1N_2$ with $(N_1,N_2)=1$, and choose Dirichlet characters $\chi_1$ on $\Z/N_1\Z$ and $\chi_2$ on $\Z/N_2\Z$ such that $\chi=\chi_1\chi_2$ on $\Z/N\Z$. 
For $k > 2$, we can define an Eisenstein series $E_{k,\chi_1,\chi_2}$ of weight $k$ and character $(\chi_1,\chi_2)$ by
\[
E_{k,\chi_1,\chi_2}(z)=\sum_{(m,n)\in\Z^2\setminus \{(0,0)\} } \chi_1(m)\chi_2(n)\, (mz+n)^{-k}.
\]
If $N \ne 1$, a regularized expression similarly gives a weight 2 Eisenstein
series $E_{2, \chi_1, \chi_2}$.
For $k \ge 2$, $E_{2,\chi_1, \chi_2}$ is a Hecke eigenform with $T_p$-
eigenvalue $\chi_1(p)p^{k-1} + \chi_2(p)$ for $p \nmid N$.
An eigenform $f \in S_k(N,\chi)$ is Hecke congruent to $E_{k,\chi_1,\chi_2} \bmod p$ away from a set of places $S$ if and only if $\pi_{f,v}^{\GL(2)}$ satisfies the local Eisenstein congruence mod $p$  in \eqref{eq:eigl2} for weight $k$ and character $(\chi_{1,v},\chi_{2,v})$ for all $v \not \in S$.
(Here we are fixing the a ring of integers and a prime ideal above $p$ 
for these congruences---we of course 
are not allowing the use of different prime ideals
above $p$ as we vary $v$ in the local congruence.)
In particular, for weight 2 and trivial characters $f \in S_2(N)$ being Hecke congruent to $E_{2,1,1} \bmod p$ corresponds to $\pi_f^{\GL(2)}$ is Hecke congruent to $\one \bmod p$.

\subsection{The Kudla lift and symmetric power lifts} \label{sec:kudla}

Choose a fundamental discriminant $D<0$, and set $F=\Q$ and $E=\Q(\sqrt{D})$.
Suppose that the class number of $E$ is one.
We write $\chi_D$ for the quadratic Dirichlet character on $\Z/D\Z$ corresponding to $E/F$.

We will consider the following two cases:
\begin{itemize}
\item[(I)] $k\ge 2$, $k$ is even, $N$ is arbitrary, and $\chi$ is trivial. 
\item[(II)] $k\ge 3$, $k$ is odd, $N$ is divisible by $D$, and $\chi=\chi_D$.
\end{itemize}

Choose a character $\omega=\otimes_v \omega_v$ on $\UU(1,F)\bsl \UU(1,\A_F)$ as follows.
In case  (I), take $\omega$ to be trivial.
In case (II), let $\Omega=\otimes_v \Omega_v$ denote a character on $E^\times\bsl \A_E^\times$ such that $\Omega|_{\A_F^\times}=\chi_D$ and $\Omega_\inf(z)=|z|/z$ $(\forall z\in E_\inf^\times =\C^\times)$, and we define $\omega=\otimes_v \omega_v$ to be the restriction of $\Omega$ to $\UU(1,\A)$.

The mapping $\UU(1)\times \SL(2) \ni (z,h) \mapsto zh\in G=\UU(2)$ gives rise to the isomorphism $(\C^1\times \SL(2,\R))/\{\pm 1\}\cong \UU(2,\R)$.
In addition, one has $G(\Q)\cap \prod_{v<\inf} G(\Z_v)=(\fO_E^\times \times \SL(2,\Z))/\{\pm 1\}$, where $G(\Z_v):=G(\Q_v) \cap M(2,\fO_E\otimes_\Z\Z_v)$.
Hence, each cusp form $f$ in $S_k(N,\chi)$ is identified with a smooth function $\varphi_{f,\R}$ on $\UU(2,\R)$ satisfying
\[
\varphi_{f,\R}(\gamma z g k)= \omega_\inf(z)\, \chi(\gamma) \, e^{ik\theta}\, \varphi_{f,\R}(g)
\]
for any $\gamma\in\Gamma_0(N)$, $z \in \C^1$, $g\in\SL(2,\R)$, $k=\begin{pmatrix} \cos\theta&  \sin\theta \\ -\sin\theta & \cos\theta \end{pmatrix} \in \SO(2)$.
Furthermore, $\varphi_{f,\R}$ can be lifted as a smooth function $\varphi_{f,\A}$ in $L^2_\mathrm{cusp}(G(F)\bsl G(\A_F),\omega)$ by the assumption that 
$E$ has  class number 1.
Let $\pi_f$ be an irreducible component of the
 automorphic representation of $G(\A_F)$
generated by $\phi_{f,\A}$.
In case (I) (resp.\ (II)), the $L$-parameter $\psi_\inf$ of $\pi_{f,\inf}$ is 
given by $\psi_\inf(z)=\diag((z/\overline{z})^{(k-1)/2},(z/\overline{z})^{(-k+1)/2})$ (resp.\ $\psi_\inf(z)=\diag((z/\overline{z})^{(k-2)/2},(z/\overline{z})^{-k/2})$). 
\begin{lem}\label{lem:1418}
Let $v$ be a finite place of $F$ which is non-split in $E$, and suppose that $G(F_v)$ is unramified, and $\pi_{f,v}$ and $\pi_{f,v}^{\GL(2)}$ are spherical.
Suppose that
\begin{itemize}
\item $(k_1,k_2)=((k-1)/2,(-k+1)/2)$ and $\chi_v$ is trivial in case (I); 
\item $(k_1,k_2)=((k-2)/2,-k/2)$ and $\chi_v$ is the unramified quadratic character on $F_v^\times$ in case (II).
\end{itemize}
Then, $\pi_{f,v}$ satisfies the Eisenstein congruence mod $p$ in \eqref{eq:eigunitary} for $(k_1,k_2)$ if $\pi_{f,v}^{\GL(2)}$ satisfies the Eisenstein congruence mod $p$ in \eqref{eq:eigl2} for weight $k$ and character $(\chi_v,1)$ or $(1,\chi_v)$.
\end{lem}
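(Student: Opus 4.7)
The plan is a direct local computation in three steps.

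\textbf{Step 1: Satake parameter relation.} At the inert place $v$, I claim the Satake parameters satisfy $\alpha_1 = \beta_1/\beta_2$ up to the Weyl involution $\alpha_1 \leftrightarrow \alpha_1^{-1}$. This follows from the compatibility of the lift $f \mapsto \varphi_{f,\A}$ with base change to $\GL(2, E_v)$. Writing the local $L$-parameter of $\pi_{f,v}$ as $\phi_v(\mathrm{Frob}_v) = \diag(x, y) \rtimes \sigma \in \GL(2,\C) \rtimes \Gal(E_v/F_v)$, one computes $\phi_v(\mathrm{Frob}_v^2) = \diag(x/y, y/x)$, so the base change of $\pi_{f,v}$ has $\GL(2,E_v)$ Satake parameter $(x/y, y/x)$ and $\alpha_1 = x/y$. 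Comparing with the base change of $\pi_{f,v}^{\GL(2)}$ to $\GL(2, E_v)$ (twisted by $\Omega_v$ in case (II); untwisted in case (I)), which has Satake parameter $(\chi_v(\varpi)\beta_1^2, \chi_v(\varpi)\beta_2^2)$, yields $\alpha_1 = \chi_v(\varpi) \beta_1^2 = \beta_1/\beta_2$ (using $\beta_1\beta_2 = \chi_v(\varpi)$ and $\chi_v^2 = 1$).

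\textbf{Step 2: Reduce to one generator.} By \cref{lem:unitsym2} with $N = 2$, the spherical Hecke algebra of $\UU(2, F_v)$ is generated over $\Z$ by $\widehat c_{(1)}(X) = q(X_1 + X_1^{-1})$, so it suffices to verify \eqref{eq:eigunitary} at $\lambda = (1)$. A direct computation from the given $(k_1, k_2)$ gives $\tilde k = k-1$ in both cases, and $k' = 0$ (so $(-1)^{k'} = 1 = \chi_v(\varpi)$) in case (I), $k' = -1$ (so $(-1)^{k'} = -1 = \chi_v(\varpi)$) in case (II). Hence the right-hand side of \eqref{eq:eigunitary} at $\lambda = (1)$ equals $q\chi_v(\varpi)(q^{k-1} + q^{1-k})$.

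\textbf{Step 3: Match the congruences.} From Step 1 and $\beta_1\beta_2 = \chi_v(\varpi)$,
\[
\alpha_1 + \alpha_1^{-1} = \frac{\beta_1^2 + \beta_2^2}{\beta_1 \beta_2} = \chi_v(\varpi)(\beta_1+\beta_2)^2 - 2.
\]
Squaring the hypothesis \eqref{eq:eigl2} for weight $k$ and character $(\chi_v, 1)$ (or equivalently $(1,\chi_v)$, since both square to the same expression using $\chi_v^2 = 1$) gives $q^{k-1}(\beta_1+\beta_2)^2 \equiv q^{2(k-1)} + 2\chi_v(\varpi) q^{k-1} + 1 \bmod p$, so $(\beta_1+\beta_2)^2 \equiv q^{k-1} + 2\chi_v(\varpi) + q^{1-k} \bmod p$. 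Substituting yields $q(\alpha_1 + \alpha_1^{-1}) \equiv q\chi_v(\varpi)(q^{k-1} + q^{1-k}) \bmod p$, matching the target from Step 2.

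The main obstacle is Step 1: pinning down the Satake parameter identification under the lift. The slickest route is the base change argument sketched above, invoking the fact that $\omega_v$ is trivial on $\UU(1,F_v)$ at inert $v$ (since $\UU(1,F_v) \subset \mathcal{O}_{E_v}^\times$) so the central character data is carried entirely by $\Omega_v$. Alternatively, one can compute the action of the Hecke operator attached to $K_v \varpi^{(1)} K_v$ on $\varphi_{f,\A}$ in terms of the $\GL(2, F_v)$ Hecke operators acting on $f$, using the class number one assumption on $E$ to pass between double coset decompositions.
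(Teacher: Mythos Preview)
Your Steps 2 and 3 are correct and cleanly executed: reducing to the single generator $\tilde Z_1 = q(X_1+X_1^{-1})$ via \cref{lem:unitsym2}, computing $\tilde k = k-1$ and $(-1)^{k'}=\chi_v(\varpi)$ in both cases, and then squaring the $\GL(2)$ congruence to obtain
\[
q(\alpha_1+\alpha_1^{-1}) \equiv q\,\chi_v(\varpi)\bigl(q^{k-1}+q^{1-k}\bigr) \bmod p
\]
is exactly right. (Minor point: $\widehat c_{(1)}$ differs from $\tilde Z_1$ by an integer constant, but this cancels on both sides, so your identification does no harm.)

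The soft spot is Step 1. You assert that the base change of $\pi_{f,v}$ from $\UU(2,F_v)$ to $\GL(2,E_v)$ agrees (up to the twist by $\Omega_v$) with the base change of $\pi^{\GL(2)}_{f,v}$ from $\GL(2,F_v)$, and from this deduce $\alpha_1=\beta_1/\beta_2$. The conclusion is correct, but the compatibility you invoke is not an immediate consequence of the paper's construction of $\pi_f$: that construction is ad hoc, via the archimedean isomorphism $(\C^1\times\SL(2,\R))/\{\pm1\}\cong\UU(2,\R)$ and strong approximation, not via any functorial transfer whose local behaviour at inert places is known in advance. So ``compatibility with base change'' needs its own argument here rather than being quoted.

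The paper's proof---which you correctly flag as the alternative---is precisely what fills this gap: one computes the action of the classical operator $\Gamma_0(N)\diag(q^{-1},q)\Gamma_0(N)$ on $f$ and tracks it through both the $\UU(2)$ and $\GL(2)$ adelizations. Concretely, on the $\GL(2)$ side this operator has eigenvalue $\chi_v(\varpi)^{-1}\bigl(q(\beta_1+\beta_2)^2-(q+1)\chi_v(\varpi)\bigr)$ (from $T_q^2=T_{q^2}+(q+1)S_q$), and on the $\UU(2)$ side it realizes $\widehat c_{(1)}(\alpha_1)=q(\alpha_1+\alpha_1^{-1})+(q-1)$; equating these yields $\alpha_1+\alpha_1^{-1}=\chi_v(\varpi)(\beta_1+\beta_2)^2-2$, which is exactly the identity your Step 3 uses. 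So the paper's route gives your key relation directly from the construction, after which your Steps 2--3 (or the equivalent) finish the proof.
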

\begin{proof}
Let $q$ denote the prime number corresponding to $v$.
The assertion can be proved by considering the action of the Hecke operator of $\Gamma_0(N)\diag(q^{-1},q)\Gamma_0(N)$ on $f$, and its lifts to $G(\A_F)$ and $\GL(2,\A_F)$.
\end{proof}

For split places, we only consider local Eisenstein congruences \eqref{eq:eigl2}  for trivial character. 

\begin{lem}\label{lem:2418}
Let $v$ be a finite place of $F$ split in $E$, and suppose that $\pi_{f,v}$ and $\pi_{f,v}^{\GL(2)}$ are spherical.
In case (I), $\pi_{f,v}$ satisfies the Eisenstein congruence mod $p$ in  \eqref{eq:eigln} for $((k-1)/2,(-k+1)/2)$ if and only if $\pi_{f,v}^{\GL(2)}$ satisfies the Eisenstein congruence mod $p$ in \eqref{eq:eigl2} for weight $k$.
In case (II), $\pi_{f,v}\boxtimes \omega_v^{-1}\circ\det$ satisfies the Eisenstein congruence for weight $k$ in \eqref{eq:eigl2} (not \eqref{eq:eigln}) if and only if $\pi_{f,v}^{\GL(2)}$ satisfies the Eisenstein congruence mod $p$ in \eqref{eq:eigl2} for weight $k$.
\end{lem}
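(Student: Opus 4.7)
The plan is to carry out a direct local comparison at the split place $v$, following the Hecke operator strategy used in the proof of \cref{lem:1418}. I would first fix the canonical isomorphism $\UU_{E/F}(2)(F_v) \cong \GL(2, F_v)$ arising from the splitting $E \otimes_F F_v \cong F_v \times F_v$; this identifies the hyperspecial maximal compact of $\UU(2)(F_v)$ with $\GL(2, \fO_v)$ and hence the spherical Hecke algebras and Satake parameters on the two sides. Then I would compute the eigenvalue of a generating local Hecke operator (e.g.\ the one attached to $\diag(\varpi,1)$) on the adelic lifts of $f$ to $\UU(2)(\A_F)$ and to $\GL(2, \A_F)$, working through the decompositions $\UU(2) = \UU(1) \cdot \SL(2)$ and $\GL(2) = \GL(1) \cdot \SL(2)$ that appear in the construction of $\varphi_{f, \A}$.

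In case (I), where both $\chi$ and $\omega$ are trivial, the common $\SL(2)$-data from $f$ is extended trivially through the central factor in both constructions, so under the split identification $\pi_{f,v}$ and $\pi_{f,v}^{\GL(2)}$ are literally the same representation of $\GL(2, F_v)$ with the same Satake parameter $(\alpha_1, \alpha_2)$. The congruence in \eqref{eq:eigln} applied to $\lambda = (1,0)$ and $\lambda = (1,1)$ with the $L$-parameter $((k-1)/2, (-k+1)/2)$ then unwinds, after multiplying through by the appropriate powers of $q$, to the two conditions that make up the weight $k$ trivial-character case of \eqref{eq:eigl2}; the equivalence is immediate.

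In case (II), at a split $v$ the quadratic character $\chi_{D,v}$ is trivial, so $\pi_{f,v}^{\GL(2)}$ has trivial central character, whereas $\omega_v$---obtained by restricting $\Omega_v$ to the subgroup $\{(a, a^{-1}) : a \in F_v^\times\} \subset E_v^\times$---is generally nontrivial and governs the center-action of $\pi_{f,v}$. Since the two adelic lifts share the same underlying $\SL(2, F_v)$-data from $f$ but extend through the center via these different characters, $\pi_{f,v}$ differs from $\pi_{f,v}^{\GL(2)}$ by a character twist coming from $\Omega_v$. Tensoring $\pi_{f,v}$ with $\omega_v^{-1} \circ \det$ is precisely the correction that makes the Hecke eigenvalues entering \eqref{eq:eigl2} match, so that the weight $k$ congruence on $\pi_{f,v} \otimes (\omega_v^{-1} \circ \det)$ and on $\pi_{f,v}^{\GL(2)}$ encode the same condition.

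The main obstacle will be the bookkeeping in case (II): explicitly determining how $\Omega_v$ restricts to $\UU(1)(F_v)$ under the split identification and confirming that the required tensor factor is exactly $\omega_v^{-1} \circ \det$ and not some other twist (e.g.\ involving a component of $\Omega_v$ directly). Once that identification is settled, each direction of the biconditional follows from the elementary fact that twisting a spherical $\GL(2)$ representation by an unramified character of $F_v^\times$ uniformly rescales both Satake parameters, which is compatible with the shape of \eqref{eq:eigl2} once the pair of characters $(\chi_1, \chi_2)$ indexing the congruence is adjusted to match the twisted central character.
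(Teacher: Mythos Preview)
Your proposal is correct and follows essentially the same route as the paper: both arguments compare the Hecke action on $f$ through the two adelic lifts, using that the $\SL(2)$-data is shared and that the only discrepancy comes from the central character $\omega$. The paper's version is slightly more concrete---it invokes the class number one hypothesis to pick a global prime $\varpi_w \in \fO_E$ with $q = \varpi_w\overline{\varpi_w}$ and then identifies the $\UU(2)$ Hecke operator $\Gamma_0(N)\diag(\varpi_w^{-1},\varpi_w^{-1}q)\Gamma_0(N)$ with the $\GL(2)$ Hecke operator $\Gamma_0(N)\diag(1,q)\Gamma_0(N)$ acting on $f$---but this is exactly the explicit realization of your local comparison.
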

\begin{proof}
Let $q$ denote the prime number corresponding to $v$.
Since we are now supposing the class number of $E$ is one, for each a place $w$ of $E$ dividing $v$, there exists a prime element $\varpi_w\in \fO=\fO_E$ of $\fO_w$ such that $q=\varpi_w\overline{\varpi_w}$.
Then, from the viewpoint of the action on $f$, the Hecke operator of $\Gamma_0(N)\diag(\varpi_w^{-1},\varpi_w^{-1}q)\Gamma_0(N)$ on $G$ is identified with that of $\Gamma_0(N)\diag(1,q)\Gamma_0(N)$ on $\GL(2)$.
Hence, the assertion can be proved by considering their lifts and the central character $\omega$.
\end{proof}

The following is a global congruence lift by the Kudla lift.
\begin{thm}\label{th:Kudlalift}
Suppose that the class number of $E$ is one.
Let $N=N_1N_2$ where $N_1$ and $N_2$ be natural numbers such that $(N_1,N_2)=1$.
Choose real-valued Dirichlet characters $\chi_1$ on $\Z/N_1\Z$ and $\chi_2$ on $\Z/N_2\Z$ such that $\chi_D=\chi_1\chi_2$ on $\Z/N\Z$, and a finite set $S$ of places of $F$ such that $\inf\in S$ and $G$ is unramified for $v\not\in S$. 
Assume that there exists a Hecke eigen cusp form $f$ in $S_3(N,\chi_D)$ such that $f$ is Hecke congruent to $E_{3,\chi_1,\chi_2}(z) \bmod p$.
Then, there exists an automorphic representation $\pi=\otimes_v \pi_v$ of $\UU(3)$ such that $\pi$ is Hecke congruent to $\one \bmod p$ and the $L$-parameter of $\pi_\inf$ is associated with $(1,0,-1)$.
\end{thm}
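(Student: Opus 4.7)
The strategy is to realize $\pi$ as the Kudla theta lift from $\UU(2) \times \UU(1)$ to $\UU(3)$ of the pair $(\pi_f, \omega^{-1})$, where $\pi_f$ is the automorphic representation of $\UU(2)$ attached to $f$ as in \cref{sec:kudla}, and then to verify local Eisenstein congruences place by place using \cref{pr:unicong1} and \cref{pr:glcong1}. First, using the identification $(\C^1 \times \SL(2,\R))/\{\pm 1\} \cong \UU(2,\R)$ together with the class number~$1$ hypothesis on $E$, lift $f \in S_3(N, \chi_D)$ to a cusp form $\varphi_{f,\A}$ on $\UU(2,F)\bsl \UU(2,\A)$ with central character $\omega$, and let $\pi_f$ denote an irreducible component of the representation it generates. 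Since $k=3$, $N$ is divisible by $D$, and $\chi = \chi_D$, case~(II) of \cref{sec:kudla} applies, so $\pi_{f,\infty}$ has $L$-parameter $\psi$ with $(k_1,k_2) = (1/2, -3/2)$. The hypothesis $f \equiv E_{3,\chi_1,\chi_2} \bmod p$ combined with \cref{lem:1418,lem:2418} then provides, at each unramified finite $v \notin S$, a local Eisenstein congruence of $\pi_{f,v}$ for the $L$-parameter $(1/2, -3/2)$, possibly after a twist by $\omega_v^{-1}\circ\det$ at split~$v$.

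Second, apply the Kudla theta lift for the dual pair $(\UU(2), \UU(1))$ with the splitting character determined by $\omega^{-1}$, producing an automorphic representation $\pi$ of $\UU(3)$. The local Kudla recipe at every place gives a formula for the Satake parameter of $\pi_v$ in terms of those of $\pi_{f,v}$ and $\omega_v^{-1}$, with the half-integer shifts dictated by the Weil representation and the chosen splitting character. At $\infty$, this recipe stacks the parameter $(1/2, -3/2)$ with that of $\omega_\infty^{-1}$ after the shift $\nu = 1/2$, yielding $L$-parameter $(1, 0, -1) = \rho_3$ for $\pi_\infty$, as claimed.

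Third, verify the local congruence at every $v \notin S$. At a non-split unramified finite place $v$, the group $\UU(1, F_v)$ is compact and equals $\UU(1, \fO_v)$, so the unramified $\omega_v^{-1}$ is trivial and satisfies the Eisenstein congruence for parameter $k' = (0)$. Apply \cref{pr:unicong1} Case~1 with $N=2$, $N'=1$, $u_N = u_{N'} = 0$ (so $\nu_N = 0$, $\nu_{N'} = 1/2$), input $k = (1/2, -3/2)$ for $\pi_{f,v}$, and input $k' = (0)$ for $\omega_v^{-1}$. Then $k + \nu_{N'} = (1, -1)$ and $k' + \nu_N = (0)$, which rearranges to $r = (1, 0, -1) \in \mathbb{L}_3$ satisfying the pairing condition of Case~1. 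The conclusion is that $\pi_v$ satisfies the local Eisenstein congruence mod $p$ for parameter $\rho_3$, which by \eqref{eq:congvol-Un} is equivalent to $\pi_v$ being Hecke congruent to $\one \bmod p$. At a split finite place, use the identification $\UU(n, F_v) \cong \GL(n, F_v)$ to reduce the problem to a $\GL(2) \times \GL(1) \to \GL(3)$ lift, and apply \cref{pr:glcong1} together with \cref{lem:2418}.

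The main obstacle is the construction of the Kudla lift itself: one must show that the theta lift of $\pi_f \boxtimes \omega^{-1}$ to $\UU(3)$ is nonzero and has the claimed local components at every place, including the archimedean one and the ramified finite ones. Non-vanishing in this weight range is essentially classical for Kudla's construction (working in the compact-at-infinity setting via the identification above), and the local Satake-parameter formulas for the unramified unitary theta correspondence are standard (via see-saw duality and explicit unramified local theta computations). Careful tracking of the splitting character and the half-integer shifts is needed to ensure the resulting local parameters feed correctly into \cref{pr:unicong1}, but no new input is required beyond what is already in \cref{sec:32}.
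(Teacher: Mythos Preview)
Your approach is essentially the paper's: construct the Kudla lift of $\pi_f$ to $\UU(3)$ and verify local Eisenstein congruences place by place via \cref{pr:unicong1}, \cref{lem:1418,lem:2418}, and the method of proof of \cref{pr:glcong1}. One minor point of precision: in the paper's formulation the $\UU(1)$-component of the endoscopic parameter is the \emph{trivial} representation, with $\Omega^{-1}$ entering as the datum for the $L$-embedding ${}^L\UU(2)\times{}^L\UU(1)\to{}^L\UU(3)$ (cf.\ \cite[Section 4.8]{rogawski}), rather than $\omega^{-1}$ serving as the $\UU(1)$-input---your computation agrees with this at unramified places (where $\omega_v$ is trivial anyway), but the distinction is worth keeping straight.
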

\begin{rem}
There is also an endoscopic lift from $S_2(N)$ to an automorphic representation of $\UU(3)$ with the same real component, but it does not preserve the Eisenstein congruences at places $v$ which are non-split over $E$ in general, see  \cref{sec:U(N)}. 
\end{rem}
\begin{proof}
By the Kudla lift, one has a lifting of $f$ to an automorphic form $F_f$ in $L^2_\mathrm{cusp}(\UU(3,F)\bsl \UU(3,\A_F))$, see \cite{Kudla} and \cite{MS}.
The automorphic representation of $F_f$ corresponds to the $L$-parameter of $\pi_f$ of $\UU(2)$ and the trivial representation of $\UU(1)$ via the $L$-embedding ${}^L\!\UU(2)\times {}^L\!\UU(1)\to {}^L\!\UU(3)$ with $\Omega^{-1}$, cf. \cite[Section 4.8]{rogawski}.
Therefore, the assertion follows from  \cref{pr:unicong1},  \cref{lem:1418,lem:2418}, and the proof of  \cref{pr:glcong1}. 
Notice that the condition for $\pi_{f,v}$ in  \cref{lem:2418} is different from  \cref{pr:glcong1}, but the local congruence lift can be proved by the same manner as in the proof of  \cref{pr:glcong1}. 
\end{proof}

The following tells us that symmetric power lifts, when they exist, preserve
Eisenstein congruences.  For non-CM forms cusp forms $f$, the
$m$-th symmetric power lift to $\GL(m+1)$ is known for $m \le 8$
(see \cite{clozel-thorne3} and references therein), which can be
transferred to $\UU(m+1)$ using \cite{mok}.  (In fact
\cite{clozel-thorne3} first constructs the representation on
a definite $\UU(m+1)$.)

\begin{thm}
Suppose that the class number of $E$ is one.
Assume $f \in S_2(N)$ is an eigenform which is Hecke congruent to 
$E_{2,1,1} \bmod p$.
If there exists an $m$-th symmetric power lift $\pi$ of $\pi_f$ from $\UU(2)$ to $\UU(m+1)$, then $\pi$ is Hecke congruent to $\one \bmod p$.
\end{thm}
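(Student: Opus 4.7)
The plan is to reduce the global congruence to a chain of local Eisenstein congruences and then invoke \cref{pr:unisymcong}. The hypothesis $f\equiv E_{2,1,1}\bmod p$ translates, via the discussion around \eqref{eq:eigl2tr}, into the statement that for every place $v$ outside some finite set $S$ the local component $\pi_{f,v}^{\GL(2)}$ satisfies the local Eisenstein congruence mod $p$ for weight $2$ and trivial character, which in the notation of \cref{sec:U(N)} corresponds to the $L$-parameter $(k_1,k_2)=(1/2,-1/2)$.

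Next I would transfer these congruences to $\UU(2,F_v)$ and then onward to $\UU(m+1,F_v)$. We are in Case~(I) of \cref{sec:kudla} with $k=2$, so: at an inert $v$, \cref{lem:1418} with $\chi_v=1$ yields that $\pi_{f,v}$ satisfies the local Eisenstein congruence mod $p$ for $(1/2,-1/2)=\rho_2$; at a split $v$, Case~(I) of \cref{lem:2418} gives the same conclusion. Applying the first assertion of \cref{pr:unisymcong} with $(k_1,k_2)=(1/2,-1/2)$ then produces a local Eisenstein congruence mod $p$ on $\UU(m+1,F_v)$ for the $L$-parameter
\[
(m k_1,\ (m-1)k_1+k_2,\ \ldots,\ m k_2) \;=\; \bigl(\tfrac{m}{2},\tfrac{m-2}{2},\ldots,-\tfrac{m}{2}\bigr) \;=\; \rho_{m+1},
\]
evaluated at the Satake parameter $(\alpha_1^m,\alpha_1^{m-2},\ldots,\alpha_1^{\delta_m})$, which is precisely the Satake parameter of the local $m$-th symmetric power lift $\pi_v$. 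By \eqref{eq:congvol-Un}, an Eisenstein congruence for $L$-parameter $\rho_{m+1}$ is exactly a congruence of spherical Hecke eigenvalues with those of $\one$, so $\pi$ is Hecke congruent to $\one\bmod p$ away from $S$, as required.

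The one point that requires real care is the identification of the local Satake parameters of the global symmetric power lift $\pi$ with the componentwise $m$-th symmetric powers of those of $\pi_f$; this is a defining property of such a lift, but it is precisely what bridges the abstract existence of $\pi$ and the explicit local computation in \cref{pr:unisymcong}. Beyond that, the argument is a straightforward concatenation of the local congruence lifts established in \cref{sec:local}.
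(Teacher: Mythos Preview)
Your approach is essentially the same as the paper's, which simply cites \cref{pr:glnsymcong,pr:unisymcong} together with \cref{lem:1418,lem:2418}. There is one genuine gap, however: you invoke \cref{pr:unisymcong} uniformly at all places, but that proposition is stated for $\UU_{E/F}(m+1,F)$ with $E$ an unramified quadratic extension of $F$, so it only applies at \emph{inert} places. At a split place $v$ one has $\UU(m+1,F_v)\cong\GL(m+1,F_v)$, the Satake parameter of $\pi_{f,v}$ is a pair $(\alpha_1,\alpha_2)$ rather than a single $\alpha_1$, and the local symmetric power has parameter $(\alpha_1^m,\alpha_1^{m-1}\alpha_2,\ldots,\alpha_2^m)$. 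Here the relevant local congruence result is \cref{pr:glnsymcong}, and the comparison with the trivial representation uses \eqref{eq:congvol} rather than \eqref{eq:congvol-Un}. Once you treat the split and inert places separately in this way, your argument matches the paper's proof exactly.
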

\begin{proof}
This follows from \cref{pr:glnsymcong,pr:unisymcong} and  \cref{lem:1418,lem:2418}. 
\end{proof}

\subsection{Congruences on $\UU(1)$}\label{sec:gcu1}

Since endoscopic lifts for $\UU(n)$ involve characters of $\UU(1)$, to
understand when these lifts preserve Eisenstein congruences, we need
to know about congruences of characters with different infinity types.  
Let $E$ be an imaginary quadratic field over $F=\Q$ of class number 1.

First we briefly discuss characters of even infinity type.
For any $u\in\Z$, one can define a Hecke character $\Omega=\otimes_v \Omega_v$ on $E^\times\A_F^\times\bsl\A_E^\times$ by
\[
\Omega_\inf(z)=(z/\overline{z})^{-u} .
\]
Suppose that $\Omega_\inf(\varepsilon)=1$ for any $\varepsilon\in\fO_E^\times$, i.e., suppose that $\frac{|\frako_E^\times|}2
| u$.
Then $\Omega$ is identified with a character on $\UU(1,F)\bsl \UU(1,\A)$ by the isomorphism $\mathbb{G}_m\bsl R_{E/F}(\mathbb{G}_m) \ni z \mapsto z/\overline{z}\in \UU(1)$ over $F$.
Note the corresponding character on $\C^1=\UU(1,\R)$ is $e^{i\theta}\mapsto e^{-iu\theta}$.
When a prime number $q$ is not split in $E$, $\UU(1,F_q)$ is compact, and so the spherical representation $\Omega_q$ is trivial.
When a prime number $q$ of $F$ is split in $E$, we choose a place $w|q$ and we take a prime element $\varpi_w\in \fO_E$ of $\fO_w$.
Then one has
\[
\Omega_q(\varpi_w) = \Omega_\inf(\varpi_w^{-1})=(\varpi_w/\overline{\varpi_w})^u=q^{-u} \times \varpi_w^{2u}.
\]
Therefore, if $\varpi_w^{2u}\equiv 1 \bmod p$ holds for the place $q$, then $\Omega$ satisfies the local Eisenstein congruence mod $p$ of $L$-parameter $-u$ for the split place $q$, that is, $q^{u}\Omega_w(\varpi_w)\equiv 1 \bmod p$.
This local congruence holds for all (split) $q$ whenever $(p-1) | 2u$
(in particular for any $u$ if $p=2, 3$), in which case we get that
$\Omega$ satisfies a global Eisenstein mod $p$ congruence with
archimedean parameter $-u$.

Next we give an example of a congruence for odd infinity type.
Let $E=\Q(\sqrt{-7})$, $\chi_{-7}$ denote the quadratic Dirichlet character on 
$\Z/7\Z$ corresponding to $E/F$.
A character $\Omega=\otimes_v \Omega_v$ on $E^\times\bsl \A_E^\times$ is defined by \[
\Omega_\inf(z)=(z/\overline{z})^{-u} \, |z|/z \;\; (z\in\C^\times)   , \quad \Omega_7(x+y\sqrt{-7})=\chi_{-7}(x) \;\; (x\in\Z_7^\times,  \;\; y\in\Z_7)  .
\]
Note that $\Omega|_{\A_F}=\chi_{-7}$ holds.
When a prime number $q_1$ of $F$ is non-split in $E$ and not $7$, we have $\Omega_{q_1}(q_1)=-1$. 
Further, for each prime $q$ split in $E$, one has
\[
\Omega_q(\varpi_w)=q^{-u-\frac12} \times \Omega_7(\varpi_w) \times \varpi_w^{2u+1}.
\]
Since $a^3\equiv \chi_{-7}(a) \bmod 7$ $(a\in(\Z/7\Z)^\times)$, $\Omega_q$ satisfies the local Eisenstein congruence mod $7$ of $L$-parameter $-u-\frac{1}{2}$ if $2u+1\equiv 3 \bmod 6$.

\subsection{Endoscopic lifts for $\UU(n)$}\label{sec:20190530lift}

Here we briefly discuss elliptic endoscopic lifts of congruences
for $\UU(n)$.
Let $E$ be an imaginary quadratic field over $F=\Q$, and $n_1$, $n_2$ be nutural numbers.
Suppose that $n_2$ is even.
We set $n=n_1+n_2$ and consider endoscopic lifts from $\UU(n_1)\times \UU(n_2)$ to $\UU(n)$ for $E/F$.
Let $\pi_1=\otimes_v \pi_{1,v}$ be an automorphic representation of $\UU(n_1)$ such that $\pi_{1,\inf}$ corresponds to the $L$-parameter $k_1=\rho_{n_1}$ (see \eqref{eq:rhotrivial} and Section \ref{sec:20190531} for $\rho_{n_1}\in \mathbb{L}_{n_1}$).
Choose an integer $u$ if $n_1$ is odd.
Let $\pi_2=\otimes_v \pi_{2,v}$ be an automorphic representation of $\UU(n_2)$ such that $\pi_{2,\inf}$ corresponds to the $L$-parameter
\[
k_2=\begin{cases} \frac{1}{2}(n-2-2u,\dots, n_1-2u,-n_1-2-2u,\dots, -n-2u) & \text{if $n_1$ is odd,} \\ \frac{1}{2}(n-1,\dots,n_1+1,-n_1-1,\dots, -n+1) & \text{if $n_1$ is even.} \end{cases}
\]
Assume that $\pi_1$ and $\pi_2$ are associated with global generic parameters in the sense of \cite[Section 2.3]{mok}.
When $n_1$ is odd, choose a character $\chi=\otimes_v \chi_v$ on $E^\times\bsl \A_E^\times$ such that $\chi_\inf(z)=(z/\overline{z})^u z/|z|$ and $\chi|_{\A_F^\times}$ is the quadratic character of $F^\times\bsl \A_F^\times$ corresponding to $E/F$.
When $n_1$ is even, we set $\chi=1$. 
We also choose a finite set $S$ of places of $F$ containing $\infty$
such that $\pi_{1,v}$, $\pi_{2,v}$, and $\chi_v$ are unramified for any $v\notin S$.
In the case that $n_1$ is odd, we suppose that $\chi_v(\varpi_v) q_v^{-u-\frac12}\equiv 1  \bmod p$ holds for each place $v\not\in S$ when $v$ is split in $E$, and $\chi_v(\varpi_v)\equiv -1 \bmod p$ holds for each place $v\not\in S$ when $v$ is non-split in $E$, see  \cref{sec:gcu1} for an example.
Assume that $\pi_{1,v}$ (resp. $\pi_{2,v}$) satisfies the Eisenstein congruences mod $p$ of $L$-parameter $k_1$ (resp. $k_2$) for any $v\not\in S$.
Then, there exists an automorphic representation $\pi$ of $\UU(n)$, which is an endoscopic lift of $\pi_1$ and $\pi_2$ by the $L$-embedding of $\chi$ (see \cite{mok}), and it follows from \cref{pr:glcong1,pr:unicong1} that $\pi$ is Hecke congruent to $\one$ mod $p$.

One can also consider global congruence lifts as above when both $n_1$ and 
$n_2$ are odd.  However it is more complicated due to character considerations
(cf.\ \cref{pr:unicong1}) and we do not discuss it here. 

On the other hand, it is easier to lift congruences from
from $\UU(n-2)\times \GL(2)$, generalizing \cref{th:Kudlalift} and the above argument with $n_1=n-2$ and $n_2=2$. 

\begin{thm}\label{thm:20190601}
Let $\rho_m$ denote the $L$-parameter $\psi$ of $\UU_{\C/\R}(m,\R)$ for $l_1=\cdots=l_m=0$ in \eqref{eq:LDS}.
Let $n \ge 2$, and $\pi_1$ be an automorphic representation of $\UU(n-2)$ which has the $L$-parameter $\rho_{n-2}$ at $\infty$, is associated with a global generic parameter, and is Hecke congruent to $\one$ mod $p$.
Let $f \in S_n(N,\chi_D^n)$ be an eigenform which is Hecke congruent to $E_{n,1,\chi_D^n}(z) \bmod p$.
Then, there is an automorphic representation $\pi$ of $\UU(n)$ such that $\pi$ is an endoscopic lift of $\pi_1$ and $\pi_f$, $\pi$ has $L$-parameter $\rho_n$ at $\infty$, and $\pi$ is Hecke congruent to $\one$ mod $p$.
\end{thm}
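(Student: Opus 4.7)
The plan is to realize $\pi$ as the elliptic endoscopic lift attached to the pair $(\pi_1, \pi_f)$ under the $L$-embedding ${}^L\UU(n-2) \times {}^L\UU(2) \to {}^L\UU(n)$ from \cref{sec:20190530lift}, taking $n_1 = n-2$ and $n_2 = 2$, and then to argue that every local factor $\pi_v$ satisfies the local Eisenstein congruence for $\rho_n$ at each unramified place. This is a concrete instance of the general lift in \cref{sec:20190530lift} specialized to $n_2 = 2$, and it generalizes \cref{th:Kudlalift} (which corresponds to $n = 3$, $n_1 = 1$).

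First I would package $f$ as an automorphic representation $\pi_f$ on $\UU(2,\A)$ following the construction of \cref{sec:kudla}: case (I) applies when $n$ is even (so $\chi_D^n$ is trivial) and case (II) applies when $n$ is odd (so $\chi_D^n = \chi_D$). The resulting $\pi_{f,\infty}$ has the archimedean $L$-parameter computed in \cref{sec:kudla}, namely with exponents $\pm (n-1)/2$ in case (I) and $(n-2)/2, -n/2$ in case (II), while $\pi_{1,\infty}$ has parameter $\rho_{n-2}$ with exponents $(n-3)/2, (n-5)/2, \ldots, -(n-3)/2$. Combining these with the endoscopic recipe of \cref{sec:20190530lift} produces an archimedean parameter with exponents $(n-1)/2, (n-3)/2, \ldots, -(n-1)/2$, which is precisely $\rho_n$: in case (I) this needs no character ($\chi = 1$, matching $n_1$ even), whereas in case (II) we take $u = 0$ and a Hecke character $\Omega$ of $E$ with $\Omega_\infty(z) = z/|z|$ and $\Omega|_{\A_F^\times} = \chi_D$, so that the twist $\pi_f \boxtimes \omega^{-1}\circ\det$ supplies the half-integer shift needed to move $(n-2)/2, -n/2$ to $(n-1)/2, -(n-1)/2$. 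The existence of the endoscopic lift $\pi$ on $\UU(n)$ with the desired archimedean parameter then follows from \cite{mok}, provided the genericity hypothesis on $\pi_1$ and the analogous fact for $\pi_f$ hold.

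For the congruence, at every finite place $v$ outside the ramification locus, the assumption that $\pi_1 \equiv \one \bmod p$ yields the local Eisenstein congruence \eqref{eq:eigunitary} for $\pi_{1,v}$ at parameter $\rho_{n-2}$. Separately, the hypothesis $f \equiv E_{n,1,\chi_D^n} \bmod p$ together with the discussion of \cref{sec:GL(2)} gives the weight-$n$ local Eisenstein congruence \eqref{eq:eigl2} for $\pi_{f,v}^{\GL(2)}$; applying \cref{lem:1418} at inert places and \cref{lem:2418} at split places (with the character twist by $\omega^{-1}\circ\det$ in case (II)) transports this to the local Eisenstein congruence for $\pi_{f,v}$ on $\UU(2,F_v)$ at the parameter extracted above. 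Finally, \cref{pr:unicong1} --- Case 1 at inert places, Case 2 at split ones --- combines the two local congruences for $\pi_{1,v}$ and $\pi_{f,v}$ into the local Eisenstein congruence for $\pi_v$ at $\rho_n$, using the congruences $\chi_v(\varpi_v) q_v^{-u-1/2}\equiv 1 \bmod p$ and $\chi_v(\varpi_v)\equiv -1 \bmod p$ supplied by the choice of $\Omega$ as in \cref{sec:gcu1}. This yields $\pi \equiv \one \bmod p$.

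The main obstacle is the bookkeeping in case (II): one has to verify that the choice of $\Omega$ can actually be made so that the required character-side congruences of \cref{sec:20190530lift} hold at \emph{every} finite place $v \notin S$ simultaneously (up to replacing $S$ by a slightly larger set), and that the sign pattern $(-1)^{k'}$ appearing in \eqref{eq:eigunitary} for the composite $L$-parameter is compatible with the sign pattern coming from $\pi_1$, $\pi_f$, and the endoscopic $L$-embedding. Once these consistency checks are dispatched --- essentially following the $n=3$ model of \cref{th:Kudlalift} but with $\pi_1$ of higher rank --- the rest of the argument is routine application of \cref{pr:unicong1} and \cref{lem:1418,lem:2418}.
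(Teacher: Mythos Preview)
Your approach is essentially the one the paper has in mind: the theorem is stated without a separate proof because it is meant to follow from the discussion in \cref{sec:20190530lift} with $n_1=n-2$, $n_2=2$, together with the template of \cref{th:Kudlalift}. Two points deserve correction or clarification.

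First, your citation of \cref{pr:unicong1} is garbled. The split/inert dichotomy is not what distinguishes Case~1 from Case~2 there; those cases depend on whether $N$ and $N'$ are both odd. Here $N'=2$ is even, so at every non-split place you are in Case~1 of \cref{pr:unicong1}. At split places $\UU(n,F_v)\simeq\GL(n,F_v)$ and you must instead invoke \cref{pr:glcong1} (or rather its proof, as in the last sentence of the proof of \cref{th:Kudlalift}), not any case of \cref{pr:unicong1}.

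Second, your ``main obstacle'' is overstated. The paper explicitly calls this situation \emph{easier} than the general $\UU(n_1)\times\UU(n_2)$ lift precisely because, when the $\UU(2)$ factor arises from $f$ via the construction of \cref{sec:kudla}, the character $\Omega$ is already built into $\pi_f$ and its congruence with $E_{n,1,\chi_D^n}$. One does not need to separately impose the congruences $\chi_v(\varpi_v)q_v^{-u-1/2}\equiv 1$ and $\chi_v(\varpi_v)\equiv -1$ from \cref{sec:20190530lift}; those were needed for a generic $\pi_2$ on $\UU(n_2)$, not for $\pi_f$. At split places in case~(II), \cref{lem:2418} delivers the congruence for $\pi_{f,v}\boxtimes\omega_v^{-1}\circ\det$ in the form \eqref{eq:eigl2} rather than \eqref{eq:eigln}, which does not literally match the input of \cref{pr:glcong1}; the paper handles this (see the end of the proof of \cref{th:Kudlalift}) by observing that the elementary-symmetric-function argument proving \cref{pr:glcong1} goes through unchanged. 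You should phrase the split-place step the same way rather than appealing to \cref{sec:gcu1}, which only furnishes isolated examples.
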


\cref{thm:main-Un} implies the existence of $\pi_1$ for $n-2$ prime in the above theorem. 
The assumption of a global generic parameter ensures that the endoscopic lift has the $L$-parameter $\rho_n$ at $\infty$.

\subsection{The Ikeda lift}
Finally, we discuss congruence lifts obtained from the Ikeda lifts to $\UU(2m)$, cf. Ikeda \cite{ikeda} and Yamana \cite{yamana}.
The global $A$-parameters of the Ikeda lifts are of the form $\mu\boxtimes \nu$, where $\mu$ is the base change lift of a cuspidal automorphic representation of $\GL(2,\A_F)$ whose component at $\infty$ is a holomorphic discrete series, and $\nu$ is the $m$-dimensional irreducible
representation of $\SU(2)$.
Hence, almost all local conponents of the Ikeda lifts are non-tempered.
\begin{thm}\label{th:oddunitikeda}
As in \cref{sec:kudla}, assume $F=\Q$, $E=\Q(\sqrt{D})$, and the class number of $E$ is one.
Choose natural numbers $N$ and $m$.
Let $f$ be a Hecke eigen cusp form in $S_{2m}(N)$ which is Hecke congruent to 
$E_{2m,1,1} \bmod p$.
Then, there exists a cuspidal automorphic representation $\pi=\otimes_v \pi_v$ of $\UU(4m-2)$ such that $\pi$ is Hecke congruent to $\one \bmod p$ and $\pi_\inf$ is the holomorphic discrete series whose the $L$-parameter is associated with $\rho_{4m-2}$.
\end{thm}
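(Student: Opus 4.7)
The plan is to apply the unitary Ikeda lift of \cite{ikeda} and \cite{yamana} to $f$, which produces a cuspidal automorphic representation $\pi$ of $\UU(4m-2,\A_F)$ whose global $A$-parameter has the form $\mu \boxtimes \nu_{2m-1}$, where $\mu$ is the base change of $\pi_f^{\GL(2)}$ to $\GL(2,\A_E)$ and $\nu_{2m-1}$ is the $(2m-1)$-dimensional irreducible representation of $\SU(2)$. The existence and cuspidality of $\pi$ are provided by that work; it remains to verify the archimedean $L$-parameter and the global Hecke congruence.

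For the archimedean component: since $f$ has weight $2m$, $\mu_\infty$ has Hecke exponents $\pm\tfrac{2m-1}{2}$ (cf.\ $\psi_{(2m-1)/2}$ in \eqref{eq:gl2r}), while $\nu_{2m-1}$ contributes Arthur $\SL(2)$-shifts $m-1, m-2, \dots, -(m-1)$. Summing and sorting in decreasing order yields
\[
\tfrac12(4m-3,\, 4m-5, \dots, 1,\, -1, \dots, -(4m-3)) = \rho_{4m-2},
\]
so $\pi_\infty$ has trivial Hecke parameters $(l_1,\dots,l_{4m-2}) = (0,\dots,0)$ in the notation of \eqref{eq:LDS}, and by the archimedean description of the Ikeda lift it is the holomorphic discrete series attached to $\rho_{4m-2}$ (\cref{sec:20190531}).

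For the global congruence: at each finite unramified place $v$, I would identify $\pi_v$ as the spherical representation of $\UU(4m-2,F_v)$ with Satake parameter $(\alpha_1 q_v^{\rho_{2m-1}})$ in the notation of \cref{pr:unisymcong}, where $(\alpha_1)$ is the $\UU(2,F_v)$-Satake parameter attached to $\pi_{f,v}$ via the Kudla construction of \cref{sec:kudla}. Because $f \equiv E_{2m,1,1} \bmod p$, the local Eisenstein congruence \eqref{eq:eigl2} for weight $2m$ with trivial characters holds for $\pi_{f,v}^{\GL(2)}$, and \cref{lem:1418,lem:2418} (case (I)) convert this into the $\UU(2,F_v)$ Eisenstein congruence for $(k_1,k_2) = (\tfrac{2m-1}{2}, -\tfrac{2m-1}{2})$. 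The second assertion of \cref{pr:unisymcong} applied with $m$ replaced by $2m-1$ is then available—its hypothesis $k_1 - k_2 \ge 2m-1$ is met with equality—and produces the local Eisenstein congruence for $\pi_v$ with $L$-parameter
\[
\bigl((k_1,\dots,k_1) + \rho_{2m-1},\; (k_2,\dots,k_2) + \rho_{2m-1}\bigr) = \rho_{4m-2},
\]
which is the required congruence mod $p$ with $\one$ at $v$.

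The main obstacle is the precise identification of the $\UU(4m-2,F_v)$-Satake parameter of $\pi_v$ in the normalized form $(\alpha_1 q_v^{\rho_{2m-1}})$ at inert places. At split $v$ we simply have $\UU(4m-2,F_v) \cong \GL(4m-2,F_v)$ and the Satake parameter is the straightforward tensor of that of $\mu_v$ with $(q_v^{(2m-2)/2}, \dots, q_v^{-(2m-2)/2})$. At inert $v$ one combines Yamana's explicit local description of the Ikeda lift with unramified base change compatibility to rewrite the $\UU(4m-2)$-parameter in the exact form required by \cref{pr:unisymcong}; this bookkeeping, rather than any new analytic input, is the delicate step.
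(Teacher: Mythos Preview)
Your approach is essentially the same as the paper's: construct $\pi$ via the Ikeda--Yamana lift and then propagate the local Eisenstein congruences using \cref{lem:1418,lem:2418} together with the second assertions of \cref{pr:glnsymcong} (split places) and \cref{pr:unisymcong} (inert places), applied with $m$ replaced by $2m-1$. Two minor points where the paper is slightly more precise: it invokes \cite[Theorem 1.1]{yamana} to first obtain a representation on $\mathrm{GU}(4m-2)$ containing a nonzero Hermitian cusp form and then takes an irreducible constituent of its restriction to $\UU(4m-2)$ with holomorphic discrete series component (rather than asserting existence on $\UU$ directly); and at split places it simply cites \cref{pr:glnsymcong}, which already packages the Satake-parameter bookkeeping you describe, so your ``main obstacle'' at inert places is likewise absorbed by \cref{pr:unisymcong} once the local components of the lift are identified.
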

\begin{proof}
By \cite[Theorem 1.1]{yamana}, we get an automorphic representation $\pi_1$ of $\mathrm{GU}(4m-2)$ which contains a nonzero Hermitian cusp form in the lift of $f$.
We can choose a nonzero irreducible constitutent $\pi$ of the restriction of $\pi_1$ to $\UU(4m-2)$, whose component is the holomorphic discrete series.
Hence, this theorem follows from \cref{pr:glnsymcong,pr:unisymcong} and \cref{lem:1418,lem:2418}.
\end{proof}



\section{Unitary groups} \label{sec:unitary}


Let $E/F$ be a CM extension of number fields, and $G' = \UU(n)$ be the
associated quasi-split unitary group over $F$ in $n$ variables
defined in \eqref{eq:unitarygroup}.

Let $G$ be an inner form of $G'$.  We can realize $G$ as follows.
There exist (i) a central simple algebra $A/E$ of degree $n$, i.e., $\dim_E A=n^2$,
and (ii) an involution $\alpha \mapsto \alpha^*$ of $A$ of the second kind 
with $\alpha^* = \bar \alpha$ for $\alpha \in E$, such that 
\[
G = \{ g \in A^\times : g^* g = 1 \}.
\]  
We remark that $G$ is the automorphism
group of the Hermitian form $\langle \alpha, \beta \rangle = \alpha^* \beta$ on
$A$.  
The center of $G$ is $E^\times \cap G$ (viewing $E^\times$ as the algebraic group 
$\mathrm{Res}_{E/F} \mathbb G_m$), 
which we may identify with $\UU(1) = E^1 = \{ a \in E^\times : a
\bar a = 1 \}$.

To specify $A$ and/or $*$ below, we will also denote
$G = \UU_A(n) = \UU_{A,*}(n)$.  (The isomorphism class depends on both
$A$ and $*$, but as we will typically only be concerned about specifying $A$ 
we often just write $\UU_A(n)$.)  Landherr's theorem on the classification of
involutions of the second kind tells us that if $v$ is inert or ramified in $E/F$, then 
$A_v$ is split.
Moreover, if $v$ splits in $E/F$ as $v = ww'$, then $*$ interchanges the factors
of $A_w$ and $A_{w'}$, giving an isomorphism $A_w \simeq A_{w'}^{\mathrm{opp}}$ and
$G_v = \UU_A(n, F_v) \simeq A_w^\times \simeq A_{w'}^\times$.

We will now assume $G = \UU_{A,*}(n)$ is a definite unitary group, i.e.,
the associated Hermitian form is totally definite.  
This means $G_v$ is compact for all $v | \infty$.
Note that one can make a definite involution on $A$ from any involution by conjugation (see \cite[Remark 10.6.11]{Sch}).

Let $\det$ denote the reduced norm on $A$.  By restriction to $G = \UU_A(n)$,
we may view $\det$ as a homomorphism of algebraic groups $\det: G \to \UU(1)$.
The derived subgroup $\SU_A(n)$ of $G$ is the kernel of 
$\det$, so 
any 1-dimensional automorphic representation of $G(\A)$ factors through $\det$.

\begin{lem} \label{det-lem}
The map $\det: G(k) \to \UU(1,k)$ is a surjective map of rational
points for any $k=F_v$ and for $k=F$.
\end{lem}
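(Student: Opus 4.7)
The plan is to use the short exact sequence of algebraic $F$-groups
\[ 1 \to \SU_A(n) \to G \xrightarrow{\det} \UU(1) \to 1 \]
together with vanishing results for $H^1$ of simply connected semisimple groups. Applying Galois cohomology to this sequence at any field $k$ yields the exact sequence
\[ G(k) \xrightarrow{\det} \UU(1,k) \xrightarrow{\delta} H^1(k, \SU_A(n)), \]
so it suffices to control the obstruction class $\delta(c)$ for $c \in \UU(1,k)$.

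First I would handle the local case $k = F_v$. If $v$ is non-archimedean, then $\SU_A(n)$ is a simply connected semisimple group over the non-archimedean local field $F_v$, so Kneser's theorem gives $H^1(F_v, \SU_A(n)) = 1$; hence $\delta = 0$ and $\det$ is surjective on $F_v$-points. If $v$ is archimedean, then since $E/F$ is CM and $G$ is definite at $v$, we have $E_v \simeq \C$, $F_v \simeq \R$, and $G(F_v)$ is the compact group $\UU(n)$. Here the reduced norm coincides with the usual determinant $\UU(n) \to \UU(1) = S^1$, which is manifestly surjective (for instance, $\diag(c,1,\dots,1) \mapsto c$).

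For the global case $k = F$, given $c \in \UU(1,F)$, the previous paragraph shows that the local class $\delta_v(c) \in H^1(F_v, \SU_A(n))$ vanishes at every place $v$. The Hasse principle for simply connected semisimple groups over number fields (due to Kneser and Harder, completed by Chernousov in the $E_8$ case; see Platonov--Rapinchuk) asserts that
\[ H^1(F, \SU_A(n)) \hookrightarrow \prod_{v} H^1(F_v, \SU_A(n)), \]
and indeed the map is bijective onto the product over archimedean places. Thus $\delta(c) = 0$ in $H^1(F, \SU_A(n))$, so $c \in \det(G(F))$, as desired.

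The only nontrivial inputs are Kneser's local vanishing theorem and the global Hasse principle for simply connected classical groups; both are standard for a group of type $A_{n-1}$ like $\SU_A(n)$. Once these are invoked, the argument is a formal diagram chase, and no explicit construction of a preimage is required. The one place where a concrete computation is needed is the archimedean surjectivity $\UU(n) \twoheadrightarrow S^1$, which is trivial.
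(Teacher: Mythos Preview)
Your proof is correct, but it takes a different route from the paper's. The paper argues directly at the level of rational points: at a split place $v$, where $G(F_v)\simeq A_w^\times$ for a central simple $F_v$-algebra $A_w$, surjectivity of $\det$ onto $\UU(1,F_v)\simeq F_v^\times$ is just the classical surjectivity of the reduced norm over a $p$-adic field; at a non-split place, $A_v$ is split so $G(F_v)$ is an honest unitary group, and one sees surjectivity onto $E_v^1$ by restricting $\det$ to the diagonal torus. The passage from local to global is then a one-line citation of the Hasse principle for the norm map of unitary groups \cite[Theorem~6.28]{platonov-rapinchuk}. By contrast, you package everything cohomologically via the exact sequence $1\to\SU_A(n)\to G\to\UU(1)\to 1$, invoking Kneser's vanishing theorem at finite places and the Hasse principle for $H^1$ of simply connected groups globally. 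Your approach is more uniform (no case split on whether $v$ is split in $E/F$) and makes the obstruction transparent, at the cost of heavier input; the paper's approach is more elementary and hands-on, exhibiting explicit preimages locally rather than showing an obstruction class vanishes.
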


\begin{proof} By the Hasse principle for the norm map of unitary groups
(\cite[Theorem 6.28]{platonov-rapinchuk}), the result for $F$ follows
from the result for each $F_v$.  If $v$ is split in $E_v/F_v$,
the local result follows from surjectivity of reduced norm for central
simple algebras over $p$-adic fields.  
Otherwise, $G(F_v)$ is an honest unitary group, and it
is clear $\det$ restricted to the diagonal torus surjects onto $\UU(1,k)$.
\end{proof}

\subsection{Endoscopic classification}\label{sec:classification}

Here we briefly explain certain aspects of the endoscopic classification
for unitary groups as asserted in \cite[Theorem* 1.7.1]{KMSW},
and refer the reader to \emph{op.\ cit.}\ and \cite{mok}
for more precise details.

The endoscopic classification was treated by
Rogawski \cite{rogawski} for $\UU(3)$ and its inner forms 
(as well as quasi-split $\UU(2)$), by Mok \cite{mok} for quasi-split
$\UU(n)$, and by Kaletha--Minguez--Shin--White \cite{KMSW}
for inner forms of $\UU(n)$ under some hypotheses.  (See 
\cite[Section 2.6]{mok} for a summary of some intermediary results.)
These latter results rely on the stabilization of the twisted trace formula
which was established in \cite{stab:final}, and also require the
general weighted fundamental lemma which is expected to be
finished by Chaudouard and Laumon.
Work in progress of Kaletha--Minguez--Shin is expected to complete
the proof of \cite[Theorem* 1.7.1]{KMSW}, and we will
assume this in our subsequent congruence results.  

In fact the cases that we need are in some sense easier than cases already
established in the literature (e.g., \cite{harris-taylor}, \cite{labesse},
\cite{shin}, \cite{mok}), as the only non-quasi-split forms we
consider are certain compact forms, where the trace formula analysis is
simpler and one does not have endoscopic contributions.  
However, to our knowledge the cases we use (definite unitary
groups over division algebras) have not been explicitly dealt with in the literature.

To describe the classification, in this section we let
$G$ be an arbitrary inner form of $G' = \UU(n)$.
In particular, we allow $G=G'$.

As in \cite{mok}, the set of formal global parameters for $G'$ is the
set $\Psi(G')$ consisting of formal sums (up to equivalence)
$\psi = \psi_1 \boxplus
\dots \boxplus \psi_m$ of formal tensors $\psi_i = \mu_i \boxtimes \nu_i$,
where $\mu_i$ is a cuspidal automorphic representation of
$\GL_{n_i}(\A_E)$ and $\nu_i$ is the $r_i$-dimensional irreducible
representation of $\SU(2)$, such that $\sum n_i r_i = n$ and
the parameter $\psi$ is conjugate self-dual.  If $m=1$, we call $\psi$ simple.  If each $\nu_i = 1$, we call $\psi$ generic.
Set $\dim \psi_i = n_i r_i$.

According
to the Moeglin--Waldspurger classification, $\mu_i \boxtimes \nu_i$
corresponds to a discrete automorphic representation $\sigma_{\psi_i}$
of $\GL_{n_ir_i}(\A_E)$, which is cuspidal if $r_i=1$.
Thus by Langlands theory of Eisenstein series, $\psi$ corresponds
to an automorphic representation $\sigma_\psi$ of $\GL_n(\A_E)$.
Let $\Psi_2(G')$ denote the subset of square-integrable parameters,
which are of the form $\psi = \psi_1 \boxplus \dots \boxplus \psi_m$
where the $\psi_i$'s are all distinct and each $\psi_i$ is conjugate
self-dual.  Let $\Psi_2(G', \mathrm{std})$ be the subset of
$\Psi_2(G')$ which ``factor through'' the standard $L$-embedding
$\mathrm{std}: {}^LG' \to {}^L \mathrm{Res}_{E/F}(G')$
(this set is denoted $\Psi_2(G', \xi_1)$ in \cite[Definition 2.4.5]{mok}).

Let $\psi = \psi_1 \boxplus \dots \boxplus \psi_m \in \Psi_2(G')$.
One associates to $\psi$ a component group 
$\mathcal S_\psi \simeq (\Z/2\Z)^{m'}$ 
(denoted $\bar{\mathcal S}_\psi$ in \cite{KMSW}), and
a canonical sign character $\epsilon_\psi$ of $\mathcal S_\psi$.  
Here $0 \le m' \le m$---see \cite[(2.4.14)]{mok}
for a precise description of $m'$.
We note $\epsilon_\psi = 1$ if $\psi$ is generic.  
Then there is a global packet 
$\Pi_\psi(G) = \Pi_\psi(G, \xi, \epsilon_\psi)$ of representations 
attached to an inner twist $(G, \xi)$ that is a
certain subset of a restricted product of local packets consisting of 
elements which are globally compatible with $\epsilon_\psi$. 
 (Here $\xi$ is an 
$\bar F$-isomorphism from $G$ to $G'$ exhibiting $G$ as an inner 
form  of $G'$.)  The role of $\epsilon_\psi$ is to give a parity condition
for a product of members of local packets to lie in the global packet.

The packet $\Pi_\psi(G)$ is necessarily empty if $\psi$ is not 
locally relevant
everywhere for $G$.  Specifically, if $v$ is split in $E/F$, and
$G(F_v) \simeq M_{r_v}(D_v)$ where $D_v$ is a central $F_v$-division algebra of degree $d_v$, then for 
$\psi = \psi_1 \boxplus \dots \boxplus \psi_m$ 
to be relevant it is necessary that $d_v | \dim \psi_i$ for each $i$.

For $\psi \in \Psi_2(G', \mathrm{std})$ and $\pi \in 
\Pi_{\psi}(G)$, we call the associated automorphic
representation $\pi_E := \sigma_\psi$ of $\GL_n(\A_E)$ the (standard)
base change of $\pi$.  Note that $\pi_E$
is cuspidal if and only if $\psi = \pi_E \boxtimes 1$, 
i.e., if and only if $\psi$ is simple generic.
If $\pi_E$ is cuspidal and $v=ww'$ is a split place for $E/F$, then
$\pi_v \simeq \pi_{E,w}$ when $A_v$ is split, and more generally
$\pi_v$ corresponds to $\pi_{E,w}$ via the
Jacquet--Langlands correspondence for $\GL(n)/E$.

Then the $\kappa = 1$ and $\chi_\kappa = 1$ case of
 \cite[Theorem* 1.7.1]{KMSW} states that we have a $G(\A)$-module
 isomorphism:
\begin{equation} \label{ECU} \tag{EC-U}
L^2_{\mathrm{disc}}(G(F) \bs G(\A)) \simeq
\bigoplus_{\psi \in \Psi_2(G', \mathrm{std})} \bigoplus_{\pi \in \Pi_{\psi}(G)} \pi.
\end{equation}
\renewcommand*{\theHequation}{notag.\theequation}

A consequence of this is a generalized Jacquet--Langlands 
correspondence for unitary groups.  Namely,  fix an inner form
$G$ of $G'$, so $G(F_v) \simeq G'(F_v)$ for almost all $v$.
For simplicity, assume $\psi \in \Psi_2(G', \mathrm{std})$ is
simple generic, so we may view $\psi$ as a conjugate self-dual
 cuspidal representation of  $\GL_n(\A_E)$.  
Then the packet $\Pi_\psi(G')$ is non-empty---in fact it contains a cuspidal 
generic representation of $G'$ \cite[Corollary 9.2.4]{mok}.
If $\pi \in \Pi_\psi(G)$ we write $\JL(\pi) = \Pi_\psi(G')$ for 
the Jacquet--Langlands
correspondent to the packet $\Pi_\psi(G)$.
For $v$ split in $E/F$ and $\pi' \in \JL(\pi)$, $\pi_v$ and $\pi'_v$
correspond via the local Jacquet--Langlands correspondence for $\GL_n(F_v)$,
and necessarily $\pi'_v \simeq \pi_v$ if $G(F_v) \simeq G'(F_v) \simeq \GL_n(F_v)$.

It is expected that generic packets are tempered.
If $\psi$ is cohomological, then 
Shin \cite{shin} (together with \cite{chenevier-harris} when $n$ is
even and $\psi_\infty$ is not Shin-regular)
guarantees that $\psi_v$ is tempered at all finite $v$.
Now let us also assume $\psi$ is cohomological.

For $\pi' \in \Pi_\psi(G')$, the local packets $\Pi_{\psi_v}$ for $\pi$ and $\pi'$
are the same at almost all places.  But, by definition, elements of the global
packets correspond locally to the trivial character of the component group
(and thus unramified local parameters $\psi_v$)
almost everywhere.  
Since $\psi_v$ is generic and bounded (tempered), 
the local packet $\Pi_{\psi_v}(G'(F_v))$
is in bijection with the dual of the component group 
at nonarchimedean $v$ (\cite[Theorem 2.5.1(b)]{mok}).
Consequently, $\pi_v \simeq \pi'_v$ for almost all $v$.

In fact we can say more.  Since $\psi$ is simple generic,
we have $|\mathcal S_\psi| =1$ (\cite[(2.4.14)]{mok}). 
This means there is no parity condition associated to $\epsilon_\psi$
required for a product $\pi' = \otimes \pi'_v$ of local components of packets
to lie in the global packet $\Pi_\psi(G')$.  Hence, given $\pi$, we may always
choose $\pi' \in \JL(\pi)$ such that $\pi'_v \simeq \pi_v$ whenever $G(F_v) \simeq
G'(F_v)$.  Moreover, at all other $v$, we can choose $\pi'_v$ freely within
the local packet $\Pi_{\psi_v}(G'(F_v))$.

\subsection{A cuspidality criterion for base change}
For the remainder of this section, we return to our assumption that 
$G = \UU_{A, *}(n)$  is a definite unitary group.

\begin{prop} \label{prop:cusp} Assume $\eqref{ECU}$.  Suppose
$n$ is prime and $A_{w}$ is a division algebra for some
finite prime $w$ of $E$.  If $\pi$ occurs in $\calA(G, K)$,
and $\pi$ is not 1-dimensional, then $\pi_E$ is cuspidal.
\end{prop}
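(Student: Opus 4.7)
The plan is to apply the endoscopic classification \eqref{ECU} and use the fact that $A$ has a division algebra local component to force the global parameter to be simple generic (or else non-generic with all members of the packet $1$-dimensional).

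First I would unpack the setup. By \eqref{ECU}, $\pi$ belongs to $\Pi_\psi(G)$ for some $\psi = \psi_1 \boxplus \cdots \boxplus \psi_m \in \Psi_2(G', \mathrm{std})$ with $\psi_i = \mu_i \boxtimes \nu_i$, $\dim \psi_i = n_i r_i$, and $\sum n_i r_i = n$. Let $v$ be the place of $F$ below $w$. Since $A_w$ is a division algebra of degree $n$, Landherr's theorem (mentioned just before Lemma \ref{det-lem}) implies $v$ splits in $E/F$, say $v = ww'$, and $G(F_v) \simeq A_w^\times$, the unit group of a central $F_v$-division algebra of degree $d_v = n$. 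The local relevance condition recalled in \cref{sec:classification} then forces $n \mid n_i r_i$ for every $i$. Combined with $\sum n_i r_i = n$, $n_i r_i \ge 1$, and the primality of $n$, this gives $m = 1$ and $n_1 r_1 = n$.

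Thus $\psi = \mu \boxtimes \nu_r$ with either $(n_1, r_1) = (n, 1)$ or $(n_1, r_1) = (1, n)$. In the first case $\psi$ is simple generic, so by the discussion in \cref{sec:classification} the base change $\pi_E = \sigma_\psi = \mu$ is a conjugate self-dual cuspidal representation of $\GL_n(\A_E)$, and we are done.

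It remains to rule out the second case, which is the only real step requiring argument. Here $\mu$ is a conjugate self-dual Hecke character of $\A_E^\times$ and $\psi = \mu \boxtimes \nu_n$. By the Moeglin--Waldspurger classification, the associated $\sigma_\psi$ is the $1$-dimensional character $\mu \circ \det$ of $\GL_n(\A_E)$. Under the endoscopic correspondence, the global packet $\Pi_\psi(G)$ then consists of characters of $G(\A)$ whose base change is $\mu \circ \det$; concretely, restricting $\mu$ to $\UU(1, \A) = E^1(\A)$ and composing with the reduced norm $\det \colon G \to \UU(1)$ yields such a character (well-defined on $G(F) \bs G(\A)$ because $\mu$ is trivial on $E^\times$). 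Hence every element of $\Pi_\psi(G)$ is $1$-dimensional, contradicting the hypothesis on $\pi$. The main point requiring care is precisely this identification of $\Pi_\psi(G)$ with $1$-dimensional characters in the non-generic case; once that is granted, the rest is just the combinatorics of the relevance condition together with $n$ being prime.
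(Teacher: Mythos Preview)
Your reduction to a simple parameter $\psi = \mu \boxtimes \nu$ with $(n_1,r_1) \in \{(n,1),(1,n)\}$ is correct and matches the paper exactly. The gap is in your handling of the case $(n_1,r_1)=(1,n)$: you assert that every element of $\Pi_\psi(G)$ is $1$-dimensional, but all you actually do is exhibit one character in the packet. You flag this yourself as ``the main point requiring care,'' and indeed it is the entire content of this case; as written, the argument is incomplete because nothing you have said rules out other, infinite-dimensional, members of the (non-generic) A-packet.

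The paper fills this gap by a different and more elementary route that sidesteps the packet structure entirely. Pick any place $v$ of $F$ split in $E$, say $v=ww'$. The local base change $\pi_{E,w}$ is the $1$-dimensional representation of $\GL_n(E_w)$ attached to $\mu_w$, and $\pi_v$ corresponds to it, so $\pi_v$ is $1$-dimensional; in particular $\pi_v$ is trivial on $G^1(F_v)$. Now apply strong approximation for the simply connected group $G^1 = \ker(\det)$ with respect to $v$ (\cite[Theorem 7.12]{platonov-rapinchuk}): triviality of $\pi$ on $G^1(F_v)$ forces triviality on all of $G^1(\A)$, so $\pi$ factors through $\det$ and is $1$-dimensional. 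This uses only the local correspondence at a single split place together with strong approximation, and requires no knowledge of non-generic A-packets on inner forms.
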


\begin{proof}
Necessarily, there is a finite prime $v$ of $F$ which splits
as $v=ww'$ for some $w'$.  Then $G(F_v) \simeq A_w^\times$
is the multiplicative group of a degree $n$ division algebra.
Let $\psi \in \Psi_2(G', \mathrm{std})$ be the parameter associated
to $\pi$.  Then for $\psi$ to be relevant, we need $\psi$ to be simple,
i.e., $\psi = \mu \boxtimes \nu$ for some cuspidal automorphic
representation $\mu$ of $\GL_{m}(\A_E)$ and $\nu$ of dimension
$r = \frac nm$.  

Since $n$ is prime, either $m=1$ or $m=n$.  If $m=n$, we are done.
Otherwise, the proposition follows from the following lemma, which
was kindly explained to us by Sug Woo Shin.
\end{proof}

\begin{lem} Assume \eqref{ECU}.  Suppose $\pi$
is an automorphic representation of $G$ associated
to a simple parameter $\psi = \mu \boxtimes \nu$ where
$\mu$ is a representation of $\GL_1(\A_E)$.
Then $\pi$ is 1-dimensional.
\end{lem}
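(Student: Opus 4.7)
My strategy is to show that every local component $\pi_v$ of $\pi$ is one-dimensional, which forces $\pi$ itself to be one-dimensional.

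First, I would identify the discrete automorphic representation $\sigma_\psi$ of $\GL_n(\A_E)$ corresponding to $\psi$. Since $\psi = \mu \boxtimes \nu$ is simple with $\dim \mu = 1$, the dimension count $1 \cdot r = n$ forces $\nu$ to be the $n$-dimensional irreducible representation of $\SU(2)$, and $\mu$ is a (conjugate self-dual) Hecke character of $\A_E^\times$. By the Moeglin--Waldspurger classification, $\sigma_\psi$ is the Speh residue of the induced representation $\mu\lvert\det\rvert^{(n-1)/2}\times\cdots\times\mu\lvert\det\rvert^{-(n-1)/2}$; for one-dimensional cuspidal datum this residue is simply the one-dimensional character $\mu \circ \det$ of $\GL_n(\A_E)$. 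In particular $\sigma_{\psi_v} = \mu_v \circ \det$ at every place.

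Next, I would use the endoscopic classification \eqref{ECU} to match local components. At an archimedean place, $G_v$ is compact, and the local A-packet for the parameter $\mu_v \boxtimes \nu$ consists of characters of $G_v$, so $\pi_v$ is one-dimensional. At a split finite place $v = ww'$, the isomorphism $G_v \simeq A_w^\times$ identifies $\pi_v$ (via local Jacquet--Langlands for $\GL_n(E_w)$) with the local JL image of $\sigma_{\psi_v}$; since one-dimensional representations transfer to one-dimensional representations under local JL for inner forms, $\pi_v$ is the character $\mu_w \circ \det$ on $A_w^\times$. At a non-split finite place $v$, the explicit description of local A-packets for unitary groups in \cite{mok} and \cite{KMSW} shows that the packet $\Pi_{\psi_v}(G_v)$ for the degenerate parameter $\mu_v \boxtimes \nu$ consists of characters factoring through the determinant $G_v \to \UU(1,F_v)$; its members are of the form $\chi_v \circ \det$ for characters $\chi_v$ of $\UU(1,F_v)$ whose restriction to $F_v^\times$ matches $\mu_v|_{F_v^\times}$.

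Combining these local analyses, each $\pi_v$ is a character, hence $\pi = \bigotimes_v' \pi_v$ is a one-dimensional representation of $G(\A)$, as required. The main obstacle is verifying the structure of the local A-packet at non-split finite places for the non-tempered parameter $\mu_v \boxtimes \nu$: one must confirm that the packet truly contains only one-dimensional representations and no higher-dimensional constituents. This follows from the general fact that the $\SU(2)$-factor $\nu = \nu_n$ being the full $n$-dimensional irreducible representation pushes the Arthur parameter to the ``anti-tempered'' extreme of the Arthur--Mok classification, whose members are (twists of) the trivial representation of $G_v$, which is one-dimensional; equivalently, these packets are characterized by having the Langlands quotient of $\sigma_{\psi_v}$ one-dimensional and then applying local JL.
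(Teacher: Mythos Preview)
Your local-to-global strategy is essentially correct, but the paper takes a genuinely different and more economical route. Rather than verifying that $\pi_v$ is one-dimensional at \emph{every} place, the paper works at a \emph{single} split place $v=ww'$ where $A_v$ is split: there $G(F_v)\simeq\GL_n(E_w)$ and $\pi_v\simeq\pi_{E,w}=\mu_w\circ\det$ is one-dimensional, hence trivial on $G^1(F_v)$ where $G^1=\ker\det$. The key step is then a global one: since $G^1$ is simply connected semisimple and $G^1(F_v)$ is non-compact, strong approximation (\cite[Theorem 7.12]{platonov-rapinchuk}) gives that $G^1(F)$ is dense in $G^1(\A^v)$. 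Combined with left-$G(F)$-invariance, right-$G^1(F_v)$-invariance, and smoothness of any $\phi$ in the space of $\pi$, this forces $\phi$ to be right-$G^1(\A)$-invariant. Hence $\pi$ factors through $\det$ and is one-dimensional.

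The comparison is instructive. Your approach requires knowing that the local Arthur packet for the maximally non-tempered parameter $\mu_v\boxtimes\nu_n$ on a (possibly non-quasi-split) local unitary group contains only characters---a fact you correctly identify as the main obstacle and which, while true, is not trivially extracted from \cite{mok} or \cite{KMSW} without further argument. The paper's strong-approximation argument bypasses this entirely: it needs only the easy split-place identification plus a classical density statement, and never touches the packet structure at inert, ramified, or archimedean places. What your approach buys is a purely local understanding of $\pi$; what the paper's approach buys is brevity and independence from the finer points of the non-tempered local classification.
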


\begin{proof}
Suppose $v=ww'$ is split in $E$.
The local base change $\pi_{E,w}$ is a
1-dimensional representation of $\GL_n(E_w)$.  
Then $\pi_v \simeq \pi_{E,w}$, so $\pi_v$ is 1-dimensional.
Since the strong approximation property with respect to $v$ is satisfied by $G^1 = \{ g \in G : \det g = 1 \}$ (see \cite[Theorem 7.12]{platonov-rapinchuk}),
$\pi_v$ trivial on $G^1(F_v)$ implies $\pi$ is trivial on $G^1(\A)$. 
Thus $\pi$ is 1-dimensional.
\end{proof}

\subsection{Eisenstein congruences} \label{sec:eis-Un}
Let $K = \prod K_v \subset G(\A)$ be a compact open subgroup which is
hyperspecial and maximal at almost all $v$.  We assume that $K_v = G_v$
for $v | \infty$, and place the following assumptions on $K_v$ for $v < \infty$.

First suppose $v$ splits in $E/F$.  Then we can write $G_v = \GL_{r_v}(D_v)$
for some division algebra $D_v$ of degree $d_v$ with $d_v r_v = n$.  Let
$\calO_v$ be an order of $D_v$ containing the unramified field extension of $F_v$
of degree $d_v$ (e.g., $\calO_v$ is the maximal order in $D_v$).  We assume
the diagonal subgroup $(\calO_v^\times)^{r_v} \subset K_v$.
This holds, for instance, when $K_v$ is the stabilizer of a lattice of the
form ${\mathcal I}_1 \oplus \dots \oplus {\mathcal I}_{r_v} \subset D_v^{r_v}$ where each
${\mathcal I}_i$ is left $\calO_v$-ideal on $D_v$.

Next suppose $v$ is ramified or inert in $E/F$, so $A_v$ is split.  
Assume $G_v$ has a maximal torus $T_v \simeq 
(E_v^\times)^r \times (E_v^1)^s$ for some $r, s$ with $2r+s = n$,
 such that the integral points of $T_v$ are contained in $K_v$,
 i.e.,  $(\frako_{E_v}^\times)^r \times (E_v^1)^s \subset K_v$.
This holds for instance if $K_v$ is the stabilizer of a lattice of the form
${\mathcal I}_1 \oplus \dots \oplus {\mathcal I}_{n} \subset E_v^{n}$ where each
${\mathcal I}_i$ is a $\frako_{E_v}$-ideal (in which case $s=n$).

The above assumptions guarantee that for all $v < \infty$, 
(i) $K_v \cap Z(G_v) = \UU(1, \frako_v) = \{ a \in \frako_{E,v}^\times : a \bar a = 1 \}$,
and (ii) $\det K_v = \UU(1, \frako_v)$. 
Note for $v < \infty$, if $E_v/F_v$ is a field
then $\UU(1, \frako_v) = \UU(1, F_v) = E_v^1$, 
whereas if $E_v/F_v$ is split
then $\UU(1, \frako_v) \simeq \frako_v^\times$.

Consequently, if $\pi$ occurs in $\calA(G, K, \omega)$, then $\omega$
is a character of $\UU(1,\A)$ which is invariant under
$\UU(1,F)$ and $K \cap Z(\A) = \UU(1, \hat \frako) \UU(1, F_\infty)$.
Thus the relevant central characters for us will be characters $\omega$
of the class group $\Cl(\UU(1)) = 
\UU(1,F) \bs \UU(1, \A_f) / \UU(1, \hat \frako)$.

Any 1-dimensional representation $\pi$
occurring in $\calA(G, K)$ is of the form $\pi = \chi \circ \det$, where 
$\chi$ is a character of $\UU(1, \A)$. 
From \cref{det-lem} and our assumptions on $K$, we in fact see that $\chi$
must be a character of $\Cl(\UU(1))$.

We can apply \cref{gen-prop} or \cref{gen-cor} to construct
congruences on $\calA(G, K)$.
However, since $\calA(G, K)$ admits many 1-dimensional representations in general,
even with trivial central character, we need more to guarantee
we get congruences with non-abelian forms.

\subsubsection{Congruence modules} \label{sec:cong-mod}

Fix a finite abelian group $H$ and let $L$ be a number field which contains
all character values for $H$.  Let $X(R)$ be the set of $R$-valued class functions
for $R = \Z$ or $R=L$.  Endow $X(L)$ with the usual inner product $( \cdot , \cdot )$.
Decompose $X(L) = X_\one(L) \oplus X_0(L)$ where $\one$ is the trivial character
of $H$ and $X_\one(L) = L \one$.  
Let $X_\one(\Z) = X_\one(L) \cap X(\Z) = \Z \one$ and 
$X_0(\Z) = X_0(L) \cap X(\Z)$.
Also, let $X^\one(\Z)$ (resp.\ $X^0(\Z)$) 
be the image of the orthogonal projection $X(\Z) \to X_\one(L)$ (resp.\ $X(\Z) \to X_0(L)$).
Then $X_\one(\Z) \oplus X_0(\Z) \subset X(\Z) \subset X^\one(\Z) \oplus X^0(\Z)$.
We consider the congruence module $C_0(H) = X(\Z)/(X_\one(\Z) \oplus X_0(\Z))$.  One readily sees that the projection $X(L) \to X_\one(L)$
induces an isomorphism 
$C_0(H) \simeq X(\Z)/(X_\one(\Z) \oplus X_0(\Z)) \simeq X^\one(\Z)/X_\one(\Z)$.
One similarly has an isomorphism with $X^0(\Z)/X_0(\Z)$.

\begin{lem} For a positive integer $n$, there exists $\phi \in X_0(\Z)$ such that
$\phi \equiv \one \mod n$ if and only if $C_0(H)$ contains an element of order $n$.
\end{lem}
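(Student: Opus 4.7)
The plan is to identify $C_0(H)$ explicitly as a cyclic group of order $|H|$ and then rephrase both sides of the biconditional as the single divisibility condition $n \mid |H|$.

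First I would unpack the congruence module. Since $H$ is abelian, every function is a class function, so $X(\Z) = \Z^H$. Using the standard inner product, the orthogonal projection onto $X_\one(L) = L\one$ sends $\phi$ to $\frac{1}{|H|}\bigl(\sum_{h \in H} \phi(h)\bigr)\one$. As $\phi$ varies over $X(\Z)$ the integer $\sum_h \phi(h)$ achieves every value in $\Z$ (for instance by supporting $\phi$ on a single point), so $X^\one(\Z) = |H|^{-1}\Z \cdot \one$. Consequently,
\[
C_0(H) \;\simeq\; X^\one(\Z)/X_\one(\Z) \;\simeq\; \tfrac{1}{|H|}\Z/\Z \;\simeq\; \Z/|H|\Z,
\]
which contains an element of order $n$ if and only if $n \mid |H|$.

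Next I would translate the condition on the left: if $\phi \in X_0(\Z)$ satisfies $\phi \equiv \one \bmod n$, writing $\phi(h) = 1 + n a_h$ with $a_h \in \Z$ and summing over $h$ using $(\phi, \one) = 0$ forces $|H| + n \sum_h a_h = 0$, so $n \mid |H|$. Conversely, given $n \mid |H|$, I would exhibit such a $\phi$ directly: fix any $h_0 \in H$ and set $\phi(h_0) = 1 - |H|$ and $\phi(h) = 1$ for $h \neq h_0$. Then $\phi$ is integer-valued with $\sum_h \phi(h) = 0$, hence lies in $X_0(\Z)$, and $\phi(h) \equiv 1 \bmod n$ for every $h$. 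Matching this with the cyclic structure of $C_0(H)$ completes the proof. The hardest step is really just the direct computation of $X^\one(\Z)$; once the isomorphism $C_0(H) \simeq \Z/|H|\Z$ is in hand, both directions reduce to elementary arithmetic.
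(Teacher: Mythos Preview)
Your proof is correct and takes a genuinely different route from the paper. The paper argues directly with the projection isomorphism $C_0(H) \simeq X^\one(\Z)/\Z\one$ without ever identifying $X^\one(\Z)$: for the forward direction it observes that $\frac{1}{n}(\phi - \one) \in X(\Z)$ projects to $-\frac{1}{n}\one$, which visibly has order exactly $n$ modulo $\Z\one$; for the converse it takes a representative $\psi \in X(\Z)$ of order $n$, writes $n\psi = a\one - \phi$ with $\phi \in X_0(\Z)$, uses $\gcd(a,n)=1$ to scale so that $a \equiv 1 \bmod n$, and then reads off $\phi = a\one - n\psi \equiv \one \bmod n$. You instead compute $X^\one(\Z) = \frac{1}{|H|}\Z\one$ at the outset and reduce both directions to the single arithmetic condition $n \mid |H|$, with an explicit witness $\phi$ in the converse. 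Your approach is more concrete, and as a byproduct it determines $C_0(H) \simeq \Z/|H|\Z$ completely---this already subsumes (and in fact corrects) the paper's subsequent lemma, which asserts $C_0(H) \simeq H$: for non-cyclic $H$ such as $(\Z/2\Z)^2$ your cyclic answer is the right one, as the function $(-3,1,1,1) \in X_0(\Z)$ satisfies $\phi \equiv \one \bmod 4$ even though $e(H)=2$. The paper's approach has the modest advantage of not requiring any computation of $X^\one(\Z)$.
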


\begin{proof}
First note if $\phi \in X_0(\Z)$ such that $\phi \equiv \one \mod n$, 
then the projection of
$\frac 1n ( \phi - \one )$ to $X_\one(L)$ is the element $- \frac 1n \one \in
X^\one(\Z)$, and thus gives an element of order $n$ in $C_0(H)$.
Conversely, suppose $\psi \in X(\Z)$ is an element of order $n$ in 
$C_0(H)$.  Then we can write $\psi = \frac an \one - \frac 1n \phi$
where $a \in \Z$ and $\phi \in X_0(\Z)$.  Since projection
gives the isomorphism $C_0(H) \simeq X^\one(\Z)/\Z \one$,
$\frac an$ has order $n$ mod $\Z$.  Thus after scaling $\psi$
(and correspondingly $\phi$) we may assume
$a \equiv 1 \mod n$.  Then $\phi \equiv  \one \mod n$.
\end{proof}

\begin{lem} As $\Z$-modules, $C_0(H) \simeq H$.
\end{lem}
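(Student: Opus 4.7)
The plan is to realize the congruence module inside the group algebra $\Z[H]$ and match it up with $H$ via the augmentation map. Identify $X(\Z)$ with $\Z[H]$ via $\phi \mapsto \sum_{h \in H} \phi(h)\,h$; this sends $\one$ to the norm element $e := \sum_{h \in H} h$, and the usual inner product on $X(L)$ makes $X_0(L)$ the kernel of the augmentation $\epsilon \colon L[H] \to L$. Consequently $X_\one(\Z) = \Z e$ and $X_0(\Z) = I_H$, the augmentation ideal.

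With these identifications I would check that $\Z e \cap I_H = 0$ (since $\epsilon(r e) = r|H|$ is nonzero for $r \ne 0$) and that inside $\Z[H]$
\[
X_\one(\Z) \oplus X_0(\Z) \;=\; \epsilon^{-1}(|H|\Z),
\]
by decomposing any $\alpha$ with $\epsilon(\alpha) = r|H|$ as $\alpha = re + (\alpha - re)$. The augmentation therefore descends to a map $C_0(H) \to \Z/|H|\Z$ which is injective by the above and surjective because $\epsilon(\Z[H]) = \Z$, hence an isomorphism. In particular $|C_0(H)| = |H|$.

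The hard part will be upgrading this cyclic quotient $\Z/|H|\Z$ to an isomorphism $C_0(H) \simeq H$ of $\Z$-modules. As $\Z$-modules, $\Z/|H|\Z$ and $H$ agree precisely when $H$ is cyclic (which is the case, for instance, when $H = \Cl(\UU(1))$ for many of the $E/F$ considered below); for non-cyclic $H$ the augmentation argument gives $C_0(H) \simeq \Z/|H|\Z$ rather than $H$. To force the finer isomorphism one would presumably need to replace the point-mass lattice $X(\Z)$ by the character lattice $\Z[\hat H]\hookrightarrow X(L)$, after which a Smith-normal-form analysis of the $|H|\times|H|$ character table (whose invariant factors coincide with those of $H$ by Fourier duality) delivers the structural isomorphism. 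In any event only the equality $|C_0(H)| = |H|$ is used in the applications, and that is what the augmentation argument supplies directly.
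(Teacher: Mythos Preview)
Your augmentation-ideal argument is correct and complete: it shows $C_0(H)\simeq\Z/|H|\Z$ for \emph{every} finite abelian $H$, so you should stop looking for a way to upgrade this to $C_0(H)\simeq H$---that isomorphism is false whenever $H$ is non-cyclic, and your computation is a disproof of the lemma as stated rather than an incomplete proof of it. (For $H=(\Z/2\Z)^2$, the function $\phi=(1,1,1,-3)\in X_0(\Z)$ satisfies $\phi\equiv\one\bmod 4$, so $C_0(H)\simeq\Z/4\Z$.) The paper takes a different route, attempting a reduction $C_0(H_1\times H_2)\simeq C_0(H_1)\oplus C_0(H_2)$ before handling the cyclic case; the cyclic step is fine and agrees with your answer, but the reduction step passes from the tensor identification $X(\Z;H_1\times H_2)\simeq X(\Z;H_1)\otimes_\Z X(\Z;H_2)$ to a direct-sum identification ``$X(\Z;H)=X(\Z;H_1)\oplus X(\Z;H_2)$'', which already fails on rank grounds, and the claimed splitting of $C_0$ over products does not hold.

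One correction to your final remark: what is actually used downstream (via the preceding lemma) is whether $C_0(H)$ contains an element of a given order $p^r$, i.e.\ the \emph{exponent} of $C_0(H)$, not merely $|C_0(H)|$. Your computation shows this exponent is $|H|$ rather than $e(H)$, so the consequence stated immediately after the lemma, and the appearance of $e^1_E$ in \cref{prop:non-abel}, should be read with $|H|$ in place of $e(H)$.
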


\begin{proof} First suppose that $H = H_1 \times H_2$. 
For $i=1, 2$, write $X(R; H_i)$, $X_0(R; H_i)$, etc.\ for the corresponding
objects for the group $H_i$.  It is not hard to see that 
$X(\Z) = X(\Z; H) = \{ \phi_1 \otimes \phi_2 : \phi_i \in X(\Z; H_i) \}$.
Thus we may identify $X(\Z; H) = X(\Z; H_1) \oplus X(\Z; H_2)$.  
This identifies the $\Z$-submodule
$X_\one(\Z; H) \oplus X_0(\Z; H)$ with $X_\one(\Z; H_1) \oplus X_0(\Z; H_1)
\oplus X_\one(\Z; H_2) \oplus X_0(\Z; H_2)$.
 Hence $C_0(H) \simeq C_0(H_1) \oplus C_0(H_2)$.
 This reduces the proof to the case that $H = \< g \>$ is cyclic of order $n$, which
we assume now.

If $\chi_1, \ldots, \chi_n$ are the irreducible characters of
$H$, then $\frac 1n (\chi_1 + \dots + \chi_n) \in X(\Z)$.  Hence $\frac 1n \one \in
X^\one(\Z)$.  Conversely, suppose $\frac 1m \one \in X^\one(\Z)$.  Then
there exists $\phi \in X_0(\Z)$ such that $\phi \equiv \one \mod m$.  Let
$a_j = \phi(g^j)$ for $1 \le j \le n$.  Then $n \cdot (\chi, \one) = \sum a_j = 0$ but
also $\sum a_j \equiv n \mod m$, hence $m | n$.  Therefore $C_0(H) \simeq
X^1(\Z)/\Z \one \simeq H$.
\end{proof}

The relevant consequence for us is the following.  Let $e(H)$ denote
the exponent of a finite group $H$: if $p^r \nmid e(H)$ then there is no
congruence mod $p^r$ between the trivial character of $H$ and any
$\Z$-valued linear combination of the non-trivial characters of $H$.

\begin{prop} \label{prop:non-abel} Let $h^1_E = |\Cl(\UU(1))|$ and $e^1_E$ be the
exponent of $\Cl(\UU(1))$.
Suppose $p | \frac{m(K)}{\gcd(n, e^1_E) h^1_E}$ and $n$ is odd.
Then there is a non-abelian eigenform $\phi \in \calA(G, K, 1)$ such that
$\phi$ is Hecke congruent to $\one \bmod p$.
\end{prop}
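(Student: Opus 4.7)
The plan is to refine the proof of \cref{gen-cor} by tracking the one-dimensional (abelian) part of $\calA(G, K, 1)$ and showing that the congruent eigenform it produces can be chosen to be non-abelian.

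First I would reduce to the trivial-central-character setting via \cref{gen-cor}: since $n$ is odd, $\bar G = G/Z$ satisfies the hypothesis there, and $\calA(G, K, 1) \simeq \calA(\bar G, \bar K)$ with $m(\bar K) = m(K)/m(K_Z)$. A direct calculation using the $\UU(1)$-structure and our assumptions on $K$ identifies $m(K_Z)$ with $h^1_E$ up to a unit factor, so setting $r := v_p(m(\bar K))$, the hypothesis rephrases as $r > v_p(\gcd(n, e^1_E))$.

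Next I would identify the abelian subspace $\calA_{\mathrm{ab}} \subset \calA(\bar G, \bar K)$: by \cref{det-lem}, every one-dimensional representation occurring is of the form $\chi \circ \det$ for some character $\chi$ of $\Cl(\UU(1))$, and the trivial-central-character condition forces $\chi^n = 1$. Thus $\calA_{\mathrm{ab}}$ is the $\det$-pullback of the character group of the finite abelian quotient
\[
H_0 := \Cl(\UU(1))/\Cl(\UU(1))^n, \qquad \exp(H_0) \mid \gcd(n, e^1_E).
\]
Let $W$ be the Hecke-stable orthogonal complement of $\calA_{\mathrm{ab}}$ in $\calA(\bar G, \bar K)$.

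The heart of the argument is to refine the construction in the proof of \cref{gen-prop} so that the $\Z$-valued form $\phi' \in \calA_0^\Z(\bar G, \bar K)$ satisfying $\phi' \equiv \one \bmod p^r$ can be chosen with nonzero projection to $W$. The valid $\phi'$ form an affine $\Z$-lattice obtained by translating $p^r \calA_0^\Z(\bar G, \bar K)$. If no such $\phi'$ had nonzero $W$-projection, then every $\phi'$ would lie entirely in $\calA_{\mathrm{ab}}$, producing (after descent to $H_0$) a $\Z$-valued function on $H_0$ which is orthogonal to $\one$ for the weighted inner product inherited from the fibers of $\Cl(\bar K) \to H_0$ and congruent to $\one$ modulo $p^r$. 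The congruence-module analysis of \cref{sec:cong-mod} then forces $p^r \mid \exp(H_0) \mid \gcd(n, e^1_E)$, contradicting $r > v_p(\gcd(n, e^1_E))$. Having chosen such $\phi'$, I would apply \cref{lift-lem} with $\calO = \Z$, $\phi_1 = \one$, $\phi_2 = \phi'$, and the subspace $W$ to extract an eigenform $\phi \in W$ Hecke congruent to $\one \bmod p$; since $W$ contains no one-dimensional representations, $\phi$ is non-abelian.

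The main subtlety is the congruence-module step in the pullback: the inner product inherited on $\calA_{\mathrm{ab}}$ from $\calA(\bar G, \bar K)$ weights each $H_0$-fiber by its mass rather than uniformly, as in the abstract setting of \cref{sec:cong-mod}, so the exponent bound $\exp(H_0) \mid \gcd(n, e^1_E)$ must be carefully transferred to this weighted context. This is the point where the factor $h^1_E$ of the denominator (coming from $m(K_Z)$) and the factor $\gcd(n, e^1_E)$ (coming from the abelian congruence-depth bound) must both be accommodated.
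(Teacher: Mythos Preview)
Your approach is essentially the same as the paper's: reduce to $\bar G$, identify the abelian subspace with pullbacks of characters of the quotient $H_0 = \Cl(\UU(1))/\Cl(\UU(1))^n$, show the integral form $\phi' \equiv \one \bmod p^r$ from \cref{gen-prop} cannot be purely abelian via the congruence-module lemmas, and then apply \cref{lift-lem} with $W$ the non-abelian part.

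One clarification: the ``subtlety'' you flag in the last paragraph is not actually an obstacle, and the paper sidesteps it entirely. The point is that the decomposition $\calA_0(\bar G,\bar K) = X_1 \oplus W$ is by automorphic representations, so if $\phi'$ has zero $W$-projection then $\phi'$ is a linear combination of the one-dimensional representations $\chi \circ \det$ with $\chi$ a \emph{nontrivial} character of $H_0$. Since $\Delta : \Cl(\bar K) \to H_0$ is surjective (by \cref{det-lem} and your assumptions on $K$), the descended function $\psi$ on $H_0$ is then a $\Z$-valued linear combination of nontrivial characters of $H_0$. Such a $\psi$ lies in $X_0(\Z)$ for the \emph{uniform} inner product on $H_0$ used in \cref{sec:cong-mod}, regardless of the weighted inner product it inherited from $\Cl(\bar K)$. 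The congruence-module lemmas therefore apply verbatim: $\psi \equiv \one \bmod p^r$ with $\psi \in X_0(\Z)$ forces $p^r \mid e(H_0) \mid \gcd(n,e^1_E)$, contradicting $r > v_p(\gcd(n,e^1_E))$. No transfer to a weighted context is needed.

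Also, a minor point: under the standing assumptions on $K$ one has $m(K_Z) = h^1_E$ exactly, not merely up to a unit.
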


\begin{proof}
By our assumptions on $K$, we have $K_Z = \UU(1, \hat \frako) U(1, F_\infty)$,
so $m(K_Z) = |\Cl(\UU(1))|$.
Thus  \cref{gen-cor} says there exists an eigenform $\phi \in \calA_0(G, K, 1)$
which is Hecke congruent to $\one \bmod p$.  We want to show we can
take $\phi$ to be non-abelian.  

Let $\bar G = G/Z$ and $\bar K = Z(\A) K/Z(\A)$.
Note the abelian elements of $\calA(G, K, 1) = \calA(\bar G, \bar K)$ are
generated by the characters $\chi \circ \det$ where $\chi$ is a character of 
$\Cl(\UU(1))$ of order dividing $n$.  We may view such $\chi$ as factoring through
the largest quotient $H$ of $\Cl(\UU(1))$ of exponent dividing $n$.

Recall that the existence of such a $\phi$ arose from an integral element
$\phi' \in \calA_0^\Z(\bar G, \bar K)$ such that $\phi' \equiv \one \bmod p^r$ where $r=v_p(m( \bar K)) = v_p(m(K))-v_p(h_E^1)$.
For a suitable rationality field $L$, decompose $\calA_0^L(\bar G, \bar K) = X_1(L) \oplus
X_2(L)$ where $X_1(L)$ consists of the abelian forms orthogonal to $\one$ 
and $X_2(L)$ is spanned
by the non-abelian eigenforms.   

We claim $\phi' \not\in X_1(L)$.  
Since $\det: G(\A) \to \UU(1,\A)$  is surjective, our assumptions on $K$
imply that $\det$ induces a surjective map $\Delta: \Cl(\bar K) \to H$. 
Thus if we had $\phi' \in X_1(L)$, 
composing it with $\Delta$ gives $\Z$-valued class function $\psi$ on $H$ such that 
$\psi \equiv \one \bmod p^r$.  But this is impossible by the above lemmas as
$v_p(\frac{p^r}{\gcd(n,e_E^1)}) > 0$ implies $p^r$ does not divide the exponent of $H$.

Hence $\phi'$ has nonzero projection to $X_2(L)$.
Therefore applying the lifting lemma, \cref{lift-lem}, with $W=X_2(L)$, we obtain
an eigenform $\phi \in X_2(L)$ which is Hecke congruent to $\one \bmod p$.
\end{proof}

\subsubsection{Non-endoscopic congruences} \label{sec:non-endo-cong}
We now define the notion of congruences on the quasi-split form $G'$.
For convenience, we talk about congruences of representations.
Suppose $K' = \prod K_v'$ is an open compact subgroup of $G'$ which is 
hyperspecial at all $v \not \in S$, and $\pi$ and $\pi'$ are automorphic
representations of $G'(\A)$ which are $K_v'$-unramified at all $v \not \in S$.
For $\alpha_v \in G'_v$, we let $\lambda_{\alpha_v}(\pi)$ be the
eigenvalue of the local Hecke operator $K_v' \alpha_v K_v'$ on $\pi_v^{K_v'}$.
We say $\pi$ and $\pi'$ are Hecke congruent (away from $S$) mod $p$ if 
$\lambda_{\alpha_v}(\pi) \equiv \lambda_{\alpha_v}(\pi') \bmod \frakp$
for some prime $\frakp$ of $\bar \Q$ above $p$ and all $v \not \in S$,
$\alpha_v \in G'_v$.

Consider the simple parameter $\psi_0 = 1 \boxtimes \nu(n) \in \Psi_2(G', 
\mathrm{std})$, where $\nu(n)$ is the irreducible $n$-dimensional representation of
$\SU(2)$.  This is the parameter of the trivial representations $\one_G$ and
$\one_{G'}$ of $G$ and $G'$. 
The base change of $\one_{G'}$
to $\GL_n(\A_E)$ is the residual contribution of the Eisenstein
series induced from the $\delta_{\GL(n)}^{-1/2}$ of the Borel.  
The unramified Hecke eigenvalues for $\one_{G'}$ are given by
\eqref{eq:congvol} and \eqref{eq:congvol-Un}.

\begin{thm} \label{thm:main-Un}
Suppose $n$ is an odd prime and assume \eqref{ECU} for $n$.
Let $A/E$ be a degree $n$ central simple algebra which is division at a non-empty
set $\mathrm{Ram}_0(A)$ of finite places of $F$ which split in $E/F$, and let $S_0 \subset \mathrm{Ram}_0(A)$.
Consider a definite unitary group $G=\UU_A(n)$ over $A$ as above.
Let $K = \prod K_v \subset G(\A)$ be a compact open subgroup satisfying 
the assumptions at the beginning of this section, and also assume that
$K_v = G^1(F_v)$ for $v \in S_0$.

Suppose that $p | \frac{m(K)}{\gcd(n,e_E^1)h_E^1}$.  
Then there exists a cuspidal automorphic representation $\pi$ 
of $G'(\A)$ with trivial central character 
such that: (i) the base change $\pi_E$ is cuspidal, (ii)
$\pi_{v_0}$ is an unramified twist of Steinberg for $v_0 \in S_0$; (iii) $\pi_v$ has a nonzero $K_v$-fixed vector
when $G(F_v) \simeq G'(F_v)$;
(iv) $\pi_v$ is a holomorphic weight $n$ discrete series for $v | \infty$;
and (v) $\pi$ is Hecke congruent to $\one_{G'} \bmod p$.
\end{thm}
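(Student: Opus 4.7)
My plan is threefold: first produce a non-abelian Eisenstein congruence on the compact form $G$ via \cref{prop:non-abel}; then transfer the associated automorphic representation to $G'$ using the endoscopic classification \eqref{ECU}; finally verify the local conditions at the archimedean places, at each $v_0 \in S_0$, and at unramified places. The hypotheses $n$ odd and $p \mid m(K)/(\gcd(n,e_E^1)h_E^1)$ are exactly those of \cref{prop:non-abel}, so I will obtain a non-abelian eigenform $\phi \in \calA_0(G,K;1)$ Hecke congruent to $\one_G$ mod $p$. Let $\pi_G$ denote the automorphic representation of $G(\A)$ generated by $\phi$; it has trivial central character and is non-abelian. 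Since $n$ is prime and $A_w$ is a division algebra for some $w \mid v_0 \in \mathrm{Ram}_0(A)$, \cref{prop:cusp} will imply that the parameter $\psi \in \Psi_2(G',\mathrm{std})$ attached to $\pi_G$ by \eqref{ECU} is simple generic: $\psi = \mu \boxtimes 1$ with $\mu = (\pi_G)_E$ a conjugate self-dual cuspidal automorphic representation of $\GL_n(\A_E)$.

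Next I will transfer $\pi_G$ to $G'$. Because $\psi$ is simple generic one has $|\mathcal S_\psi|=1$ and $\epsilon_\psi = 1$, so by the discussion following \eqref{ECU} there is no global parity obstruction, and I may choose $\pi = \otimes \pi_v \in \Pi_\psi(G')$ with each $\pi_v$ a freely prescribed member of the local packet $\Pi_{\psi_v}(G'_v)$. I will make the following choices: at every place where $G_v \simeq G'_v$ take $\pi_v \simeq (\pi_G)_v$; at each archimedean $v$ take $\pi_v$ to be the holomorphic weight $n$ discrete series in the real packet associated to the trivial parameter $\rho_n$ (which lies in the same packet as the trivial representation on the compact form, as in \cref{sec:20190531}); at each $v_0 \in S_0$, which splits in $E$, take $\pi_{v_0}$ to be the image of $(\pi_G)_{v_0}$ under the local Jacquet--Langlands correspondence for $\GL_n(F_{v_0})$. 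Since $K_{v_0} = G^1(F_{v_0}) = \SL_1(D_{v_0})$ and $(\pi_G)_{v_0}$ has trivial central character, $(\pi_G)_{v_0}$ must factor through the reduced norm as an unramified character, so its JL transfer is an unramified twist of Steinberg, giving (ii). Cuspidality (i) follows from cuspidality of $\mu$, and (iii), (iv) follow from the above choices.

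For the congruence (v), let $S$ denote the union of $S_0$, the archimedean places, and the finite set of places where $G_v \not\simeq G'_v$ or $K_v$ fails to be hyperspecial. For $v \notin S$, by construction $\pi_v \simeq (\pi_G)_v$, so their spherical Hecke eigenvalues coincide; since $\one_G$ and $\one_{G'}$ are both attached to the parameter $\psi_0 = 1 \boxtimes \nu(n)$ and share the spherical eigenvalues recorded in \eqref{eq:congvol} and \eqref{eq:congvol-Un} at every such $v$, the congruence $\pi_G \equiv \one_G \bmod p$ from the first paragraph will force $\pi \equiv \one_{G'} \bmod p$ away from $S$. The hard part will be pinning down the local matching at $v_0 \in S_0$: one must confirm that $(\pi_G)_{v_0}$ corresponds under local JL to an \emph{unramified} twist of Steinberg rather than a ramified twist, which should follow from the level constraint $K_{v_0}=G^1(F_{v_0})$ together with the trivial central character forcing the character on $F_{v_0}^\times$ pulled back through the reduced norm to be unramified. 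The archimedean selection of the holomorphic member of the $L$-packet and the assumed classification \eqref{ECU} handle the remaining compatibilities.
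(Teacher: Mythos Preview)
Your proposal is correct and follows essentially the same route as the paper's proof: apply \cref{prop:non-abel} to obtain a non-abelian eigenform on $G$ congruent to $\one$, invoke \cref{prop:cusp} to see the associated parameter is simple generic (hence $\pi_E$ cuspidal), and then use the freedom in the global packet (since $|\mathcal S_\psi|=1$) to select a transfer $\pi\in\Pi_\psi(G')$ with the prescribed local components. Your local analysis at $v_0\in S_0$---that $K_{v_0}=G^1(F_{v_0})$ forces $(\pi_G)_{v_0}=\chi_{v_0}\circ\det$ and hence the Jacquet--Langlands transfer is a twist of Steinberg---is exactly the paper's argument, and your identification of the unramifiedness of $\chi_{v_0}$ as the delicate point matches the paper's treatment (which simply records it and notes afterwards that the trivial central character forces $\chi_{v_0}^n=1$).
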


Note that by the classification of central simple algebras over number fields
and Landherr's theorem, given $E/F$ and any non-empty finite set $\Sigma$
of finite places of $F$ split in $E/F$, there exists $G=\UU_A(n)$ as in 
Theorem \ref{thm:main-Un} with $\Ram_0(A) = \Sigma$.

We make a few remarks on such a $\pi$ as in the theorem.
First, it  cannot arise as an endoscopic lift
from smaller unitary groups, so this congruence is ``native'' to $\UU(n)$.  
Second, by the central character condition, (ii) means $\pi_{v_0}$ is a twist of
Steinberg by an unramified character of order dividing $n$.
Also, (iii) implies $\pi_v$ will be unramified whenever
$K_v$ is hyperspecial.  Moreover, if every finite place $v \not \in S_0$
satisfies $G(F_v) \simeq G(F'_v)$, and if $K_v$ is good special maximal compact
subgroup at all of these places, then we have strong control over $\pi$ at all
places: (iv) describes $\pi_\infty$ completely; (ii) says $\pi_v$ is an unramified
twist of Steinberg for $v \in S_0$, and 
(iii) says $\pi_v$ is $K_v$-spherical at all remaining $v$.

\begin{proof}
First \cref{prop:non-abel} tells us there exists a non-abelian eigenform
$\phi \in \calA(G, K, 1)$ which is Hecke congruent to $\one \bmod p$.
Let $\sigma$ be the associated automorphic representation of $G(\A)$.
By \cref{prop:cusp}, we know $\sigma_E$ is cuspidal.
We may take $\pi \in \JL(\sigma)$ such that (iv) holds
 and $\pi_v \simeq \sigma_v$
when $G(F_v) \simeq G'(F_v)$.  For $v \in S_0$,
since $K_{v_0} = G_{v_0}^1$ we must have that 
$\sigma_{v_0}= \chi_{v_0} \circ \det$, where $\chi_{v_0}$ is an unramified 
character of $F_{v_0}^\times$, so
the local Jacquet--Langlands correspondent $\pi_{v_0}$ is Steinberg twisted
by $\chi_{v_0}$.
Finally, $\pi$ satisfies (v) because $\one_{G'}$ has the same Hecke eigenvalues
as $\one \in \calA(G,K,1)$ at almost all places.
\end{proof}

We now describe $m(K)$ for nice maximal compact subgroups $K$
using \cite{GHY}.  For simplicity we restrict to odd $n$. 
If desired, one can obtain masses for smaller
compact subgroups $K' \subset K$ by recalling  that $m(K') = [K:K'] m(K)$.
Let $\chi_{E/F}$
be the quadratic idele class character of $F$ associated to $E/F$.

\begin{prop} \label{prop:mass-Un}
Let $G=\UU_A(n)$ be a definite unitary group over $A$ where $n$ is odd.
Let $\Ram_f(E)$ (resp.\ $\Ram_f(A)$) denote the set of finite primes of
$F$ above which $E$ (resp.\ $A$) is ramified.  Assume $A_w$ is division for
each $w$ above $v \in \Ram_f(A)$.  Let $S = \Ram_f(E) \cup \Ram_f(A)$.
Take $K = \prod K_v$ such that $K_v$ is maximal hyperspecial for finite $v \not \in S$,
$K_v = G^1(F_v)$ for $v \in \Ram_f(A)$, 
$K_v$ is the stabilizer of a maximal lattice for $v \in \Ram_f(E)$,
and $K_v = G(F_v)$ for $v | \infty$.
 Then
\begin{equation} \label{eq:mass-Un}
m(K) = 2^{1-nd-|\Ram_f(E)|}  \times 
\prod_{r=1}^n L(1-r, \chi_{E/F}^r) \times \prod_{v \in \Ram_f(A)} 
\left( \prod_{r=1}^{n-1} (q_v^{r} - 1) \right),
\end{equation}
where $d = [F:\Q]$. 
\end{prop}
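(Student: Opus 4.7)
The plan is to apply the general mass formula of Gan--Hanke--Yu \cite{GHY}. For a smooth integral model $\mathcal{G}$ of $G$ adapted to $K$, their formula writes the mass as the product of the Tamagawa number $\tau(G)$ with a global $L$-value ``main term'' attached to the fundamental invariants, multiplied by local correction factors $\lambda_v$ which equal $1$ at every hyperspecial place. For quasi-split $\UU(n)$, the fundamental invariants have degrees $1, 2, \ldots, n$ with the $r$-th invariant transforming by $\chi_{E/F}^r$ under the Galois action on the dual group; together with $\tau(\UU(n)) = 2$, this produces the main term $2 \prod_{r=1}^n L(1-r, \chi_{E/F}^r)$. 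Since $\UU_A(n)$ is an inner form of $\UU(n)$ with the same Tamagawa number and the same fundamental invariants, this main term is unchanged for our $G$, and the remaining task is to compute the local corrections at the bad places.

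There are three sources of corrections. First, at each archimedean place, $K_v = U(n)$ is the compact real form, and comparing $\vol(U(n))$ against the archimedean Euler factors of $\prod_r L(s, \chi_{E/F}^r)$ at $s = 1-r$ produces a factor of $2^{-n}$ per infinite place, totalling $2^{-nd}$. Second, at each $v \in \Ram_f(E)$, the stabilizer $K_v$ of a maximal lattice is a maximal, but not hyperspecial, parahoric in the quasi-split group $G(F_v)$; a standard Bruhat--Tits local density computation (cf.\ \cite{shimura:bull,GHY}) yields a correction of $2^{-1}$ per such place. Third, at $v \in \Ram_f(A)$, $v$ splits in $E$ and $G(F_v) \simeq A_w^\times$ with $A_w/F_v$ a central division algebra of degree $n$, and $K_v = G^1(F_v) = D_w^1$; one computes the local volume ratio via Jacquet--Langlands (comparing with $\SL_n(F_v)$) and the order of the appropriate reductive quotient over the residue field to obtain an extra factor $\prod_{r=1}^{n-1}(q_v^r - 1)$. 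Assembling all the pieces gives \eqref{eq:mass-Un}.

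The main obstacle will be the local computation at $v \in \Ram_f(A)$, where one must carefully set up the Bruhat--Tits integral model for the compact-mod-center group $G^1 = \SL_1(A_w)$ on a central division algebra and correctly normalize Haar measures against the unramified ``would-be'' Euler factor. This reduces to understanding the reductive quotient of the corresponding parahoric group scheme and counting its $\mathbb{F}_{q_v}$-points; once this is in hand, the product $\prod_{r=1}^{n-1}(q_v^r-1)$ emerges from the order of $\SL_n(\mathbb{F}_{q_v})$ modulo the torus factors arising from the norm-one condition on $\mathbb{F}_{q_v^n}^\times$. The remaining verification that the two ``non-dyadic'' factors (archimedean and $\Ram_f(E)$) really contribute just $2^{-1}$ each per place is essentially a bookkeeping exercise with $\Gamma$-factors and Bruhat--Tits buildings, as carried out in \cite{GHY}.
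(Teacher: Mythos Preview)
Your approach matches the paper's: both apply the Gan--Hanke--Yu mass formula \cite[Propositions~2.13 and 4.4]{GHY} to obtain $m(K)=2^{1-nd}\prod_{r=1}^n L(1-r,\chi_{E/F}^r)\prod_{v\in S}\lambda_v$, quote $\lambda_v=\tfrac12$ for $v\in\Ram_f(E)$, and isolate the places $v\in\Ram_f(A)$ as the only genuinely new local computation. The paper carries out that last step slightly differently---it compares the reductive quotients $\bar H_v^0\simeq\GL_n(k_v)$ and $\bar G_v\simeq\F_{q_v^n}^\times$ directly from the GHY recipe (so the reference group is $\GL_n$ rather than $\SL_n$, and Jacquet--Langlands is not invoked)---but the arithmetic $\lambda_v=q_v^{-n(n-1)/2}\prod_{r=1}^{n-1}(q_v^n-q_v^r)=\prod_{r=1}^{n-1}(q_v^r-1)$ is the same either way.
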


\begin{proof}
A general mass formula is given in \cite[Proposition 2.13]{GHY}, which is
explicated for definite odd unitary groups over fields in Proposition 4.4 of \emph{op.\ cit.}
From those calculations, it follows that
\[ m(K) = 2^{1-nd} \times \prod_{r=1}^n L(1-r, \chi_{E/F}^r) \times \prod_{v \in S} \lambda_v, \] 
where $\lambda_v$ is as follows.  For a finite place $v$, let $\underline H_v'$ be Gross's canonical integral model of $H_v:=G'_v$.  Let $\underline G_v$ be the smooth
integral model associated to a parahoric such that $K_v = \underline G_v(\frako_v)$.
By our hypotheses, $S$ is the set of finite places such that 
$\underline G_v \not \simeq \underline H_v^0$.  
Let $\bar G_v$ and $\bar H^0_v$ be the maximal reductive quotients of the
special fibers of $\underline G_v$ and $\underline H_v^0$, which are reductive groups
over $k_v = \frako_v/\frakp_v$, with $\bar G_v$ possibly being disconnected.
Then for $v \in S$,
\[ \lambda_v = \frac{q_v^{-N(\bar H_v^0)} | \bar H_v^0(k_v) |}
{q_v^{-N(\bar G_v)} | \bar G_v(k_v) |}, \]
where $N( \cdot)$ denotes the number of positive roots over $\bar k_v$.
When $G_v$ is quasi-split, \emph{loc.\ cit.}\ tells us $\lambda_v = \frac 12$ if $E_v/F_v$
is ramified.  

So we need only to compute $\lambda_v$ for $v \in \Ram_f(A)$.
In this case $v$ splits in $E/F$ so $\bar H_v^0 \simeq \GL_n(k_v)$.
Let $\calO_v = A_v$ and $\mathfrak P_v$ the prime ideal of $\calO_v$.  Then
$G_v \simeq \calO_v^\times/(1+ \mathfrak P_v) \simeq \F_{q_v^n}^\times$, which gives
 $\lambda_v = q_v^{-n(n-1)/2} \prod_{r=1}^{n-1} (q_v^n - q_v^r) =
\prod_{r=1}^{n-1} (q_v^{r} - 1)$. 
\end{proof}

\begin{rem} By \cite{GHY}, we can extend the formula \eqref{eq:mass-Un} to include 
finite places $v$ such that $G_v$ is quasi-split and $K_v$ 
is a special but not hyperspecial maximal
compact.  Each such place will contribute a factor of
$\lambda_v = \frac{q^n+1}{q+1}$ to $m(K)$.
\end{rem}

Consequently, \cref{thm:main-Un} gives non-endoscopic Eisenstein congruences
mod $p$ which are Steinberg at $v$ whenever
$p$ is a sufficiently large (depending on $n$ and $E/F$)
prime dividing some $q_v^r -1$ (for $1 \le r \le n-1$).

\begin{ex} \label{ex:Qi} Suppose $F=\Q$, $E=\Q(i)$.  Then
$|\Cl(\UU(1))| = 1$.  Let $A/E$ be a central division algebra of
odd prime degree $n=2m+1$  which is 
ramified only at the primes of $E$ above a fixed rational 
prime $\ell \equiv 1 \bmod 4$ (so necessarily division at $w | \ell$).
Write $\chi = \chi_{E/F}$.  
It is well known that $L(1-r,\chi^r)=-\frac 1r B_{r,\chi^r}$ (generalized
Bernoulli number).
Thus taking $G$ and $K$ as in \cref{prop:mass-Un}, we get
\[ m(K) = \frac {1}{2^{n} n!} \prod_{r=1}^m B_{2r} \times
\prod_{r=1}^m B_{2r+1, \chi} \times \prod_{r=1}^{n-1} (\ell^r - 1). \]

Suppose $p > n$ is a prime dividing some $\ell^r - 1$ where $1 \le r \le n-1$.
Since $p > n$, the von Staudt--Clausen theorem tells us that
 $p$ does not divide the denominators of any of the Bernoulli numbers
$B_2, B_4, \dots, B_{2m}$.  Also
$B_{1, \chi}, B_{3, \chi}, \dots, B_{n, \chi}$ all have denominator $2$.
Hence \cref{thm:main-Un} yields a non-endoscopic
holomorphic weight $n$ cuspidal representation
$\pi$ of $G'(\A) = \UU(n,\A)$ Hecke congruent to $\one_{G'} \bmod p$ 
such that $\pi$ is (i) unramified at each odd finite $v \ne \ell$, (ii)
spherical at $v=2$, and (iii) an unramified twist of Steinberg at $v=\ell$.
(By working with smaller compact subgroups $K$, one can remove the condition
$p > n$.)

The same result is true for some additional values of $p$, independent of $\ell$,
coming from numerators of Bernoulli numbers.  For instance,
we can always take $p=61$ for $7 \le n \le 59$ as $61 | B_{7,\chi}$; 
we can take $p \in \{ 277, 2659 \}$ if $11 \le n < p$ as $277 \cdot 2659 | B_{9,\chi}$; 
we can take $p = 19$ if $n= 13, 17$ as $19 | B_{11,\chi}$;
or we can take $p\in \{43, 691, 967 \}$ if $13 \le n < p$ as $691 | B_{12}$ and
$43 \cdot 97 | B_{13,\chi}$.
\end{ex}

\subsubsection{Congruences from definite unitary groups over fields}
\label{sec:Un-field}

The reason to work with definite unitary groups over division
algebras $A$ is to guarantee we get a representation $\pi$ with an Eisenstein
congruence such that $\pi$ is non-endoscopic.  
For instance, the congruences coming from Bernoulli numbers in \cref{ex:Qi}
also occur on the definite unitary group $G = \UU_A(3)$ where $A = M_3(\Q(i))$.
Namely, if $p > n=2m+1$ and $p | B_{2r}$ or $p | B_{2r+1, \chi}$ for some
$1 \le r \le m$, we still get an Eisenstein congruence mod $p$ 
for some non-abelian $\pi$ occurring in 
$\calA_0(G,K, 1)$ where $K_v$ is maximal everywhere.  
However, it may be that $\pi$ is a lift from a smaller group, and that
this congruence may be explained either as
the symmetric square lift of a weight 2 Eisenstein congruence
or as the Kudla lift of a weight 3 Eisenstein congruence as in  \cref{sec:kudla}.

First we state a sample result using definite unitary groups over fields. Unspecified notation is as in \cref{thm:main-Un}.

\begin{prop} \label{field-cong} Let $E/F$ be a CM extension of number fields,  
$n$ a positive
integer, and $G$ the definite unitary group preserving $\Phi = I \in M_n(E)$.
Take $\mathfrak l$ to either be $\frako_E$ or a prime ideal of $E$ not ramified in
in $E/F$.  Let $\ell \in \mathbb N$ be the absolute norm of $\mathfrak l$.
Let $K = \prod K_v$ where $K_v = G_v$ for $v | \infty$, $K_v$ 
is the stabilizer of a maximal lattice for all finite $v \ne \mathfrak l$  (so all finite $v$ if $\ell = 1$) such that $K_v$ is hyperspecial at all such unramified $v$, 
and $K_{\mathfrak l}$ be the subgroup of $G(\frako_{\mathfrak l})$
fixing the (localization of the) lattice $\Lambda = \frako_E \oplus \dots \oplus \frako_E \oplus \mathfrak l \frako_{E} \subset E^n$.
If $n$ is odd, put $\lambda_v = \lambda_v(n) = \frac 12$.  If $n$ is even,
put $\lambda_v = \lambda_v(n) = 1$ or $\frac 12 \cdot \frac{q^n-1}{q+1}$ according to
whether $(-1)^{n/2} \in N(E_v^\times)$ or not. 
Suppose $p$ divides 
\begin{equation} \label{Unfieldmass}
 m(K) = 2^{1-nd}  \times\frac{\ell^n - \chi_{E/F}(\ell)^n}{\ell - \chi_{E/F}(\ell)} \times \prod_{v \in \Ram_f(E)} \lambda_v \times \prod_{r=1}^n L(1-r, \chi_{E/F}^r).
\end{equation}
Then there exists an eigenform $\phi \in \calA_0(G, K, 1)$ which is Hecke congruent
to $\one \mod p$.  The factor $\frac{\ell^n - 1}{\ell - 1}$ is interpreted
to be $1$ if $\ell = 1$.
\end{prop}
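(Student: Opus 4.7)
The plan is to combine \cref{gen-prop} (or \cref{gen-cor}) with an explicit evaluation of $m(K)$ via \cite[Proposition 2.13]{GHY}, in the same spirit as the proof of \cref{prop:mass-Un}.

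For the representation-theoretic step, one applies \cref{gen-prop} to produce an eigenform $\phi \in \calA_0(G,K)$ Hecke congruent to $\one \bmod p$.  To obtain the stated refinement to the trivial central character subspace $\calA_0(G,K,1)$: when $n$ is odd, \cref{gen-cor} applies directly, and $m(K_Z)$ is a product of $|\Cl(\UU(1))|$ and a unit index, absorbable into a harmless factor; when $n$ is even, one instead adapts the construction of the auxiliary integral form $\phi'$ in the proof of \cref{gen-prop} to build it inside $\calA^\Z(G,K,1)$ directly.

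The heart of the proof is the evaluation of $m(K)$.  By \cite[Proposition 2.13]{GHY},
\[
 m(K) = 2^{1-nd} \times \prod_{r=1}^n L(1-r, \chi_{E/F}^r) \times \prod_{v \in S_K} \lambda_v,
\]
where $S_K$ is the set of finite places at which the smooth integral model of $G_v$ cut out by $K_v$ differs from Gross's canonical hyperspecial model $\underline H_v^0$, and $\lambda_v$ is the ratio of orders of maximal reductive quotients of special fibers, exactly as in the proof of \cref{prop:mass-Un}.

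The factors $\lambda_v$ split into three cases.  At the distinguished prime $\mathfrak l$ (when $\mathfrak l \ne \frako_E$), $K_{\mathfrak l}$ is the stabilizer of the non-self-dual lattice $\Lambda$.  In the split case ($\chi_{E/F}(\ell) = 1$, $G_v \simeq \GL_n(F_v)$), $K_{\mathfrak l}$ is a maximal parahoric of index $[\GL_n(\frako_v) : K_{\mathfrak l}] = 1 + \ell + \cdots + \ell^{n-1}$, counting the hyperplanes in $\F_\ell^n$.  In the inert case ($\chi_{E/F}(\ell) = -1$, $\ell = q_v^2$), $K_{\mathfrak l}$ stabilizes a non-hyperspecial vertex of the Bruhat--Tits building of $\UU_n(F_v)$, and the index computed from the reductive quotient equals $(\ell^n - (-1)^n)/(\ell + 1)$.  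In either subcase this contribution equals $\tfrac{\ell^n - \chi_{E/F}(\ell)^n}{\ell - \chi_{E/F}(\ell)}$.  At $v \in \Ram_f(E)$ with $n$ odd, one obtains $\lambda_v = \tfrac12$ as in the proof of \cref{prop:mass-Un}.  At $v \in \Ram_f(E)$ with $n$ even, two Hermitian form classes arise, distinguished by whether $(-1)^{n/2} \in N(E_v^\times)$; the corresponding special fiber calculation produces $\lambda_v$ equal to $1$ or $\tfrac12 \cdot \tfrac{q^n - 1}{q + 1}$ respectively.

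Assembling these factors gives formula \eqref{Unfieldmass}, after which the conclusion follows from the reduction step.  The main obstacle will be the case-by-case local bookkeeping, especially the inert subcase at $\mathfrak l$ (which requires identifying the correct non-hyperspecial parahoric in the Bruhat--Tits building of the unramified unitary group and its reductive quotient) and the even-$n$ ramified case (which requires tracking the Hermitian form class via Hasse invariants).  The remaining contributions are direct specializations of the computations in \cite{GHY}.
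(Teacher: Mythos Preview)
Your approach is essentially the same as the paper's: verify the mass formula via \cite{GHY} and then invoke \cref{gen-prop}.  Two points of comparison are worth noting.

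First, the paper handles the factor at $\mathfrak l$ more directly than you propose.  Rather than treating $K_{\mathfrak l}$ as a parahoric and computing $\lambda_{\mathfrak l}$ from reductive quotients of special fibers, the paper simply observes that $K_{\mathfrak l} \subset G(\frako_{\mathfrak l})$ (the hyperspecial maximal compact), so $m(K) = [G(\frako_{\mathfrak l}) : K_{\mathfrak l}] \cdot m(K^0)$ where $K^0$ agrees with $K$ away from $\mathfrak l$ and is hyperspecial at $\mathfrak l$.  Then $m(K^0)$ is read off from \cite{GHY} exactly as in \cref{prop:mass-Un}, and the index is computed as that of an $(n-1,1)$-type maximal parabolic in $\GL_n(\mathbb F_\ell)$ (split case) or $\UU_n(\mathbb F_\ell)$ (inert case).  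This sidesteps the Bruhat--Tits bookkeeping you flagged as the ``main obstacle'' in the inert subcase.

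Second, you are more careful than the paper about landing in $\calA_0(G,K,1)$ rather than merely $\calA_0(G,K)$.  The paper's proof says only ``the only thing to do is check the above formula for $m(K)$,'' implicitly appealing to \cref{gen-prop}, which a priori produces $\phi \in \calA_0(G,K)$ without the central character constraint.  Your remarks on using \cref{gen-cor} for $n$ odd and adapting the $\phi'$ construction for $n$ even go beyond what the paper writes out.
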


\begin{proof} 
The only thing to do is check the above formula for $m(K)$.  This
follows from the formulas in \cite{GHY} together with the
fact that $[G(\frako_{\mathfrak l}) : K_{\mathfrak l}] = (\ell^n-1)/(\ell - 1)$
(resp.\ $(\ell^n-(-1)^n)/(\ell+1)$) if
$\mathfrak l$ is split (resp.\ unramified inert) in $E$.  Note 
$[G(\frako_{\mathfrak l}) : K_{\mathfrak l}]$ can be computed as 
the index of an $(n-1, 1)$-type maximal parabolic inside 
$\GL_n(\mathbb F_\ell)$ (resp.\ $\UU_n(\mathbb F_\ell)$).
\end{proof}

We can numerically compute examples in Magma \cite{magma} on
definite $\UU(3)$ (or more generally $\UU(n)$) over a field
using Markus Kirschmer and David 
Lorch's code for Hermitian lattices.\footnote{Available at: \url{http://www.math.rwth-aachen.de/~Markus.Kirschmer/magma/lat.html}}
We compute unramified eigenvalues on $\UU(n)$
at both split and inert places using the lattice method described in
\cite{greenberg-voight}.  For convenience, we calculate with
the standard definite form $G$ of $\UU(n)$ over $F=\Q$, taking
$\Lambda = \frako_E \oplus \dots \oplus \frako_E \oplus  \mathfrak l \frako_{E}$, 
 $K_v$ to be stabilizer of $\Lambda_v$ at all finite $v$, and $K_\infty =
 \UU_3(\R)$.  In this case, denote $K(\ell) := K = \prod K_v$ where $\ell$
 is the absolute norm of $\mathfrak l$.
 Note $K_v$ is of the form in given the proposition except possibly
 at the finite $v \in \Ram_f(E)$ as now $K_v$ need not be maximal.
 But since $K(\ell)$ is contained in some $K'$ of the form in the proposition, 
 we can still compute congruence examples
as in the proposition on the potentially larger spaces of level $K(\ell)$. 
 
Our calculations suggest that
the Eisenstein congruences from the $n=3$ case of \cref{field-cong} 
can also be explained by \cref{th:Kudlalift} as Kudla lifts of weight 3 
Eisenstein congruences for $\GL(2)$.
We numerically check if a form $\phi$ on $\UU(3)$ is a Kudla lift of a
weight 3 elliptic form $f$ by comparing Hecke eigenvalues.
Specifically, from computing local Hecke operators in \cref{sec:local}, it follows that the $p$-th Hecke eigenvalue
of $\phi$ is $a_p(f)+p$ at split primes $p$ and $a_p(f)^2+2p^2+p-1$
at unramified inert places $p$.

\begin{ex} Taking $n=3$, $E/F=\Q(i)/\Q$, and $K=K(\ell) = K(5)$
we get $m(K)=\frac{31}{384}$, which gives an eigenform
$\phi \in \calA_0(G, K, 1)$ with an Eisenstein congruence mod $31$.
(Here $K(5)$ is of the form in the proposition.)
We compute that $\calA_0(G, K, 1)$ is
spanned by $2$ forms, say $\phi$ and $\bar \phi$, 
which are Galois conjugate over $\Q(\sqrt 5)$.
We may choose $\phi$ so that the first few eigenvalues at split primes $v$
for $\one$ and $\phi$ are given by:
\begin{center}
\begin{tabular}{c|cccc}
$v$ & $13$ & $17$ & $29$ & $37$ \\
\hline
$\one$ & $183$ & $307$ & $871$ & $1407$ \\ 
$\phi$ & $9-2\sqrt 5$ & $11+ 8 \sqrt 5$ & $27 - 4 \sqrt 5$ &
$41 - 10 \sqrt 5$
\end{tabular}
\end{center}
and the first few eigenvalues at unramified inert primes $v$ are given by
\begin{center}
\begin{tabular}{c|cccc}
$v$ & $3$ & $7$ & $11$ & $19$ \\
\hline
$\one$ & $84$ & $2408$ & $14652$ & $130340$ \\ 
$\phi$ & $10+2\sqrt 5$ & $50+10\sqrt 5$ & $52-88 \sqrt 5$ &
$580-32 \sqrt 5$
\end{tabular}
\end{center}
These eigenvalues for $\one$ and $\phi$ are congruent mod 
$\frakp = (\frac{1+5\sqrt 5}2)$.
Note this congruence comes from the factor $\frac{\ell^3-1}{\ell-1}$ in the
mass formula.

Numerically, $\phi$ agrees the Kudla lift of a newform 
$f \in S_3(20, \chi_{-4})$.  The rationality field of $f$
is a CM extension of $\Q(\sqrt{5})$ and $f$ has a mod $31$ congruence 
with $E_{3,1,\chi_{-4}} \in M_3(20,\chi_{-4})$.
\end{ex}

\begin{ex} Taking $n=3$, $E/F=\Q(\sqrt{-19})/\Q$ and $K = K(1)$,
we get $m(K) = \frac{11}{48}$.
Here $\calA_0(G,K,1)$ is spanned by $2$ rational eigenforms. One of these forms,
which we call $\phi$, has a congruence mod $11$ with $\one$.  Note this
congruence comes from the numerator of the Bernoulli number
$B_{3,\chi_{-19}}$.

The space $S_3(19,\chi_{-19})$ has dimension $3$, spanned by 
newforms $f, \bar f, g$, where $f$ has rationality field $\Q(\sqrt{-13})$.
Numerically, $\phi$ agrees with the Kudla lift of $f$ (and of $\bar f$),
and $f$ has mod $11$ congruences with the Eisenstein
series $E_{3,1,\chi_{-19}}$ and $E_{3,\chi_{-19},1}$.

(In this example $K(1)$ is not of the form in the proposition, as the global lattice
$\frako_E^{\oplus 3}$ is not maximal, but it has the same mass as a maximal lattice $\Lambda'$.  
However, taking $K'$ to be the stabilizer of $\prod_q \Lambda'_q$, 
one only sees the Kudla lift of $g$ in $\calA_0(G,K')$ and not that of $f, \bar f$.)
\end{ex}

Additional calculations together with known results and expectations
about GL(2) Eisenstein congruences suggest that the congruences
in \cref{field-cong} always arise as endoscopic lifts.  Specifically, we expect
the following GL(2) Eisenstein congruences.
For simplicity, we only explicate this $F=\Q$.

\begin{conj} \label{conj:wtk}
Let $k \ge 2$ be an integer, $E$ be an imaginary quadratic field of discriminant $D$, and $\ell = 1$ or $\ell \nmid D$ a prime.  Define $\lambda_q = \lambda_q(k)$ as in \cref{field-cong}.
If $p$ divides 
\begin{equation} \label{eq:wtk-mass}
\frac 1{2^{k - 1}k!} \times \frac{\ell^k - \chi_{D}(\ell)^k}{\ell - \chi_{D}(\ell)} 
\times \prod_{q \in \Ram_f(E)} \lambda_q \times \prod_{r=1}^k B_{r,\chi_D^r},
\end{equation}
then there exists
an eigenform $f \in S_{k'}(\ell |D|, \chi_{D}^{k'})$ which is Hecke congruent mod
$p$ to the Eisenstein series $E_{k', 1, \chi_D^{k'}} \in M_{k'}(\ell |D|, \chi_{D}^{k'})$ for some $2 \le k' \le k$.

Moreover we may take $k=k'$ in the following situations.
If $\Ram_f(E)$ contains a single prime and $k$ is odd, 
denote this prime by $p_D$; otherwise
let $p_D = 1$.  Let $N$ be the conductor of $\chi_D^k$, and put $\delta = |\Ram_f(E)|$ if $k$ is odd and $\delta = 0$ if $k$ is even.

\begin{enumerate}[(i)]
\item If $\ell = 1$, and $p$ divides
\[ \frac 1{2^{k - 1 + \delta} p_D^{(k-1)/2}} \prod_{m=1}^{\lfloor (k-1)/2 \rfloor} (\prod_{(q-1) | 2m} \frac 1q ) \times \frac{B_{k,\chi_D^k}}k, \] then we may take $k'=k$ and
 $f \in S_{k}(N, \chi_{D}^k)$ to be a newform.
 
 \item If $k \ge 3$, $\ell$ is prime, and $p$ divides
 \[  \frac 1{2^{k - 1 + \delta} p_D^{\lfloor (k+1)/2 \rfloor}}
 \prod_{m=1}^{\lfloor k/2 \rfloor} (\prod_{(q-1) | 2m} \frac 1q ) \times \frac{\ell^k - \chi_{D}(\ell)^k}{\ell - \chi_{D}(\ell)}, \] 
 then we may take $k'=k$ and
  $f \in S_{k}(\ell N, \chi_{D}^k)$ to be a newform.
\end{enumerate}
\end{conj}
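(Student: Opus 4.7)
The approach is to combine \cref{field-cong} with the endoscopic classification \eqref{ECU} to convert mass formula congruences on a definite $\UU(k)$ into GL(2) Eisenstein congruences. The strategy: start from an eigenform $\phi \in \calA_0(G,K,1)$ produced by \cref{field-cong}, analyze the Arthur parameter $\psi$ of its associated representation $\sigma$ of $G(\A)$, and descend the data to a weight-$k'$ newform on $\GL(2)$ for some $2 \le k' \le k$.

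First I would apply \cref{field-cong} to $G=\UU(k)$ with the specified lattice data, so that the hypothesis $p \mid m(K)$ produces $\phi \in \calA_0(G,K,1)$ with $\phi \equiv \one \bmod p$. Let $\psi = \psi_1 \boxplus \cdots \boxplus \psi_m \in \Psi_2(G',\mathrm{std})$, $\psi_i = \mu_i \boxtimes \nu_i$, be the Arthur parameter of $\sigma$. Because $G(F_v)$ is a unitary group over a field at every finite place, every parameter is \emph{a priori} relevant, so no factorization is excluded on formal grounds. Second, I would constrain $\psi$ using the Hecke congruence $\sigma_v \equiv \one \bmod p$ at almost all $v$: the Satake parameters of each $\mu_i$ are forced to be congruent mod $p$ to an Eisenstein parameter (shifted by $\nu_i$), which rules out ``generic'' cuspidal $\mu_i$ and leaves only lifts of lower-rank data. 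Combined with the archimedean constraint that $\psi_\infty$ is the parameter of the holomorphic weight-$k$ discrete series, the candidates reduce to (a) Kudla-type lifts of weight-$k$ forms from $\GL(2)$ via $\UU(2)$, extending \cref{th:Kudlalift}; (b) Ikeda-type lifts $\mu \boxtimes \nu(r)$ with $\mu$ a base change from $\GL(2)$, cf.\ \cref{th:oddunitikeda}; and (c) endoscopic products with characters of $\UU(1)$. In each case the $\GL(2)$ component descends to $f \in S_{k'}(N',\chi_D^{k'})$ via the generalized Jacquet--Langlands correspondence of \cref{sec:classification}.

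To prove the refinements (i) and (ii), I would match each factor of \eqref{eq:wtk-mass} with a specific contribution to the mass coming from a particular Arthur parameter type. The factor $\prod_{(q-1)\mid 2m}\tfrac{1}{q}$ is the von~Staudt--Clausen denominator of $B_{2m}$, and should account for contributions from lifts of weight-$<k$ forms that do not attain $k'=k$; removing these denominators and the factor $p_D^{(k-1)/2}$ isolates the contribution from a weight-$k$ newform, forcing $k'=k$. The $\ell$-factor $(\ell^k - \chi_D(\ell)^k)/(\ell-\chi_D(\ell))$ in (ii) encodes local level-raising at $\ell$ and corresponds to the parahoric index $[G(\frako_{\mathfrak l}) : K_{\mathfrak l}]$ in $\UU(k)$.

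The main obstacle is ruling out Arthur parameters not of $\GL(2)$ origin and matching mass contributions one-for-one with parameter types. This requires: (a) the full endoscopic classification \eqref{ECU}, which is assumed; (b) existence and functorial properties of the relevant lifts (Kudla, Ikeda, Yoshida, symmetric-power) in arbitrary weight---currently fully known only for restricted ranges of $k$; and (c) precise local multiplicity formulas at ramified places to count each local packet's contribution. A parallel route, potentially more tractable for (i) and (ii), is a direct Hida-theoretic or Eisenstein-ideal construction in the spirit of Mazur, Ribet, and Skinner--Urban, though this would not directly explain the mass formula origin that the conjecture aims to capture.
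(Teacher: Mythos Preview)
The statement you are attempting to prove is labeled a \emph{conjecture} in the paper, and the paper does not supply a proof. Immediately after stating it, the authors explain that the first part ``corresponds to the expectation that the congruences in \cref{field-cong} can also be produced as lifts of $\GL(2)$ Eisenstein congruences,'' and that the divisibility conditions in (i) and (ii) are chosen so that ``a new factor of $p$ arises in \eqref{eq:wtk-mass} when increasing the weight from $k-1$ to $k$'' or when passing from $\ell=1$ to $\ell$ prime. This is heuristic reasoning based on numerical examples and the structure of the mass formula, not a proof. The authors also note that much of the conjecture follows from known results of Billerey--Menares, Spencer, and Ribet when $p>k+1$ and $p\nmid N$, but the precise level control in (i) and (ii) and the small-$p$ cases remain open.

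Your proposal is a reasonable outline of how one might \emph{attempt} a proof, and it is close in spirit to the paper's own motivating heuristic: take the $\phi$ from \cref{field-cong}, classify its Arthur parameter, and descend to $\GL(2)$. But the obstacles you list at the end are exactly why this is stated as a conjecture rather than a theorem. In particular, the step ``the Satake parameters of each $\mu_i$ are forced to be congruent mod $p$ to an Eisenstein parameter, which rules out `generic' cuspidal $\mu_i$ and leaves only lifts of lower-rank data'' is not a valid deduction: a mod-$p$ congruence of Hecke eigenvalues does not by itself constrain the Arthur parameter type, and there is no mechanism in the paper (or in the literature) that converts such a congruence into a structural statement about $\psi$. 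The paper's own evidence is computational (\cref{ex:endo-nomass} and the surrounding examples), not deductive. So your proposal does not constitute a proof, and the paper does not claim one.
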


Note if $k$ is even, then $N=1$ and there is no dependence on
$D$ in (i) or (ii)---e.g., the final factor in (ii) is $\frac{\ell^k - 1}{\ell - 1}$.

The first part of the conjecture corresponds to the expectation
that the congruences in \cref{field-cong} can also be produced as lifts
of $\GL(2)$ Eisenstein congruences of the above type.  
Specifically, when $k=k'$ above, the posited GL(2) congruence
corresponds to a congruence in \cref{field-cong} via an
endoscopic lift as in \cref{thm:20190601}.  However, there are many
lifts from GL(2) to $\UU(n)$ (e.g., the Ikeda lift in \cref{th:oddunitikeda}) 
and not all of the congruences in \cref{field-cong} can directly be explained
by \cref{thm:20190601}.
For instance, when $D=-19$ and we have mod 11 Eisenstein congruences
on both $\UU(3)$ and $\UU(4)$ with $\ell = 1$ in \cref{field-cong}
because $11 | L(-2,\chi_{-19})$.  Correspondingly, there is a mod 11
congruence in $S_3(19,\chi_{-19})$ but not one in $S_4(19)$.  It
appears that both the $\UU(3)$ and $\UU(4)$ congruences arise as lifts
of the weight 3 congruence (the former being the Kudla lift, and the latter
not of the type considered in \cref{thm:20190601}).

The reasoning for the second part of the conjecture is as follows.
The condition in (i) means that a new factor
of $p$ arises in \eqref{eq:wtk-mass} when increasing the weight from
$k-1$ to $k$.
(In other words, the depth of the congruence in the sense of
\cref{rem:depth} should increase when going from
weight $k-1$ to weight $k$.)
Here we explicated denominators (and ignored numerators) of Bernoulli numbers, using that
the denominator of $\frac{B_{2n}}{2n}$ is $\prod_{(q-1) | 2n} q$ (where $q$ denotes
a prime) and the denominator of $\frac{B_{2n+1,\chi_D}}{2n+1}$ is $p_D$.
Similarly, for the $\ell$ prime case the divisibility condition in (ii) forces a
new factor of $p$ in \eqref{eq:wtk-mass} when compared with the $\ell = 1$
case.

For $k \ge 4$ even, we also applied the following reasoning:
if one has a mod $p$ congruence in $S_k(\ell D)$ for all $D$, almost surely
there is one in $S_k(\ell)$.  
(Note $k=2$ is special: one does not have congruences in
$S_2(\ell)$ for $p | (\ell + 1)$, but rather for $p | (\ell - 1)$, and
one really should look at the spaces $S_2(\ell D)$---cf.\
\cite{mazur}, \cite{yoo}, and \cref{sec:GL2}).  

This explicit conjecture is mostly known by now,
and our primary reason for stating the above conjecture was to
make precise the apparent connection between
mass formulas on $\UU(k)$ and weight $k$ GL(2) congruences.
For instance, generalizing work of Billerey
and Menares \cite{billerey-menares} (cf.\ \cref{sec:lift-GL2}),
Spencer \cite{spencer} showed that for a Dirichlet character $\chi$ of
conductor $N$, there is a mod $p$ Hecke congruence with $E_{k,1,\chi}$
in $S_k(\ell N, \chi)$ if $p \nmid 6N$, $\ell \nmid N$ and $p$ divides
$(p^k - \chi(p))\frac{B_{k,\chi}}k$.  Thus apart from the specification
of the exact level of $f$ in (i) and (ii), the above conjecture is known
if $p > k+1$ and $p \nmid N$.  However, at least when $k$ is even, 
one can typically deduce the exact level of $f$ in the conjecture by
comparing \cite{spencer} with work of Ribet \cite{ribet} which says that 
(for $p > k+1$) one has a mod $p$  Eisenstein congruence in $S_k(1)$ 
if and only if $p | \frac{B_k}k$.  See also \cite[Proposition 5.2]{berger-klosin}
for a similar congruence result with an argument that is valid for small primes,
but with less control over the level of $f$.  We also checked the
above conjecture numerically for small choices of parameters.

\begin{rem} \label{rem:wtk}
\cref{conj:wtk} can be extended to different ways.
First, there is an obvious extension to Hilbert modular forms using
\eqref{Unfieldmass}.
Second, one can use the same idea of checking when new factors
divide the mass to extend these conjectures to newforms of higher levels
(e.g., replace $\ell$ with a product $\ell_1 \cdots \ell_t$ by
allowing more general compact subgroups in \cref{field-cong}).  E.g., see
\cref{conj:wt4} and \cref{rem:wt2new}.
To our knowledge, there are no general results along the lines of
\cref{conj:wtk} for higher levels or for Hilbert modular forms when $k > 2$.
\end{rem}

We note one more phenomenon.
While the above conjecture says that the congruences seen in the mass
formula in \cref{field-cong} should also be seen on GL(2), we can
find examples to see the converse of this statement is not true, at least for small
$p$.

\begin{ex} \label{ex:endo-nomass}
There is a unique CM newform $f \in S_3(11, \chi_{-11})$.  It has
rational coefficients and is congruent to $E_{3,1,\chi_{-11}}$ mod $3$.
Taking $n=3$, $E=\Q(\sqrt{-11})$ and $K=K(1)$, we compute
$\dim \calA(G,K) = 2$.  Say $\calA_0(G,K) = \langle \phi \rangle$.
Numerically $\phi$ agrees with the Kudla lift of $f$
and is congruent to $\one$ mod $3$.  However, the
mass here is $\frac 1{16}$, so we do not see
this lifted congruence in the mass formula.
\end{ex}


\section{Symplectic and odd orthogonal groups} \label{sec:orthog}

Here we will discuss our construction of Eisenstein congruences
for groups of type $B_n$ and $C_n$, i.e., odd orthogonal and symplectic
groups.  For simplicity we will focus on the rank 2 case where these 
coincide---namely $\SO(5) \simeq \PGSp(4)$---and we have a more explicit
understanding of the representations.  Then we briefly discuss the higher rank
situation in \cref{sec:BCn}.  

\subsection{$\SO(5)$} 
Let $F$ be a totally real number field of degree $d$ and $B/F$ a totally definite quaternion
algebra.  Consider the projective similitude group $G = \mathrm{PGU}(2,B)$, which is an
inner form of the quasi-split $G' = \SO(5) \simeq \PGSp(4)$ that is compact at infinity.
We consider a maximal compact subgroup $K = \prod K_v$ as follows.  Let $S$
be a finite set of finite places containing all finite places where $B$ is ramified.  
Take $K_v = G(F_v)$ for $v | \infty$ and $K_v$
hyperspecial maximal compact for finite $v \not \in S$.  Let $S_1$ (resp.\ $S_2$)
be the set of finite places which are ramified in $B$ such that $K_v$ stabilizes
a maximal lattice in the principal genus (resp.\ non-principal genus).  Let $S_3$
be the set of places in $S$ which are split in $B$, and assume $K_v$ is not hyperspecial
for $v \in S_3$, i.e., $K_v$ is the level $\frakp_v$ paramodular subgroup of $\PGSp_4(F_v)$.

\begin{prop} \label{SO5-mass}
Suppose $p$ divides the numerator of
\[ m(K) = 2^{1-2d-|S|} | \zeta_F(-1) \zeta_F(-3)| \prod_{v \in S} \lambda_v, \]
where
\[ \lambda_v = 
\begin{cases}
(q_v+1)(q_v^2+1) & \text{if } v \in S_1, \\ 
q_v^2 - 1 & \text{if } v \in S_2, \\
q_v^2 + 1 & \text{if }  v \in S_3.
\end{cases} \]
Then there exists an eigenform $\phi \in \calA_0(G,K)$ which is Hecke congruent
to $\one \mod p$.
\end{prop}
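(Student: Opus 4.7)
The proposition has two layers: the mass formula for $m(K)$ and the Eisenstein congruence deduced from it. For the congruence, nothing beyond \cref{gen-prop} is needed: once we know $p \mid m(K)$, there automatically exists an eigenform $\phi \in \calA_0(G,K)$ Hecke congruent to $\one \bmod p$. So the substance lies entirely in verifying the stated formula for $m(K)$.

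To compute $m(K)$, I would follow the template of \cref{prop:mass-Un} and apply the general formula of Gross, in the form \cite[Proposition 2.13]{GHY}, which writes
\[
m(K) = (\text{archimedean factor}) \cdot \prod_r |\zeta_F(1-2r)| \cdot \prod_{v \in S} \lambda_v,
\]
the product over $r$ running over the motivic exponents of the quasi-split form $G'$. For $G' = \PGSp(4) \simeq \SO(5)$ of type $C_2$ the exponents are $r = 1,2$, producing the factor $|\zeta_F(-1)\zeta_F(-3)|$; combining the definite (compact at infinity) archimedean contribution with the passage to the projective similitude group at each bad place assembles the overall prefactor $2^{1-2d-|S|}$. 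Each local correction is
\[
\lambda_v = \frac{q_v^{-N(\bar H_v^0)} |\bar H_v^0(k_v)|}{q_v^{-N(\bar G_v)} |\bar G_v(k_v)|},
\]
where $\bar H_v^0$ is the maximal reductive quotient of the special fiber of the Gross canonical hyperspecial model of $G'$ and $\bar G_v$ is that of the smooth integral model of $G$ with $\underline G_v(\frako_v) = K_v$.

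Unraveling the three cases: for $v \in S_3$ (split in $B$, paramodular level $\frakp_v$) the reductive quotient at the paramodular vertex of the Bruhat--Tits building of $\PGSp(4,F_v)$ is of mixed Levi type inside $\Sp(4)$, and a direct point count yields $\lambda_v = q_v^2 + 1$. For $v \in S_1 \cup S_2$ the local group $G_v$ is itself compact modulo center, and the computation reduces to comparing $|\bar G_v(k_v)|$ across the two conjugacy classes of maximal orders in $M_2(B_v)$ (with $B_v$ the local quaternion division algebra): the principal genus, whose stabilizer has reductive quotient essentially $\PGL_2$ over the residue field of $B_v$, yields $\lambda_v = (q_v+1)(q_v^2+1)$, while the non-principal genus yields $\lambda_v = q_v^2 - 1$. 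The main obstacle I expect is correctly identifying the smooth integral models and their reductive quotients at $v \in S_1 \cup S_2$---in particular matching the two lattice genera with explicit group schemes over $\frako_v$---and assembling the archimedean, central, and Tamagawa normalizations into precisely the prefactor $2^{1-2d-|S|}$. A natural sanity check would be to specialize to $F = \mathbb{Q}$ and compare with Hashimoto--Ibukiyama-type mass formulas for compact forms of $\GSp(4)$, or with \cite{shimura:bull} for quaternion Hermitian forms.
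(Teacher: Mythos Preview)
Your proposal is correct and follows essentially the same approach as the paper: reduce the congruence to \cref{gen-prop}, then verify the mass formula by specializing \cite[Proposition 2.13]{GHY} to $\SO(5)$ and computing the local factors $\lambda_v$. The paper's own proof is even terser than yours---it simply cites \cite{GHY} for the mass formula without spelling out the reductive quotients or the individual $\lambda_v$, and notes the $F=\Q$ case was done earlier by Hashimoto--Ibukiyama---so your more detailed outline (including the sanity check you suggest) is if anything a fuller account of the same argument.
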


Note that in all cases the local factors $\lambda_v$ are factors of $q_v^4 - 1$.

\begin{proof}
By \cref{gen-prop}, it suffices to prove the mass formula.  This mass formula
follows from the mass formula for $\SO(5)$ in \cite{GHY}.  In the case of $F=\Q$,
the mass formula was given earlier in the context of $\mathrm{PGU}(2,B)$ by 
Hashimoto and Ibukiyama  (\cite{hashimoto-ibukiyama2}, \cite{hashimoto-ibukiyama3}).
\end{proof}

As with the case of unitary groups, there should be a generalized Jacquet--Langlands
correspondence associating automorphic representations of $G(\A)$ to automorphic
representations of $G'(\A)$.  This would follow from the endoscopic classification of
representations of $G(\A)$, as conjectured in \cite[Chapter 9]{arthur:book} (see also
\cite{ralf:packet} for an explicit description of this classification for $\SO(5)$).
However, even for $\SO(5)$ this has not been completed, though some instances
of the Jacquet--Langlands correspondence are known by \cite{sorensen:2009}.

We would like to know whether this method can give cuspidal, 
and if possible non-CAP, representations of $G'(\A)$ with Eisenstein congruences.
The local archimedean parameter corresponding to the trivial representation for $G(\R)$
matches the local parameter $\rho_3$ corresponding to the packet containing the holomorphic discrete series $\mathcal D_3$ of scalar weight 3 for $G'(\R)$.  This packet
has size 2, and the other (generic) member $\mathcal D_{(3,-1)}$ is the large discrete series with minimal $K$-type $(3,-1)$.
Hence we expect the Eisenstein congruences from \cref{SO5-mass} to transfer to Eisenstein congruences for holomorphic or non-holomorphic Siegel modular forms of archimedean type $\rho_3$---specifically, discrete automorphic representations 
$\pi'$ of $G'(\A)$ such that either $\pi'_v \simeq \mathcal D_3$ for all $v | \infty$,
or for all but one $v | \infty$ and the remaining archimedean
component is $\mathcal D_{(3,-1)}$.

To be more explicit about the weight, 
for a global Arthur parameter $\psi$ for $G'(\A)$, one in general
needs a parity condition to be satisfied for a global tensor product $\pi' = \bigotimes \pi'_v$
to lie in the discrete spectrum of $G'(\A)$.  This parity condition tells us whether
we can take $\pi'$ to be holomorphic or not.
We will say $\pi'$ (or $\psi$) is of Saito--Kurokawa type if the Arthur parameter $\psi$ 
is of the
form $\mu \boxplus (\chi \otimes \nu(2))$, where $\mu$ is a cuspidal representation of
$\GL_2(\A)$, $\chi$ a quadratic idele class character of $\GL_1(\A)$ and $\nu(2)$
is the irreducible 2-dimensional representation of $\SU(2)$.  It follows from
\cite{ralf:packet} that, if $\pi'$ has a paramodular fixed vector for all finite $v$,
then the parity condition means we can take $\pi_\infty$ to be holomorphic
if and only if $\psi$ is not of Saito--Kurokawa type $\mu \boxplus (\chi \otimes \nu(2))$
for some $\mu, \chi$ such that $\eps(1/2, \mu \otimes \chi) = -1$.

While the behavior of level structure (existence of fixed vectors under explicit
compact subgroups) 
along the conjectural Jacquet--Langlands
correspondence for $\SO(5)$ is not understood even conjecturally in general,
Ibukiyama (\cite{ibukiyama:1985}, \cite{ibukiyama:2018}) 
has made global conjectures that suggest how the level 
structure behaves for maximal compact $K$ as above.
Indeed, it follows from the tables \cite{ralf:appendix} and \cite{ralf-brooks}
that the (principal genus stabilizer) $K_v$-level structure on $G(F_v)$ 
for $v \in S_1$ corresponds to being new of Klingen level $\frakp_v$ on $G'(F_v)$,
and the (non-principal genus stabilizer) $K_v$-level structure on $G(F_v)$ for
$v \in S_2$ corresponds to paramodular level structure on $G'(F_v)$.
We remark the paramodular case of Ibukiyama conjectures were recently
proven by \cite{vanHoften}.
Hence \cref{SO5-mass} suggests the following congruence statement about
Siegel modular forms.

\begin{conj} \label{SO5-conj}
For $p | m(K)$ in the notation of \cref{SO5-mass}, there exists
a holomorphic or non-holomorphic degree $2$ 
Siegel modular form $f$ of archimedean type $\rho_3$ such that (i) $f$ is unramified outside
$S$; (ii) $f$ has Klingen level $\frakp_v$ for $v \in S_1$; (iii) $f$ has paramodular
level $\frakp_v$ for $v \in S_2 \cup S_3$; and (iv) $f$ is Hecke congruent mod $p$ to
the trivial representation $\one$ 
of $\PGSp(4)$.
\end{conj}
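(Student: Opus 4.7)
The plan is to combine \cref{gen-prop} for $G$ with a conjectural Jacquet--Langlands transfer to $G'$, in close parallel with the unitary argument of \cref{thm:main-Un}. Since $p \mid m(K)$ by \cref{SO5-mass}, \cref{gen-prop} produces an eigenform $\phi \in \calA_0(G,K)$ Hecke congruent to $\one \bmod p$. Because $G = \mathrm{PGU}(2,B)$ is semisimple (and in fact adjoint), the $1$-dimensional automorphic representations appearing in $\calA(G,K)$ form a small, explicitly computable subspace, controlled by characters of $\pi_1(\mathrm{PGSp}_4) \simeq \Z/2\Z$ together with a class-group-like factor; a congruence-module argument in the spirit of \cref{prop:non-abel} (using the lemmas of \cref{sec:cong-mod}) should therefore let us take $\phi$ non-abelian for any $p$ beyond a small explicit bound. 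Let $\pi_\phi$ denote the associated automorphic representation of $G(\A)$.

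Next, invoking the (still conjectural) endoscopic classification for inner forms of $\SO(5)$ in the style of Arthur \cite{arthur:book}, and using the explicit local packet descriptions of Schmidt \cite{ralf:packet}, I would attach to $\pi_\phi$ a global Arthur parameter $\psi$ and exhibit a transfer $\pi' \in \Pi_\psi(G')$ with the required local behavior. At each $v \mid \infty$, the packet $\Pi_{\psi_v}$ is of archimedean type $\rho_3$, and one selects $\pi'_v$ to be either the holomorphic discrete series $\mathcal D_3$ or the large discrete series $\mathcal D_{(3,-1)}$ as permitted by the global parity condition in $\mathcal S_\psi$ (the Saito--Kurokawa case with $\eps(1/2, \mu \otimes \chi) = -1$ being exactly what forces at least one non-holomorphic archimedean factor). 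At each finite $v \notin S$, taking $\pi'_v \simeq \pi_{\phi,v}$ is possible because $G(F_v) \simeq G'(F_v)$ and $K_v$ is hyperspecial, which transfers the spherical Hecke eigenvalues and preserves the congruence with $\one_{G'}$ away from $S$. At $v \in S$, one picks the member of $\Pi_{\psi_v}$ carrying the correct new vector: by the explicit tables of \cite{ralf:appendix, ralf-brooks}, the principal-genus $K_v$-level structure on $G(F_v)$ for $v \in S_1$ matches Klingen new level $\frakp_v$ on $G'(F_v)$, while the non-principal-genus stabilizer for $v \in S_2$ matches paramodular new level $\frakp_v$ (which is already the chosen $K_v$ for $v \in S_3$); Ibukiyama's conjectures \cite{ibukiyama:1985, ibukiyama:2018} predict the global version of these matchings, which would themselves follow from the Jacquet--Langlands correspondence we are assuming.

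The hard part is the endoscopic classification itself. For quasi-split $\mathrm{PGSp}(4)$ it is largely established, but its extension to the compact inner form $G = \mathrm{PGU}(2,B)$, and in particular the precise description of local packets at ramified quaternionic places together with the Klingen vs.\ paramodular level correspondence predicted by Ibukiyama, are known only in limited cases---the paramodular case having been settled recently in \cite{vanHoften}. A secondary obstacle is ensuring that the $p$-part of $m(K)$ is not absorbed entirely by parameters whose transfer has archimedean type outside $\rho_3$; while Saito--Kurokawa contributions are already accommodated in the conjecture, ruling out other pathological archimedean behavior seems to require a more delicate argument than the cuspidality criterion of \cref{prop:cusp} used in the unitary setting.
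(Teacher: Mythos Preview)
The statement is a \emph{conjecture}, not a theorem, and the paper does not prove it. The surrounding text (the paragraphs preceding \cref{SO5-conj}) makes clear that the authors arrive at this statement heuristically: \cref{SO5-mass} gives the congruent eigenform $\phi$ on the compact inner form $G=\mathrm{PGU}(2,B)$, and then the expected but unproven generalized Jacquet--Langlands correspondence for $\SO(5)$ (as conjectured in \cite[Chapter~9]{arthur:book}), together with Ibukiyama's level-structure conjectures \cite{ibukiyama:1985,ibukiyama:2018} and the local tables \cite{ralf:appendix,ralf-brooks}, \emph{suggest} the stated transfer to $G'=\PGSp(4)$ with the specified Klingen/paramodular levels. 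Your ``proof proposal'' is essentially this same heuristic, and you have correctly identified exactly where the argument is conditional: the endoscopic classification for the non-quasi-split inner forms of $\SO(5)$ and the level-matching conjectures are not established in the generality needed (only the paramodular case is, via \cite{vanHoften}). So there is no disagreement between your approach and the paper's---both are outlines of what a proof would look like once the requisite functoriality is available.

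Two small corrections to your write-up. First, your ``secondary obstacle'' about archimedean types outside $\rho_3$ is not a genuine issue: since $K_v=G_v$ for $v\mid\infty$ and the weight is trivial, every $\pi$ occurring in $\calA(G,K)$ has $\pi_\infty$ trivial, hence its archimedean $L$-parameter is automatically $\rho_3$; any transfer lands in the packet $\{\mathcal D_3,\mathcal D_{(3,-1)}\}$ at each infinite place, and the only question is which member the parity condition selects. Second, the paper does not attempt (and the conjecture does not require) that $\phi$ be non-abelian or that the transfer be non-CAP; on the contrary, the discussion in \cref{sec:lift-GL2} argues that the predicted congruences are likely always realized by Saito--Kurokawa lifts. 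So the congruence-module refinement you sketch, while natural, goes beyond what the conjecture asserts.
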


In the next section, we will suggest that, at least when $S_1 = \emptyset$,
there is always a holomorphic Saito--Kurokawa form $f$ satisfying the above 
congruence.  

We expect the trivial representation has the same spherical
Hecke eigenvalues as a suitable weight 3 degree 2
Siegel Eisenstein series.

\subsection{Lifting GL(2) Eisenstein congruences} \label{sec:lift-GL2}

Here we will explain how, under certain conditions, one can also construct 
congruences predicted in \cref{SO5-conj} via Saito--Kurokawa lifts.
First we state recent results on GL(2) Eisenstein congruences in higher weight.

\begin{prop}[\cite{billerey-menares}, \cite{gaba-popa}] \label{thm:BM}
Let $k \ge 2$, $p \ge \min\{ 2k-1, 5 \}$, and $\ell$ a prime.  If $p | (\ell^{2k}-1)$, then
there exists a newform $f \in S_{2k}(\Gamma_0(\ell))$ which is congruent mod $p$ to a 
weight $2k$ Eisenstein series $E_{k,\ell} \in M_{2k}(\Gamma_0(\ell))$.
Moreover, if $p | (\ell^{k} \pm 1)$, then we may take $f$ to have Atkin--Lehner sign
$\pm 1$ at $p$.
\end{prop}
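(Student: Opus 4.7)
The plan is to adapt the Mazur--style Eisenstein-ideal argument to higher weights. Write $E_{k,\ell}\in M_{2k}(\Gamma_0(\ell))$ for the Eisenstein eigenform in the statement: up to scalar, it is the unique weight-$2k$ Eisenstein series of level $\ell$ with $T_p$-eigenvalue $p^{2k-1}+1$ for $p\neq\ell$, and it may be chosen as a $U_\ell$-eigenform with eigenvalue $1$ or $\ell^{2k-1}$. Both level-$1$ pullbacks $E_{2k}(\tau)$ and $E_{2k}(\ell\tau)$ span the Eisenstein subspace of $M_{2k}(\Gamma_0(\ell))$, and $E_{k,\ell}$ is a specific integral linear combination of the two.

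First, I would compute the constant terms of $E_{2k}(\tau)$ and $E_{2k}(\ell\tau)$ at the two cusps $\infty$ and $0$ of $X_0(\ell)$. Each is, up to an explicit power of $\ell$, a scalar multiple of $B_{2k}/(2k)$. A standard determinant computation shows that the matrix of constant terms has determinant proportional to $(B_{2k}/(2k))^2\cdot(\ell^{2k}-1)$, up to a unit power of $\ell$. Consequently, under the hypothesis $p\mid\ell^{2k}-1$, one can form a $\Z$-integral linear combination $\phi'$ of $E_{2k}(\tau)$ and $E_{2k}(\ell\tau)$ that has the same Hecke eigensystem as $E_{k,\ell}$ modulo $p$ and whose constant terms at both cusps are divisible by $p$; that is, $\phi'$ is ``cuspidal mod $p$''. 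This is the higher-weight analogue of the construction used in \cite{mazur} and of the first step in the proof of \cref{gen-prop}.

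Second, I would apply \cref{lift-lem} with $W=S_{2k}(\Gamma_0(\ell))$ inside $M_{2k}(\Gamma_0(\ell))$, obtaining a cuspidal Hecke eigenform $f\in S_{2k}(\Gamma_0(\ell))$ congruent to $E_{k,\ell}$ modulo a prime $\mathfrak p\mid p$ of a sufficiently large number field. The hypothesis $p\geq\min\{2k-1,5\}$ rules out small-prime pathologies: by the von Staudt--Clausen theorem, $p$ does not divide the denominator of $B_{2k}/(2k)$, so the congruence genuinely arises from the factor $\ell^{2k}-1$ and not from a hypothetical mod-$p$ Eisenstein congruence already present at level $1$. This forces $f$ to be new at $\ell$.

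For the Atkin--Lehner refinement, factor $\ell^{2k}-1=(\ell^k-1)(\ell^k+1)$. Any newform $f\in S_{2k}(\Gamma_0(\ell))$ satisfies $a_\ell(f)^2=\ell^{2k-2}$, and its Atkin--Lehner sign $\varepsilon_f=\pm 1$ is determined by $a_\ell(f)=-\varepsilon_f\,\ell^{k-1}$. The two choices of $U_\ell$-eigenvalue for $E_{k,\ell}$ (namely $1$ and $\ell^{2k-1}$) correspond, modulo $\mathfrak p$, to the two possible values of $a_\ell(f)\bmod\mathfrak p$: if $p\mid\ell^k\mp 1$, then $\ell^k\equiv\pm 1\pmod p$ pins down the consistent choice of $\varepsilon_f$. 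The main obstacle is isolating the $\ell$-new part of the Eisenstein ideal; the proof really needs the hypothesis on $p$ to prevent interference with hypothetical level-$1$ congruences with $E_{2k}$ coming from $p\mid B_{2k}/(2k)$.
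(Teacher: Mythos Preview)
The paper does not give its own proof of this proposition; it is quoted from the literature (Billerey--Menares and Gaba--Popa), with only a short paragraph of commentary afterwards explaining which paper contributes which refinement. So there is no in-paper argument to compare against, and your sketch is an attempt to reconstruct the proofs in those references.

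Your outline is in the right spirit---the cited papers do proceed via an analysis of constant terms of Eisenstein series at the cusps of $X_0(\ell)$ and a Deligne--Serre-type lifting step---but it contains a concrete error. You assert that the hypothesis $p \ge \min\{2k-1,5\}$, together with von Staudt--Clausen, guarantees that $p$ does not divide the denominator of $B_{2k}/(2k)$. This is false: for $k \ge 3$ the hypothesis only says $p \ge 5$, and there is nothing preventing $(p-1)\mid 2k$ (e.g.\ $p=5$, $k=10$). Consequently your clean dichotomy ``the congruence comes from $\ell^{2k}-1$ and not from a level-$1$ congruence'' breaks down, and with it your one-line deduction that $f$ is new at $\ell$. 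The actual arguments in \cite{billerey-menares} and \cite{gaba-popa} handle the small-prime and denominator issues more carefully; in particular, the newness of $f$ and the control of the Atkin--Lehner sign require additional work beyond what you have written. Your invocation of \cref{lift-lem} is also not quite licit as stated, since that lemma is formulated for algebraic modular forms on a compact-at-infinity group, not for $M_{2k}(\Gamma_0(\ell))$; the analogous lifting step in the classical setting is the Deligne--Serre lemma.
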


In fact, these works also give congruences if $p | (\ell^{2k-2}-1)$ with additional divisibility
criteria of Bernoulli numbers, and \cite{billerey-menares} proves an 
if-and-only-if statement (without the Atkin--Lehner sign condition) under the 
stronger hypothesis that $p \ge 2k+3$.  The specification of Atkin--Lehner sign
and the extension to $p \ge 2k-1$ was done in \cite{gaba-popa}.
  In \cite{billerey-menares}, the authors also
conjecture that their congruence result extends to squarefree level in a natural way.  Note \cref{conj:wtk}(iii) suggests an extension of this result to small
primes $p$.

We recall some facts about the Saito--Kurokawa lift.
Let $\mu$ be a discrete $L^2$-automorphic representations of 
of $\PGL_2(\A)$.  We define the Saito--Kurokawa lift
$SK(\mu)$ of $\mu$ to be a certain discrete automorphic representation of 
$\SO_5(\A)$ associated to the formal global parameter 
$\mu \boxplus (\one \boxtimes \nu(2))$ as in \cite{arthur:book}.
This corresponds to the
endoscopic functorial lifting of $(\mu, \one \boxtimes \nu(2))$
associated to the
standard embedding $\SL_2(\C) \times \SL_2(\C) \to \Sp_4(\C)$ 
from the dual group of $\PGL(2) \times \PGL(2)$ to the dual group of
$\SO(5)$.  We have not specified $SK(\mu)$ as a unique representation
in a global packet $\Pi(\mu)$.   
Under a parity condition, $SK(\mu)$ can be defined as in \cite{ralf:SK1}.

For simplicity, we just specify some information about $SK(\mu)$
in the special case $F=\Q$ and $\mu$ corresponds to an elliptic newform
$f \in S_{4}(\Gamma_0(N))$ with $N$ squarefree.
Then we may take $SK(f) := SK(\mu)$ to be unramified at primes
$p \nmid N$ and paramodular of level $p$ for $p | N$  \cite{ralf:SK2}.  
Let $\omega(N)$ be the number of primes dividing $N$.
If $\eps(1/2, f) = (-1)^{\omega(N)}$, then we may take $SK(f)$ to be a
holomorphic Siegel cusp form of scalar weight 3 \cite{ralf:SK1}.  
If $\eps(1/2, f) = 1$, then an examination of the parity condition
shows we cannot take $SK(f)$ to be holomorphic, but we can
take $SK(f)$ to be generic of weight $(3,-1)$.
(For the product $\otimes \pi_v$ of local components to occur
discretely, one needs the associated character of the component group to have order $\eps(1/2, f)$. Both the non-archimedean 
local paramodular representations and the archimedean representation $\mathcal D_{(3,-1)}$  correspond to the trivial characters
of the local component groups---cf.\ \cite{ralf:packet}.) 

Let $E_4$ be the weight 4 Eisenstein series in $M_4(\Gamma_0(1))$.
Then the trivial representation $\one$ of SO(5) locally agrees with the
local Saito--Kurokawa lifts of (the local principal series representations
attached to) $E_4$.
Thus it follows from \cref{pr:yoshida} ($k=4$, $k'=2$)
that if an eigenform 
$f \in S_{4}(\Gamma_0(N))$ is Hecke congruent to $E_{4}$ mod $p$, then
the Saito--Kurokawa lift $SK(f)$ will be Hecke congruent to $\one$ mod $p$.

Hence \cref{thm:BM}, together with the above discussion of 
Saito--Kurokawa lifts, gives us a special case of
 \cref{SO5-conj}, namely when whe $F=\Q$, 
 $S = S_2 \cup S_3 = \{ \ell \}$ and $p \ge 5$:

\begin{cor} Let $\eps = \pm 1$.  Suppose $\ell$ is a prime and $p \ge 5$ such that
$p | (\ell^k + \eps)$.  Then there exists a degree $2$ archimedean type
$\rho_3$ Saito--Kurokawa type
Siegel modular form $SK(f)$ Hecke congruent to $\one$ mod $p$, such that
$f$ is cuspidal and holomorphic if $\eps = -1$ and $SK(f)$ is non-holomorphic if 
$\eps = +1$.
\end{cor}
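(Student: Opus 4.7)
The plan is to combine Proposition \cref{thm:BM} with the Saito--Kurokawa lifting discussed immediately above the corollary. I would first apply \cref{thm:BM} with the parameter $k$ in that statement taken equal to $2$: since $p \ge 5$ and $p \mid \ell^k + \eps$, this produces a newform $f \in S_4(\Gamma_0(\ell))$ Hecke congruent mod $p$ to $E_{2,\ell}$, with Atkin--Lehner sign $w_\ell(f)$ at $\ell$ determined by the sign $\eps$. Then I would pass to the Saito--Kurokawa lift $SK(f)$, a discrete automorphic representation of $\PGSp_4(\A)$ with formal parameter $\mu_f \boxplus (\mathbf 1 \boxtimes \nu(2))$, where $\mu_f$ is the cuspidal representation of $\GL_2(\A)$ attached to $f$.

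To see the Hecke congruence $SK(f) \equiv \one \bmod p$ away from $\ell$, I would apply \cref{pr:yoshida} locally at each good prime $q$ with $k=4$, $k'=2$, and $\alpha' = (q^{1/2}, q^{-1/2})$; this specialization realizes the local Saito--Kurokawa lift and sends the weight $4$ Eisenstein congruence satisfied by $\pi_{f,q}^{\GL(2)}$ to the weight $(3,3)$ Eisenstein congruence on $\PGSp_4$. By \eqref{eq:eigsp1tr}--\eqref{eq:eigsp2tr}, weight $(3,3)$ encodes precisely the spherical Hecke eigenvalues of $\one$, which yields the required congruence at every good prime. At $\ell$, the work \cite{ralf:SK2} ensures that $SK(f)$ is paramodular of level $\ell$, matching the level structure in \cref{SO5-conj} for $\{\ell\} = S_2 \cup S_3$.

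Finally, I would invoke the parity criterion recalled before the corollary: $SK(f)$ may be chosen holomorphic of scalar weight $3$ iff $\eps(1/2, f) = (-1)^{\omega(\ell)} = -1$, and otherwise $SK(f)_\infty \simeq \mathcal D_{(3,-1)}$. For a weight $4$ newform of prime level $\ell$, the global root number $\eps(1/2, f)$ is determined by $w_\ell(f)$, and the Atkin--Lehner sign assignment from \cref{thm:BM} is calibrated so that $\eps = -1$ lands in the holomorphic branch and $\eps = +1$ in the non-holomorphic branch. The main delicate point is precisely this sign-matching step: one must check that the conventions for the Atkin--Lehner sign in \cref{thm:BM}, the formula for $\eps(1/2,f)$ in terms of $w_\ell(f)$, and the parity condition from \cite{ralf:SK1} line up to yield the two branches stated in the corollary. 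Once the signs are reconciled, the remainder is a direct assembly of \cref{thm:BM}, \cref{pr:yoshida}, and the Saito--Kurokawa machinery summarized above.
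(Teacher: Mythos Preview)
Your proposal is correct and follows essentially the same route as the paper: apply \cref{thm:BM} with $k=2$ to obtain a weight $4$ newform $f$ of level $\ell$ with prescribed Atkin--Lehner sign, use \cref{pr:yoshida} with $(k,k')=(4,2)$ to transfer the Eisenstein congruence to $SK(f)\equiv\one\bmod p$, and then invoke the parity criterion $\eps(1/2,f)=(-1)^{\omega(\ell)}=-1$ to sort the holomorphic and non-holomorphic branches. Your identification of the sign-matching as the only delicate point is exactly right; for weight $4$ and prime level one has $\eps(1/2,f)=w_\ell(f)$, so $p\mid(\ell^2-1)$ gives $w_\ell(f)=-1$ and hence the holomorphic branch, while $p\mid(\ell^2+1)$ gives the non-holomorphic branch, as claimed.
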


\begin{rem} By considering the action of ramified Hecke operators on $\one$,
it should be possible to show that, for $p$ odd, the $\phi$ from \cref{SO5-mass} has local
epsilon sign $-1$ for $v \in S_2$ and sign $+1$ for $v \in S_3$.  
(It is not hard to prove the analogous statement for GL(2) assuming $p \ne 2$---for $p=2$ it is not true as one cannot distinguish signs mod $2$.)
Consequently, looking at the parity condition for the global packet, 
we expect that one can take the $f$ in \cref{SO5-conj} to be
holomorphic when $S_1 = \emptyset$ and $|S_2| \equiv [F:\Q] \mod 2$.  Since
$B$ is ramified at an even number of places, the condition $S_1 = \emptyset$
should be sufficient.
\end{rem}

Since local conditions, such as $p | (q_v^2 - 1)$, are often sufficient to produce
congruences, and the local factor in \cref{thm:BM} for weight 4 matches with the
local factors in the mass formula in \cref{SO5-mass}, 
it is reasonable to expect that the congruences predicted in \cref{SO5-conj}
are always achieved by Saito--Kurokawa lifts.  As with the case of 
unitary groups over fields in \cref{sec:Un-field}, 
this suggests
that the mass formula does not give ``new'' Eisenstein congruences for $\SO(5)$, but does provide
a way to see higher weight Eisenstein congruences on $\GL(2)$.
Specifically, \cref{SO5-conj} suggests the following generalization
of \cref{thm:BM} in weight 4.

\begin{conj} \label{conj:wt4} Let $N_1, N_2$ be coprime squarefree positive integers with 
$\omega(N_1)$ odd.  Suppose $p$ is an odd prime dividing
$\frac 1{45} \prod_{\ell | N_1} (\ell^2 - 1) \prod_{\ell | N_2}(\ell^2 + 1)$.
Then there exists an eigenform $f \in S_4(\Gamma_0(N_1 N_2))$ which is
Hecke congruent to $E_4$ mod $p$.  If $p \ge 7$, and $p | (\ell^2-1)$ for each $\ell | N_1$ and $p | (\ell^2+1)$ for each $\ell | N_2$, then we may take $f$ to be new of
level $N_1 N_2$ such that $f$ has Atkin--Lehner sign
$-1$ (resp.\ $+1$) if $\ell | N_1$ (resp.\ $\ell | N_2$).
\end{conj}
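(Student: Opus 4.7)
The plan is to mimic the strategy of \cref{sec:lift-GL2}: produce the congruence on a compact inner form via a mass formula and then push it through a Saito--Kurokawa lift.

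\emph{Setup on a definite inner form.} Since $\omega(N_1)$ is odd, there is a totally definite quaternion algebra $B/\Q$ with $\mathrm{Ram}(B)=\{\infty\}\cup\{\ell : \ell \mid N_1\}$. Set $G=\mathrm{PGU}(2,B)$ and pick $K=\prod K_v$ as follows: $K_v$ hyperspecial at $v\nmid N_1N_2$; the stabilizer of a non-principal-genus lattice at $v\mid N_1$ (so $v\in S_2$ in the notation of \cref{SO5-mass}); and the level-$\frakp_v$ paramodular subgroup of $\PGSp_4(\Q_v)$ at $v\mid N_2$ (so $v\in S_3$); here $S_1=\emptyset$. \cref{SO5-mass} then gives
\[
m(K)\;=\;\frac{1}{2^{|S|+6}\cdot 45}\prod_{\ell\mid N_1}(\ell^2-1)\prod_{\ell\mid N_2}(\ell^2+1),
\]
and since $p$ is odd the hypothesis forces $p\mid m(K)$, producing an eigenform $\phi\in\calA_0(G,K)$ with $\phi\equiv\one\bmod p$.

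\emph{Transfer and Saito--Kurokawa identification.} Granting the endoscopic classification and generalized Jacquet--Langlands correspondence for $\SO(5)$ conjectured in \cite{arthur:book} and made explicit in \cite{ralf:packet}, attach to $\phi$ a discrete automorphic representation $\pi'=\JL(\phi)$ of $G'(\A)=\PGSp_4(\A)$ of archimedean type $\rho_3$, with local level structure dictated by \cite{ralf:appendix,ralf-brooks} (paramodular-new at $v\mid N_2$; paramodular-new of Atkin--Lehner sign $-1$ at $v\mid N_1$ via the $S_2$ tables), and with $\pi'\equiv\one_{G'}\bmod p$ at all $v\nmid N_1N_2$. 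The trivial representation $\one_{G'}$ has spherical Hecke eigenvalues matching those of $SK(E_4)$ (both arise as a degeneration of the parameter $I(|\cdot|^{3/2},|\cdot|^{-3/2})\boxplus(\one\boxtimes\nu(2))$, as noted at the end of \cref{sec:lift-GL2}), so the congruence places $\pi'$ on the Saito--Kurokawa locus. I then expect $\pi'=SK(\pi_f)$ for some eigenform $f\in S_4(\Gamma_0(M))$, $M\mid N_1N_2$, and \cref{pr:yoshida} (with $k=4$, $k'=2$) run in reverse translates $SK(\pi_f)\equiv SK(E_4)\bmod p$ into $f\equiv E_4\bmod p$. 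Under the stronger hypotheses $p\ge 7$ and $p\mid(\ell^2\mp 1)$ at each ramified $\ell$, a direct local computation of the Hecke and Atkin--Lehner action on $\phi$ at $v\in S_2\cup S_3$ pins down the signs and shows $f$ is new of exact level $N_1N_2$ with the stated Atkin--Lehner signs; $p\ge 7$ is needed to separate $\pm 1\bmod p$ cleanly and to avoid interference with the factor $2^{|S|+6}\cdot 45$ in the denominator of $m(K)$.

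\emph{Main obstacle.} The essential gap is Step 2: the Jacquet--Langlands transfer from general compact inner forms of $\PGSp(4)$ is only partially established (the paramodular case by \cite{vanHoften}, some cases by \cite{sorensen:2009}), so this route does not currently yield an unconditional proof of \cref{conj:wt4}. A more tractable attack, extending \cite{billerey-menares} and \cite{gaba-popa}, would work entirely on $\GL(2)$: build an explicit weight-$4$ Eisenstein series of level $N_1N_2$ with prescribed Atkin--Lehner signs whose constant terms carry the factors $\ell^2\mp 1$, and then apply \cref{lift-lem} (or the Deligne--Serre lifting lemma) inside the cuspidal subspace. The technical core of that second approach is the simultaneous control of newness and Atkin--Lehner signs at many primes, together with a careful handling of the small primes $p=3,5$ appearing in the denominator $45$ and in $\ell^2\pm 1$ for small $\ell$.
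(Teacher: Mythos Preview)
The statement is labeled a \emph{conjecture} in the paper and no proof is given; the paper offers it as a heuristic prediction suggested by the $\SO(5)$ mass formula, not as a theorem. Your sketch accurately reconstructs that heuristic---the mass computation via \cref{SO5-mass} is correct, and the expected link with Saito--Kurokawa lifts of weight $4$ elliptic forms is exactly the reasoning the paper uses to motivate \cref{conj:wt4}.

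That said, your list of obstacles is incomplete. You flag the Jacquet--Langlands transfer from $G$ to $\PGSp(4)$ as the essential gap, but there is a second, independent gap that is at least as serious: even granting the transfer, nothing forces the resulting $\pi'$ to be of Saito--Kurokawa type. The sentence ``the congruence places $\pi'$ on the Saito--Kurokawa locus'' is unjustified---being Hecke congruent mod $p$ to a representation with Saito--Kurokawa parameter does not make $\pi'$ itself a Saito--Kurokawa lift, and your step ``I then expect $\pi'=SK(\pi_f)$'' is precisely the expectation the paper states in the paragraph immediately preceding \cref{conj:wt4}, not a known fact. Without it you cannot extract an elliptic $f$ at all, let alone one congruent to $E_4$; so ``running \cref{pr:yoshida} in reverse'' never gets off the ground. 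The direct $\GL(2)$ approach you outline at the end (extending \cite{billerey-menares} and \cite{gaba-popa} to composite squarefree level with prescribed Atkin--Lehner signs) is indeed the natural line of attack and is how the known prime-level cases are proved.
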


In general, the $f$ in the conjecture need not be new for level $N_1N_2$.
For instance if
$N_1 = 2$ and $N_2=5$, it predicts an Eisenstein congruence mod 13 in level 10.  
However there is no new congruence at level 10, but rather an old one coming
from level 5.  The reason to expect that we may take $f$ to be new in the
situation prescribed in the second part of the conjecture is because then 
each local factor contributes to the depth of the congruence as
mentioned in \cref{rem:depth} (see also \cref{rem:wt2new}).
The same reasoning suggests the obvious analogue
of \cref{conj:wt4} for parallel weight 4 Hilbert modular forms.
We also remark that by considering mass formulas for $K$ which are non-maximal,
one is led to expect weight 4 Eisenstein congruences mod $p$ in levels divisible 
by $p^2$, analogous to what happens in weight 2 (see \cref{thm:GL2}).

\subsection{Higher rank} \label{sec:BCn}
Here we briefly discuss the analogue of the
mass formulas for the higher rank analogues of $\Sp(4)$ and $\SO(5) \simeq \PGSp(4)$.

Suppose $G$ is a definite form of $\Sp(2n)$ or $\SO(2n+1)$ over $\Q$ and
$K = \prod K_v$ where $K_v = G_v$ for $v | \infty$ and $K_v$ is hyperspecial
maximal compact outside of a finite set of finite places $S$.  Then Shimura's
mass formula as stated in \cite[Propositions 7.4 and 9.4]{GHY} is of the form
\begin{equation}
m(K) = \frac{\tau(G)}{2^n} \prod_{r=1}^n \zeta(1-2r) \prod_{v \in S} \lambda_v.
\end{equation}
Here $\tau(G) = 1$ or $2$ according to whether $G = \Sp(2n)$ or $\SO(2n+1)$.

If $G(F_v)$ is quasi-split and we take $K_v$ to be the Iwahori subgroup at some finite quasi-split $v$, we simply have  $\lambda_v = [G(\frako_v) : K_v]$.  Putting $q=q_v$,
these indices are, respectively, 
$|\Sp(2n,\mathbb F_q)| = q^{n^2} \prod_{r=1}^n (q^{2r}-1)$ and (for $q$ odd)
$|\SO(2n+1,\mathbb F_q)| = q^{2n^2+3n}\prod_{r=1}^n (q^{2r}-1)$.
Alternatively, if $K_v$ is not hyperspecial but stabilizes a maximal integral lattice,
then the precise form of Shimura's mass formulas tells 
us that the numerator of
$\lambda_v$ is a divisor of $q_v^{2n}-1$ if $G=\SO(2n+1)$ and 
$\lambda_v = \prod_{r=1}^n (q_v^r + (-1)^r)$ if $G = \Sp(2n)$.
See \cite{GHY} for details.

If $G=\Sp(2n)$ and $G'$ is the quasi-split inner form, then
the Eisenstein congruences on \cref{gen-prop} should, via a generalized
Jacquet--Langlands correspondence between $G$ and $G'$,
give rise to Eisenstein congruences for
scalar weight $n+1$ holomorphic degree $n$ Siegel modular forms
(at least under suitable parity conditions for the local parameters).

However, there are conjecturally many functorial lifts from $\GL(2)$
to $\Sp(2n)$ and $\SO(2n+1)$, and there seems to be no reason
to expect that the Eisenstein congruences constructed by \cref{gen-prop}
will not arise from lifts of $\GL(2)$ congruences.  For instance,
when $n$ is odd, one has an Ikeda lift from weight 2 forms on $\PGL(2)$
to scalar weight $n+1$ Siegel modular forms on $\Sp(2n)$.
Thus we expect congruences coming from the factors $q_v\pm 1$
in the mass formula for $\Sp(2n)$ to already be explained by 
Ikeda lifts of $\GL(2)$ congruences in weight 2.
This suggests suitable generalizations of \cref{conj:wt4} to higher weight
for both elliptic and Hilbert modular forms.



\section{$G_2$} \label{sec:G2}


\subsection{Eisenstein congruences for $G_2$}

Let $F$ be a totally real number field of degree $d$ and $G/F$ be the form of
$G_2$ which is compact at each infinite place.  Necessarily $G_v$ is quasi-split
at each finite place.  Let $K = \prod K_v$ be a maximal compact subgroup of $G(\A)$
which is hyperspecial at almost all places.  At a finite place $v$, by Bruhat--Tits
theory, the conjugacy classes maximal compact subgroups correspond to 
(complements of) vertices in the extended Dynkin diagram for $G_2$.  There
are 2 types of non-hyperspecial maximal compact subgroups: (i) those corresponding
to Dynkin diagram $A_1 \times A_1$, and (ii) those corresponding to Dynkin diagram
$A_2$.  Let $S_1$ (resp.\ $S_2$) be the set of finite places $v$ where $K_v$ is of type
(i) (resp.\ (ii)).

\begin{thm} \label{thm:G2} Suppose $p$ divides
\[ m(K) = \frac 1{2^d} \zeta_F(-1) \zeta_F(-5) \prod_{v \in S_1} (q_v^4+q_v^2+1) \times \prod_{v \in S_2} (q_v^3 + 1). \]
Then there exists an eigenform $\phi \in \calA_0(G,K)$ such that $\phi$ is
Hecke congruent to $\one$ mod $p$.
\end{thm}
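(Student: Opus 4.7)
Once the displayed mass formula for $m(K)$ is established, \cref{gen-prop} directly produces an eigenform $\phi \in \calA_0(G,K)$ Hecke congruent to $\one$ mod $p$ whenever $p \mid m(K)$, so the entire work lies in computing $m(K)$.

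First I would establish the base case where $K_v$ is hyperspecial at every finite place. For the compact-at-infinity simply connected form of $G_2$ over a totally real field $F$ of degree $d$, Gross's mass formula in \cite{gross:amf}, combined with the Harder--Prasad local volume computations, yields
\[ m(K^{\mathrm{hs}}) = \frac{1}{2^d}\, \zeta_F(-1)\, \zeta_F(-5). \]
The appearance of $\zeta_F(-1)$ and $\zeta_F(-5)$ reflects the fundamental degrees $2$ and $6$ of $G_2$, while the factor $1/2^d$ is a product of archimedean local contributions, each equal to $1/2$ for the compact real form of $G_2$; when $F=\Q$ this recovers the well-known value $m(K^{\mathrm{hs}}) = 1/6048$.

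Next, for each $v \in S_1 \cup S_2$, I would compute the local scaling factor $m(K_v)/m(K_v^{\mathrm{hs}})$. Since any two maximal parahoric subgroups of $G_2(F_v)$ can be chosen to share a common alcove, there is an Iwahori subgroup $K^0_v \subset K_v \cap K_v^{\mathrm{hs}}$, and the ratio equals $[K_v^{\mathrm{hs}} : K^0_v]/[K_v : K^0_v]$. Bruhat--Tits theory identifies each such index with the number of $\mathbb{F}_{q_v}$-points of the full flag variety of the reductive quotient of the corresponding parahoric. Removing respectively the long or the short simple root from the affine Dynkin diagram of $G_2$ produces the two types of non-hyperspecial maximal parahorics, whose reductive quotients are of type $A_1 \times A_1$ (the places in $S_1$) and of type $A_2$ (the places in $S_2$). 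One computes $[K_v^{\mathrm{hs}} : K^0_v] = (q_v+1)(q_v^5+q_v^4+q_v^3+q_v^2+q_v+1)$, and either $[K_v : K^0_v] = (q_v+1)^2$ or $(q_v+1)(q_v^2+q_v+1)$, which give local ratios $(q_v^6-1)/(q_v^2-1) = q_v^4+q_v^2+1$ and $(q_v^6-1)/(q_v^3-1) = q_v^3+1$ respectively. Multiplying the hyperspecial mass by these local ratios over $v \in S_1 \cup S_2$ produces the formula in the theorem, and \cref{gen-prop} concludes.

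The main obstacle is the careful tracking of the archimedean factor $1/2^d$ in the hyperspecial mass, which requires invoking Gross's canonical measure conventions together with the fact that $\tau(G_2) = 1$. The remaining Bruhat--Tits index calculations are routine once the combinatorics of the affine Dynkin diagram of $G_2$ is pinned down.
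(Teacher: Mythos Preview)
Your proof is correct and reaches the same conclusion, but the paper takes a much shorter route: it simply cites \cite[Theorem 3.7 and (2)]{CNP} for the mass formula and then invokes \cref{gen-prop}, exactly as you do in the final step. Your derivation of the mass formula from scratch---establishing the hyperspecial mass $2^{-d}\zeta_F(-1)\zeta_F(-5)$ via the Gross/Prasad formalism and then computing the parahoric correction factors $q_v^4+q_v^2+1$ and $q_v^3+1$ by comparing Iwahori indices inside the hyperspecial and the $A_1\times A_1$- or $A_2$-type maximal parahorics---is a genuinely different argument. Your Bruhat--Tits index computations are correct: $[K_v^{\mathrm{hs}}:K_v^0]=(q_v+1)(q_v^6-1)/(q_v-1)$, while $[K_v:K_v^0]=(q_v+1)^2$ and $(q_v+1)(q_v^2+q_v+1)$ for the two non-hyperspecial types, giving exactly the stated ratios. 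The payoff of your approach is that it is self-contained and transparently explains where the factors come from; the paper's approach has the advantage of brevity and of relying on a source that also pins down the integral models explicitly. One small point: the reference ``Harder--Prasad'' is nonstandard---the relevant inputs are Prasad's volume formula together with Gross's motive computation, or equivalently the general framework of \cite{GHY} already used elsewhere in the paper.
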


In particular, if $F=\Q$, we have $m(K) = \frac 1{2^5 \cdot 3^3 \cdot 7}
\prod_{\ell \in S_1} (\ell^4+\ell^2+1) \cdot \prod_{\ell \in S_2} (\ell^3 + 1)$.
Note that each factor for $v \in S_1 \cup S_2$ is a factor of $\frac{q_v^6 - 1}{q_v-1}$,
which would be the local factor if we took $K_v$ to be Iwahori.

\begin{proof}Again it suffices, to prove the mass formula for $m(K)$.
This is proved in \cite[Theorem 3.7 and (2)]{CNP}.
\end{proof}

\subsection{Lifts from GL(2)} \label{sec:lift-G2}

Conjecturally, there are three functorial lifts from PGL(2) to $G$,
corresponding to three classes of embeddings of dual groups
$\iota: \SL_2(\C) \to G_2(\C)$, and are described in
\cite{lansky-pollack}.  (There are 4 classes of embeddings
from $\SL_2(\C)$ to $G_2(\C)$,
but one is not relevant for compact $G_2$.) 
We can label these by $\iota_s$, $\iota_l$ and $\iota_r$,
which respectively send a unipotent element of $\SL_2(\C)$
to a short root for $G_2$, a long root for $G_2$ and regular
unipotent element for $G_2$.  The main aspect that is relevant here
is that $\iota_s$ (resp.\ $\iota_l$, resp.\ $\iota_r$) 
associates the weight 4 (resp.\ weight 6, resp.\ weight 2) 
discrete series of $\PGL_2(\R)$ to the trivial representation of $G(\R)$.

We remark that corresponding lifts to split $G_2$ associated
to $\iota_s$ and $\iota_l$ have been
studied, e.g., see \cite{GG:2006} and \cite{GG:2009}.

Now, by  \cref{pr:g2cong}, if $f$ is an elliptic
or Hilbert modular form of (parallel) weight $k$
 and is congruent to the Eisenstein series $E_k$
mod $p$ for $k \in \{ 2, 4, 6 \}$, then the appropriate functorial lift to $G(\A)$,
if automorphic, will be Hecke congruent to $\one$ mod $p$.
As in the case of $\SO(5)$ and unitary groups over fields, we expect that the Eisenstein congruences
from \cref{thm:G2} arise from PGL(2) Eisenstein congruences
coming from a suitable generalization of \cref{thm:BM} in weight 6.
Moreover, we expect such congruences to arise as lifts from PGL(2)
cusp forms which have Atkin--Lehner sign $+1$ at each $v \in S_2$.


\section{$\GL(2)$} \label{sec:GL2}


In this section, we discuss weight 2 Eisenstein congruences in the case
of GL(2) (or rather PGL(2)).  This was treated in \cite{me:cong}
over totally real number fields $F$ originally under the assumption that 
$h_F = h_F^+$.  However, as pointed out to us by Jack Shotton, 
the published argument
only gives cuspidal congruences mod $p$ when $p \nmid h_F$ and
$h_F$ is odd.\footnote{See \texttt{arXiv:1601.03284v4} for a corrected
version of \cite{me:cong}.}

Here we explain how to remove this class number condition 
by working with PGL(2) rather than GL(2)
and using congruence modules as in \cref{sec:cong-mod}. 
Moreover, even in the case that $p \nmid h_F$ and $h_F$ is odd,
we slightly refine our earlier result by making use of \cite{me:basis}
together with congruence modules.

Let $F$ be a totally real number field of degree $d$, 
and $B/F$ be a definite quaternion
algebra.  Let $\calO$ be a special order of $B$
(in the sense of Hijikata--Pizer--Shemanske) of the following type.
For a prime $v$ split in $B$, assume $\calO_v$ is an Eichler order
of level $\frakp_v^{r_v}$ (with $r_v = 0$ for almost all $v$).
For $v$ a finite  prime at which $B$ ramifies, assume 
$\calO_v$ is of the form $\frako_{E,v} + \mathfrak P_v^{2m}$
where $m$ is a non-negative integer, $\frako_{E,v}$ is the ring of integers
of the unramified quadratic extension $E_v/F_v$ and $\mathfrak P_v$
is the unique prime ideal for $B_v$.  In the latter case we say
$\calO_v$ is a special order of level $\frakp_v^{2m+1}$ (of unramified
quadratic type).
Let $\mathfrak N_1$ (resp.\ $\mathfrak N_2$) be
$\prod_v \frakp_v^{r_v}$ where $v$ ranges over the finite primes such that
$B/F$ splits (resp.\ ramifies) and $\frakp_v^{r_v}$ is the level of $\calO_v$.
Let $\mathfrak N = \mathfrak N_1 \mathfrak N_2$.
Let $E_{2, \mathfrak N}$ be a parallel weight 2 Eisenstein eigenform
over $F$ of level $\mathfrak N$ which has Hecke eigenvalue
$q_v$  (resp.\ 1) for $v | \mathfrak N_1$ (resp. $v | \mathfrak N_2$),
and Hecke eigenvalue $q_v + 1$ for finite $v \nmid \mathfrak N$.

\begin{thm} \label{thm:GL2}
Suppose $p$ is a rational prime which divides
\begin{equation}  \label{eq:GL2-mass}
2^{1-d-e-| \{ v | \mathfrak N_1 \}| } |\zeta_F(-1)|
 \prod_{v | \mathfrak N_1} q_v^{r_v-1}(q_v-1)
 \prod_{v | \mathfrak N_2} q_v^{r_v-1}(q_v+1),
 \end{equation}
 where $e$ is the $2$-exponent of the narrow class group
 $\Cl^+(F)$.
 Then there exists a parallel weight $2$ cuspidal Hilbert eigenform
 $f$ of level $\mathfrak N$ and trivial nebentypus such that
 $f$ is Hecke congruent to $E_{2, \mathfrak N} \bmod p$
 at all finite $v$ such that $r_v \le 1$.
 Moreover, for $v | \mathfrak N_1$ we may take $f$ such that the $v$-part of
 the exact level of $f$ is $\frakp_v^{s_v}$, where
 (i) $s_v$ is odd; (ii) $s_v = 1$ if $p \nmid q_v$; and (iii) $s_v = r_v$
 for any single chosen $v | \mathfrak N_1$ lying above $p$ (if such a $v$ exists).
 \end{thm}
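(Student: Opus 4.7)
The plan is to extend the argument of \cite{me:cong} by working with the projective form $G = B^\times/F^\times$, an inner form of $\PGL(2)$ compact at infinity, and then applying the algebraic-modular-forms machinery of \cref{sec2} together with the Jacquet--Langlands correspondence. I take $K = \prod K_v$ to be the image in $G(\A)$ of the adelic units of $\calO$, so that $\calA(G, K)$ corresponds to parallel weight $2$ quaternionic modular forms with trivial central character and level $\calO$. The first step is to verify that $m(K)$ (suitably renormalized) equals the expression \eqref{eq:GL2-mass}. This combines the Eichler--Shimura--Shimizu mass formula at split primes, Shimura's formula at ramified primes with special orders of level $\frakp_v^{2m+1}$, and the passage from $B^\times$ to $B^\times/F^\times$, which introduces the factor $2^{-e}$ through the $2$-part of $Z(\A)/Z(F)K_Z \cong \Cl^+(F)$. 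The local factors $q_v^{r_v-1}(q_v-1)$ at $v\mid\mathfrak N_1$ appear as the new-at-exact-level-$\frakp_v^{r_v}$ pieces obtained after subtracting the oldform contributions of level $\frakp_v^{r_v - 1}$ and $\frakp_v^{r_v - 2}$ from the full Eichler mass.

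Next, \cref{gen-prop} produces an eigenform $\phi \in \calA_0(G, K)$ Hecke-congruent to $\one$ modulo $p$, and I would invoke the congruence-module argument of \cref{sec:cong-mod} and the strategy of \cref{prop:non-abel} to upgrade this to a non-abelian congruence. The abelian eigenforms in $\calA(G, K)$ factor through the reduced norm and correspond to quadratic Hecke characters of $F$ unramified outside $\mathfrak N$; their contribution to the congruence module is a $2$-torsion subgroup of exponent dividing $2^e$. Since $2^{-e}$ is built into the mass expression, \cref{lift-lem} applied with $W$ the orthogonal complement of the abelian subspace inside $\calA_0(G, K)$ yields a non-abelian eigenform. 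Jacquet--Langlands transfers this to a parallel weight $2$ cuspidal Hilbert eigenform $f$ with trivial nebentypus, Hecke-congruent to $E_{2, \mathfrak N}$ at every $v$ where the local matching is spherical or an unramified twist of Steinberg of odd conductor (in particular at all $v$ with $r_v \le 1$).

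Finally, I would control the exact level of $f$ at $v\mid\mathfrak N_1$. The Jacquet--Langlands image has $v$-conductor $\frakp_v^{s_v}$ with $0 \le s_v \le r_v$, and the local Eisenstein congruence $\pi_v \equiv \one_v \bmod p$, together with the Satake-parameter calculations of \cref{sec:GL(2)}, forces $s_v$ to be odd; an even $s_v > 0$ would correspond to a supercuspidal or unramified-twist principal series whose local invariants are incompatible with a congruence to the trivial character. For (ii), when $p\nmid q_v$ the integral old/new decomposition at $v$ of \cite{me:basis} is Hecke-stable over $\Z_{(p)}$, and applying \cref{lift-lem} to the new-at-$\frakp_v$ summand yields $s_v = 1$; for (iii), at the single chosen $v\mid(\mathfrak N_1, p)$ one simply does not restrict to the new-at-$v$ subspace, allowing $s_v = r_v$. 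The main obstacle I anticipate is the bookkeeping in this last step: maintaining the congruence to $\one$ through successive projections to new-at-$v$ subspaces for each $v\mid\mathfrak N_1$ simultaneously, which depends both on the $\Z_{(p)}$-integrality of these projections from \cite{me:basis} and on the careful local parity analysis forcing $s_v$ odd, together with verifying that the mass-expression denominator $2^e$ is sharp enough to exclude only abelian congruences and not genuine cuspidal ones.
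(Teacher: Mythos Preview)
Your overall strategy --- work with $G = PB^\times$, compute $m(K)$ via the $\SO(3)$ mass formula so that \eqref{eq:GL2-mass} equals $2^{-e}m(K)$, apply \cref{gen-prop}, use the congruence-module argument of \cref{sec:cong-mod} to pass to a non-abelian eigenform, and transfer via Jacquet--Langlands --- is exactly what the paper does. The abelian forms are indeed $\psi\circ N$ for quadratic $\psi$ on $\Cl^+(F)$, and the $2^{-e}$ factor handles them as you say.

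However, your arguments for (i) and (iii) contain genuine gaps.

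For (i), the oddness of $s_v$ is \emph{not} a consequence of any local congruence. At $v\mid\mathfrak N_1$ with $r_v>1$ the theorem does not even assert a congruence (the congruence is only claimed at places with $r_v\le 1$), so there is no local constraint $\pi_v\equiv\one_v$ to invoke. Moreover, even where such a congruence holds, there is no mechanism by which it rules out even conductor exponents: the Hecke congruence lives at unramified places and says nothing about the local representation type at ramified $v$. The correct argument is purely structural: the Jacquet--Langlands correspondence of \cite{me:basis} gives a Hecke-module isomorphism
\[
\mathcal S(G,K)\;\simeq\;\bigoplus_{\mathfrak M} S_2^{\mathfrak M\text{-new}}(\mathfrak M\mathfrak N_2),\qquad \mathfrak M=\prod_{v\mid\mathfrak N_1}\frakp_v^{2m_v+1},\quad 1\le 2m_v+1\le r_v,
\]
so \emph{every} non-abelian eigenform in $\calA(G,K)$ already corresponds to a cusp form with odd $s_v$ at each $v\mid\mathfrak N_1$. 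Part (i) is immediate from this.

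For (iii), you have inverted the logic. The claim is that we can \emph{force} $s_v=r_v$ at one chosen $v\mid(\mathfrak N_1,p)$, i.e.\ that the congruent cusp form can be taken locally new of the full level $\frakp_v^{r_v}$ there. ``Not restricting to the new subspace'' merely allows $s_v\le r_v$; it does not pin down $s_v=r_v$. The paper's argument is a second congruence-module comparison: let $K'$ agree with $K$ except that $r_v$ is replaced by $r_v-2$, so that $\calA_0(G,K')$ is exactly the span of the old-at-$v$ forms together with the abelian forms. Since $p\mid q_v$, one has $v_p(m(K'))<v_p(m(K))$, and the argument of \cref{prop:non-abel} (with $W$ the new-at-level-$r_v$ subspace) then produces a congruent eigenform genuinely new at $v$.

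For (ii), your approach via $\Z_{(p)}$-integral old/new projectors may be workable, but the paper's route is simpler: when $p\nmid q_v$, replacing $r_v$ by $1$ only removes a factor of $q_v^{r_v-1}$ from the mass, so $p$ still divides \eqref{eq:GL2-mass} and the construction goes through directly at level $\frakp_v$; then (i) forces $s_v=1$.
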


\begin{proof}
Let $G = PB^\times$ and $K = \prod K_v$, where 
$K_v$ the image of $\calO_v^\times$ in $PB^\times$ for
$v < \infty$ and $K_v = G_v$ for $v | \infty$.  From the SO(3)
case of the mass formula in \cite{GHY}, one deduces that
\eqref{eq:GL2-mass} is $2^{-e} m(K)$ (compare with the mass formula
in \cite{me:cong}).
As explained in \cite{me:cong}, the constant function $\one$ on $\Cl(K)$ 
is a Hecke eigenfunction of all Hecke operators $T_v$ ($v$ finite),
with the same Hecke eigenvalues as the modular form $E_{2,\mathfrak N}$
for any $v$ with $r_v \le 1$.
Then by \cref{gen-prop}, there exists an eigenform $\phi \in \calA_0(G,K)$
such that $\phi$ is Hecke congruent to $\one$ mod $p$.  This congruence
is also valid for ramified Hecke eigenvalues when $r_v = 1$ (again,
see \emph{op.\ cit.}).

Now we want to show we can take $\phi$ to be non-abelian.  The abelian
forms in $\calA_0(G, K)$, viewed as functions on 
$\A^\times \bs B^\times(\A)/B^\times(F_\infty)$, are generated by the forms
$\psi \circ N$, where $N: B^\times \to F^\times$ is the reduced norm and
$\psi$ is a quadratic character of $\Cl^+(F)$.
Necessarily, such a form can only be congruent to $\one \bmod p$ if $p = 2$.
Using the same argument as in \cref{prop:non-abel} (the relevant congruence
module for the space of abelian forms orthogonal to $\one$ 
has $2$-exponent $e$, whereas
the congruence module for $\calA_0(G,K)$ has $2$-exponent $v_2(m(K))$),
gives such a non-abelian $\phi$.

Let $\mathcal S(G,K)$ be the orthogonal complement of the abelian subspace
of $\calA(G,K)$.  By the Jacquet--Langlands correspondence for  
modular forms from \cite{me:basis}, we have an isomorphism of Hecke modules,
for the Hecke algebras away from the set of $v | \mathfrak N_1$ with $r_v > 1$,
\[ \mathcal S(G,K) \simeq \bigoplus S_2^{\mathfrak M\text{-new}}(\mathfrak M
\mathfrak N_2), \quad \mathfrak M = \prod_{v | \mathfrak N_1} \frakp_v^{2m_v+1},
\, 1 \le 2m_v+1 \le r_v. \]
The spaces on the right are the spaces of parallel weight 2 Hilbert cusp forms
of level $\mathfrak M \mathfrak N_2$ which are locally new at each
$v | \mathfrak M$ (the associated local representation of $\PGL_2(F_v)$ has conductor
$\frakp_v^{2m_v+1}$), and $\mathfrak M$ runs over divisors of $\mathfrak N_1$
which have odd exponent at every $v | \mathfrak N_1$.  This shows (i).

Let $f$ be a Hilbert
modular form corresponding to $\phi$.
If $v | \mathfrak N_1$ such that $p \nmid q_v$, then if necessary
we may enlarge $K$ by taking $K_v = \calO_{B,v}^\times$ at $v$ so
that $r_v = 1$.  This forces $f$ to have exact level
$\frakp_v$ at $v$, i.e., we may assume (ii).

For (iii), suppose there exists $v | \mathfrak N_1$ such that $p | q_v$.
If $r_v = 1$, there is nothing to show, so assume $r_v \ge 3$.
Then we may use the above decomposition of $\mathcal S(G,K)$ together with
the argument from \cref{prop:non-abel}.  Namely, for a sufficiently large rationality
field $L$, we may decompose 
$\mathcal A^L_0(G,K) = X_1(L) \oplus X_2(L)$, where $X_1(L)$ is generated
by abelian forms together with cuspidal eigenforms which have level at most
$\frakp_v^{r_v-2}$ at $v$, and $X_2(L)$ is generated by cuspidal eigenforms 
which have exact level $\frakp_v^{r_v}$ at $v$.  Now $X_1(L) = A^L_0(G,K')$
where $K'$ is defined in the same way as $K$ except replacing $r_v$ with $r_v-2$.
Then the $p$-exponent of the congruence module for $X_1$ is simply 
$v_p(m(K'))$.  But this is strictly less than $v_p(m(K))$, so the argument of
\cref{prop:non-abel} gives an eigenform in $X_2(L)$ which is Hecke congruent 
to $\one$ mod $p$.
\end{proof}

\begin{rem}  \label{rem:wt2new}
If $v | \mathfrak N_2$ such that $p | (q_v+1)$ if $r_v = 1$ (resp.\
$p | q_v$ if $r_v > 1$), we expect that we can also assume the $f$ in
the theorem is locally new at $v$.  Similarly, we expect we can impose (iii) for
all $v$ such that $p | q_v$.
This is because then the local factor at 
$v$ contributes to the $v_p(m(K))$, i.e., contributes to the $p$-exponent
of the relevant congruence module.  Alternatively, this factor contributes to
the depth of the congruence mentioned in \cref{rem:depth}.  In order to prove
this along the lines of our argument for (iii), we would need to know
the $p$-exponent of the congruence module for the $v$-old forms.
We do not attempt to study this here.
\end{rem}

\begin{rem}
Ribet and Yoo (see \cite{yoo}) have studied weight 2 Eisenstein congruences
with fixed Atkin--Lehner signs for elliptic modular forms of squarefree level  
under some conditions.  If $p > 2$ and $\mathfrak N$ is squarefree,
then $f$ as in the theorem necessarily has Atkin--Lehner sign $-1$
at each $v | \mathfrak N_1$, and Atkin--Lehner sign $+1$ at each
$v | \mathfrak N_2$ such that the $v$-part of the exact level of $f$
is $\frakp_v$.
\end{rem}

\begin{cor} Let $F=\Q$ and $p$ be prime.  Then for any $m \ge 1$ (resp.\ $m \ge 3$)
if $p$ is odd (resp.\ $p=2$), there exists a newform $f \in S_2(p^{2m+1})$
which is Hecke congruent to $E_{2,p} \bmod p$ away from $p$.
\end{cor}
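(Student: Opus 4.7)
The plan is to apply \cref{thm:GL2} with $F=\Q$ to the unique (up to isomorphism) definite quaternion algebra $B/\Q$ ramified exactly at $p$ and $\infty$, equipped with the order $\calO$ whose $q$-adic completion is maximal for every finite $q\ne p$ and at $p$ is the special order $\calO_p = \frako_E + \mathfrak P_p^{2m}$ of level $p^{2m+1}$ of unramified quadratic type; here $E$ denotes the unramified quadratic extension of $\Q_p$ and $\mathfrak P_p$ the maximal ideal of $B_p$. In the notation of \cref{thm:GL2} we then have $\mathfrak N_1=1$ and $\mathfrak N=\mathfrak N_2=p^{2m+1}$, and the associated $K_v$ is hyperspecial at every finite $q\ne p$.

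Using $d=1$, $e=0$ (since $\Cl^+(\Q)$ is trivial), $|\zeta_\Q(-1)|=\tfrac{1}{12}$, and the empty product over primes dividing $\mathfrak N_1$, the mass formula \eqref{eq:GL2-mass} collapses to
\[
m(K) \;=\; \frac{p^{2m}(p+1)}{12}.
\]
I would then verify the divisibility $p\mid m(K)$ under the stated hypotheses: for $p\ge 5$ we have $\gcd(p,12)=1$ and $v_p(m(K))\ge 2m\ge 2$; for $p=3$ we have $v_3(m(K))=2m-1\ge 1$ as soon as $m\ge 1$; and for $p=2$ we have $v_2(m(K))=2m-2\ge 4$ as soon as $m\ge 3$. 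Thus the hypothesis of \cref{thm:GL2} holds in every case required by the corollary.

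Applying \cref{thm:GL2} then produces a parallel weight $2$ cuspidal Hilbert eigenform $f$ of level $\mathfrak N=p^{2m+1}$ with trivial nebentypus which is Hecke congruent to $E_{2,\mathfrak N}$ mod $p$ at every finite $v$ with $r_v\le 1$, i.e.\ at every rational prime $q\ne p$. It remains to see that $f$ is in fact new of exact level $p^{2m+1}$. This should follow because the Jacquet--Langlands correspondence for the special order $\calO_p=\frako_E+\mathfrak P_p^{2m}$ at the ramified prime $p$ sends any eigenform in $\mathcal S(G,K)$ to an elliptic newform whose local representation at $p$ is a discrete series of $\GL_2(\Q_p)$ of conductor exactly $p^{2m+1}$ (cf.\ \cite{me:basis} and the underlying local computations of Hijikata--Pizer--Shemanske).

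The main obstacle is precisely this final newness step. Although the decomposition $\mathcal S(G,K)\simeq\bigoplus_{\mathfrak M\mid\mathfrak N_1} S_2^{\mathfrak M\text{-new}}(\mathfrak M\mathfrak N_2)$ recalled in the proof of \cref{thm:GL2} degenerates (as $\mathfrak N_1=1$) to the single summand $S_2(p^{2m+1})$, the specific unramified-quadratic-type local order at the ramified prime forces the local $\GL_2(\Q_p)$-representation of any such $f$ to be supercuspidal (or Steinberg when $m=0$) of conductor exactly $p^{2m+1}$; extracting this cleanly from \cite{me:basis}, and in particular ruling out that the eigenform delivered by \cref{thm:GL2} is a $p$-oldform, is the only non-routine point in the argument.
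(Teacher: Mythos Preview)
Your setup and mass computation are correct, and this is the paper's intended route: take $B$ ramified exactly at $p$ and $\infty$, so $\mathfrak N_1=1$ and $\mathfrak N_2=p^{2m+1}$, and invoke \cref{thm:GL2}.

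The gap is in your newness argument. Your claim that the special order $\calO_p=\frako_E+\mathfrak P_p^{2m}$ ``forces the local $\GL_2(\Q_p)$-representation of any such $f$ to be \ldots\ of conductor exactly $p^{2m+1}$'' is false. Since $\mathfrak P_p^{2m}\subset\mathfrak P_p^{2m-2}$, one has $\calO_p\subset\calO_p':=\frako_E+\mathfrak P_p^{2m-2}$ and hence $\calO_p^\times\subset(\calO_p')^\times$; every representation with a fixed vector for the level-$p^{2m-1}$ order also has one for the level-$p^{2m+1}$ order. Thus $\mathcal S(G,K')\subset\mathcal S(G,K)$ genuinely consists of $p$-oldforms, and nothing in \cref{thm:GL2} as stated prevents the eigenform it produces from landing there. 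The local theory from \cite{me:basis} and \cite{HPS:crelle} only gives a conductor \emph{bound}, together with the fact that the quaternionic level filtration (by $m$) matches the $\GL_2$ conductor filtration (by $2m+1$); it does not by itself pin down the conductor of the particular $f$ output by the theorem.

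What you actually need is to rerun the congruence-module step from the proof of part (iii) of \cref{thm:GL2}, but at the ramified prime $p\mid\mathfrak N_2$ rather than at a prime of $\mathfrak N_1$. With $X_1=\calA_0(G,K')$ and $X_2$ its Hecke-stable complement in $\calA_0(G,K)$, one has $m(K')=p^{2m-2}(p+1)/12$ and hence $v_p(m(K))-v_p(m(K'))=2>0$. Exactly as in the proof of (iii), the integral $\phi'\equiv\one\bmod p^{v_p(m(K))}$ built in \cref{gen-prop} cannot lie in $X_1$ (whose congruence module has $p$-exponent only $v_p(m(K'))$), so \cref{lift-lem} applied with $W=X_2$ yields an eigenform that is quaternionically new at level $p^{2m+1}$. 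The local matching of filtrations then identifies it with a $\GL_2$-newform of conductor exactly $p^{2m+1}$. This is precisely the situation \cref{rem:wt2new} flags as ``expected'' in general; the point here is that with a single prime and $X_1=\calA_0(G,K')$, the needed $p$-exponent of the old congruence module is just $v_p(m(K'))$, which you have computed.
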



\section{Special mod $p$ congruences for $\UU(p)$} \label{sec:special}


Given a weight 2 cuspidal newform $f$ on $\PGL(2)$ 
whose $p$-th Fourier coefficient is $-1$ for a $p$ dividing the level
(i.e., locally is the unramified quadratic twist of Steinberg at $p$), one can
use quaternionic modular forms to construct a newform $g$ of the same weight and
level which is congruent to $f$ mod 2 and has Fourier coefficient $+1$ at $p$
(i.e., locally is the untwisted Steinberg at $p$), at least in the case that 
the level is a squarefree product of an odd number of primes \cite{me:cong2}. 
In general $g$ may be Eisenstein, but under some simple explicit conditions
it can be chosen to be cuspidal.
Here we extend this to higher rank in the setting of unitary groups.   

Let $E/F$ be a CM extension of number fields.
Let $S$ be a non-empty finite set of finite places of $F$ which split in $E$.
Consider a definite unitary group $G = \UU_A(n)$, 
where $A/E$ is a degree $n$ central division algebra such that, for each finite $v \in S$,
$G(F_v) \simeq D_v^\times$ for some division algebra $D_{v}/F_{v}$.
Let $K \subset G(\A)$ be as in the beginning of \cref{sec:eis-Un} such that 
$K_{v} \simeq \calO_{D_{v}}^\times$ for $v \in S$.

If $\pi$ occurs in $\calA(G, K; 1)$, then, for $v \in S$, 
$\pi_{v}$ is 1-dimensional, and thus of the
form $\mu_{v} \circ \det$ for some unramified character 
$\mu_{v} : F_v^\times \to \C^\times$
such that $\mu_{v}^n = 1$.  (Here $\det$ denotes the reduced norm from $D_{v}$
to $F_{v}$.)  Consider a collection $\mu_S = (\mu_v)_{v \in S}$ of such $\mu_v$.
We denote by $\calA(G, K; 1)^{\mu_S}$ the subspace
of $\calA(G, K; 1)$ generated by $\pi^K$ where $\pi$ runs over all
$\pi$ contributing to $\calA(G,K; 1)$ such that
$\pi_{v} \simeq \mu_{v} \circ \det$ for all $v \in S$.  
When $\mu_{v} = 1$ for all $v \in S$, we write this as $\calA(G, K; 1)^{1_S}$.
Let $\zeta_m = e^{2 \pi i/m}$.

\begin{lem} \label{lem:special}
Fix $p | n$.
Suppose $\mu_{v}$ has prime power order $p^{r_v} | n$ for all $v \in S$.  
Let $\calO$ be the ring of integers of some number field containing $\zeta_{p^r}$,
and $\frakp$ a prime of $\calO$ above $p$.  Then for any
nonzero $\phi \in \calA^\calO(G, K; 1)^{\mu_S}$, there exists 
a nonzero $\phi' \in \calA^\calO(G, K; 1)^{1_S}$
such that $\phi' \equiv \phi \bmod \frakp$.
\end{lem}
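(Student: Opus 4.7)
The plan is to exploit the fact that each $\mu_v$ has order a power of $p$, so $\alpha_v := \mu_v(\det \Pi_v)$ is a $p^{r_v}$-th root of unity and therefore satisfies $\alpha_v \equiv 1 \pmod{\frakp}$ (since $\frakp \mid p$). Here $\Pi_v$ denotes a uniformizer of $D_v$; since $\Pi_v$ normalizes $\calO_{D_v}^\times \cong K_v$, we have $K_v \Pi_v K_v = \Pi_v K_v$, and the local Hecke operator $T_{\Pi_v}$ is simply right translation by $\Pi_v$. On the $\mu_S$-isotypic component, $T_{\Pi_v}$ acts by the scalar $\alpha_v$, since every $K_v$-spherical representation of $G(F_v) \cong D_v^\times$ factors through the reduced norm. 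Consequently, for $\phi \in \calA^\calO(G,K;1)^{\mu_S}$, we have $\phi(x\Pi_v) = \alpha_v \phi(x) \equiv \phi(x) \pmod{\frakp}$.

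The reduction $\bar\phi : \Cl(K) \to \calO/\frakp$ is therefore right-invariant under each $\Pi_v$, so combined with the trivial central character and $K$-invariance it descends to a function on $G(F) \bs G(\A) / K^*$, where $K^* := Z(\A) \cdot K \cdot \prod_{v \in S} \langle \Pi_v \rangle$ (a subgroup since each $\Pi_v$ normalizes $K_v$). Crucially, $\calA(G,K;1)^{1_S}$ is precisely the subspace of $\calA(G,K;1)$ consisting of functions that are right-invariant under $K^*$, because the condition $\pi_v = 1 \circ \det$ at $v \in S$ is equivalent to $\phi(x\Pi_v) = \phi(x)$. I would then choose coset representatives $\{y_j\}$ for $G(F) \bs G(\A) / K^*$, define $\phi'(y_j) := \phi(y_j) \in \calO$, and extend by $K^*$-invariance; this yields $\phi' \in \calA^\calO(G,K;1)^{1_S}$, and since $\phi$ is constant modulo $\frakp$ on each $K^*$-orbit, we obtain $\phi' \equiv \phi \pmod{\frakp}$ on all of $\Cl(K)$.

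To secure nonvanishing: if $\phi \not\equiv 0 \pmod{\frakp}$ then $\bar\phi \ne 0$ and the lift $\phi'$ is automatically nonzero; if instead $\phi \equiv 0 \pmod{\frakp}$, one can simply take $\phi' = p \cdot \one \in \calA^\calO(G,K;1)^{1_S}$, which is nonzero and congruent to $\phi \equiv 0 \pmod{\frakp}$. The only genuinely nontrivial ingredient is the local Hecke computation identifying $T_{\Pi_v}$ as right translation by $\Pi_v$ acting by $\alpha_v$ on the $\mu_v$-isotypic component; this reduces to commutativity of the spherical Hecke algebra at $v$ (generated by $T_{\Pi_v}^{\pm 1}$) and the fact that its spherical irreducibles are the one-dimensional representations $\mu_v \circ \det$. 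Beyond verifying this local structure and bookkeeping with the center, I do not foresee any serious obstacle.
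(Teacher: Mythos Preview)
Your argument is correct and follows essentially the same route as the paper: both proofs observe that right translation by a uniformizer $\Pi_v$ of $D_v$ multiplies $\phi$ by a $p$-power root of unity (hence by something $\equiv 1 \bmod \frakp$), and then define $\phi'$ to be constant on orbits of the group generated by the $\Pi_v$'s by picking the value of $\phi$ at a chosen representative. The paper phrases this via orbits on $\Cl(\bar K)$ for $\bar G = G/Z$, while you package the same thing as $K^*$-invariance; these are equivalent. Your case split for nonvanishing is unnecessary, since your direct construction already yields $\phi' \ne 0$: the values of $\phi$ along an orbit differ only by roots of unity, so $\phi$ nonzero at some point forces $\phi(y_j) \ne 0$ at the chosen representative of that orbit.
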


\begin{proof}
Let $\bar G = G/Z$ and $\bar K = Z(\A) K/ Z(\A)$.  Then we may view $\phi$
as a function on $\Cl(\bar K)$.  For $v \in S$, fix a uniformizer $\varpi_{D, v}$ of 
$D_{v}$ such that $\det \varpi_{D,v} = \varpi_{v}$.
Then $\varpi_{D,v}$ acts on $\Cl(\bar K)$ via right multiplication with order dividing 
$n$.  Denote this action by $\sigma_v$.  Let $Y_1, \ldots, Y_t$ be the orbits of the
ensuing action of $\Gamma = \prod_{v \in S} \langle \varpi_{D,v} \rangle$
on $\Cl(\bar K)$. 

Note that for $\phi \in \calA(G, K; 1)$, we have $\phi \in  \calA(G, K; 1)^{\mu_S}$ 
if and only if $\phi(\sigma_v(y)) = \mu_{v}(\varpi_{v}) \phi(y)$ for all 
$y \in \Cl(\bar K)$, $v \in S$.
Fix some orbit $Y_i$ and write $Y_i = \{ y_1, \ldots, y_s \}$.  
Then for any $y_j \in Y_i$, there is some sequence of $\sigma_v$'s (with $v \in S$) 
whose composition sends $y_1$ to $y_j$.  Hence $\phi(y_j) = \zeta \phi(y_1)$ 
for some $p$-power root of unity $\zeta$.  Since $\zeta \equiv 1 \bmod \frakp$,
defining $\phi'(y_j) = \phi(y_1)$ for $1 \le j \le s$ gives a function on $Y_i$
which is congruent to $\phi$ mod $\frakp$.  Defining $\phi'$ this way on each orbit
completes the proof.
\end{proof}

The following is a partial analogue of \cite[Theorem 1.3]{me:cong2} in higher rank,
and the proof is similar in spirit.  

\begin{thm}  Let $n=p$ be an odd prime, and assume $\eqref{ECU}$ for $n$.
Let $S$ be a finite set of finite places of $F$ which are split in 
$E/F$.    Suppose $p$ does not divide $|\Cl(\UU(1))|$ nor
\[  \prod_{r=1}^p L(1-r, \chi_{E/F}^r) \times \prod_{v \in S} 
\left( \prod_{r=1}^{p-1} (q_v^{r} - 1) \right). \]
For each $v \in S$, let $\mu_v$ be
an unramified character of $F_{v}^\times$ of order $1$ or $p$. 
For finite $v \not \in S$, assume $K_v$ is a hyperspecial
maximal compact open subgroup of $\UU_p(F_v)$.

Let $\pi$ be an automorphic representation of $G'(\A) = \UU_p(\A)$ holomorphic
of parallel weight $p$ with trivial central character such that $\pi_E$ is cuspidal, 
$K_v$-spherical for all finite $v \not \in S$, and
$\pi_{v} \simeq \St_{v} \otimes \mu_{v}$ for all $v \in S$.  
Then there exists an automorphic representation of $\pi'$ of $G'(\A)$, also
holomorphic of parallel weight $p$ with trival central character and $\pi_E'$ cuspidal, 
such that $\pi_v'$ is $K_v$-spherical for all finite $v \not \in S$,
$\pi'_{v} \simeq \St_{v}$ for all $v \in S$ 
and $\pi$ is Hecke congruent to $\pi' \bmod p$.
\end{thm}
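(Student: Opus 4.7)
The approach is to descend to a definite inner form $G = \UU_A(p)$ where $A/E$ is a central division algebra of degree $p$ with $A_v$ division precisely at $v \in S$, compact at infinity; take $K \subset G(\A)$ with $K_v = \calO_{D_v}^\times$ at $v \in S$, $K_v$ a hyperspecial maximal compact matching the given one on $G'(F_v)$ at finite $v \notin S$, and $K_v = G(F_v)$ at $v \mid \infty$. By local Jacquet--Langlands at each $v \in S$ (sending $\St_v \otimes \mu_v$ to the $1$-dimensional character $\mu_v \circ \det$ of $D_v^\times$) combined with the endoscopic classification \eqref{ECU}, as in the proof of \cref{thm:main-Un}, the given $\pi$ is the transfer of a cuspidal automorphic representation $\sigma$ of $G(\A)$ with $\sigma_E = \pi_E$, $\sigma_v \simeq \mu_v \circ \det$ at $v \in S$, $\sigma_v \simeq \pi_v$ at finite $v \notin S$, and archimedean parameter $\rho_p$. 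Fix $\calO \subset \overline{\Q}$ with prime $\frakp$ above $p$ and a nonzero eigenform $\phi \in \sigma^K \subset \calA^\calO(G, K; 1)^{\mu_S}$ (rescaled so $\phi \not\equiv 0 \pmod{\frakp}$). Since each $\mu_v$ has order dividing $p = n$, \cref{lem:special} yields $\phi' \in \calA^\calO(G, K; 1)^{1_S}$ with $\phi' \equiv \phi \pmod{\frakp}$.

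Apply \cref{lift-lem} with $\phi_1 = \phi$, $\phi_2 = \phi'$, and $W = \calA(G, K; 1)^{1_S}$; this subspace is stable under the Hecke operators away from $S$ because they commute with the $\prod_{v \in S} \langle \varpi_{D, v} \rangle$-action defining the $\mu_S$-decomposition in the proof of \cref{lem:special}. The output is an eigenform $\tilde\phi \in W$ whose Hecke eigensystem away from $S$ is congruent modulo $\frakp$ to that of $\phi$. The main obstacle is to ensure the associated $\tilde\sigma$ is neither $\one_G$ nor another $1$-dimensional character of $G(\A)$, for otherwise its transfer back would land in the residual rather than the cuspidal spectrum. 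This is exactly where the arithmetic hypothesis enters: the mass formula \cref{prop:mass-Un} adapted to the present $K$ translates the hypothesis into $p \nmid m(K)/h_E^1$, so by the sharp congruence module argument of \cref{prop:non-abel} (compare \cref{sec:cong-mod} and \cref{rem:depth}) no non-abelian eigenform in $\calA_0(G, K; 1)$ is Hecke congruent to $\one_G$ modulo $\frakp$; an entirely parallel decomposition of the $1$-dimensional subspace of $\calA(G, K; 1)$ by the dual of $\Cl(\UU(1))$ (together with $p \nmid h_E^1$) rules out mod $\frakp$ congruence of a non-abelian eigenform with any nontrivial $1$-dimensional character. Since $\sigma$ is non-abelian by \cref{prop:cusp} (as $\pi_E$ is cuspidal and $p$ is prime), its Hecke eigensystem is mod $\frakp$ incongruent to that of every $1$-dimensional form; hence the same holds for $\tilde\phi$, and $\tilde\sigma$ is non-abelian.

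By \cref{prop:cusp}, $\tilde\sigma_E$ is cuspidal, and the associated simple generic parameter $\tilde\psi$ has trivial component group so no parity obstruction constrains the choice of local components. Transferring via \eqref{ECU}, select $\pi'$ in $\JL(\tilde\sigma) = \Pi_{\tilde\psi}(G')$ with $\pi'_v \simeq \tilde\sigma_v$ at every finite $v \notin S$ and $\pi'_v$ holomorphic of weight $p$ at $v \mid \infty$, exactly as in the proof of \cref{thm:main-Un}. At each $v \in S$, $\tilde\sigma_v$ is the trivial character of $D_v^\times$, whose local Jacquet--Langlands correspondent on $\GL_p(F_v)$ is $\St_v$, so $\pi'_v \simeq \St_v$. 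The resulting $\pi'$ has trivial central character, cuspidal base change, and the required Hecke congruence to $\pi$ away from $S$ follows immediately from $\sigma_v \simeq \pi_v$, $\tilde\sigma_v \simeq \pi'_v$ at finite $v \notin S$, and $\tilde\sigma \equiv \sigma \pmod{\frakp}$.
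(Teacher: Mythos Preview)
Your overall architecture matches the paper's: descend to the definite inner form $G=\UU_A(p)$, pull $\pi$ back to $\sigma$ via \eqref{ECU}, apply \cref{lem:special} to the eigenform $\phi\in\sigma^K$ to get $\phi'\in\calA^\calO(G,K;1)^{1_S}$ with $\phi'\equiv\phi\pmod\frakp$, pass to an eigenform via \cref{lift-lem}, and transfer back.  The transfer steps and the identification of the only abelian form in $\calA(G,K;1)$ with $\one_G$ (using $p\nmid h_E^1$) are fine.

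The gap is in how you exclude $\tilde\sigma=\one_G$.  You assert that $p\nmid m(K)/h_E^1$ forces every non-abelian eigenform to be Hecke \emph{in}congruent to $\one_G$ modulo $\frakp$, citing \cref{prop:non-abel} and \cref{sec:cong-mod}.  But those results only go in the other direction: \cref{gen-prop} and \cref{prop:non-abel} show that $p\mid m(K)$ is \emph{sufficient} for an Eisenstein congruence, and \cref{rem:depth} explicitly describes $v_p(m(K))$ as a \emph{lower bound} on the depth.  The converse is nowhere established, and \cref{ex:endo-nomass} exhibits a definite unitary group with $m(K)=\tfrac{1}{16}$ yet a non-abelian eigenform Hecke congruent to $\one$ modulo $3$.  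So with $W=\calA(G,K;1)^{1_S}$ in \cref{lift-lem}, nothing prevents the output $\tilde\phi$ from being a multiple of $\one$.

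The paper avoids this by arguing with the \emph{function} $\phi'$ rather than with Hecke eigensystems.  Since $\phi\in\calA_0(G,K;1)$ one has $(\phi,\one)=0$, and the pointwise congruence $\phi'\equiv\phi\pmod\frakp$ gives $(\phi',\one)\equiv 0\pmod\frakp$; if $\phi'=c\,\one$ this reads $c\,m(K)\equiv 0$, forcing $c\equiv 0$ (contradiction) or $\frakp\mid m(K)$ (excluded by hypothesis via \cref{prop:mass-Un}).  Hence $\phi'$ is non-abelian, so it has nonzero projection to the non-abelian subspace of $\calA(G,K;1)^{1_S}$, and one applies \cref{lift-lem} with $W$ equal to that non-abelian subspace.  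Replacing your choice of $W$ and your congruence-module paragraph with this inner-product argument fixes the proof.
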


\begin{proof}
Let $G = \UU_A(p)$ be a totally definite inner form of $G'$ which is
locally isomorphic to $G'$ at all finite places outside of $S$ and compact
at each $v \in S$.  Now $\pi$ corresponds to a simple generic formal 
parameter $\psi$, which we may think of as the cuspidal representation
$\pi_E$ of $\GL_p(\A_E)$.  Then there exists an automorphic representation
$\sigma \in \Pi_\psi(G)$ such that $\sigma_v \simeq \pi_v$ for all finite $v \not \in S$,
$\sigma_v \simeq \mu_v \circ \det$ for $v \in S$, and $\sigma_v$ is trivial for $v | \infty$.

For $v \in S$, let $D_v/F_v$ be a division algebra isomorphic to $A_w/E_w$
for some $w | v$ and put $K_v = \calO_{D_v}^\times$.  For $v | \infty$, put $K_v = G_v$.
Set $K = \prod K_v$.  Then $\sigma$ occurs
in $\calA_0(G, K; 1)$ and we may take a nonzero $\phi \in \sigma^K$ to have values
in the ring of integers $\calO$ of some number field $L$.  Let $\frakp$ be a prime of 
$\calO$ above $p$.  

If $\phi \equiv 0 \bmod \frakp$, we may consider the Hilbert class field $H_L$ of $L$
so that $\frakp$ is unramified and principal in $H_L$.
Thus we may scale $\phi$ by an element of $H_L$ to assume that $\phi \not \equiv 0
\bmod \frakp$, and moreover $\phi \not \equiv 0 \bmod \mathfrak P$ for some prime
$\mathfrak P$ of $H_L$ above $p$.  Hence by replacing $L$ with $H_L$ and $\frakp$
with $\mathfrak P$ if necessary, we may and will assume 
$\phi \not \equiv 0 \bmod \frakp$.

By \cref{lem:special}, there
exists a nonzero $\phi' \in \calA^{\calO}(G,K;1)^{1_S}$ such that $\phi' \equiv \phi \bmod \frakp$.
We claim $\phi'$ is non-abelian.  First note that, since
$p \nmid |\Cl(\UU(1))|$, the only non-abelian forms in $\calA(G,K;1)$ are constant
functions.  However, if $\phi' = c \one$ for some $c \in \calO$, then
$\phi \in \calA_0(G,K; 1)$ implies $0 = (\phi, \one) \equiv  c (\one, \one) \equiv
 c m(K) \bmod \frakp$.  This would mean $\frakp | m(K)$,
 since $\phi' \equiv \phi \not \equiv 0
\bmod \frakp$ implies $c \not \equiv 0 \bmod \frakp$.
But this is impossible by our indivisibility assumption
together with \cref{prop:mass-Un}.  

Then, as in the proofs of \cref{gen-prop} and \cref{thm:main-Un}, we can transfer this
to a mod $\frakp$ Hecke congruence with a non-abelian eigenform $\phi''$ on $G$,
and obtain a congruent $\pi'$ on $G'$ as asserted.
\end{proof}

\begin{rem} It is clear from the proof that one can allow $K_v$ to be a finite index
subgroup of a hyperspecial maximal compact $K_v^0$ at a finite number of $v \not
\in S$ by also imposing the conditions $p \nmid [K_v^0 : K_v]$.  At such $v$,
then the appropriate statement is that both $\pi_v$ and $\pi_v'$ have nonzero
$K_v$-fixed vectors.
\end{rem}

\begin{rem} In the case of weight 2 elliptic modular forms of squarefree level,
we showed in \cite{me:ref-dim} 
that there is a strict (though small) bias towards local ramified
factors being Steinberg as opposed to the unramifed quadratic twist of Steinberg.
In \cite{me:cong2}, this bias was shown to be related to the existence of mod 2 
congruences of forms which are twisted Steinberg at certain places to untwisted
Steinberg at these places.  Similarly, the above congruence result suggests
a bias towards local untwisted Steinberg representations on $\UU_p(\A)$.  
Specifically, in the notation
of the proof, we expect that the number of representations occurring in 
$\calA(G, K; 1)^{1_S}$ is always at least the number of representations
occurring in $\calA(G, K; 1)^{\mu_S}$.  The above result implies the analogous
statement is true for mod $p$ Hecke congruence classes of representations.
\end{rem}


\begin{bibdiv}
\begin{biblist}

\bib{arthur:book}{book}{
   author={Arthur, James},
   title={The endoscopic classification of representations},
   series={American Mathematical Society Colloquium Publications},
   volume={61},
   note={Orthogonal and symplectic groups},
   publisher={American Mathematical Society, Providence, RI},
   date={2013},
   pages={xviii+590},
   isbn={978-0-8218-4990-3},
}

\bib{berger-klosin}{article}{
   author={Berger, Tobias},
   author={Klosin, Krzysztof},   
   title={Modularity of residual Galois extensions and the Eisenstein ideal},
   eprint={https://arxiv.org/abs/1810.07808}  
   status={preprint},
   year={2018}
   label={BK} 
}

\bib{BKK}{article}{
   author={Berger, Tobias},
   author={Klosin, Krzysztof},
   author={Kramer, Kenneth},
   title={On higher congruences between automorphic forms},
   journal={Math. Res. Lett.},
   volume={21},
   date={2014},
   number={1},
   pages={71--82},
   issn={1073-2780},
}

\bib{bergstrom-dummigan}{article}{
   author={Bergstr\"{o}m, Jonas},
   author={Dummigan, Neil},
   title={Eisenstein congruences for split reductive groups},
   journal={Selecta Math. (N.S.)},
   volume={22},
   date={2016},
   number={3},
   pages={1073--1115},
   issn={1022-1824},
}

\bib{billerey-menares}{article}{
   author={Billerey, Nicolas},
   author={Menares, Ricardo},
   title={On the modularity of reducible ${\rm mod}\, l$ Galois
   representations},
   journal={Math. Res. Lett.},
   volume={23},
   date={2016},
   number={1},
   pages={15--41},
   issn={1073-2780},
}

\bib{Borel}{article}{
   author={Borel, A.},
   title={Automorphic $L$-functions},
   conference={
      title={Automorphic forms, representations and $L$-functions},
      address={Proc. Sympos. Pure Math., Oregon State Univ., Corvallis,
      Ore.},
      date={1977},
   },
   book={
      series={Proc. Sympos. Pure Math., XXXIII},
      publisher={Amer. Math. Soc., Providence, R.I.},
   },
   date={1979},
   pages={27--61},
}

\bib{magma}{article}{
   author={Bosma, Wieb},
   author={Cannon, John},
   author={Playoust, Catherine},
   title={The Magma algebra system. I. The user language},
   note={Computational algebra and number theory (London, 1993)},
   journal={J. Symbolic Comput.},
   volume={24},
   date={1997},
   number={3-4},
   pages={235--265},
   issn={0747-7171},
   review={\MR{1484478}},
   doi={10.1006/jsco.1996.0125},
}

\bib{Cartier}{article}{
   author={Cartier, P.},
   title={Representations of $p$-adic groups: a survey},
   conference={
      title={Automorphic forms, representations and $L$-functions},
      address={Proc. Sympos. Pure Math., Oregon State Univ., Corvallis,
      Ore.},
      date={1977},
   },
   book={
      series={Proc. Sympos. Pure Math., XXXIII},
      publisher={Amer. Math. Soc., Providence, R.I.},
   },
   date={1979},
   pages={111--155},
   review={\MR{546593}},
}

\bib{chenevier-harris}{article}{
   author={Chenevier, Ga\"{e}tan},
   author={Harris, Michael},
   title={Construction of automorphic Galois representations, II},
   journal={Camb. J. Math.},
   volume={1},
   date={2013},
   number={1},
   pages={53--73},
   issn={2168-0930},
}

\bib{CR}{article}{
   author={Chenevier, Ga\"{e}tan},
   author={Renard, David},
   title={Level one algebraic cusp forms of classical groups of small rank},
   journal={Mem. Amer. Math. Soc.},
   volume={237},
   date={2015},
   number={1121},
   pages={v+122},
   issn={0065-9266},
   isbn={978-1-4704-1094-0},
   isbn={978-1-4704-2509-8},
}

\bib{clozel-thorne3}{article}{
   author={Clozel, Laurent},
   author={Thorne, Jack A.},
   title={Level-raising and symmetric power functoriality, III},
   journal={Duke Math. J.},
   volume={166},
   date={2017},
   number={2},
   pages={325--402},
   issn={0012-7094},
}

\bib{CNP}{article}{
   author={Cohen, Arjeh M.},
   author={Nebe, Gabriele},
   author={Plesken, Wilhelm},
   title={Maximal integral forms of the algebraic group $G_2$ defined by
   finite subgroups},
   journal={J. Number Theory},
   volume={72},
   date={1998},
   number={2},
   pages={282--308},
   issn={0022-314X},
}

\bib{gaba-popa}{article}{
   author={Gaba, Radu},
   author={Popa, Alexandru A.},
   title={A generalization of Ramanujan's congruence to modular forms of
   prime level},
   journal={J. Number Theory},
   volume={193},
   date={2018},
   pages={48--73},
   issn={0022-314X},
}

\bib{GGS}{article}{
   author={Gan, Wee Teck},
   author={Gross, Benedict},
   author={Savin, Gordan},
   title={Fourier coefficients of modular forms on $G_2$},
   journal={Duke Math. J.},
   volume={115},
   date={2002},
   number={1},
   pages={105--169},
   issn={0012-7094},
}

\bib{GG:2006}{article}{
   author={Gan, Wee Teck},
   author={Gurevich, Nadya},
   title={Nontempered A-packets of $G_2$: liftings from $\widetilde{\rm
   SL}_2$},
   journal={Amer. J. Math.},
   volume={128},
   date={2006},
   number={5},
   pages={1105--1185},
   issn={0002-9327},
   review={\MR{2262172}},
}

\bib{GG:2009}{article}{
   author={Gan, Wee Teck},
   author={Gurevich, Nadya},
   title={CAP representations of $G_2$ and the spin $L$-function of ${\rm
   PGSp}_6$},
   journal={Israel J. Math.},
   volume={170},
   date={2009},
   pages={1--52},
   issn={0021-2172},
}

\bib{GHY}{article}{
   author={Gan, Wee Teck},
   author={Hanke, Jonathan P.},
   author={Yu, Jiu-Kang},
   title={On an exact mass formula of Shimura},
   journal={Duke Math. J.},
   volume={107},
   date={2001},
   number={1},
   pages={103--133},
   issn={0012-7094},
}

\bib{greenberg-voight}{article}{
   author={Greenberg, Matthew},
   author={Voight, John},
   title={Lattice methods for algebraic modular forms on classical groups},
   conference={
      title={Computations with modular forms},
   },
   book={
      series={Contrib. Math. Comput. Sci.},
      volume={6},
      publisher={Springer, Cham},
   },
   date={2014},
   pages={147--179},
}

\bib{Gross}{article}{
   author={Gross, Benedict H.},
   title={On the Satake isomorphism},
   conference={
      title={Galois representations in arithmetic algebraic geometry},
      address={Durham},
      date={1996},
   },
   book={
      series={London Math. Soc. Lecture Note Ser.},
      volume={254},
      publisher={Cambridge Univ. Press, Cambridge},
   },
   date={1998},
   pages={223--237},
}

\bib{gross:amf}{article}{
   author={Gross, Benedict H.},
   title={Algebraic modular forms},
   journal={Israel J. Math.},
   volume={113},
   date={1999},
   pages={61--93},
   issn={0021-2172},
}

\bib{harris-taylor}{book}{
   author={Harris, Michael},
   author={Taylor, Richard},
   title={The geometry and cohomology of some simple Shimura varieties},
   series={Annals of Mathematics Studies},
   volume={151},
   note={With an appendix by Vladimir G. Berkovich},
   publisher={Princeton University Press, Princeton, NJ},
   date={2001},
   pages={viii+276},
   isbn={0-691-09090-4},
}

\bib{hashimoto-ibukiyama2}{article}{
   author={Hashimoto, Ki-ichiro},
   author={Ibukiyama, Tomoyoshi},
   title={On class numbers of positive definite binary quaternion Hermitian
   forms. II},
   journal={J. Fac. Sci. Univ. Tokyo Sect. IA Math.},
   volume={28},
   date={1981},
   number={3},
   pages={695--699 (1982)},
   issn={0040-8980},
}

\bib{hashimoto-ibukiyama3}{article}{
   author={Hashimoto, Ki-ichiro},
   author={Ibukiyama, Tomoyoshi},
   title={On class numbers of positive definite binary quaternion Hermitian
   forms. III},
   journal={J. Fac. Sci. Univ. Tokyo Sect. IA Math.},
   volume={30},
   date={1983},
   number={2},
   pages={393--401},
   issn={0040-8980},
}

\bib{HPS:crelle}{article}{
   author={Hijikata, H.},
   author={Pizer, A.},
   author={Shemanske, T.},
   title={Orders in quaternion algebras},
   journal={J. Reine Angew. Math.},
   volume={394},
   date={1989},
   pages={59--106},
   issn={0075-4102},
}

\bib{vanHoften}{article}{
   author={van Hoften, Pol},
   title={A geometric {J}acquet--{L}anglands correspondence for paramodular {S}iegel threefolds},
   eprint={https://arxiv.org/abs/1906.04008}
   status={preprint},
  year={2019}
  label={vHo}
}

\bib{ibukiyama:1985}{article}{
   author={Ibukiyama, Tomoyoshi},
   title={On relations of dimensions of automorphic forms of ${\rm
   Sp}(2,{\bf R})$ and its compact twist ${\rm Sp}(2)$. I},
   conference={
      title={Automorphic forms and number theory},
      address={Sendai},
      date={1983},
   },
   book={
      series={Adv. Stud. Pure Math.},
      volume={7},
      publisher={North-Holland, Amsterdam},
   },
   date={1985},
   pages={7--30},
}

\bib{ibukiyama:2018}{article}{
   author={Ibukiyama, Tomoyoshi},
   title={Conjectures on correspondence of symplectic modular forms of
   middle parahoric type and Ihara lifts},
   journal={Res. Math. Sci.},
   volume={5},
   date={2018},
   number={2},
   pages={Paper No. 18, 36},
   issn={2522-0144},
}

\bib{ikeda}{article}{
   author={Ikeda, Tamotsu},
   title={On the lifting of hermitian modular forms},
   journal={Compositio Math.},
   volume={144},
   date={2008},
   pages={1107--1154},
}

\bib{KMSW}{article}{
   author={Kaletha, Tasho},
   author={Minguez, Alberto},
   author={Shin, Sug Woo},
   author={White, Paul-James},
   title={Endoscopic classification of representations: inner forms of unitary groups},
   eprint={https://arxiv.org/abs/1409.3731v3}  
   status={preprint},
   year={2014}
   label={KMSW} 
}

\bib{Kottwitz}{article}{
   author={Kottwitz, Robert E.},
   title={Shimura varieties and $\lambda$-adic representations},
   conference={
      title={Automorphic forms, Shimura varieties, and $L$-functions, Vol.
      I},
      address={Ann Arbor, MI},
      date={1988},
   },
   book={
      series={Perspect. Math.},
      volume={10},
      publisher={Academic Press, Boston, MA},
   },
   date={1990},
   pages={161--209},
}

\bib{Kudla}{article}{
   author={Kudla, Stephen S.},
   title={On certain arithmetic automorphic forms for ${\rm SU}(1,\,q)$},
   journal={Invent. Math.},
   volume={52},
   date={1979},
   number={1},
   pages={1--25},
   issn={0020-9910},
}

\bib{labesse}{article}{
   author={Labesse, J.-P.},
   title={Changement de base CM et s\'{e}ries discr\`etes},
   language={French},
   conference={
      title={On the stabilization of the trace formula},
   },
   book={
      series={Stab. Trace Formula Shimura Var. Arith. Appl.},
      volume={1},
      publisher={Int. Press, Somerville, MA},
   },
   date={2011},
   pages={429--470},
}

\bib{lansky-pollack}{article}{
   author={Lansky, Joshua},
   author={Pollack, David},
   title={Hecke algebras and automorphic forms},
   journal={Compositio Math.},
   volume={130},
   date={2002},
   number={1},
   pages={21--48},
   issn={0010-437X},
}

\bib{Macdonald}{book}{
   author={Macdonald, I. G.},
   title={Symmetric functions and Hall polynomials},
   series={Oxford Mathematical Monographs},
   edition={2},
   note={With contributions by A. Zelevinsky;
   Oxford Science Publications},
   publisher={The Clarendon Press, Oxford University Press, New York},
   date={1995},
   pages={x+475},
   isbn={0-19-853489-2},
}

\bib{me:cong}{article}{
   author={Martin, Kimball},
   title={The Jacquet-Langlands correspondence, Eisenstein congruences, and
   integral $L$-values in weight 2},
   journal={Math. Res. Lett.},
   volume={24},
   date={2017},
   number={6},
   pages={1775--1795},
   issn={1073-2780},
   note={Corrected version available at \url{https://arxiv.org/abs/1601.03284v4}}
}

\bib{me:ref-dim}{article}{
   author={Martin, Kimball},
   title={Refined dimensions of cusp forms, and equidistribution and bias of
   signs},
   journal={J. Number Theory},
   volume={188},
   date={2018},
   pages={1--17},
   issn={0022-314X},
}

\bib{me:cong2}{article}{
   author={Martin, Kimball},
   title={Congruences for modular forms mod 2 and quaternionic $S$-ideal
   classes},
   journal={Canad. J. Math.},
   volume={70},
   date={2018},
   number={5},
   pages={1076--1095},
   issn={0008-414X},
}

\bib{me:basis}{article}{
   author={Martin, Kimball},
   title={The basis problem revisited},
   eprint={https://arxiv.org/abs/arXiv:1804.04234}  
   status={preprint},
   year={2018}
   label={Mar} 
}

\bib{mazur}{article}{
   author={Mazur, B.},
   title={Modular curves and the Eisenstein ideal},
   journal={Inst. Hautes \'{E}tudes Sci. Publ. Math.},
   number={47},
   date={1977},
   pages={33--186 (1978)},
   issn={0073-8301},
}

\bib{Minguez}{article}{
   author={M\'{i}nguez, Alberto},
   title={Unramified representations of unitary groups},
   conference={
      title={On the stabilization of the trace formula},
   },
   book={
      series={Stab. Trace Formula Shimura Var. Arith. Appl.},
      volume={1},
      publisher={Int. Press, Somerville, MA},
   },
   date={2011},
   pages={389--410},
}

\bib{stab:final}{book}{
  author={Moeglin, Colette},
  author={Waldspurger, Jean-Loup},
  label={MW17},
  title={Stabilisation de la formule des traces tordue. Vol.s 1 and 2}
   series={Progress in Mathematics},
   volume={316 and 317},
   Publisher = {Basel: Birkh\"auser/Springer},
   date={2016 and 2017},
}

\bib{mok}{article}{
   author={Mok, Chung Pang},
   title={Endoscopic classification of representations of quasi-split
   unitary groups},
   journal={Mem. Amer. Math. Soc.},
   volume={235},
   date={2015},
   number={1108},
   pages={vi+248},
   issn={0065-9266},
   isbn={978-1-4704-1041-4},
   isbn={978-1-4704-2226-4},
}

\bib{MS}{article}{
   author={Murase, Atsushi},
   author={Sugano, Takashi},
   title={On the Fourier-Jacobi expansion of the unitary Kudla lift},
   journal={Compos. Math.},
   volume={143},
   date={2007},
   number={1},
   pages={1--46},
   issn={0010-437X},
}

\bib{platonov-rapinchuk}{book}{
   author={Platonov, Vladimir},
   author={Rapinchuk, Andrei},
   title={Algebraic groups and number theory},
   series={Pure and Applied Mathematics},
   volume={139},
   note={Translated from the 1991 Russian original by Rachel Rowen},
   publisher={Academic Press, Inc., Boston, MA},
   date={1994},
   pages={xii+614},
   isbn={0-12-558180-7},
}

\bib{ralf-brooks}{book}{
   author={Roberts, Brooks},
   author={Schmidt, Ralf},
   title={Local newforms for GSp(4)},
   series={Lecture Notes in Mathematics},
   volume={1918},
   publisher={Springer, Berlin},
   date={2007},
   pages={viii+307},
   isbn={978-3-540-73323-2},
}

\bib{ribet}{article}{
   author={Ribet, Kenneth A.},
   title={On $l$-adic representations attached to modular forms},
   journal={Invent. Math.},
   volume={28},
   date={1975},
   pages={245--275},
}

\bib{rogawski}{book}{
   author={Rogawski, Jonathan D.},
   title={Automorphic representations of unitary groups in three variables},
   series={Annals of Mathematics Studies},
   volume={123},
   publisher={Princeton University Press, Princeton, NJ},
   date={1990},
   pages={xii+259},
   isbn={0-691-08586-2},
   isbn={0-691-08587-0},
}

\bib{Satake}{article}{
   author={Satake, Ichir\^{o}},
   title={Theory of spherical functions on reductive algebraic groups over
   ${\germ p}$-adic fields},
   journal={Inst. Hautes \'{E}tudes Sci. Publ. Math.},
   number={18},
   date={1963},
   pages={5--69},
   issn={0073-8301},
}

\bib{Sch}{book}{
   author={Scharlau, Winfried},
   title={Quadratic and Hermitian forms},
   series={Grundlehren der Mathematischen Wissenschaften [Fundamental
   Principles of Mathematical Sciences]},
   volume={270},
   publisher={Springer-Verlag, Berlin},
   date={1985},
   pages={x+421},
   isbn={3-540-13724-6},
}

\bib{ralf:SK1}{article}{
   author={Schmidt, Ralf},
   title={The Saito-Kurokawa lifting and functoriality},
   journal={Amer. J. Math.},
   volume={127},
   date={2005},
   number={1},
   pages={209--240},
   issn={0002-9327},
}

\bib{ralf:SK2}{article}{
   author={Schmidt, Ralf},
   title={On classical Saito-Kurokawa liftings},
   journal={J. Reine Angew. Math.},
   volume={604},
   date={2007},
   pages={211--236},
   issn={0075-4102},
   review={\MR{2320318}},
}

\bib{ralf:appendix}{article}{
   author={Schmidt, Ralf},
   title={The spherical representations of $GSp(1,1)$. Appendix to: Jacquet-Langlands-Shimizu correspondence for theta lifts to $GSp(2)$ and its inner forms I: An explicit functorial correspondence by Hiro-aki Narita},
   journal={J. Math. Soc. Japan},
   volume={69},
   date={2017},
   number={4},
   pages={1443--1474},
   issn={0025-5645},
}

\bib{ralf:packet}{article}{
   author={Schmidt, Ralf},
   title={Packet structure and paramodular forms},
   journal={Trans. Amer. Math. Soc.},
   volume={370},
   date={2018},
   number={5},
   pages={3085--3112},
   issn={0002-9947},
}

\bib{shimura:bull}{article}{
   author={Shimura, Goro},
   title={Quadratic Diophantine equations, the class number, and the mass
   formula},
   journal={Bull. Amer. Math. Soc. (N.S.)},
   volume={43},
   date={2006},
   number={3},
   pages={285--304},
   issn={0273-0979},
}

\bib{shin}{article}{
   author={Shin, Sug Woo},
   title={Galois representations arising from some compact Shimura
   varieties},
   journal={Ann. of Math. (2)},
   volume={173},
   date={2011},
   number={3},
   pages={1645--1741},
   issn={0003-486X},
}

\bib{sorensen:2009}{article}{
   author={Sorensen, Claus M.},
   title={Level-raising for Saito-Kurokawa forms},
   journal={Compos. Math.},
   volume={145},
   date={2009},
   number={4},
   pages={915--953},
   issn={0010-437X},
}

\bib{spencer}{thesis}{
   author={Spencer, David},
   title={Congruences of Local Origin for Higher Levels
},
   type={Ph.D. Thesis}
   note={University of Sheffield},
   date={2018},
   pages={149},
}

\bib{SV}{book}{
   author={Springer, Tonny A.},
   author={Veldkamp, Ferdinand D.},
   title={Octonions, Jordan algebras and exceptional groups},
   series={Springer Monographs in Mathematics},
   publisher={Springer-Verlag, Berlin},
   date={2000},
   pages={viii+208},
   isbn={3-540-66337-1},
}

\bib{yoo}{article}{
   author={Yoo, Hwajong},
   title={Non-optimal levels of a reducible mod $\ell$ modular
   representation},
   journal={Trans. Amer. Math. Soc.},
   volume={371},
   date={2019},
   number={6},
   pages={3805--3830},
   issn={0002-9947},
}

\bib{yamana}{article}{
   author={Yamana, Shunsuke},
   title={On the lifting of Hilbert cusp forms to Hilbert-Hermitian cusp forms},
   eprint={http://syamana.sub.jp/}
   status={preprint},
  year={2019}
  label={Yam}
}

\end{biblist}
\end{bibdiv}

\end{document}